\newtheorem{dfn}{Definition}[subsection]
\newtheorem{prop}[dfn]{Proposition}
\newtheorem{thm}[dfn]{Theorem}
\newtheorem{lem}[dfn]{Lemma}
\newtheorem{cor}[dfn]{Corollary}
\newtheorem{exple}[dfn]{Example}
\newtheorem{rem}[dfn]{Remark}
\newcommand\ag{\mathfrak{a}_g}
\newcommand\agminus{\mathfrak{a}_g^-}
\newcommand\deromega{{\rm Der}_\omega}
\newcommand\Hom{{\rm Hom}}
\newcommand\hotimes{\hat{\otimes}}
\newcommand\Ker{{\rm Ker}}
\newcommand\LfL{[\widehat{\mathcal{L}}]}
\newcommand\LL{\widehat{\mathcal{L}}}
\newcommand\llg{\mathfrak{l}_g}
\newcommand\Qomega{{\mathbb{Q}[[\omega]]}}
\newcommand\Qzeta{{\mathbb{Q}\langle\zeta\rangle}}
\newcommand\T{\widehat{T}}
\begin{document}
\title{The logarithms of Dehn twists}
\author{Nariya Kawazumi and Yusuke Kuno}
\date{}
\maketitle

\begin{abstract}
By introducing an invariant of loops on a compact oriented surface
with one boundary component, we give an explicit formula
for the action of Dehn twists
on the completed group ring of the fundamental
group of the surface. This invariant can be considered as ``the logarithms"
of Dehn twists. The formula generalizes the classical formula describing
the action on the first homology of the surface, and
Morita's explicit computations of the extended first and
the second Johnson homomorphisms.
For the proof we use a homological interpretation
of the Goldman Lie algebra in the framework of Kontsevich's
formal symplectic geometry.
As an application, we prove the action of the Dehn twist
of a simple closed curve on the $k$-th nilpotent quotient
of the fundamental group of the surface depends only
on the conjugacy class of the curve in the $k$-th quotient.
\end{abstract}

\section{Introduction}
Let $\Sigma$ be a compact oriented $C^{\infty}$-surface
of genus $g>0$ with one boundary component, and $\mathcal{M}_{g,1}$
the mapping class group of $\Sigma$ relative to the boundary.
In other words, $\mathcal{M}_{g,1}$ is the group of diffeomorphisms
of $\Sigma$ fixing the boundary $\partial \Sigma$ pointwise,
modulo isotopies fixing the boundary pointwise.
Choose a basepoint $*$ on the boundary $\partial\Sigma$.
The group $\mathcal{M}_{g,1}$ (faithfully) acts on $\pi=\pi_1(\Sigma,*)$,
hence on the nilpotent quotients of $\pi$. For example,
$\mathcal{M}_{g,1}$ acts on the first homology group
$H_1(\Sigma;\mathbb{Z})\cong \pi/[\pi,\pi]$,
and this gives rise to the classical representation
$$\mathcal{M}_{g,1}\rightarrow Sp(2g;\mathbb{Z}),$$
whose kernel is called the Torelli group $\mathcal{I}_{g,1}$.
Looking at the kernel of the action on the higher nilpotent quotients of $\pi$,
we obtain a series of normal subgroups of $\mathcal{M}_{g,1}$,
denoted by $\mathcal{M}_{g,1}[k]$, $k\ge 1$, so that
$\mathcal{M}_{g,1}[1]=\mathcal{I}_{g,1}$. More precisely,
the group $\mathcal{M}_{g,1}[k]$ consists of the mapping classes
acting trivially on the $k$-th nilpotent quotient of $\pi$ (see \S7.4).

In this view point, the quotients $\mathcal{M}_{g,1}/\mathcal{M}_{g,1}[k]$
serve as approximations of $\mathcal{M}_{g,1}$, and the
successive quotients $\mathcal{M}_{g,1}[k]/\mathcal{M}_{g,1}[k+1]$
can be seen as particles of them. A systematic study of these
particles was initiated by Johnson \cite{Joh1} \cite{Joh2}.
He introduced a series of group homomorphisms
$$\tau_k\colon \mathcal{M}_{g,1}[k]\rightarrow {\rm Hom}(H,\mathcal{L}_k),\ k\ge 1,$$
which induce the injections $\tau_k\colon \mathcal{M}_{g,1}[k]/\mathcal{M}_{g,1}[k+1]
\hookrightarrow {\rm Hom}(H,\mathcal{L}_k)$, $k\ge 1$.
Here $H$ is the first integral homology of the surface and
$\mathcal{L}_k$ is the degree $k$-part of the free Lie algebra generated
by $H$. The homomorphism $\tau_k$ is nowadays called {\it the $k$-th
Johnson homomorphism}. He extensively studied the first and the second
Johnson homomorphisms, and in \cite{Joh3} he proved $\tau_1$
gives the free part of the abelianization of $\mathcal{I}_{g,1}$.

One of several significant developments which followed
the initial works of Johnson is about extensions of
the Johnson homomorphisms to the whole mapping class group.
In \cite{Mo2}, Morita showed that the first Johnson homomorphism
$\tau_1$ extends to the whole group $\mathcal{M}_{g,1}$ as
a crossed homomorphism, denoted by
$\tilde{k}\in Z^1(\mathcal{M}_{g,1};\frac{1}{2}\Lambda^3H)$.
Here, $\Lambda^3H$ is the third exterior product of $H$.
He also showed that the extension is unique up to coboundaries.
The arguments in \cite{Mo2} are supported by
many explicit computations on Humphreys generators,
which are generators of $\mathcal{M}_{g,1}$ consisting of several Dehn twists.
In \cite{Mo3} \cite{Mo4}, Morita further showed that the
second Johnson homomorphism $\tau_2$ also extends to the
whole $\mathcal{M}_{g,1}$ as a crossed homomorphism,
and again did many explicit computations.

After the works of Morita, there have been known several studies
including Hain \cite{Hai} and Day \cite{Day} \cite{Day2}
about extensions of the Johnson homomorphisms to the
whole mapping class group.
Another approach by using the notion of {\it generalized Magnus expansions}
is developed in \cite{Ka}. Hereafter, let $H=H_1(\Sigma;\mathbb{Q})$.
Roughly speaking, a Magnus expansion in the sense of \cite{Ka}
is an identification $\theta\colon \widehat{\mathbb{Q}\pi}
\stackrel{\cong}{\rightarrow}\widehat{T}$ as complete augmented algebras,
where $\widehat{\mathbb{Q}\pi}$ is the completed group
ring of $\pi$ and $\widehat{T}$ is the completed tensor algebra
generated by $H$.
Once we choose a Magnus expansion $\theta$, then we have an
injective homomorphism
$$T^{\theta}\colon \mathcal{M}_{g,1}\rightarrow {\rm Aut}(\widehat{T})$$
called {\it the total Johnson map associated to} $\theta$.
The map $T^{\theta}$ can be understood as a tensor expression of
the action of $\mathcal{M}_{g,1}$ on the completed group ring of $\pi$,
since $\theta\colon \widehat{\mathbb{Q}\pi}\overset\cong\to
\widehat{T}$ is an isomorphism. For details, see \S2.5.
As was clarified in \cite{Ka}, $T^{\theta}$ induces
$\theta$-dependent extensions of all the Johnson homomorphisms $\tau_k$,
denoted by $\tau^{\theta}_k$ where $k\ge 1$, to the whole mapping class group.
$\tau^{\theta}_k$'s are no longer homomorphisms, and are
not crossed homomorphisms if $k\ge 2$,
but satisfy an infinite sequence of coboundary conditions.

Note that the fundamental group $\pi$ is a free group of rank $2g$.
Actually the treatment in \cite{Ka} is on ${\rm Aut}(F_n)$,
the automorphism group of a free group of rank $n$,
rather than the mapping class group.
As long as we just regard $\pi$ as a free group,
it does not seem a matter of concern which Magnus expansion we choose.
However, recently Massuyeau \cite{Mas} introduced the notion
of {\it a symplectic expansion}, which seems suitable for the study
of $\mathcal{M}_{g,1}$ from the view point of Magnus expansions.
A symplectic expansion is a Magnus expansion of $\pi$
respecting the fact that $\pi$ has a particular element
corresponding to the boundary of $\Sigma$.
For precise definition, see \S2.4.

In this paper, we begin a quantitative approach to  the topology of $\Sigma$
and the mapping class group $\mathcal{M}_{g,1}$
via a symplectic expansion.
The primary theme is the Dehn twist formula for the total Johnson
map associated to a symplectic expansion.
As was stated above, Dehn twists generate the mapping class group
$\mathcal{M}_{g,1}$.
We introduce an invariant of loops on $\Sigma$, and
derive a formula of the values of $T^{\theta}$ on Dehn twists
in terms of this invariant.
It is classically known that the action of a Dehn twist on the homology
of an oriented surface is given by transvection. Our formula can be
seen as a generalization of this fact. Moreover, it gives formulas
for the extensions $\tau_k^{\theta}$ and
recovers some computations of Morita on the extended $\tau_1$, and $\tau_2$.
Behind our proof of the above formula, a close relationship
between the Goldman Lie algebra of $\Sigma$ and
formal symplectic geometry plays a vital role. The
relationship is established via a symplectic expansion, and
this is another theme of this paper keeping pace with the first.

\subsection{Statement of the main results}
Let us briefly introduce several notations.
The completed tensor algebra
$\widehat{T}=\prod_{m=0}^{\infty}H^{\otimes m}$
has a decreasing filtration of two-sided ideals given by
$\widehat{T}_p:=\prod_{m\ge p}^{\infty}H^{\otimes m}$,
for $p\ge 1$. For a Magnus expansion $\theta$,
let $\ell^{\theta}:=\log \theta$. Then $\ell^{\theta}$ is a map from
$\pi$ to $\widehat{T}_1$.
Define a linear map $N\colon \widehat{T}_1 \rightarrow \widehat{T}_1$ by
$N|_{H^{\otimes p}}=\sum_{m=0}^{p-1} \nu^m$, for $p\ge 1$,
where $\nu\colon H^{\otimes p}\rightarrow H^{\otimes p}$
is the map induced from the cyclic permutation. For $x\in \pi$,
let $$L^{\theta}(x):=\frac{1}{2}N(\ell^{\theta}(x)\ell^{\theta}(x))
\in \widehat{T}_2.$$
It turns out that $L^{\theta}(x^{-1})=L^{\theta}(x)$,
and $L^{\theta}(yxy^{-1})=L^{\theta}(x)$ for $x,y\in \pi$.
Thus if $\gamma$ is an unoriented loop on $\Sigma$,
$L^{\theta}(\gamma)\in \widehat{T}_2$ is well-defined
by taking a representative of $\gamma$ in $\pi$.
Using the Poincar\'e duality, we make an identification
$\widehat{T}_1= H\otimes \widehat{T}\cong {\rm Hom}(H,\widehat{T})$
and regard $L^{\theta}(\gamma)$ as a derivation of $\widehat{T}$
by applications of the Leibniz rule.

\begin{center}
Figure 1: the right handed Dehn twist

\unitlength 0.1in
\begin{picture}( 49.0000, 10.5000)(  2.0000,-12.0000)
%
\special{pn 13}%
\special{pa 200 400}%
\special{pa 2200 400}%
\special{fp}%
%
\special{pn 13}%
\special{pa 200 1200}%
\special{pa 2200 1200}%
\special{fp}%
%
\special{pn 13}%
\special{pa 3100 400}%
\special{pa 5100 400}%
\special{fp}%
%
\special{pn 13}%
\special{pa 3100 1200}%
\special{pa 5100 1200}%
\special{fp}%
%
\special{pn 8}%
\special{ar 1200 800 100 400  1.5707963 4.7123890}%
%
\special{pn 8}%
\special{ar 1200 800 100 400  4.7123890 4.9523890}%
\special{ar 1200 800 100 400  5.0963890 5.3363890}%
\special{ar 1200 800 100 400  5.4803890 5.7203890}%
\special{ar 1200 800 100 400  5.8643890 6.1043890}%
\special{ar 1200 800 100 400  6.2483890 6.4883890}%
\special{ar 1200 800 100 400  6.6323890 6.8723890}%
\special{ar 1200 800 100 400  7.0163890 7.2563890}%
\special{ar 1200 800 100 400  7.4003890 7.6403890}%
\special{ar 1200 800 100 400  7.7843890 7.8539816}%
%
\special{pn 8}%
\special{ar 4100 800 100 400  1.5707963 4.7123890}%
%
\special{pn 8}%
\special{ar 4100 800 100 400  4.7123890 4.9523890}%
\special{ar 4100 800 100 400  5.0963890 5.3363890}%
\special{ar 4100 800 100 400  5.4803890 5.7203890}%
\special{ar 4100 800 100 400  5.8643890 6.1043890}%
\special{ar 4100 800 100 400  6.2483890 6.4883890}%
\special{ar 4100 800 100 400  6.6323890 6.8723890}%
\special{ar 4100 800 100 400  7.0163890 7.2563890}%
\special{ar 4100 800 100 400  7.4003890 7.6403890}%
\special{ar 4100 800 100 400  7.7843890 7.8539816}%
%
\special{pn 8}%
\special{pa 200 800}%
\special{pa 2200 800}%
\special{fp}%
%
\special{pn 8}%
\special{ar 3900 800 200 400  6.2831853 6.4831853}%
\special{ar 3900 800 200 400  6.6031853 6.8031853}%
\special{ar 3900 800 200 400  6.9231853 7.1231853}%
\special{ar 3900 800 200 400  7.2431853 7.4431853}%
\special{ar 3900 800 200 400  7.5631853 7.7631853}%
%
\special{pn 8}%
\special{ar 4300 800 200 400  3.1415927 3.3415927}%
\special{ar 4300 800 200 400  3.4615927 3.6615927}%
\special{ar 4300 800 200 400  3.7815927 3.9815927}%
\special{ar 4300 800 200 400  4.1015927 4.3015927}%
\special{ar 4300 800 200 400  4.4215927 4.6215927}%
%
\special{pn 8}%
\special{ar 3900 1000 200 200  1.5707963 3.1415927}%
%
\special{pn 8}%
\special{ar 3500 1000 200 200  4.7123890 6.2831853}%
%
\special{pn 8}%
\special{pa 3100 800}%
\special{pa 3500 800}%
\special{fp}%
%
\special{pn 8}%
\special{ar 4300 600 200 200  4.7123890 6.2831853}%
%
\special{pn 8}%
\special{ar 4700 600 200 200  1.5707963 3.1415927}%
%
\special{pn 8}%
\special{pa 5100 800}%
\special{pa 4700 800}%
\special{fp}%
\put(4.2000,-7.4000){\makebox(0,0)[lb]{$\ell$}}%
\put(10.7000,-3.3000){\makebox(0,0)[lb]{$C$}}%
\put(39.7000,-3.2000){\makebox(0,0)[lb]{$C$}}%
\put(32.9000,-7.3000){\makebox(0,0)[lb]{$t_C(\ell)$}}%
\end{picture}%

\end{center}

Let $C$ be a simple closed curve on $\Sigma$.
We denote by $t_C\in \mathcal{M}_{g,1}$ the right handed Dehn twist along $C$
(see Figure 1). By the remark above, $L^{\theta}(C)\in \widehat{T}_2$ is defined.
This invariant turns out to be ``the logarithm" of $t_C$:

\begin{thm}[$=$Theorem \ref{thm:7-1-1}]
\label{thm:1-1-1}
Let $\theta$ be a symplectic expansion and $C$ a simple closed curve
on $\Sigma$. Then the total Johnson map $T^{\theta}(t_C)$ is described as
\begin{equation}
\label{eq:1-1-1}
T^{\theta}(t_C)=e^{-L^{\theta}(C)}.
\end{equation}
Here, the right hand side is the algebra automorphism of $\widehat{T}$
defined by the exponential of the derivation $-L^{\theta}(C)$.
\end{thm}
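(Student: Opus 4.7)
The plan is to route the statement through the Goldman Lie algebra of $\Sigma$, realizing both sides of (\ref{eq:1-1-1}) via its action on $\widehat T$. This is precisely the ``close relationship between the Goldman Lie algebra and formal symplectic geometry'' advertised in the introduction, so I would set up that dictionary first: a symplectic expansion $\theta$ should produce a Lie algebra homomorphism $\sigma^\theta$ from the completed Goldman Lie algebra of $\Sigma$ into $\deromega(\widehat T)$, the derivations of $\widehat T$ preserving the symplectic element $\omega$ to which $\theta$ sends the boundary loop. Concretely, to a conjugacy class $|x|$ one attaches the derivation determined by $N(\ell^\theta(x))\in\widehat T_1\cong\Hom(H,\widehat T)$ via Poincar\'e duality. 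The \emph{symplectic} (rather than merely Magnus) nature of $\theta$ is essential here, because only then does $\sigma^\theta$ land in $\deromega(\widehat T)$.

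Unpacking the definition of $L^\theta$, the two occurrences of $\ell^\theta(C)$ together with the factor $\tfrac12 N$ immediately give $\sigma^\theta(\tfrac12|C|^2)=L^\theta(C)$, up to a sign fixed by orientation conventions. The heart of the proof is then the geometric identity that, at the infinitesimal level, $t_C$ corresponds to the Goldman-Hamiltonian flow generated by $-\tfrac12|C|^2$. This rests on the classical ``insertion'' picture of a Dehn twist: after isotoping an arc $\alpha$ transverse to $C$, each transverse intersection with sign $\varepsilon_p$ contributes an inserted copy of $C^{\varepsilon_p}$ in $t_C(\alpha)$. Summed over intersection points, this insertion is exactly Goldman's definition of the bracket $\{\tfrac12|C|^2,\alpha\}$, so via $\sigma^\theta$ one recognizes the leading-order deviation of $t_C(\alpha)$ from $\alpha$ as an application of $-L^\theta(C)$.

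The main obstacle is promoting this first-order identity to the global exponential (\ref{eq:1-1-1}). One cannot appeal to a one-parameter flow because $t_C$ is only defined for integer powers; instead I would argue order by order in the filtration $\{\widehat T_p\}$, showing by induction on $p$ that $T^\theta(t_C)$ and $\exp(-L^\theta(C))$ agree modulo $\widehat T_p$. The inductive step should reduce to the insertion calculation above combined with the Leibniz rule for the derivation $L^\theta(C)$, but executing this cleanly requires careful bookkeeping of signs and of the cyclic symmetry produced by $N$, which is where most of the technical effort is likely to be concentrated.
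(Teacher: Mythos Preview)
Your strategy of routing through the Goldman action and its $\theta$-transport to $\deromega(\widehat T)$ matches the paper's framework (Theorems~\ref{thm:6-3-3} and~\ref{thm:6-4-3}), but the specific identifications are off and the endgame is different. The Lie algebra map sends $|x|$ to $-N\theta(x)$, not to $N(\ell^\theta(x))$; the latter collapses to the homology class $[x]$ and carries no higher information. More importantly, there is no product on $\mathbb{Q}\hat{\pi}$, so ``$\tfrac12|C|^2$'' is undefined there; the correct statement is that $L^\theta(C)$ arises from $\tfrac12(\log x)^2$ in the \emph{completed group ring} $\widehat{\mathbb{Q}\pi}$, via the extended action and the key formula $\theta(\sigma((\log x)^2)y)=-2L^\theta(x)\theta(y)$ of Theorem~\ref{thm:6-5-2}. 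Your insertion description is really $\sigma(|C|)$, and that derivation carries a piece of degree $-1$ (namely $[C]\in H$), so it does not even exponentiate to a filter-preserving automorphism of $\widehat T$; there is no way to package $t_C$ as a flow of $|C|$ alone, and hence your proposed filtration induction does not start from a correct first-order identity.

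The paper's endgame avoids any induction. It splits into the non-separating and separating cases and chooses symplectic generators adapted to $C$ (so that $C\simeq|\alpha_1|$, resp.\ $C\simeq|\gamma_h|$ with $\gamma_h=\prod_{i\le h}[\alpha_i,\beta_i]$). Feeding the elementary identity $\sigma(\alpha_1^n)\beta_1=n\beta_1\alpha_1^n$ (resp.\ its separating analogue) into Theorem~\ref{thm:6-5-2} yields $L^\theta(C)$ on each generator in \emph{closed form}: e.g.\ $L^\theta(C)\theta(\beta_1)=-\theta(\beta_1)\ell^\theta(\alpha_1)$, while generators disjoint from $C$ are annihilated by Corollary~\ref{cor:6-5-3}. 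These closed forms exponentiate \emph{exactly} to $\theta(t_C(\text{generator}))$, and since the $\theta$-images of the generators topologically generate $\widehat T$, the two automorphisms agree. No order-by-order bookkeeping is needed; all of the cyclic-symmetry work you anticipate is absorbed once into Theorem~\ref{thm:6-5-2}.
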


The formula does not hold for a group-like Magnus expansion
which is not symplectic.
It should be remarked here that whether $C$
is non-separating or separating, the formula holds.
Note that (\ref{eq:1-1-1}) is an equality as filter-preserving
automorphisms of $\widehat{T}$.
If we compute (\ref{eq:1-1-1}) modulo $\widehat{T}_2$, we get
the well-known formula for the action on the homology:
\begin{equation}
\label{eq:1-1-2}
t_C(X)=X-(X\cdot [C])[C],\ X\in H.
\end{equation}
Here $(\ \cdot \ )$ is the intersection form
on $H$ and $[C]$ is the homology class
of $C$ with a fixed orientation. By computing (\ref{eq:1-1-1})
modulo higher tensors, we will get formulas of $\tau^{\theta}_k(t_C)$
in terms of $L^{\theta}(C)$. These formulas match the computations
of the extended $\tau_1$ for Humphreys generators and
$\tau_2(t_C)$ for separating $C$ by Morita \cite{Mo} \cite{Mo2}. See \S7.

The classical formula (\ref{eq:1-1-2}) tells us that the action
of $t_C$ on $H_1(\Sigma;\mathbb{Z})$ depends only on
the class $\pm [C]$. As an application of
Theorem \ref{thm:1-1-1}, we get a generalization of this fact.
Let $N_k=N_k(\pi)$ be the $k$-th nilpotent quotient
of $\pi$. We number the indices so that
$N_1=\pi^{\rm abel}\cong H_1(\Sigma;\mathbb{Z})$.
The mapping class group $\mathcal{M}_{g,1}$ naturally acts on $N_k$.
Let $\bar{N_k}$ be the quotient set of $N_k$ by conjugation and
the relation $x\sim x^{-1}$. Then any simple closed curve $C$
defines an element of $\bar{N_k}$, which we denote by $\bar{C}_k$.

\begin{thm}[$=$Theorem \ref{thm:7-4-1}]
\label{thm:1-1-2}
For each $k\ge 1$, the action of $t_C$ on $N_k$
depends only on the class $\bar{C}_k\in \bar{N}_k$.
If $C$ is separating, it depends only on the class
$\bar{C}_{k-1}\in \bar{N}_{k-1}$.
\end{thm}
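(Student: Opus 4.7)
The plan is to combine the Dehn twist formula (Theorem~\ref{thm:1-1-1}) with careful degree bookkeeping in the filtered algebra $\widehat{T}$. I first reduce to a comparison of total Johnson maps modulo the ideal $\widehat{T}_{k+1}$: since $\pi$ is free, $N_k = \pi/\Gamma_{k+1}\pi$ is torsion-free, and the Magnus--Fox identity $\Gamma_{k+1}\pi = \pi \cap (1+I^{k+1})$ together with $\theta$ embeds $N_k$ into $\widehat{T}/\widehat{T}_{k+1}$. Consequently the action of $t_C$ on $N_k$ is recovered from the class of $T^{\theta}(t_C)$ modulo $\widehat{T}_{k+1}$, so it suffices to show $T^{\theta}(t_{C_1}) \equiv T^{\theta}(t_{C_2}) \pmod{\widehat{T}_{k+1}}$ whenever the stated hypothesis on $\bar{C}_\bullet$ holds.

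By Theorem~\ref{thm:1-1-1}, $T^{\theta}(t_C) = e^{-L^{\theta}(C)}$, so the question becomes one of estimating $L^{\theta}(C_1) - L^{\theta}(C_2)$. Using the $\pi$-conjugacy and inversion invariance of $L^{\theta}$ noted just before the theorem, I may arrange $C_1 = C_2 \cdot w$ in $\pi$ with $w \in \Gamma_{k+1}\pi$ in the general case and $w \in \Gamma_k\pi$ in the separating case. Baker--Campbell--Hausdorff applied to $\log(\theta(C_2)\theta(w))$ then yields $\delta := \ell^{\theta}(C_1) - \ell^{\theta}(C_2) \in \widehat{T}_{k+1}$ in general and $\delta \in \widehat{T}_{\max(k,2)}$ in the separating case (where $\ell^{\theta}(C_i)\in \widehat{T}_2$ is automatic from $[C_i]=0$), since every higher BCH commutator has at least one factor from each of $\ell^{\theta}(C_2)$ and $\log\theta(w)$. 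Expanding
\[
L^{\theta}(C_1) - L^{\theta}(C_2) = \tfrac{1}{2}N\bigl(\ell^{\theta}(C_2)\,\delta + \delta\,\ell^{\theta}(C_2) + \delta^2\bigr),
\]
one finds in the general case $\widehat{T}_1 \cdot \widehat{T}_{k+1} \subset \widehat{T}_{k+2}$, and in the separating case the extra factor $\ell^{\theta}(C) \in \widehat{T}_2$ upgrades each cross term to $\widehat{T}_2 \cdot \widehat{T}_{\max(k,2)} \subset \widehat{T}_{k+2}$, while $\delta^2$ also lies in $\widehat{T}_{k+2}$. Hence $L^{\theta}(C_1) \equiv L^{\theta}(C_2) \pmod{\widehat{T}_{k+2}}$ in either case.

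It remains to upgrade this congruence of derivations to a congruence of their exponentials. Writing $L_i := L^{\theta}(C_i)$, the difference $L_1 - L_2 \in \widehat{T}_{k+2}$ corresponds under $\widehat{T}_2 \cong \Hom(H,\widehat{T}_1)$ to a map $H \to \widehat{T}_{k+1}$, so as a derivation it sends $\widehat{T}_j$ into $\widehat{T}_{j+k}$, while each $L_i$ itself preserves the filtration of $\widehat{T}$. The identity
\[
L_1^n - L_2^n = L_1\circ(L_1^{n-1} - L_2^{n-1}) + (L_1 - L_2)\circ L_2^{n-1}
\]
then inductively gives $L_1^n(a) \equiv L_2^n(a) \pmod{\widehat{T}_{k+1}}$ for every $n\ge 0$ and $a\in H$. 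Summing the exponential series term-by-term in the complete algebra $\widehat{T}$ yields $e^{-L_1}(a) \equiv e^{-L_2}(a) \pmod{\widehat{T}_{k+1}}$ on generators, and this propagates to all of $\widehat{T}/\widehat{T}_{k+1}$ by the algebra-automorphism property, completing the argument. The most delicate part is the degree bookkeeping in the separating case, where the hypothesis on $w$ is one term weaker and the improvement $\ell^{\theta}(C) \in \widehat{T}_2$ must exactly compensate to recover the bound $\widehat{T}_{k+2}$.
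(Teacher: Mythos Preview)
Your proof is correct and follows essentially the same approach as the paper: reduce to the action on $\widehat{T}/\widehat{T}_{k+1}$ via the embedding $N_k\hookrightarrow \widehat{T}/\widehat{T}_{k+1}$ coming from $\theta^{-1}(1+\widehat{T}_{k+1})=\Gamma_{k+1}$, apply $T^{\theta}(t_C)=e^{-L^{\theta}(C)}$, and show that the relevant truncation of $L^{\theta}(C)$ depends only on $\bar{C}_k$ (resp.\ $\bar{C}_{k-1}$). The only difference is packaging: the paper works with the graded pieces $L^{\theta}_i(C)$ for $2\le i\le k+1$ and observes directly that they are determined by $\ell^{\theta}_i(x)$ for $i\le k$ (resp.\ $i\le k-1$, using $\ell^{\theta}_1(x)=0$), whereas you phrase the same estimate in filtered language via BCH and then spell out the inductive passage from $L_1-L_2\in\widehat{T}_{k+2}$ to $e^{-L_1}\equiv e^{-L_2}\pmod{\widehat{T}_{k+1}}$, which the paper simply asserts.
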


\subsection{The Goldman Lie algebra and formal symplectic geometry}
The key ingredients for our proof of Theorem \ref{thm:1-1-1}
are {\it the Goldman Lie algebra} of $\Sigma$, see Goldman \cite{Go}, and
its homological interpretation in the framework of
{\it formal symplectic geometry} by Kontsevich \cite{Kon}.

The Goldman Lie algebra is a Lie algebra associated to an
oriented surface, and regarded as an origin of string topology
by Chas-Sullivan \cite{CS}.
It was introduced in \cite{Go} as a universal
object for describing the Poisson brackets of coordinate functions
on the space ${\rm Hom}(\pi,G)/G$,
using his notation, with a natural symplectic structure.
Here $\pi$ is the fundamental group of a closed oriented surface
(hence is not our $\pi$)
and $G$ is a Lie group satisfying very general conditions.

Let $\mathbb{Q}\hat{\pi}$ be the Goldman Lie algebra of $\Sigma$.
Here, $\hat{\pi}$ is the set of conjugacy classes of $\pi$.
In \S3, we show that $\mathbb{Q}\hat{\pi}$ acts on the group ring $\mathbb{Q}\pi$
as a derivation. Namely, we show that there is a Lie algebra homomorphism
$\sigma\colon \mathbb{Q}\hat{\pi} \rightarrow {\rm Der}(\mathbb{Q}\pi)$.
On the other hand, let
$\mathfrak{a}_g^-={\rm Der}_{\omega}(\widehat{T})$ be the
space of derivations of $\widehat{T}$ killing the symplectic form.
This is a variant of ``associative", one of the three Lie algebras in formal symplectic
geometry. In fact, we have a canonical isomorphism
$\mathfrak{a}_g^-=N(\widehat{T}_1)$. For details, see \S2.7.

Then we have the following two theorems. The slogan is:
a symplectic expansion builds a bridge between
the objects in ``surface-side" and ``$\widehat{T}$-side".

\begin{thm}[$=$Theorem \ref{thm:6-3-3}]
\label{thm:1-2-1}
Let $\theta$ be a symplectic expansion. Then the map
$$
-\lambda_{\theta}\colon \mathbb{Q}\hat{\pi}\to N(\T_1) = \agminus, \quad
x \mapsto -N\theta(x)
$$
is a Lie algebra homomorphism. The kernel is the subspace $\mathbb{Q}1$ 
spanned by the constant loop $1$, and the image is dense in $N(\T_1) =
\agminus$ with respect to the $\T_1$-adic topology. 
\end{thm}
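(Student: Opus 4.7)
The plan breaks into four pieces. First, I check well-definedness on $\mathbb{Q}\hat\pi$: the cyclic-sum operator satisfies $N(ab)=N(ba)$ for $a,b\in\T$, since $\nu$ just cycles tensor factors. Applied to $\theta(yxy^{-1})=\theta(y)\theta(x)\theta(y)^{-1}$ this gives $N\theta(yxy^{-1})=N\theta(x)$, so $x\mapsto -N\theta(x)$ descends to $\mathbb{Q}\hat\pi$, and by construction its image lies in $N(\T_1)=\agminus$.

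The heart of the proof, and the step I expect to be the main obstacle, is the Lie bracket compatibility. I would compose the Lie-algebra map $\sigma\colon\mathbb{Q}\hat\pi\to\mathrm{Der}(\mathbb{Q}\pi)$ from \S3 with conjugation by $\theta$ to obtain a Lie algebra homomorphism $\tilde\sigma^\theta\colon\mathbb{Q}\hat\pi\to\mathrm{Der}(\T)$, $\alpha\mapsto\theta\circ\sigma(\alpha)\circ\theta^{-1}$. Since a derivation of $\T$ is determined by its restriction to $H$, under the Poincar\'e-duality identification $\T_1\cong H\otimes\T\cong\Hom(H,\T)$ used already in the statement of Theorem \ref{thm:1-1-1}, it suffices to prove that $\tilde\sigma^\theta([x])$ coincides with the derivation attached to $-N\theta(x)\in N(\T_1)$---which, in particular, forces it to kill $\omega$ and thus to lie in $\agminus$. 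The check is carried out on generators $X\in H$: Goldman's formula writes $\sigma([x])(Y)$ as a signed sum over transverse intersection points of representatives of $[x]$ and $Y$, with coefficients coming from the algebraic intersection form on $H$. The hypothesis that $\theta$ is symplectic---$\theta(\zeta)=\exp\omega$ for the boundary loop $\zeta$---is exactly what translates this geometric intersection datum into the symplectic pairing defining $\agminus$; a general group-like expansion will not make this identification work, as noted just after Theorem \ref{thm:1-1-1}. Once the identification is in place, the bracket relation descends from $\sigma$.

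The kernel and density are then structural. Clearly $-\lambda_\theta(1)=-N(1)=0$, so $\mathbb{Q}1\subset\Ker(-\lambda_\theta)$. For the reverse inclusion, factor $-\lambda_\theta$ through $\T\to\T/\overline{[\T,\T]}$. The classical isomorphism $\mathbb{Q}\pi/[\mathbb{Q}\pi,\mathbb{Q}\pi]\cong\mathbb{Q}\hat\pi$ (conjugacy classes are a basis of the cyclic quotient), together with $\theta$ being a filtration-preserving algebra isomorphism, yields that the induced map $\mathbb{Q}\hat\pi\to\T/\overline{[\T,\T]}$ is injective; and $N$ acts on the degree-$p$ cyclic-invariant quotient as multiplication by $p$, so its kernel is confined to the degree-zero summand, which pulls back to $\mathbb{Q}\cdot 1$. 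For density, $\theta$ extends to an isomorphism $\widehat{\mathbb{Q}\pi}\xrightarrow{\cong}\T$ of complete augmented algebras, so $\theta(\mathbb{Q}\pi)$ is dense in $\T$; $N$ is continuous (it preserves each $\T_p$ and is a finite sum of cyclic shifts on each graded piece), hence $N\theta(\mathbb{Q}\pi)$ is dense in $N(\T_1)=\agminus$.
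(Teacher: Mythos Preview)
Your overall strategy---deduce the Lie-homomorphism property from the action compatibility $\theta(\sigma(u)v)=-N\theta(u)\cdot\theta(v)$, since conjugation of the Lie map $\sigma$ by $\theta$ is automatically a Lie map---is legitimate and is in fact the content of Theorem~\ref{thm:1-2-2}. But you have not proved that identity; you have only restated what you would like to be true. Your sentence ``Goldman's formula writes $\sigma([x])(Y)$ as a signed sum over transverse intersection points \dots\ with coefficients coming from the algebraic intersection form on $H$'' mischaracterises $\sigma$: each intersection point $p$ contributes a full based loop $\beta_{*p}\alpha_p\beta_{p*}\in\pi$, not a homology-level intersection number. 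Passing from this geometric formula to the tensor-algebra derivation $-N\theta(x)$ is exactly the substance of the paper's Sections~3--6 (Propositions~\ref{prop:3-5-2}, \ref{prop:5-4-3}, \ref{prop:6-2-3} and Lemma~\ref{lem:6-4-2}), where both $\sigma$ and the $\agminus$-action are rewritten as intersection pairings on twisted homology of $(\Sigma,\partial\Sigma)$ and $(\T,\Qomega)$ respectively, and the symplectic hypothesis is used via $\theta_*[\Sigma]=\LfL$ (Lemma~\ref{lem:6-2-1}). Nothing in your sketch supplies this bridge. Note also that you cannot literally ``check on $X\in H$'': elements of $H$ are not in the image of $\theta$, so $\tilde\sigma^\theta|_H$ is not directly accessible via Goldman's formula without first extending $\sigma$ continuously to $\widehat{\mathbb{Q}\pi}$---and the paper's proof of that extension (Lemma~\ref{lem:6-5-1}) already uses Theorem~\ref{thm:1-2-2}.

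Your kernel argument also has a gap. You assert that the induced map $\mathbb{Q}\hat\pi\to\T/\overline{[\T,\T]}$ is injective because $\theta$ is a filtered algebra isomorphism, but $\theta$ is an isomorphism only after completion: what you need is that $\theta(\mathbb{Q}\pi)\cap\overline{[\T,\T]}=\theta([\mathbb{Q}\pi,\mathbb{Q}\pi])$, i.e.\ that no nontrivial finite $\mathbb{Q}$-combination of conjugacy classes becomes a limit of commutators under $\theta$. This is true, but it is not formal; the paper obtains it from Proposition~\ref{prop:3-4-3}(1), which decomposes $H_1(\pi;\mathbb{Q}\pi^c)$ via the Shapiro lemma and uses that centralisers in $\pi$ are infinite cyclic. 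Your density argument, by contrast, is fine and is essentially the paper's.
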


\begin{thm}[$=$Theorem \ref{thm:6-4-3}]
\label{thm:1-2-2}
Let $\theta$ be a symplectic expansion. Then, 
for $u \in \mathbb{Q}\hat{\pi}$ and $v \in \mathbb{Q}\pi$, 
we have the equality
$$
\theta(\sigma(u)v) = -\lambda_{\theta}(u)\theta(v).
$$
Here the right hand side means minus the action of
$\lambda_{\theta}(u) \in \agminus$ on the tensor $\theta(v) \in \T$
as a derivation.
In other words, the diagram
$$
\begin{CD}
\mathbb{Q}\hat{\pi} \times \mathbb{Q}\pi @>{\sigma}>>
\mathbb{Q}\pi \\
@V{-\lambda_{\theta}\times \theta}VV @VV{\theta}V \\
\mathfrak{a}_g^{-} \times \widehat{T} @>>> \widehat{T},
\end{CD}
$$
where the bottom horizontal arrow means the derivation, commutes.
\end{thm}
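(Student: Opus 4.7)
The plan is to verify the identity directly using transverse geometric representatives, after first reducing the problem to a generating set. Both sides of $\theta(\sigma(u)v) = -\lambda_{\theta}(u)\theta(v)$ are linear in $u$ and satisfy the Leibniz rule in $v$: on the left, $\sigma(u)$ is a derivation of $\mathbb{Q}\pi$ (as established in \S3) and $\theta$ is an algebra homomorphism; on the right, $-\lambda_{\theta}(u) \in \agminus$ is by construction a derivation of $\T$. Both sides are also continuous with respect to the natural adic filtrations. It therefore suffices to prove the identity when $u = |\alpha| \in \hat{\pi}$ is the class of a single loop and $v = \gamma$ ranges over a generating set of $\pi$.

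For such $u$ and $v$, choose transverse representatives --- a free loop $\alpha$ and a based loop $\gamma$ --- meeting in finitely many points $p_1, \ldots, p_n$. From the definition of $\sigma$ given in \S3,
$$\sigma(|\alpha|)\gamma = \sum_{i=1}^{n} \epsilon(p_i)\, \gamma_{*p_i}\,\alpha_{p_i}\,\gamma_{p_i*},$$
where $\epsilon(p_i) = \pm 1$ is the local intersection sign, $\gamma_{*p_i}$ and $\gamma_{p_i*}$ are the sub-arcs of $\gamma$ on either side of $p_i$, and $\alpha_{p_i}$ denotes $\alpha$ traversed based at $p_i$. Applying $\theta$ and using its multiplicativity turns this into a tensor-valued sum indexed by intersection points.

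For the other side, $-\lambda_{\theta}(|\alpha|) = -N\theta(\alpha)$ acts on $\theta(\gamma)$ through the Poincar\'e-duality identification $\T_1 \cong \Hom(H, \T)$, i.e.\ by a contraction determined by the symplectic form $\omega$. The key lemma to establish is a local matching at each crossing: contracting one $H$-factor of $\theta(\gamma)$ against $N\theta(\alpha)$ at the ``slot'' corresponding to $p_i$ produces exactly $-\epsilon(p_i)\,\theta(\gamma_{*p_i})\,\theta(\alpha_{p_i})\,\theta(\gamma_{p_i*})$. Here the symplectic condition on $\theta$ enters essentially: because $\log\theta(\zeta)$ equals $\omega$ modulo higher-degree terms, the symplectic pairing used in the contraction reproduces the signed intersection number at the crossing, while the cyclic operator $N$ in the definition of $\lambda_{\theta}$ accounts for the freedom to re-base $\alpha$ at any point along itself.

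The main obstacle is this local computation. It is the same technical heart as in the proof of Theorem~\ref{thm:1-2-1}, which handles a bracket of two free loops; in the present case one loop $\gamma$ carries a basepoint, so it is naturally split into two sub-arcs at each crossing rather than rotated cyclically. The precise matching of geometric splicing with the contraction induced by the symplectic form --- including the correct sign and the bookkeeping of the asymmetry between $\alpha$ (cyclic) and $\gamma$ (split at the crossing) --- is the technically delicate step. Once the local identity is proven, summing over all $p_i$ and comparing with the expression above for $\sigma(|\alpha|)\gamma$ completes the argument.
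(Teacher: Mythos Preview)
Your reduction to generators via linearity and the Leibniz rule is sound, and the paper's argument could be read as implementing the same reduction implicitly. But the approach diverges sharply from the paper at the ``local matching'' step, and that step as you describe it has a genuine gap.

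The paper does not attempt any direct matching between geometric intersection points and tensor contractions. Instead, it builds a homological framework (\S\S4--5) in which both sides of the identity are expressed as intersection products on homology with twisted coefficients: $\sigma(u)v = \mathcal{C}_*(\lambda(u)\cdot\xi(v))$ on the surface side (Proposition~\ref{prop:3-5-2}), and a formally identical expression $\mathcal{C}(\lambda_\theta(u)\cdot\xi\theta(v))$ on the $\T$ side (Proposition~\ref{prop:5-4-3}). The theorem then follows from the naturality of intersection products under $\theta$ (Proposition~\ref{prop:6-2-3}), together with the compatibility of the two $\xi$ maps (Lemma~\ref{lem:6-4-2}). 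The symplectic condition enters through Lemma~\ref{lem:6-2-1}, which says $\theta_*[\Sigma] = \LfL$; this requires $\ell^\theta(\zeta) = \omega$ exactly, not merely modulo higher-degree terms as you write.

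Your proposed ``slot corresponding to $p_i$'' is not well-defined. The element $\theta(\gamma) \in \T$ is a formal power series; its only ``slots'' are the positions in each monomial, and there is no correspondence between these and the geometric points $p_1,\dots,p_n$. When the derivation $-N\theta(\alpha)$ acts on $\theta(\gamma)$, it hits every tensor position in every homogeneous component---infinitely many terms, not one per intersection point. What you seem to be picturing is exactly the paper's description of $\xi(\gamma)$ as a flat section of $\mathcal{S}^r\otimes\mathcal{S}^l$ over the arc $\gamma$, whose value at $p$ is $\gamma_{*p}\otimes\gamma_{p*}$; intersecting that section against $\lambda(\alpha)$ does localize to the $p_i$. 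But translating that picture to $\T$, and proving that $\theta$ carries the surface intersection pairing to the algebraic one, is precisely the content of the homological machinery you are trying to circumvent. Without it, the ``local computation'' you defer to has no clear formulation.
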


In fact, we can derive Theorem \ref{thm:1-1-1} from these
two theorems and some care about convergence. See \S6.5.
Another application of these theorems will be studied
in our forthcoming paper \cite{KK}.

\subsection{Organization of the paper}
This paper is organized as follows.
In section 2 we start by recalling Magnus expansions,
symplectic expansions, and the total Johnson map
associated to a Magnus expansion.
Then we introduce the invariant
$L^{\theta}$ and prove some properties of it. We close
this section by showing connections to
formal symplectic geometry.

In section 3, we look at the Goldman Lie
algebra of $\Sigma$, and we show that it acts on the group
ring of $\pi$ as a derivation. We also give a homological
interpretation of this action.
In section 5, we construct a counterpart of the story in section 3,
in the framework of formal symplectic geometry.
In particular, we give homological interpretations of
$\mathfrak{a}_g^-$ and its action on $\widehat{T}$.
To do this we need a (co)homology theory of
(complete) Hopf algebras, to which section 4 is devoted.
We mention the relative homology of a pair, cap products, Kronecker products,
and relation to (co)homology of groups.

The theorems in Introduction are proved in sections 6 and 7.
In section 6, the stories in sections 3 and 5 are compared
by a symplectic expansion, and
Theorems \ref{thm:1-2-1} and \ref{thm:1-2-2} are proved.
In section 7 we prove
Theorems \ref{thm:1-1-1} and \ref{thm:1-1-2},
and derive some formulas of $\tau^{\theta}_k(t_C)$,
which recover some computations by Morita.
Finally in section 8 we consider the case of the mapping class group
of a once punctured surface and derive results similar
to Theorems \ref{thm:1-1-1} and \ref{thm:1-1-2}.

In Appendix, partial examples of symplectic expansions
are given.

\subsection{Conventions}
Here we list the conventions of this paper.

\begin{enumerate}
\item
Let $G$ be a group. For $x,y\in G$,
we denote by $[x,y]$ their commutator $xyx^{-1}y^{-1}\in G$.
\item
As usual, we often ignore the distinction between
a path and its homotopy class.
\item
For continuous paths $\gamma_1$, $\gamma_2$ on $\Sigma$
such that the endpoint of $\gamma_1$
coincides with the start point of $\gamma_2$, their product
$\gamma_1\gamma_2$ means the path traversing $\gamma_1$ first,
then $\gamma_2$. The product in the fundamental group is the
induced one.
\item
Sometimes we omit $\otimes$ to express tensors.
For example, if $X,Y,Z\in H$, then
$XYZ$ means $X\otimes Y\otimes Z \in H^{\otimes 3}$.
If $u\in H^{\otimes k}$ and $X\in H$, then
$uX$ means $u\otimes X\in H^{\otimes k+1}$.
\item
Throughout the paper we basically work over $\mathbb{Q}$,
although several results hold over the integers,
especially in \S3,
and it would be possible to present all the main results
with the coefficients in an integral domain including
the rationals $\mathbb{Q}$.
\end{enumerate}

\noindent \textbf{Acknowledgments.}
The authors wish to express their gratitude to
Alex Bene, Shigeyuki Morita, Robert Penner, and Masatoshi Sato
for valuable comments.
The first-named author is grateful to
Yasushi Kasahara for stimulating conversations,
and Akira Kono for helpful comments concerning
the higher-dimensional generalization of our map $\sigma$
(Remark 3.2.3). The second-named author would
like to thank Tadashi Ishibe and Ichiro Shimada
for valuable comments.

The first-named author is partially supported by the Grant-in-Aid for
Scientific Research (A) (No.18204002) from the
Japan Society for Promotion of Sciences. The second-named
author is supported by JSPS Research Fellowships
for Young Scientists (22$\cdot$4810).

\tableofcontents

\section{Magnus expansions and total Johnson map}

\subsection{Our surface and mapping class group}
As in Introduction, $\Sigma$ is
a compact oriented $C^{\infty}$-surface of genus $g>0$
with one boundary component. We choose
a basepoint $*$ on the boundary $\partial \Sigma$. The fundamental
group $\pi:=\pi_1(\Sigma,*)$ is a free group of rank $2g$.
Let $H:=H_1(\Sigma;\mathbb{Q})$ be the first homology
group of $\Sigma$.
$H$ is naturally isomorphic to
$H_1(\pi;\mathbb{Q})\cong
\pi^{\rm abel}\otimes_{\mathbb{Z}}\mathbb{Q}$,
the first homology group of $\pi$.
Here $\pi^{\rm abel}=\pi/[\pi,\pi]$ is
the abelianization of $\pi$. Under this identification, we write
$$[x]:=( x\ {\rm mod\ }[\pi,\pi] )
\otimes_{\mathbb{Z}} 1 \in H,\  {\rm for\ } x\in \pi.$$
Let $\mathcal{M}_{g,1}$ be the mapping class group of $\Sigma$
relative to the boundary, namely the group of
orientation-preserving diffeomorphisms of $\Sigma$
fixing $\partial \Sigma$ pointwise,
modulo isotopies fixing $\partial \Sigma$ pointwise.

Let $\zeta \in \pi$ be a based loop parallel to $\partial \Sigma$
and going by counter-clockwise manner. Explicitly, if we take
symplectic generators $\alpha_1,\beta_1,\ldots,\alpha_g,\beta_g \in \pi$
as shown in Figure 2,
$\zeta=\prod_{i=1}^g [\alpha_i,\beta_i]$.

\begin{center}
Figure 2: symplectic generators of $\pi$ for $g=2$
\unitlength 0.1in
\begin{picture}( 37.5000, 18.2000)(  2.0000,-18.7000)
%
\special{pn 13}%
\special{ar 3750 1070 200 800  0.0000000 6.2831853}%
%
\special{pn 13}%
\special{ar 1000 1070 800 800  1.5707963 4.7123890}%
%
\special{pn 13}%
\special{ar 1000 1070 300 300  0.0000000 6.2831853}%
%
\special{pn 13}%
\special{ar 2400 1070 300 300  0.0000000 6.2831853}%
%
\special{pn 8}%
\special{ar 1000 1070 500 500  1.5707963 6.2831853}%
%
\special{pn 8}%
\special{ar 2400 1070 500 500  1.5707963 6.2831853}%
%
\special{pn 8}%
\special{ar 2000 1070 500 500  1.5707963 3.1415927}%
%
\special{pn 8}%
\special{ar 3400 1070 500 500  1.5707963 3.1415927}%
%
\special{pn 8}%
\special{ar 1800 1070 500 500  1.5707963 3.1415927}%
%
\special{pn 8}%
\special{ar 3200 1070 500 500  1.5707963 3.1415927}%
%
\special{pn 8}%
\special{ar 2200 1070 500 500  1.5707963 3.1415927}%
%
\special{pn 8}%
\special{ar 3600 1070 500 500  1.5707963 3.1415927}%
%
\special{pn 8}%
\special{pa 2700 1070}%
\special{pa 2700 1038}%
\special{pa 2700 1006}%
\special{pa 2700 974}%
\special{pa 2700 942}%
\special{pa 2700 910}%
\special{pa 2700 878}%
\special{pa 2700 846}%
\special{pa 2700 814}%
\special{pa 2698 782}%
\special{pa 2698 750}%
\special{pa 2698 718}%
\special{pa 2700 686}%
\special{pa 2702 654}%
\special{pa 2706 622}%
\special{pa 2710 590}%
\special{pa 2716 558}%
\special{pa 2726 526}%
\special{pa 2736 494}%
\special{pa 2746 464}%
\special{pa 2760 434}%
\special{pa 2776 406}%
\special{pa 2794 380}%
\special{pa 2812 356}%
\special{pa 2834 332}%
\special{pa 2858 310}%
\special{pa 2882 288}%
\special{pa 2900 270}%
\special{sp 0.070}%
%
\special{pn 8}%
\special{pa 3100 1070}%
\special{pa 3100 1038}%
\special{pa 3100 1006}%
\special{pa 3100 974}%
\special{pa 3100 942}%
\special{pa 3100 910}%
\special{pa 3100 878}%
\special{pa 3100 846}%
\special{pa 3102 814}%
\special{pa 3102 782}%
\special{pa 3102 750}%
\special{pa 3102 718}%
\special{pa 3102 686}%
\special{pa 3100 654}%
\special{pa 3096 622}%
\special{pa 3090 590}%
\special{pa 3084 558}%
\special{pa 3076 526}%
\special{pa 3066 494}%
\special{pa 3054 464}%
\special{pa 3040 434}%
\special{pa 3026 406}%
\special{pa 3008 380}%
\special{pa 2988 356}%
\special{pa 2966 332}%
\special{pa 2944 310}%
\special{pa 2920 288}%
\special{pa 2900 270}%
\special{sp}%
%
\special{pn 8}%
\special{pa 1300 1070}%
\special{pa 1300 1038}%
\special{pa 1300 1006}%
\special{pa 1300 974}%
\special{pa 1300 942}%
\special{pa 1300 910}%
\special{pa 1300 878}%
\special{pa 1300 846}%
\special{pa 1300 814}%
\special{pa 1298 782}%
\special{pa 1298 750}%
\special{pa 1298 718}%
\special{pa 1300 686}%
\special{pa 1302 654}%
\special{pa 1306 622}%
\special{pa 1310 590}%
\special{pa 1316 558}%
\special{pa 1326 526}%
\special{pa 1336 494}%
\special{pa 1346 464}%
\special{pa 1360 434}%
\special{pa 1376 406}%
\special{pa 1394 380}%
\special{pa 1412 356}%
\special{pa 1434 332}%
\special{pa 1458 310}%
\special{pa 1482 288}%
\special{pa 1500 270}%
\special{sp 0.070}%
%
\special{pn 8}%
\special{pa 1700 1070}%
\special{pa 1700 1038}%
\special{pa 1700 1006}%
\special{pa 1700 974}%
\special{pa 1700 942}%
\special{pa 1700 910}%
\special{pa 1700 878}%
\special{pa 1700 846}%
\special{pa 1702 814}%
\special{pa 1702 782}%
\special{pa 1702 750}%
\special{pa 1702 718}%
\special{pa 1702 686}%
\special{pa 1700 654}%
\special{pa 1696 622}%
\special{pa 1690 590}%
\special{pa 1684 558}%
\special{pa 1676 526}%
\special{pa 1666 494}%
\special{pa 1654 464}%
\special{pa 1640 434}%
\special{pa 1626 406}%
\special{pa 1608 380}%
\special{pa 1588 356}%
\special{pa 1566 332}%
\special{pa 1544 310}%
\special{pa 1520 288}%
\special{pa 1500 270}%
\special{sp}%
%
\special{pn 8}%
\special{pa 1000 1570}%
\special{pa 3600 1570}%
\special{fp}%
%
\special{pn 13}%
\special{pa 1000 1870}%
\special{pa 3746 1870}%
\special{fp}%
%
\special{pn 13}%
\special{pa 1000 270}%
\special{pa 3746 270}%
\special{fp}%
%
\special{pn 20}%
\special{sh 1}%
\special{ar 3600 1570 10 10 0  6.28318530717959E+0000}%
\special{sh 1}%
\special{ar 3600 1570 10 10 0  6.28318530717959E+0000}%
%
\special{pn 8}%
\special{pa 1676 770}%
\special{pa 1700 670}%
\special{fp}%
\special{pa 1700 670}%
\special{pa 1726 770}%
\special{fp}%
%
\special{pn 8}%
\special{pa 3076 770}%
\special{pa 3100 670}%
\special{fp}%
\special{pa 3100 670}%
\special{pa 3126 770}%
\special{fp}%
%
\special{pn 8}%
\special{pa 926 546}%
\special{pa 1026 570}%
\special{fp}%
\special{pa 1026 570}%
\special{pa 926 596}%
\special{fp}%
%
\special{pn 8}%
\special{pa 2326 546}%
\special{pa 2426 570}%
\special{fp}%
\special{pa 2426 570}%
\special{pa 2326 596}%
\special{fp}%
%
\special{pn 8}%
\special{pa 3600 1700}%
\special{pa 3660 1784}%
\special{fp}%
\special{pa 3660 1784}%
\special{pa 3646 1682}%
\special{fp}%
\put(5.6000,-6.0500){\makebox(0,0)[lb]{$\alpha_1$}}%
\put(14.4500,-2.3000){\makebox(0,0)[lb]{$\beta_1$}}%
\put(21.0000,-5.3000){\makebox(0,0)[lb]{$\alpha_2$}}%
\put(28.2500,-2.2000){\makebox(0,0)[lb]{$\beta_2$}}%
\put(36.3500,-14.7500){\makebox(0,0)[lb]{$*$}}%
\put(34.2500,-17.8500){\makebox(0,0)[lb]{$\zeta$}}%
\end{picture}%

\end{center}

By the classical theorem of Dehn-Nielsen, the natural action
of $\mathcal{M}_{g,1}$ on $\pi=\pi_1(\Sigma,*)$ is faithful and
we can identify $\mathcal{M}_{g,1}$ as a subgroup of ${\rm Aut}(\pi)$:
\begin{equation}
\label{eq:2-1-1}
\mathcal{M}_{g,1}=\{ \varphi\in {\rm Aut}(\pi);\ \varphi(\zeta)=\zeta \}.
\end{equation}

\subsection{Group ring and tensor algebra}
Let $\mathbb{Q}\pi$ be the group ring of $\pi$.
It has an augmentation given by
$\varepsilon \colon \mathbb{Q}\pi \rightarrow \mathbb{Q},
\ \sum_in_ix_i \mapsto \sum_in_i,$
where $n_i\in \mathbb{Q}, x_i\in \pi$. Let $I\pi$ be
the augmentation ideal, namely the kernel of $\varepsilon$.
The powers of $I\pi$ give
a decreasing filtration of $\mathbb{Q}\pi$. 
The completed group ring of $\pi$, or more precisely the $I\pi$-adic completion
of $\mathbb{Q}\pi$, is
$$\widehat{\mathbb{Q}\pi}:=\varprojlim_m \mathbb{Q}\pi/{I\pi}^m.$$
It naturally has a structure of a complete augmented algebra
(in the sense of Quillen \cite{Qui}, Appendix A) with respect to
a decreasing filtration given by
$\varprojlim_{m\ge p} {I\pi}^p/{I\pi}^m,\ {\rm for\ } p\ge 1$.

Let $\widehat{T}$ be the completed tensor algebra generated by $H$.
Namely
$\widehat{T}=\prod_{m=0}^{\infty}H^{\otimes m},$
where $H^{\otimes m}$ is the tensor space of degree $m$.
Choosing a basis for $H$, it is isomorphic to the ring of
non-commutative formal power series in $2g$ indeterminates.
We can write elements of $\widehat{T}$ uniquely as
$$u=\sum_{m=0}^{\infty}u_m=u_0+u_1+u_2+\cdots,
\ u_m\in H^{\otimes m}.$$
The algebra $\widehat{T}$ has an augmentation given by
$\varepsilon \colon \widehat{T}\rightarrow \mathbb{Q},
\ u=\sum_{m=0}^{\infty}u_m \mapsto u_0$,
and it is a complete augmented algebra
with respect to a decreasing filtration
$$\widehat{T}_p:=\prod_{m\ge p}H^{\otimes m},\ {\rm for\ } p\ge 1.$$

Both $\mathbb{Q}\pi$ and $\widehat{T}$ have a structure of
(complete) Hopf algebra. For simplicity, we use the same letters
$\Delta$ and $\iota$ for the coproducts and the antipodes
of both Hopf algebras. In the case of $\mathbb{Q}\pi$, these are given by
$$\Delta(x)=x\otimes x,\ {\rm and\ } \iota(x)=x^{-1},\ {\rm for\ } x\in \pi,$$
and in the case of $\widehat{T}$, the formulas are
$$\Delta(X)=X\hat{\otimes} 1+1\hat{\otimes} X,\ {\rm and\ }
\iota(X)=-X,\ {\rm for\ } X\in H.$$
Here $\hat{\otimes}$ means the completed tensor product.
The Hopf algebra structure of $\mathbb{Q}\pi$ induces
a structure of a complete Hopf algebra on $\widehat{\mathbb{Q}\pi}$.

By definition the set of group-like elements of $\widehat{T}$ is
the set of $u\in \widehat{T}$ satisfying $\Delta(u)=u\hat{\otimes} u$,
and the set of primitive elements is
$\widehat{\mathcal{L}}:=
\{ u\in \widehat{T}; \Delta(u)=u\hat{\otimes} 1+1\hat{\otimes}u \}$.
As is well-known, $\widehat{\mathcal{L}}$
has a structure of a Lie algebra with the bracket $[u,v]:=uv-vu$.
The degree $p$-part $\mathcal{L}_p:=\widehat{\mathcal{L}}\cap H^{\otimes p}$
is described successively as $\mathcal{L}_1=H$, and
$\mathcal{L}_p=[H,\mathcal{L}_{p-1}]$ for $p\ge 2$.
For $p\ge 1$, we write
$\widehat{\mathcal{L}}_p=\widehat{\mathcal{L}}\cap \widehat{T}_p$.

By the exponential map
$$\exp(u)=\sum_{n=0}^{\infty} \frac{1}{n!}u^n,\ {\rm for\ } u\in \widehat{\mathcal{L}},$$
$\widehat{\mathcal{L}}$ is bijectively mapped to the set of group-like elements
and the inverse is given by the logarithm
$$\log(u)=\sum_{n=1}^{\infty} \frac{(-1)^{n-1}}{n} (u-1)^n.$$

Since the set of group-like elements constitutes a group
with respect to the multiplication of $\widehat{T}$,
the above bijection endows the underlying set of
$\widehat{\mathcal{L}}$ with a group structure, which is
described by the Baker-Campbell-Hausdorff series:
$$u\cdot v=\log (\exp (u) \exp (v))=u+v+\frac{1}{2}[u,v]+
\frac{1}{12}[u-v,[u,v]]+\cdots,\ {\rm for\ } u,v\in \widehat{\mathcal{L}}.$$

\subsection{Magnus expansion}
We recall the notion of a Magnus expansion in our generalized sense. 
Remark that the subset $1+\widehat{T}_1$ constitutes a group
with respect to the multiplication of $\widehat{T}$.

\begin{dfn}[Kawazumi \cite{Ka}]
A map $\theta\colon \pi \rightarrow 1+\widehat{T}_1$ is called
a {\rm (}$\mathbb{Q}$-valued{\rm )} Magnus expansion of $\pi$ if
\begin{enumerate}
\item[{\rm (1)}]
$\theta\colon \pi \rightarrow 1+\widehat{T}_1$ is a group homomorphism, and
\item[{\rm (2)}]
$\theta(x) \equiv 1+[x]\ {\rm mod\ }\widehat{T}_2$ for any $x\in \pi$.
\end{enumerate}
\end{dfn}

As was shown in \cite{Ka}, Theorem 1.3,
any Magnus expansion $\theta$ induces
the filter-preserving isomorphism
\begin{equation}
\label{eq:2-3-1}
\theta\colon \widehat{\mathbb{Q}\pi}
\stackrel{\cong}{\rightarrow} \widehat{T}
\end{equation}
of augmented algebras.
Since $\pi$ is a free group, any Magnus
expansion is determined by its values on free generators of $\pi$,
hence we have many choices of Magnus expansions (see also \S2.8).
\begin{exple}
\label{ex:2-3-2}
{\rm Let $\alpha_1,\beta_1,\ldots,\alpha_g,\beta_g\in \pi$ be
symplectic generators (see \S2.1) and write them as
$x_1,\ldots, x_{2g}$. The Magnus expansion defined by
$\theta(x_i)=1+[x_i]$, for $1\le i \le 2g$,
is called the standard Magnus expansion.
This is introduced by Magnus \cite{Mag}.}
\end{exple}

Among all the Magnus expansions, group-like expansions
respect the Hopf algebra structure of $\mathbb{Q}\pi$
and $\widehat{T}$. For a Magnus expansion $\theta$, let
$\ell^{\theta}:=\log \theta$. Here it should be remarked 
the logarithm is defined on the set $1+\widehat{T}_1$.
A priori, $\ell^{\theta}$
is a map from $\pi$ to $\widehat{T}_1$.

\begin{dfn}
\label{def:2-3-3}
A Magnus expansion $\theta$ is called group-like if
$\theta(\pi)$ is contained in the set of group-like elements
of $\widehat{T}$, or equivalently,
$\ell^{\theta}(\pi)\subset \widehat{\mathcal{L}}$.
\end{dfn}

If $\theta$ is group-like, (\ref{eq:2-3-1}) turns out to be the
isomorphism of complete Hopf algebras (see Massuyeau \cite{Mas}, Proposition 2.10).
The Magnus expansion of Example \ref{ex:2-3-2} is not group-like. 

\begin{exple}
\label{ex:2-3-4}
{\rm
Let $x_i$ be as the same in Example \ref{ex:2-3-2},
then the Magnus expansion defined by
$\theta(x_i)=\exp([x_i])$, for $1\le i\le 2g$,
is group-like because of the Baker-Campbell-Hausdorff formula.
}
\end{exple}

In fact, by the Baker-Campbell-Hausdorff formula, if
$\ell^{\theta}(x)\in \widehat{\mathcal{L}}$ and
$\ell^{\theta}(y)\in \widehat{\mathcal{L}}$, then
$\ell^{\theta}(xy)\in \widehat{\mathcal{L}}$.
Thus if $\theta$ is a Magnus expansion and the values
of $\theta$ on free generators are all group-like,
then $\theta$ is group-like.
Hence we also have many choices of group-like expansions.

\begin{exple}
\label{ex:2-3-5}
{\rm
Bene-Kawazumi-Penner {\rm \cite{BKP}} constructed a group-like
Magnus expansion canonically associated to any trivalent marked fatgraph.
}
\end{exple}

\subsection{Symplectic expansion}
So far we have only used the fact that $\pi$ is a free group.
Here we recall the notion of a symplectic expansion, which
is a Magnus expansion respecting the fact that $\pi=\pi_1(\Sigma,*)$.

Let $\omega \in \mathcal{L}_2 \subset H^{\otimes 2}$
be the symplectic form. Explicitly, $\omega$ is given by
$$\omega=\sum_{i=1}^g A_iB_i-B_iA_i,$$
where $\alpha_1,\beta_1,\ldots,\alpha_g,\beta_g$ are
symplectic generators and $A_i=[\alpha_i]$, $B_i=[\beta_i]\in H$.

\begin{dfn}[Massuyeau \cite{Mas}]
\label{def:2-4-1}
A Magnus expansion $\theta$ is called a symplectic
expansion if
\begin{enumerate}
\item
$\theta$ is group-like, and
\item
$\theta(\zeta)=\exp(\omega)$,
or equivalently, $\ell^{\theta}(\zeta)=\omega$.
\end{enumerate}
\end{dfn}

Unfortunately, the group-like expansions of Examples
\ref{ex:2-3-4} and \ref{ex:2-3-5} are not symplectic.
But symplectic expansions do exist, and they are infinitely many
(see \S2.8).
Here we list some examples.

\begin{exple}
\label{ex:2-4-2}
{\rm
Kawazumi \rm \cite{Ka2} constructed a symplectic expansion
(with coefficients in $\mathbb{R}$),
called the harmonic Magnus expansion,
associated to any triple $(C,P_0,v)$ where
$C$ is a marked compact Riemann surface,
$P_0\in C$, and $v$ is a non-zero tangent vector at $P_0$.
The construction is transcendental.
}
\end{exple}

\begin{exple}
\label{ex:2-4-3}
{\rm
Massuyeau \rm \cite{Mas} constructed
a symplectic expansion using the LMO functor.
}
\end{exple}

\begin{exple}
\label{ex:2-4-4}
{\rm
There is a canonical way of associating a symplectic
expansion with any (not necessary symplectic) free generators
of $\pi$. The construction is purely combinatorial.
The details of this expansion will be given in \rm \cite{Ku}.
}
\end{exple}

\subsection{Total Johnson map}
We denote by ${\rm Aut}(\widehat{T})$
the set of filter-preserving algebra automorphisms
of $\widehat{T}$, which clearly constitutes a group.
Let $\theta$ be a Magnus expansion of $\pi$.
For $\varphi\in \mathcal{M}_{g,1}$
we use the same letter $\varphi$ for the
induced automorphism of $\pi$, in view of (\ref{eq:2-1-1}).
As a consequence of the isomorphism (\ref{eq:2-3-1}),
for each $\varphi \in \mathcal{M}_{g,1}$
there uniquely exists
$T^{\theta}(\varphi)\in {\rm Aut}(\widehat{T})$
such that
$$T^{\theta}(\varphi) \circ \theta
=\theta \circ \varphi.$$
Let $|\varphi|\colon H\rightarrow H$ be the automorphism of $H$
induced by the action of $\varphi$ on the first homology of $\Sigma$.
We also denote by $|\varphi|\in {\rm Aut}(\widehat{T})$
the automorphism induced by $|\varphi|$. Then
$\tau^{\theta}(\varphi):=T^{\theta}(\varphi)\circ |\varphi|^{-1}
\in {\rm Aut}(\widehat{T})$ acts on
$\widehat{T}_1/\widehat{T}_2 \cong H$ as the identity.
Therefore the restriction of $\tau^{\theta}(\varphi)$ to $H$ is
uniquely written as
$$\tau^{\theta}(\varphi)|_H=1_H+\sum_{k=1}^{\infty}\tau^{\theta}_k(\varphi),$$
where $\tau^{\theta}_k(\varphi)\in {\rm Hom}(H,H^{\otimes k+1})$.

\begin{dfn}[\cite{Ka}]
\label{def:2-5-1}
The automorphism $T^{\theta}(\varphi)\in {\rm Aut}(\widehat{T})$
is called the total Johnson map
of $\varphi$ associated to $\theta$, and $\tau^{\theta}_k(\varphi)$
is called the $k$-th Johnson map of $\varphi$ associated to $\theta$.
\end{dfn}
The group homomorphism
$$T^{\theta}\colon \mathcal{M}_{g,1}\rightarrow {\rm Aut}(\widehat{T})$$
is also called the total Johnson map.
It is injective since the natural map
$\pi\rightarrow \widehat{\mathbb{Q}\pi}$ is injective
by the classical fact $\bigcap_{m=1}^{\infty}I\pi^m=0$.
It should be remarked that our use of the terminology
here is different from \cite{Ka}, where $\tau^{\theta}(\varphi)$ is
called the total Johnson map of $\varphi$.

\subsection{The invariant $L^{\theta}$}
We introduce an invariant of unoriented loops on $\Sigma$
associated with a Magnus expansion.

\begin{dfn}
\label{def:2-6-1}
Define a linear map $N\colon \widehat{T}\rightarrow \widehat{T}$ by
$$N|_{H^{\otimes p}}=\sum_{m=0}^{p-1} \nu^m,\ {\rm for\ } p\ge 1,$$
where $\nu$ is the cyclic permutation given by
$X_1X_2\cdots X_p \mapsto
X_2X_3\cdots X_1$ {\rm (}$X_i \in H$ {\rm )}, and
$N|_{H^{\otimes 0}}=0$.
\end{dfn}

The following lemma will be used frequently.

\begin{lem}
\label{lem:2-6-2}
\begin{enumerate}
\item[{\rm (1)}] For $u,v\in \widehat{T}$, $N(uv)=N(vu)$.
\item[{\rm (2)}] For $u,v,w\in \widehat{T}$, $N([u,v]w)=N(u[v,w])$.
\item[{\rm (3)}] For $v\in \widehat{T}_1$,
$v\in N(\widehat{T}_1)$ is equivalent to $\nu(v)=v$.
\item[{\rm (4)}] Under the identification $\widehat{T}_1\cong H\otimes \widehat{T}$,
\begin{equation}
\label{eq:2-6-1}
N(\widehat{T}_1)=
{\rm Ker}([\ ,\ ]\colon H\otimes \widehat{T}\rightarrow \widehat{T}).
\end{equation}
\end{enumerate}
\end{lem}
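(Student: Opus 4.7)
The plan is to verify each statement by reducing to homogeneous tensors, exploiting the cyclicity of $\nu$ and the fact that we work over $\mathbb{Q}$.

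For (1), I would first reduce to the case $u \in H^{\otimes a}$, $v \in H^{\otimes b}$ by bilinearity and continuity. Writing $u = X_1 \cdots X_a$ and $v = Y_1 \cdots Y_b$, the set of cyclic rotations of $X_1 \cdots X_a Y_1 \cdots Y_b$ and of $Y_1 \cdots Y_b X_1 \cdots X_a$ coincide, since both cyclic words are the same. Hence $N(uv) = N(vu)$. Then (2) is an algebraic consequence of (1): expanding, $N([u,v]w) = N(uvw) - N(vuw)$, and applying (1) to the second term (with $vu$ and $w$ swapped, or equivalently $v$ and $uw$ swapped) yields $N(uvw) - N(uwv) = N(u[v,w])$.

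For (3), the key observation is that on $H^{\otimes p}$ the operator $\nu$ satisfies $\nu^p = \mathrm{id}$, so $\nu \circ N|_{H^{\otimes p}} = \sum_{m=1}^{p} \nu^m = \sum_{m=0}^{p-1} \nu^m = N|_{H^{\otimes p}}$. Thus every element of $N(\widehat{T}_1)$ is $\nu$-fixed. Conversely, if $v \in H^{\otimes p}$ with $\nu(v) = v$, then $N(v) = p v$, so $v = N(v/p)$ lies in $N(\widehat{T}_1)$; this is where the rational coefficients are essential. Extending to all of $\widehat{T}_1$ by taking homogeneous components finishes the equivalence.

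For (4), under the identification $\widehat{T}_1 = H \otimes \widehat{T}$ sending $X w \in H^{\otimes p}$ to $X \otimes w$, one checks directly that $\nu(Xw) = wX$, so $\nu$-invariance of $\xi = \sum_i X_i \otimes w_i$ translates to $\sum_i X_i w_i = \sum_i w_i X_i$, i.e.\ $[\,,\,](\xi) = 0$. Combining this with (3) yields the claimed equality $N(\widehat{T}_1) = \Ker([\,,\,])$. None of the steps present a genuine obstacle; the only subtle point is the reliance on $\mathbb{Q}$-coefficients in (3) to invert $p$, and the correct bookkeeping of the identification $\widehat{T}_1 \cong H \otimes \widehat{T}$ in (4) so that the cyclic shift corresponds to the bracket.
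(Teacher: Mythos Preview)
Your proof is correct and follows essentially the same approach as the paper: reducing (1) to homogeneous tensors via cyclic invariance of $N$, deriving (2) from (1) by the same algebraic manipulation, using $v = N(v/p)$ for the nontrivial direction of (3), and reading off (4) from the computation $\nu(Xw) - Xw = wX - Xw = -[X,w]$ combined with (3). You are slightly more explicit than the paper in justifying the forward direction of (3), but there is no substantive difference.
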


\begin{proof}
The first assertion is clear if $u$ and $v$ are homogeneous,
since $N(\nu(w))=N(w)$ for a homogeneous $w\in \widehat{T}$.
The general case follows from bi-linearity.
Using (1), we compute
$N([u,v]w)=N(uvw-vuw)=N(wuv-wvu)=N(uvw-uwv)=N(u[v,w])$, which proves (2).
If $v\in \widehat{T}_1$ is homogeneous of degree $p$ and $\nu(v)=v$,
then $v=N(\frac{1}{p}v)\in N(\widehat{T}_1)$. This proves (3).
Finally, $\nu(X\otimes u)-X\otimes u=uX-Xu=-[X,u]$ for
$X\otimes u \in H\otimes \widehat{T}$. Combining this with (3),
we have (4).
\end{proof}

The operator $N$ also appeared in \cite{Ka2}.
Using $N$, we make the following definition.

\begin{dfn}
\label{def:2-6-3}
Let $\theta$ be a Magnus expansion. Define
$L^{\theta}\colon \pi \rightarrow \widehat{T}_2$ by
$$L^{\theta}(x)=\frac{1}{2}N(\ell^{\theta}(x)\ell^{\theta}(x)).$$
\end{dfn}

The following lemma shows $L^{\theta}$ descends to an invariant
for unoriented loops on $\Sigma$.

\begin{lem}
\label{lem:2-6-4}
For any $x,y\in \pi$, we have
\begin{enumerate}
\item[{\rm (1)}] $L^{\theta}(x^{-1})=L^{\theta}(x)$,
\item[{\rm (2)}] $L^{\theta}(yxy^{-1})=L^{\theta}(x)$.
\end{enumerate}
\end{lem}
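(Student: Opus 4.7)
The plan is to reduce both assertions to two elementary facts about the logarithm on $1+\widehat{T}_1$, and then exploit the cyclic invariance of $N$ provided by Lemma \ref{lem:2-6-2}(1).

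For part (1), I would first observe that since $\theta\colon\pi\to 1+\widehat{T}_1$ is a group homomorphism, $\theta(x^{-1})=\theta(x)^{-1}$. Writing $u=\ell^{\theta}(x)\in\widehat{T}_1$, we have $\theta(x)=\exp(u)$; because $u$ commutes with itself, the usual formal power series identity $\exp(u)\exp(-u)=1$ holds in $\widehat{T}$, so $\theta(x)^{-1}=\exp(-u)$ and therefore $\ell^{\theta}(x^{-1})=-u=-\ell^{\theta}(x)$. Squaring cancels the sign, so
$$\ell^{\theta}(x^{-1})\,\ell^{\theta}(x^{-1})=(-u)(-u)=u^{2}=\ell^{\theta}(x)\,\ell^{\theta}(x),$$
and applying $\tfrac12 N$ to both sides gives $L^{\theta}(x^{-1})=L^{\theta}(x)$.

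For part (2), set $b=\theta(y)$ and $u=\ell^{\theta}(x)$. Since $\theta$ is a homomorphism, $\theta(yxy^{-1})=b\exp(u)b^{-1}$. Because conjugation by $b$ is an algebra automorphism of $\widehat{T}$ preserving the filtration, it commutes with the (formal) exponential, whence
$$\theta(yxy^{-1})=\exp(bub^{-1}),\qquad\text{so}\qquad \ell^{\theta}(yxy^{-1})=b\,\ell^{\theta}(x)\,b^{-1}.$$
Squaring yields $\ell^{\theta}(yxy^{-1})^{2}=bu^{2}b^{-1}$. Now the cyclic invariance of $N$ (Lemma \ref{lem:2-6-2}(1)) gives $N(bu^{2}b^{-1})=N(u^{2}b^{-1}b)=N(u^{2})$, and multiplying by $\tfrac12$ yields $L^{\theta}(yxy^{-1})=L^{\theta}(x)$.

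The only subtlety, and really the only nontrivial point, is the manipulation with $\exp$ and $\log$ on $1+\widehat{T}_1$: one has to know that conjugation commutes with $\exp$ (so that the logarithm is conjugation-equivariant) and that $\exp(-u)$ is the two-sided inverse of $\exp(u)$. Both follow from the fact that $\widehat{T}$ is a complete augmented algebra, so the series for $\exp$ and $\log$ converge in $\widehat{T}$ and the standard identities manipulate as in the commutative case whenever a single element is involved. Once this is recorded, both claims reduce to the two-line computations above.
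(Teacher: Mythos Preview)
Your proof is correct and follows essentially the same route as the paper's: both parts hinge on $\ell^{\theta}(x^{-1})=-\ell^{\theta}(x)$ and $\ell^{\theta}(yxy^{-1})=\theta(y)\ell^{\theta}(x)\theta(y)^{-1}$, after which squaring and the cyclic invariance of $N$ from Lemma~\ref{lem:2-6-2}(1) finish the job. The paper's presentation is terser (it simply asserts the identity for $\ell^{\theta}(x^{-1})$ and writes out the chain $N(\theta(y)u^2\theta(y)^{-1})=N(u^2\theta(y)^{-1}\theta(y))=N(u^2)$), but the content is the same.
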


\begin{proof}
The first part follows from $\ell^{\theta}(x^{-1})=-\ell^{\theta}(x)$.
Since $\ell^{\theta}(yxy^{-1})=
e^{\ell^{\theta}(y)}\ell^{\theta}(x)e^{-\ell^{\theta}(y)}
=\theta(y)\ell^{\theta}(x)\theta(y^{-1})$,
we compute
\begin{eqnarray*}
L^{\theta}(yxy^{-1})
&=& \frac{1}{2}N(\theta(y)\ell^{\theta}(x)\theta(y^{-1})
\theta(y)\ell^{\theta}(x)\theta(y^{-1}))
= \frac{1}{2}N(\theta(y)\ell^{\theta}(x)\ell^{\theta}(x)\theta(y^{-1})) \\
&=& \frac{1}{2}N(\ell^{\theta}(x)\ell^{\theta}(x)\theta(y^{-1})\theta(y))
= \frac{1}{2}N(\ell^{\theta}(x)\ell^{\theta}(x)) \\
&=& L^{\theta}(x),
\end{eqnarray*}
using Lemma \ref{lem:2-6-2} (1). This proves (2).
\end{proof}

Let $\gamma$ be an (un)oriented loop on $\Sigma$.
In view of Lemma \ref{lem:2-6-4}, we can define
$L^{\theta}(\gamma)\in \widehat{T}_2$ as $L^{\theta}(x)$,
where $x$ is a representative of $\gamma$ in $\pi$.

We denote by $L_k^{\theta}$ the degree $k$-part of $L^{\theta}$.
In \S7, we will compute $L_k^{\theta}$ for symplectic $\theta$ and small $k$.

\subsection{Formal symplectic geometry}
The space $N(\widehat{T}_1)$ is closely related to formal symplectic
geometry. In \cite{Kon}, Kontsevich introduced three
Lie algebras ``commutative", ``associative", and ``Lie". We recall two
of the three, namely ``associative" and ``Lie".

First we recall ``associative".
By definition, a derivation of $\widehat{T}$ is a linear map
$D\colon \widehat{T}\rightarrow \widehat{T}$ satisfying the Leibniz rule:
$$D(u_1u_2)=D(u_1)u_2+u_1D(u_2),\ {\rm for\ } u_1,u_2\in \widehat{T}.$$
The space ${\rm Der}(\widehat{T})$ of the derivations of $\widehat{T}$
has the structure of Lie algebra
given by $[D_1,D_2]=D_1\circ D_2-D_2\circ D_1$,
$D_1,D_2\in {\rm Der}(\widehat{T})$.
Since $\widehat{T}$ is freely generated by $H$ as a complete algebra,
any derivation of $\widehat{T}$ is uniquely determined by its values
on $H$, and ${\rm Der}(\widehat{T})$ is identified with
${\rm Hom}(H,\widehat{T})$.

By the Poincar\'e duality, $\widehat{T}_1\cong H\otimes \widehat{T}$
is identified with ${\rm Hom}(H,\widehat{T})$:
\begin{equation}
\label{eq:2-7-1}
\widehat{T}_1\cong H\otimes \widehat{T} \stackrel{\cong}{\rightarrow}
{\rm Hom}(H,\widehat{T}),\ X\otimes u\mapsto (Y\mapsto (Y\cdot X)u).
\end{equation}
Here $(\ \cdot \ )$ is the intersection pairing on $H=H_1(\Sigma;\mathbb{Q})$.

Let $\mathfrak{a}_g^-={\rm Der}_{\omega}(\widehat{T})$ be the Lie subalgebra
of ${\rm Der}(\widehat{T})$ consisting of derivations
killing the symplectic form $\omega$. We call such derivations
{\it symplectic derivations of} $\widehat{T}$.
In view of (\ref{eq:2-7-1}) any derivation $D$ is written as
\begin{equation}
\label{eq:2-7-2}
D=\sum_{i=1}^g B_i\otimes D(A_i)-A_i\otimes D(B_i)\in \widehat{T}_1.
\end{equation}
Since $D(\omega)=\sum_{i=1}^g [D(A_i),B_i]+[A_i,D(B_i)]$ we can write
\begin{equation}
\label{eq:2-7-3}
\mathfrak{a}_g^-={\rm Ker}([\ ,\ ]\colon H\otimes \widehat{T}
\rightarrow \widehat{T})=N(\widehat{T}_1)
\end{equation}
(see also (\ref{eq:2-6-1})).
The Lie subalgebra $\mathfrak{a}_g:=N(\widehat{T}_2)$
is nothing but (the completion of) what Kontsevich \cite{Kon} calls $a_g$.

We next recall ``Lie".
By definition, a derivation of $\widehat{\mathcal{L}}$ is a linear map
$D\colon \widehat{\mathcal{L}}\rightarrow \widehat{\mathcal{L}}$ satisfying
$$D([u_1,u_2])=[D(u_1),u_2]+[u_1,D(u_2)]
,\ {\rm for\ } u_1,u_2\in \widehat{\mathcal{L}}.$$
Let $\mathfrak{l}_g={\rm Der}_{\omega}(\widehat{\mathcal{L}})$
be the space of derivations of $\widehat{\mathcal{L}}$
killing $\omega \in \mathcal{L}_2$. By the same reason as above, we have
\begin{equation}
\label{eq:2-7-4}
\mathfrak{l}_g={\rm Ker}([\ ,\ ]\colon H\otimes \widehat{\mathcal{L}}
\rightarrow \widehat{\mathcal{L}}).
\end{equation}
$\mathfrak{l}_g$ is a Lie subalgebra of $\mathfrak{a}_g$.

\begin{lem}
\label{lem:2-7-1}
Let $m\ge 1$, and $X,Y_1,\ldots,Y_m \in H$. Set
$u=[Y_1,[Y_2,[\cdots ,[Y_{m-1},Y_m] \cdots ]]]\in \mathcal{L}_m$. Then
$$N(X\otimes u)=X\otimes u+
\sum_{i=1}^m Y_i\otimes
[[Y_{i+1},\cdots [Y_{m-1},Y_m]\cdots ],[\cdots [[X,Y_1],Y_2],\cdots,Y_{i-1}]].$$
In particular, we have $N(H\otimes \widehat{\mathcal{L}})
\subset H\otimes \widehat{\mathcal{L}}$.
\end{lem}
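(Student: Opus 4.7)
The plan is to compute $N(X \otimes u)$ by projecting onto the ``first tensor factor'' components of the decomposition $\widehat{T}_1 = H \otimes \widehat{T}$. By multilinearity in $X, Y_1, \ldots, Y_m$, I may assume these are distinct basis vectors of $H$; since each cyclic rotation in $N(Xu) = \sum_{k=0}^{m} \nu^k(Xu)$ begins with one of $X, Y_1, \ldots, Y_m$, it suffices to identify the $X$-first and each $Y_i$-first contribution. The core device is the telescoping identity
\[
N(X \otimes u) \;=\; N(R_i \otimes u^{(i)}), \qquad 0 \le i \le m-1,
\]
where $R_i := [\cdots[[X, Y_1], Y_2], \cdots, Y_i] \in \mathcal{L}_{i+1}$ (left-normed, with $R_0 = X$) and $u^{(i)} := [Y_{i+1}, [Y_{i+2}, \cdots, [Y_{m-1}, Y_m]\cdots]] \in \mathcal{L}_{m-i}$ (right-normed, with $u^{(0)} = u$ and $u^{(m-1)} = Y_m$). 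I would prove this identity by induction on $i$: since $u^{(k)} = [Y_{k+1}, u^{(k+1)}]$, Lemma 2.6.2 (2) applied to the triple $(R_k, Y_{k+1}, u^{(k+1)})$ gives
\[
N(R_k u^{(k)}) \;=\; N\bigl(R_k [Y_{k+1}, u^{(k+1)}]\bigr) \;=\; N\bigl([R_k, Y_{k+1}] u^{(k+1)}\bigr) \;=\; N(R_{k+1} u^{(k+1)}).
\]

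Next I would read off each first-factor contribution from a suitable intermediate form. The $X$-first component is $X \otimes u$, taken directly from the $k = 0$ rotation, since $X$ appears in each monomial of $Xu$ exactly once. For $1 \le i \le m-1$, I use
\[
N(Xu) \;=\; N\bigl(R_{i-1}[Y_i, u^{(i)}]\bigr) \;=\; N(R_{i-1} Y_i u^{(i)}) - N(R_{i-1} u^{(i)} Y_i).
\]
Because $R_{i-1}$ is built from $X, Y_1, \ldots, Y_{i-1}$ and $u^{(i)}$ from $Y_{i+1}, \ldots, Y_m$, the letter $Y_i$ occurs exactly once in each of these degree-$(m{+}1)$ tensors; the unique cyclic rotation placing $Y_i$ in the first slot yields $Y_i \otimes u^{(i)} R_{i-1}$ from the first summand and $Y_i \otimes R_{i-1} u^{(i)}$ from the second, so the total $Y_i$-first contribution is $Y_i \otimes [u^{(i)}, R_{i-1}]$, matching the stated formula. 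For $i = m$, take $k = m-1$: from $N(Xu) = N(R_{m-1} Y_m)$, the unique $Y_m$-first rotation is $Y_m \otimes R_{m-1}$, which agrees with the formula under the convention that a vacuous ``first slot'' is absent.

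Finally, each coefficient $[u^{(i)}, R_{i-1}]$ is a Lie bracket of Lie elements, hence lies in $\widehat{\mathcal{L}}$, so $N(X \otimes u) \in H \otimes \widehat{\mathcal{L}}$. Since right-normed brackets of the given shape span $\mathcal{L}_m$ (by iterated Jacobi), linearity in $u$ together with $\widehat{T}_p$-adic continuity upgrades this to the inclusion $N(H \otimes \widehat{\mathcal{L}}) \subset H \otimes \widehat{\mathcal{L}}$. The main delicate point is the boundary conventions in the formula, namely $i=1$ (empty ``second slot'', reducing to $X$ at the innermost position) and $i=m$ (empty ``first slot'', collapsing the outer bracket to $R_{m-1}$); these are handled uniformly by the telescoping identity, which supplies the correct coefficient at either extreme without ambiguity.
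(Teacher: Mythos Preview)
Your proof is correct and follows essentially the same approach as the paper's: both reduce by multilinearity to the case where $X, Y_1, \ldots, Y_m$ are distinct generators, use Lemma~\ref{lem:2-6-2}(2) iteratively to telescope $N(Xu)$ into the form $N(R_{i-1}[Y_i, u^{(i)}])$, and then read off the $Y_i$-first component as $[u^{(i)}, R_{i-1}]$. Your notation $R_i$, $u^{(i)}$ and your explicit treatment of the boundary cases $i=1$ and $i=m$ make the argument slightly more transparent, but the underlying idea---projecting onto first-letter components after the telescoping rewrite---is identical to the paper's use of the projections $p_X$ and $p_{Y_i}$.
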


\begin{proof}
Consider the tensor algebra $T^{\prime}$
generated by the letters
$X,Y_1,\ldots,Y_m$. The operator $N$ is naturally defined on $T^{\prime}$.
There is a homomorphism $T^{\prime}\rightarrow \widehat{T}$
coming from the universality of $T^{\prime}$. This homomorphism
is compatible with $N$. Thus it suffices to show the formula on $T^{\prime}$.
Let $H^{\prime}$ be the $\mathbb{Q}$-vector space spanned by
$X,Y_1,\ldots,Y_m$. The formula we want to show is an equality
in ${H^{\prime}}^{\otimes m+1}$. There is a direct sum
decomposition
$${H^{\prime}}^{\otimes m+1}=X\otimes {H^{\prime}}^{\otimes m}
\oplus \bigoplus_{i=1}^m Y_i \otimes {H^{\prime}}^{\otimes m}.$$
Let $p_X\colon
{H^{\prime}}^{\otimes m+1}\rightarrow X\otimes {H^{\prime}}^{\otimes m}
\cong {H^{\prime}}^{\otimes m}$ be the projection according to this
direct sum decomposition. Similarly, define $p_{Y_i}$, $1\le i\le m$.
Note that for any $v\in {H^{\prime}}^{\otimes m+1}$ we have
$v=Xp_X(v)+\sum_{i=1}^mY_ip_{Y_i}(v)$.
Now, set $v:=N(X\otimes u)$. It is clear that
$p_X(v)=u$. For each $1\le i\le m$,
we denote $v^{\prime}=[Y_{i+1},\cdots [Y_{m-1},Y_m]\cdots ]$.
By Lemma \ref{lem:2-6-2}, we compute
\begin{eqnarray*}
v &= &N(X[Y_1,[Y_2,\cdots,[Y_i,v^{\prime}]\cdots ]]) \\
&=& N([X,Y_1][Y_2,\cdots,[Y_i,v^{\prime}]\cdots ]) \\
&\cdots & \\
&=& N(v^{\prime \prime}[Y_i,v^{\prime}]) \\
&=& N(v^{\prime \prime}Y_iv^{\prime}-v^{\prime \prime}v^{\prime}Y_i)
=N(Y_iv^{\prime}v^{\prime \prime}-Y_iv^{\prime \prime}v^{\prime})
=N(Y_i[v^{\prime},v^{\prime \prime}]),
\end{eqnarray*}
where $v^{\prime \prime}=[\cdots [[X,Y_1],Y_2],\cdots,Y_{i-1}]$.
This shows $p_{Y_i}(v)=[v^{\prime},v^{\prime \prime}]$,
and completes the proof.
\end{proof}

\begin{lem}
\label{lem:2-7-2}
$$N(\widehat{\mathcal{L}}\hat{\otimes} \widehat{\mathcal{L}})
={\rm Ker}([\ ,\ ]\colon H\otimes \widehat{\mathcal{L}}
\rightarrow \widehat{\mathcal{L}})$$
\end{lem}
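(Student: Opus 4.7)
The plan is to establish the two inclusions separately, using Lemmas \ref{lem:2-6-2} and \ref{lem:2-7-1} as the main tools. Note first that the right hand side is contained in $N(\widehat{T}_1)$ by Lemma \ref{lem:2-6-2}(4), and that the bracket on $\widehat{\mathcal{L}}$ is the restriction of the bracket on $\widehat{T}$. So once I know $N(\widehat{\mathcal{L}}\hat{\otimes}\widehat{\mathcal{L}})\subset H\otimes \widehat{\mathcal{L}}$, the inclusion $\subset$ follows immediately from Lemma \ref{lem:2-6-2}(4).

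For the containment $N(\widehat{\mathcal{L}}\hat{\otimes}\widehat{\mathcal{L}})\subset H\otimes\widehat{\mathcal{L}}$, by continuity and bilinearity of $N$ I reduce to showing $N(u\otimes v)\in H\otimes\widehat{\mathcal{L}}$ for homogeneous $u\in\mathcal{L}_m$, $v\in\mathcal{L}_n$. I treat the case $m=1$ and $m\ge 2$ separately. When $m=1$, we have $u\in H$, and Lemma \ref{lem:2-7-1} (applied term-by-term after writing $v$ as a sum of iterated brackets) gives $N(u\otimes v)\in H\otimes\widehat{\mathcal{L}}$. When $m\ge 2$, using $\mathcal{L}_m=[H,\mathcal{L}_{m-1}]$ I may assume $u=[X,u']$ with $X\in H$ and $u'\in\mathcal{L}_{m-1}$. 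Then the key identity Lemma \ref{lem:2-6-2}(2) gives
$$
N(u\otimes v) = N([X,u']\,v) = N\bigl(X\,[u',v]\bigr) = N\bigl(X\otimes [u',v]\bigr),
$$
and since $[u',v]\in\widehat{\mathcal{L}}$ I am reduced to the previous case; Lemma \ref{lem:2-7-1} again produces an element of $H\otimes\widehat{\mathcal{L}}$.

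For the reverse inclusion $\supset$, suppose $w\in H\otimes\widehat{\mathcal{L}}$ satisfies $[w]=0$ in $\widehat{\mathcal{L}}$, hence also in $\widehat{T}$. Decompose $w=\sum_{p\ge 2}w_p$ into homogeneous parts, with $w_p\in H\otimes\mathcal{L}_{p-1}$. By Lemma \ref{lem:2-6-2}(4), $w\in N(\widehat{T}_1)$, and then by Lemma \ref{lem:2-6-2}(3), $\nu(w_p)=w_p$ for every $p$, so $w_p=\tfrac{1}{p}N(w_p)$. Since $H=\mathcal{L}_1$, we have $w_p\in\mathcal{L}_1\otimes\mathcal{L}_{p-1}\subset\widehat{\mathcal{L}}\hat{\otimes}\widehat{\mathcal{L}}$, and summing up yields $w\in N(\widehat{\mathcal{L}}\hat{\otimes}\widehat{\mathcal{L}})$. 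The only delicate point in the whole argument is the inductive reduction in the middle paragraph: one must be careful that the computation using Lemma \ref{lem:2-6-2}(2) preserves the Lie-algebra-valued-ness of the second tensor factor, but the identity $N([X,u']v)=N(X[u',v])$ makes this transparent.
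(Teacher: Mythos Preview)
Your proof is correct and follows essentially the same route as the paper's. The only cosmetic difference is that the paper compresses your case split $m=1$ versus $m\ge 2$ into the single observation $N(\widehat{\mathcal{L}}\hat{\otimes}\widehat{\mathcal{L}})=N(H\otimes\widehat{\mathcal{L}})$ via Lemma~\ref{lem:2-6-2}(2), and then invokes Lemma~\ref{lem:2-7-1}; the reverse inclusion is argued identically in both.
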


\begin{proof}
Using Lemma \ref{lem:2-6-2} (2), we have
$N(\widehat{\mathcal{L}}\hat{\otimes} \widehat{\mathcal{L}})=
N(H\otimes \widehat{\mathcal{L}})$, and
$N(H\otimes \widehat{\mathcal{L}})$ is contained in
$(H\otimes \widehat{\mathcal{L}})\cap N(\widehat{T}_1)$
by Lemma \ref{lem:2-7-1}.
Therefore we get
$N(\widehat{\mathcal{L}}\hat{\otimes} \widehat{\mathcal{L}})
\subset {\rm Ker}([\ ,\ ]\colon H\otimes \widehat{\mathcal{L}}
\rightarrow \widehat{\mathcal{L}})$ by (\ref{eq:2-6-1}).
On the other hand,
if $v\in H\otimes \widehat{\mathcal{L}}\subset \widehat{T}_1$
is homogeneous of degree $p\ge 2$
and $\nu(v)=v$, then
$v=N(v/p)\in N(H\otimes \widehat{\mathcal{L}})=
N(\widehat{\mathcal{L}}\hat{\otimes} \widehat{\mathcal{L}})$.
By Lemma \ref{lem:2-6-2} (3)(4), we get the other inclusion.
\end{proof}

Thus, if $\theta$ is group-like, our invariant $L^{\theta}$ is considered as a map
$L^{\theta}\colon \pi \rightarrow \mathfrak{l}_g$.

\subsection{The space of symplectic expansions}
There are infinitely many Magnus expansions and symplectic expansions.
Here we consider the spaces that parametrize them.
Let $\Theta$ be the set of Magnus expansions of $\pi$, and
let $\Theta^{\rm symp}\subset \Theta$ be the set of symplectic expansions.

Let ${\rm IA}(\widehat{T})$ be the subgroup of ${\rm Aut}(\widehat{T})$
consisting of the automorphisms
acting on $\widehat{T}_1/\widehat{T}_2
\cong H$ as the identity.
Let $\theta$ and $\theta^{\prime}$ be Magnus expansions.
By \cite{Ka}, Theorem 1.3, there uniquely exists
$U=U(\theta,\theta^{\prime})
\in {\rm IA}(\widehat{T})$ such that $\theta^{\prime}=U\circ \theta$.
Conversely, for $\theta \in \Theta$ and $U\in {\rm IA}(\widehat{T})$,
$U\circ \theta$ is a Magnus expansion. Thus if we fix $\theta$,
$\Theta$ is identified with ${\rm IA}(\widehat{T})$
by $\theta^{\prime}\mapsto U(\theta,\theta^{\prime})$.

The group ${\rm IA}(\widehat{T})$ is identified with
its "tangent space" ${\rm Hom}(H,\widehat{T}_2)$, by the logarithms:
$${\rm IA}(\widehat{T})\rightarrow {\rm Hom}(H,\widehat{T}_2),\ 
U\mapsto (\log U)|_H.$$
Note that $\log U$ converges since $U$ acts on
$\widehat{T}_1/\widehat{T}_2\cong H$ as the identity.
We can regard ${\rm Hom}(H,\widehat{T}_2)$ as the space of
derivations of $\widehat{T}$ with positive degrees.
By the Poincar\'e duality (\ref{eq:2-7-1}),
${\rm Hom}(H,\widehat{T}_2)$ is identified with $H\otimes \widehat{T}_2$.
In this way we have an identification
\begin{equation}
\label{eq:2-8-1}
{\rm IA}(\widehat{T})\cong H\otimes \widehat{T}_2,
\end{equation}
and if we fix $\theta$ there is a canonical bijection
$\Theta \cong {\rm IA}(\widehat{T}) \cong H\otimes \widehat{T}_2$.

\begin{prop}
\label{prop:2-8-1}
The set $\Theta^{\rm symp}$ is not empty.
Once we choose a symplectic expansion $\theta$,
the restriction of the canonical bijection
$\Theta \cong H\otimes \widehat{T}_2$ to $\Theta^{\rm symp}$
gives a bijection $$\Theta^{\rm symp}\cong
{\rm Ker}([\ ,\ ]\colon H\otimes \widehat{\mathcal{L}}_2
\rightarrow \widehat{\mathcal{L}}).$$
\end{prop}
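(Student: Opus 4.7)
Non-emptiness of $\Theta^{\rm symp}$ I would invoke from the constructions of Examples \ref{ex:2-4-2}--\ref{ex:2-4-4} (or, if one prefers a self-contained argument, from the Appendix). The plan for the bijection is then to fix $\theta \in \Theta^{\rm symp}$ and characterize which $\theta' = U \circ \theta$, $U \in {\rm IA}(\widehat{T})$, are again symplectic. Writing $D := \log U$, which is a degree-raising derivation of $\widehat{T}$, the identification (\ref{eq:2-8-1}) realizes $D$ as an element of $H \otimes \widehat{T}_2$ via the Poincar\'e duality (\ref{eq:2-7-1}). The task becomes: show that $\theta' \in \Theta^{\rm symp}$ if and only if $D$ lies in ${\rm Ker}([\ ,\ ]\colon H \otimes \widehat{\mathcal{L}}_2 \to \widehat{\mathcal{L}})$.

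I would treat the two defining conditions of a symplectic expansion separately. For the group-like condition, since $\theta$ is group-like, by Massuyeau \cite{Mas}, Proposition 2.10, it induces an isomorphism of complete Hopf algebras $\widehat{\mathbb{Q}\pi} \cong \widehat{T}$. Hence $\theta'$ is group-like if and only if it too is such an isomorphism, equivalently if and only if $U = \theta' \circ \theta^{-1}$ is a Hopf algebra automorphism of $\widehat{T}$. Since $U$ is already an algebra automorphism, this is equivalent to $U$ preserving the set of primitives $\widehat{\mathcal{L}}$; and because $U = \exp(D)$ with $D$ a derivation, it is equivalent to $D$ being a derivation of $\widehat{\mathcal{L}}$, which in turn (as $H$ Lie-generates $\widehat{\mathcal{L}}$) amounts to $D(H) \subset \widehat{\mathcal{L}}_2$. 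Under the Poincar\'e identification, this translates to $D \in H \otimes \widehat{\mathcal{L}}_2$.

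For the boundary condition, since $U$ is an algebra automorphism and $\theta(\zeta) = \exp(\omega)$, we have $\theta'(\zeta) = U(\exp(\omega)) = \exp(U(\omega))$, and as $\exp$ is a bijection from $\widehat{T}_2$ onto $1 + \widehat{T}_2$, the requirement $\theta'(\zeta) = \exp(\omega)$ becomes $U(\omega) = \omega$. Expanding $U = \exp(D)$ and comparing homogeneous parts by induction on degree (using that $D$ strictly raises degree), this is equivalent to $D(\omega) = 0$. Combining both conditions and applying (\ref{eq:2-7-4}) yields the bijection
$$\Theta^{\rm symp} \cong {\rm Ker}([\ ,\ ]\colon H \otimes \widehat{\mathcal{L}}_2 \to \widehat{\mathcal{L}}).$$
The most delicate step is the equivalence between group-likeness of $\theta'$ and the Hopf-algebra property of $U$, which rests on the fact that a group-like Magnus expansion already encodes the full coproduct information; the remaining manipulations are purely formal.
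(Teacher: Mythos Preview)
Your proof is correct and follows essentially the same approach as the paper: both reduce the two symplectic conditions on $\theta' = U\circ\theta$ to the pair $U(H)\subset\widehat{\mathcal{L}}$ and $U(\omega)=\omega$, and then translate these via $D=\log U$ into $D\in H\otimes\widehat{\mathcal{L}}_2$ and $D(\omega)=0$, invoking (\ref{eq:2-7-4}) to finish. The only difference is presentational: the paper asserts the equivalence between group-likeness of $\theta'$ and $U(H)\subset\widehat{\mathcal{L}}$ directly, whereas you route it through the characterization of $U$ as a Hopf algebra automorphism via Massuyeau's Proposition~2.10; this detour is sound but not needed, since one can check directly that $U(\exp\ell)=\exp(U\ell)$ for any algebra automorphism $U$, so $\theta'$ is group-like iff $U(\widehat{\mathcal{L}})\subset\widehat{\mathcal{L}}$ iff $U(H)\subset\widehat{\mathcal{L}}$.
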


\begin{proof}
The examples of symplectic expansions given
in \S2.4 show that $\Theta^{\rm symp}$ is not empty.
Or, see Massuyeau \cite{Mas} Lemma 2.16.
We will prove the latter part.

Suppose $\theta$ and $\theta^{\prime}$ are symplectic.
Since both of the two are group-like and
$\theta(\zeta)=\theta^{\prime}(\zeta)=\omega$,
the automorphism $U=U(\theta,\theta^{\prime})$ must satisfy
\begin{equation}
\label{eq:2-8-2}
U(H)\subset \widehat{\mathcal{L}},\ {\rm and\ } U(\omega)=\omega.
\end{equation}
Conversely for $\theta\in \Theta^{\rm symp}$
and $U\in {\rm IA}(\widehat{T})$ satisfying (\ref{eq:2-8-2}),
$U\circ \theta$ is symplectic.

Let $U\in {\rm IA}(\widehat{T})$.
Under the identification (\ref{eq:2-8-1}),
$U(H)\subset \widehat{\mathcal{L}}$ is equivalent to
$(\log U)|_H \in H\otimes \widehat{\mathcal{L}}_2$.
Also, $U(\omega)=\omega$ is equivalent to $\log U(\omega)=0$,
where $\log U$ acts on $\widehat{\mathcal{L}}$ as a derivation.
By (\ref{eq:2-7-4}), this is equivalent to $(\log U)|_H\in
{\rm Ker}([\ ,\ ]\colon H\otimes \widehat{\mathcal{L}}_2
\rightarrow \widehat{\mathcal{L}})$. This completes the proof.
\end{proof}

\section{The Goldman Lie algebra}
In this section, we recall the Goldman Lie algebra \cite{Go}.
In particular, we show that the Goldman Lie algebra of
$\Sigma$ acts on the group ring $\mathbb{Q}\pi$ as a derivation.
We will work over the rationals, but
all the statements in this section except
Proposition \ref{prop:3-4-3} holds over the integers.

All of the loops that we consider are piecewise differentiable.

\subsection{The Goldman Lie algebra}
Let $S$ be a connected oriented 2-manifold and let
$\hat{\pi}(S)=[S^1,S]$ be the set of free homotopy classes
of oriented loops on $S$. In other words, $\hat{\pi}(S)$
is the set of conjugacy classes of the fundamental group of $S$.
Let $|\ |\colon \pi_1(S)\rightarrow \hat{\pi}(S)$
be the natural quotient map.
For a loop $\alpha\colon S^1\rightarrow S$ and a simple point $p\in \alpha$,
let $\alpha_p$ be the oriented loop $\alpha$ based at $p$.

Let $\mathbb{Q}\hat{\pi}(S)$ be the vector space spanned by $\hat{\pi}(S)$.
We first recall the Goldman bracket on $\mathbb{Q}\hat{\pi}(S)$.
Let $\alpha,\beta$ be immersed loops in $S$ such that
$\alpha \cup \beta \colon S^1\cup S^1 \rightarrow S$
is an immersion with at worst transverse double points.
For each intersection $p\in \alpha \cap \beta$,
the conjunction $\alpha_p\beta_p\in \pi_1(S,p)$ is defined.
Let $\varepsilon(p;\alpha,\beta)\in \{ \pm 1 \}$ be
the local intersection number of $\alpha$ and $\beta$ at $p$ and set
$$[\alpha,\beta]:=\sum_{p\in \alpha \cap \beta}
\varepsilon(p;\alpha,\beta)|\alpha_p\beta_p| \in \mathbb{Q}\hat{\pi}(S).$$
Let $1\in \hat{\pi}(S)$ be the homotopy
class of the constant loop and let
$\hat{\pi}^{\prime}(S)=\hat{\pi}(S)\setminus \{ 1\}$.

\begin{thm}[Goldman \cite{Go}]
\label{thm:3-1-1}
The above bracket defines a well-defined linear map
$$[\ ,\ ]\colon \mathbb{Q}\hat{\pi}(S) \otimes \mathbb{Q}\hat{\pi}(S)
\rightarrow \mathbb{Q}\hat{\pi}(S),$$
and with respect to this bracket $\mathbb{Q}\hat{\pi}(S)$
has a structure of Lie algebra.
Moreover, $\mathbb{Q}\hat{\pi}^{\prime}(S)$ is an ideal of
$\mathbb{Q}\hat{\pi}(S)$ and
$\mathbb{Q}\hat{\pi}(S)=\mathbb{Q}\hat{\pi}^{\prime}(S)
\oplus \mathbb{Q}1$ is a direct sum decomposition as Lie algebras.
\end{thm}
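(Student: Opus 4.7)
This is Goldman's original theorem, and my plan follows the structure of the argument in \cite{Go}. The first and most substantial task is to verify that $[\alpha,\beta]$ descends to a well-defined linear map on free homotopy classes. By standard transversality, the space of immersions of $S^{1}\sqcup S^{1}$ into $S$ representing a fixed ordered pair of free homotopy classes, and whose image has only transverse double points, is connected; so it suffices to check invariance of the signed sum $[\alpha,\beta]$ under the elementary generic moves that connect two such representatives: a Reidemeister-II-type move between $\alpha$ and $\beta$ introducing (or removing) a bigon, a self-homotopy of one loop alone through a self-tangency or self-crossing, and passage through a triple point. For an $R_{II}$ move the two new intersections $p$ and $p'$ carry opposite local signs, while the bigon provides a free homotopy $|\alpha_p\beta_p|=|\alpha_{p'}\beta_{p'}|$, so the two contributions cancel. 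Self-homotopies of a single loop do not affect $\alpha\cap\beta$ nor the free homotopy class $|\alpha_p\beta_p|$ at any fixed $p$, and a triple-point move is analysed directly.

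Once well-definedness is proved, bilinearity is immediate, and antisymmetry follows from $\varepsilon(p;\alpha,\beta)=-\varepsilon(p;\beta,\alpha)$ together with $|\alpha_p\beta_p|=|\beta_p\alpha_p|$, since $\alpha_p\beta_p=\beta_p^{-1}(\beta_p\alpha_p)\beta_p$ in $\pi_{1}(S,p)$.

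The main obstacle is the Jacobi identity
\[
[[\alpha,\beta],\gamma]+[[\beta,\gamma],\alpha]+[[\gamma,\alpha],\beta]=0.
\]
Choose three representatives $\alpha,\beta,\gamma$ in general position, meaning that all pairwise intersections are transverse double points and no three of the curves meet at a single point. Expanding the outer bracket in $[[\alpha,\beta],\gamma]$ shows that each contributing term is indexed by a pair $(p,q)$ with $p\in\alpha\cap\beta$ and $q$ an intersection of the smoothed loop $\alpha_p\beta_p\subset\alpha\cup\beta$ with $\gamma$; since $\alpha_p\beta_p$ traces out $\alpha$ and $\beta$ once each, $q$ lies in $(\alpha\cap\gamma)\cup(\beta\cap\gamma)$. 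Hence the whole cyclic sum is indexed by unordered pairs $\{p_1,p_2\}$ of intersection points drawn from two distinct pairwise intersection sets, together with a choice of which of the three curves plays the outer role. My plan is to fix such a pair $\{p_1,p_2\}$, enumerate the cyclic contributions in which it occurs, and verify by inspecting the local sign conventions and the resulting triply-concatenated conjugacy classes that the contributions cancel in pairs. The careful sign bookkeeping at the two intersection points is the delicate part, and I expect it to be the real work of the proof.

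Finally, a constant loop admits a point-representative that may be taken disjoint from any other immersed loop, so $[1,\alpha]=0$ for every $\alpha$. Hence $\mathbb{Q}1$ lies in the centre of $\mathbb{Q}\hat{\pi}(S)$, and the tautological vector-space splitting $\mathbb{Q}\hat{\pi}(S)=\mathbb{Q}\hat{\pi}^{\prime}(S)\oplus\mathbb{Q}1$ is automatically a decomposition of Lie algebras with $\mathbb{Q}\hat{\pi}^{\prime}(S)$ an ideal.
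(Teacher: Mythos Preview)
Your outline for the first part (well-definedness, antisymmetry, Jacobi) follows Goldman's original argument, and indeed the paper does not reprove this part either --- it simply cites \cite{Go}. So there is nothing to criticise there beyond noting that the ``careful sign bookkeeping'' you anticipate is genuinely the content of Goldman's proof.

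The gap is in your final paragraph. From $[1,\alpha]=0$ you correctly conclude that $\mathbb{Q}1$ is central, hence an ideal. But centrality of $\mathbb{Q}1$ does \emph{not} by itself force $\mathbb{Q}\hat{\pi}'(S)$ to be an ideal, nor the vector-space splitting to be a Lie-algebra splitting. For that you need the separate fact that $[\mathbb{Q}\hat{\pi}(S),\mathbb{Q}\hat{\pi}(S)]\subset\mathbb{Q}\hat{\pi}'(S)$, i.e.\ that the bracket of two nontrivial loops never has a constant-loop component. (A toy counterexample: in the Heisenberg Lie algebra with basis $x,y,z$, $z$ central and $[x,y]=z$, the line $\mathbb{Q}z$ is central but the complementary plane $\mathbb{Q}x\oplus\mathbb{Q}y$ is not a subalgebra.) Your argument supplies no reason why a term $|\alpha_p\beta_p|$ in the bracket cannot be the trivial class.

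This is exactly the point the paper addresses in the Remark following the theorem. Goldman's own argument for $[\alpha,\beta]\in\mathbb{Q}\hat{\pi}'$ when $\beta=\alpha^{-1}$ relied on the claim $[\alpha,\alpha^{-1}]=0$, which the paper shows is false in general (an explicit counterexample is given using $N\theta$). The corrected argument pairs the two intersection points arising from each self-intersection of $\alpha$ and observes that their contributions are either both nontrivial or cancel; combined with Goldman's valid argument for $\beta\neq\alpha^{-1}$, this yields $[\mathbb{Q}\hat{\pi},\mathbb{Q}\hat{\pi}]\subset\mathbb{Q}\hat{\pi}'$. You should incorporate this step rather than claim it is automatic.
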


\begin{rem}{\rm
It is true that $[\mathbb{Q}\hat{\pi},
\mathbb{Q}\hat{\pi}] \subset \mathbb{Q}\hat{\pi}'$.
But Goldman's proof for it \cite{Go} pp.294-295 is, unfortunately,
not true. In fact, his assertion $[\alpha, \alpha^{-1}] = 0$
for $\alpha \in \hat{\pi}$ is not true in general.
If we choose $\alpha=\alpha_1\alpha_2$ as in Figure 3, then $[\alpha, \alpha^{-1}]
= \alpha_1\alpha_2{\alpha_1}^{-1}{\alpha_2}^{-1} -
\alpha_2\alpha_1{\alpha_2}^{-1}{\alpha_1}^{-1}$.
For a symplectic expansion $\theta$, we have
$$
N\theta[\alpha,\alpha^{-1}] =
\frac{1}{3}N([X_1,X_2][X_1,X_2][X_1,X_2]) +
\mbox{higher terms} \neq 0
$$
(see Theorem \ref{thm:1-2-1}).
Here we denote $X_1 = [\alpha_1]$ and $X_2 =
[\alpha_2]
\in H$. Hence $[\alpha, \alpha^{-1}] \neq 0$.

\begin{center}
Figure 3: $[\alpha,\alpha^{-1}]\neq 0$

\unitlength 0.1in
\begin{picture}( 44.9000, 13.2400)(  2.8000,-16.3000)
%
\special{pn 13}%
\special{ar 2266 760 406 406  0.3217506 2.8198421}%
%
\special{pn 13}%
\special{ar 2266 1144 384 384  3.7146284 5.6900395}%
%
\special{pn 13}%
\special{ar 3864 760 406 406  0.3217506 2.8198421}%
%
\special{pn 13}%
\special{ar 3864 1144 384 384  3.7160247 5.6915170}%
%
\special{pn 20}%
\special{sh 1}%
\special{ar 2896 920 10 10 0  6.28318530717959E+0000}%
\special{sh 1}%
\special{ar 2896 920 10 10 0  6.28318530717959E+0000}%
%
\special{pn 8}%
\special{pa 2920 952}%
\special{pa 2936 980}%
\special{pa 2952 1008}%
\special{pa 2968 1036}%
\special{pa 2986 1064}%
\special{pa 3002 1092}%
\special{pa 3018 1118}%
\special{pa 3036 1146}%
\special{pa 3054 1172}%
\special{pa 3072 1198}%
\special{pa 3092 1224}%
\special{pa 3112 1248}%
\special{pa 3132 1274}%
\special{pa 3154 1298}%
\special{pa 3176 1320}%
\special{pa 3200 1342}%
\special{pa 3224 1364}%
\special{pa 3250 1386}%
\special{pa 3276 1406}%
\special{pa 3302 1424}%
\special{pa 3330 1444}%
\special{pa 3358 1460}%
\special{pa 3386 1478}%
\special{pa 3416 1494}%
\special{pa 3446 1508}%
\special{pa 3476 1522}%
\special{pa 3508 1536}%
\special{pa 3540 1548}%
\special{pa 3572 1560}%
\special{pa 3604 1570}%
\special{pa 3638 1578}%
\special{pa 3670 1588}%
\special{pa 3704 1594}%
\special{pa 3738 1600}%
\special{pa 3772 1606}%
\special{pa 3806 1610}%
\special{pa 3840 1612}%
\special{pa 3874 1614}%
\special{pa 3908 1616}%
\special{pa 3942 1614}%
\special{pa 3976 1612}%
\special{pa 4010 1610}%
\special{pa 4044 1606}%
\special{pa 4078 1600}%
\special{pa 4110 1594}%
\special{pa 4144 1586}%
\special{pa 4176 1578}%
\special{pa 4208 1568}%
\special{pa 4242 1556}%
\special{pa 4272 1542}%
\special{pa 4304 1528}%
\special{pa 4334 1512}%
\special{pa 4364 1496}%
\special{pa 4394 1478}%
\special{pa 4422 1458}%
\special{pa 4450 1438}%
\special{pa 4476 1418}%
\special{pa 4502 1394}%
\special{pa 4526 1372}%
\special{pa 4548 1348}%
\special{pa 4570 1322}%
\special{pa 4590 1296}%
\special{pa 4610 1270}%
\special{pa 4626 1242}%
\special{pa 4642 1214}%
\special{pa 4656 1184}%
\special{pa 4668 1154}%
\special{pa 4678 1124}%
\special{pa 4686 1094}%
\special{pa 4692 1062}%
\special{pa 4696 1030}%
\special{pa 4698 998}%
\special{pa 4698 964}%
\special{pa 4696 932}%
\special{pa 4692 898}%
\special{pa 4686 866}%
\special{pa 4680 834}%
\special{pa 4670 800}%
\special{pa 4660 768}%
\special{pa 4648 736}%
\special{pa 4634 706}%
\special{pa 4618 674}%
\special{pa 4602 644}%
\special{pa 4584 616}%
\special{pa 4564 588}%
\special{pa 4544 560}%
\special{pa 4522 534}%
\special{pa 4500 508}%
\special{pa 4476 486}%
\special{pa 4450 464}%
\special{pa 4424 442}%
\special{pa 4398 424}%
\special{pa 4370 406}%
\special{pa 4342 390}%
\special{pa 4312 376}%
\special{pa 4282 362}%
\special{pa 4252 350}%
\special{pa 4220 340}%
\special{pa 4190 332}%
\special{pa 4156 324}%
\special{pa 4124 318}%
\special{pa 4092 314}%
\special{pa 4058 310}%
\special{pa 4024 308}%
\special{pa 3990 306}%
\special{pa 3956 306}%
\special{pa 3922 308}%
\special{pa 3888 310}%
\special{pa 3854 314}%
\special{pa 3820 318}%
\special{pa 3786 324}%
\special{pa 3752 332}%
\special{pa 3718 338}%
\special{pa 3684 348}%
\special{pa 3650 358}%
\special{pa 3618 368}%
\special{pa 3586 380}%
\special{pa 3554 392}%
\special{pa 3522 404}%
\special{pa 3492 418}%
\special{pa 3460 434}%
\special{pa 3432 450}%
\special{pa 3402 466}%
\special{pa 3374 482}%
\special{pa 3346 500}%
\special{pa 3320 518}%
\special{pa 3294 538}%
\special{pa 3268 556}%
\special{pa 3242 576}%
\special{pa 3218 598}%
\special{pa 3194 618}%
\special{pa 3170 640}%
\special{pa 3146 662}%
\special{pa 3124 684}%
\special{pa 3100 706}%
\special{pa 3078 728}%
\special{pa 3056 752}%
\special{pa 3034 774}%
\special{pa 3012 798}%
\special{pa 2990 820}%
\special{pa 2968 844}%
\special{pa 2948 868}%
\special{pa 2928 888}%
\special{sp}%
%
\special{pn 13}%
\special{ar 664 758 406 406  0.3217506 2.8198421}%
%
\special{pn 13}%
\special{ar 664 1142 384 384  3.7160247 5.6915170}%
%
\special{pn 8}%
\special{pa 2880 894}%
\special{pa 2864 864}%
\special{pa 2846 836}%
\special{pa 2830 808}%
\special{pa 2812 780}%
\special{pa 2794 754}%
\special{pa 2774 730}%
\special{pa 2752 706}%
\special{pa 2730 684}%
\special{pa 2706 664}%
\special{pa 2680 648}%
\special{pa 2654 632}%
\special{pa 2626 620}%
\special{pa 2596 610}%
\special{pa 2564 600}%
\special{pa 2532 592}%
\special{pa 2500 586}%
\special{pa 2468 582}%
\special{pa 2434 578}%
\special{pa 2400 576}%
\special{pa 2366 574}%
\special{pa 2332 574}%
\special{pa 2300 574}%
\special{pa 2266 574}%
\special{pa 2234 576}%
\special{pa 2202 578}%
\special{pa 2170 580}%
\special{pa 2138 584}%
\special{pa 2106 590}%
\special{pa 2076 596}%
\special{pa 2044 604}%
\special{pa 2014 612}%
\special{pa 1984 622}%
\special{pa 1954 634}%
\special{pa 1924 648}%
\special{pa 1894 664}%
\special{pa 1864 680}%
\special{pa 1834 700}%
\special{pa 1806 720}%
\special{pa 1778 742}%
\special{pa 1752 766}%
\special{pa 1730 790}%
\special{pa 1708 816}%
\special{pa 1690 842}%
\special{pa 1674 870}%
\special{pa 1660 898}%
\special{pa 1652 926}%
\special{pa 1648 956}%
\special{pa 1648 986}%
\special{pa 1652 1016}%
\special{pa 1658 1046}%
\special{pa 1670 1076}%
\special{pa 1684 1104}%
\special{pa 1700 1134}%
\special{pa 1720 1162}%
\special{pa 1742 1188}%
\special{pa 1766 1214}%
\special{pa 1792 1240}%
\special{pa 1820 1264}%
\special{pa 1848 1286}%
\special{pa 1878 1306}%
\special{pa 1908 1324}%
\special{pa 1938 1340}%
\special{pa 1968 1354}%
\special{pa 2000 1366}%
\special{pa 2032 1378}%
\special{pa 2064 1386}%
\special{pa 2094 1392}%
\special{pa 2126 1398}%
\special{pa 2158 1400}%
\special{pa 2190 1402}%
\special{pa 2222 1400}%
\special{pa 2254 1398}%
\special{pa 2286 1394}%
\special{pa 2318 1388}%
\special{pa 2350 1382}%
\special{pa 2380 1372}%
\special{pa 2410 1362}%
\special{pa 2440 1350}%
\special{pa 2470 1336}%
\special{pa 2500 1322}%
\special{pa 2528 1306}%
\special{pa 2556 1290}%
\special{pa 2582 1272}%
\special{pa 2608 1252}%
\special{pa 2634 1232}%
\special{pa 2658 1210}%
\special{pa 2682 1188}%
\special{pa 2704 1166}%
\special{pa 2726 1142}%
\special{pa 2748 1118}%
\special{pa 2768 1094}%
\special{pa 2788 1068}%
\special{pa 2808 1044}%
\special{pa 2826 1018}%
\special{pa 2846 992}%
\special{pa 2864 966}%
\special{pa 2880 942}%
\special{sp}%
%
\special{pn 8}%
\special{pa 3008 1102}%
\special{pa 3030 1078}%
\special{pa 3052 1054}%
\special{pa 3074 1032}%
\special{pa 3096 1008}%
\special{pa 3118 986}%
\special{pa 3142 962}%
\special{pa 3164 940}%
\special{pa 3186 918}%
\special{pa 3210 896}%
\special{pa 3234 874}%
\special{pa 3258 852}%
\special{pa 3282 832}%
\special{pa 3306 810}%
\special{pa 3332 790}%
\special{pa 3356 770}%
\special{pa 3382 752}%
\special{pa 3408 734}%
\special{pa 3434 716}%
\special{pa 3462 698}%
\special{pa 3490 680}%
\special{pa 3516 664}%
\special{pa 3546 650}%
\special{pa 3574 634}%
\special{pa 3602 622}%
\special{pa 3632 608}%
\special{pa 3662 596}%
\special{pa 3692 586}%
\special{pa 3722 576}%
\special{pa 3754 566}%
\special{pa 3786 560}%
\special{pa 3816 554}%
\special{pa 3848 548}%
\special{pa 3880 544}%
\special{pa 3914 542}%
\special{pa 3946 540}%
\special{pa 3978 542}%
\special{pa 4012 544}%
\special{pa 4044 548}%
\special{pa 4078 554}%
\special{pa 4108 560}%
\special{pa 4140 570}%
\special{pa 4170 580}%
\special{pa 4200 594}%
\special{pa 4228 608}%
\special{pa 4256 626}%
\special{pa 4282 646}%
\special{pa 4306 666}%
\special{pa 4330 690}%
\special{pa 4350 716}%
\special{pa 4368 742}%
\special{pa 4386 770}%
\special{pa 4400 800}%
\special{pa 4412 830}%
\special{pa 4422 862}%
\special{pa 4430 894}%
\special{pa 4434 928}%
\special{pa 4436 960}%
\special{pa 4434 994}%
\special{pa 4430 1028}%
\special{pa 4422 1060}%
\special{pa 4414 1092}%
\special{pa 4402 1122}%
\special{pa 4386 1150}%
\special{pa 4370 1178}%
\special{pa 4350 1202}%
\special{pa 4328 1224}%
\special{pa 4304 1244}%
\special{pa 4278 1264}%
\special{pa 4252 1280}%
\special{pa 4222 1294}%
\special{pa 4192 1308}%
\special{pa 4162 1320}%
\special{pa 4130 1332}%
\special{pa 4098 1342}%
\special{pa 4066 1352}%
\special{pa 4034 1360}%
\special{pa 4002 1368}%
\special{pa 3970 1374}%
\special{pa 3938 1380}%
\special{pa 3906 1384}%
\special{pa 3874 1388}%
\special{pa 3842 1390}%
\special{pa 3810 1390}%
\special{pa 3778 1388}%
\special{pa 3746 1386}%
\special{pa 3716 1380}%
\special{pa 3684 1374}%
\special{pa 3654 1366}%
\special{pa 3624 1354}%
\special{pa 3592 1342}%
\special{pa 3564 1330}%
\special{pa 3534 1314}%
\special{pa 3506 1298}%
\special{pa 3478 1280}%
\special{pa 3452 1262}%
\special{pa 3426 1240}%
\special{pa 3400 1220}%
\special{pa 3376 1198}%
\special{pa 3354 1174}%
\special{pa 3332 1152}%
\special{pa 3312 1126}%
\special{pa 3292 1102}%
\special{pa 3272 1076}%
\special{pa 3254 1050}%
\special{pa 3236 1024}%
\special{pa 3220 996}%
\special{pa 3202 968}%
\special{pa 3186 942}%
\special{pa 3170 914}%
\special{pa 3154 886}%
\special{pa 3138 858}%
\special{pa 3122 830}%
\special{pa 3106 802}%
\special{pa 3090 774}%
\special{pa 3072 746}%
\special{pa 3056 720}%
\special{pa 3038 694}%
\special{pa 3018 668}%
\special{pa 3000 642}%
\special{pa 2978 618}%
\special{pa 2958 594}%
\special{pa 2934 570}%
\special{pa 2910 548}%
\special{pa 2886 528}%
\special{pa 2860 508}%
\special{pa 2834 488}%
\special{pa 2806 472}%
\special{pa 2778 456}%
\special{pa 2750 442}%
\special{pa 2720 430}%
\special{pa 2690 418}%
\special{pa 2660 410}%
\special{pa 2628 402}%
\special{pa 2598 396}%
\special{pa 2566 392}%
\special{pa 2534 388}%
\special{pa 2502 386}%
\special{pa 2470 384}%
\special{pa 2436 382}%
\special{pa 2404 382}%
\special{pa 2372 382}%
\special{pa 2338 382}%
\special{pa 2306 382}%
\special{pa 2274 382}%
\special{pa 2242 384}%
\special{pa 2210 384}%
\special{pa 2178 386}%
\special{pa 2146 390}%
\special{pa 2114 392}%
\special{pa 2082 396}%
\special{pa 2050 400}%
\special{pa 2018 406}%
\special{pa 1988 412}%
\special{pa 1956 420}%
\special{pa 1926 428}%
\special{pa 1894 438}%
\special{pa 1864 448}%
\special{pa 1834 460}%
\special{pa 1804 474}%
\special{pa 1776 488}%
\special{pa 1748 504}%
\special{pa 1720 520}%
\special{pa 1692 538}%
\special{pa 1666 558}%
\special{pa 1640 578}%
\special{pa 1616 600}%
\special{pa 1594 622}%
\special{pa 1572 646}%
\special{pa 1550 672}%
\special{pa 1532 698}%
\special{pa 1514 726}%
\special{pa 1496 756}%
\special{pa 1482 784}%
\special{pa 1468 816}%
\special{pa 1458 846}%
\special{pa 1448 878}%
\special{pa 1440 910}%
\special{pa 1434 942}%
\special{pa 1430 974}%
\special{pa 1428 1006}%
\special{pa 1430 1038}%
\special{pa 1432 1070}%
\special{pa 1438 1102}%
\special{pa 1446 1134}%
\special{pa 1456 1164}%
\special{pa 1468 1194}%
\special{pa 1482 1224}%
\special{pa 1498 1252}%
\special{pa 1514 1280}%
\special{pa 1534 1308}%
\special{pa 1554 1334}%
\special{pa 1576 1358}%
\special{pa 1598 1382}%
\special{pa 1622 1406}%
\special{pa 1648 1428}%
\special{pa 1672 1448}%
\special{pa 1700 1466}%
\special{pa 1726 1484}%
\special{pa 1754 1500}%
\special{pa 1784 1516}%
\special{pa 1812 1530}%
\special{pa 1842 1542}%
\special{pa 1872 1554}%
\special{pa 1902 1564}%
\special{pa 1934 1574}%
\special{pa 1966 1582}%
\special{pa 1996 1588}%
\special{pa 2028 1594}%
\special{pa 2060 1600}%
\special{pa 2092 1602}%
\special{pa 2126 1606}%
\special{pa 2158 1606}%
\special{pa 2190 1608}%
\special{pa 2222 1606}%
\special{pa 2254 1604}%
\special{pa 2286 1602}%
\special{pa 2318 1598}%
\special{pa 2350 1592}%
\special{pa 2382 1586}%
\special{pa 2414 1578}%
\special{pa 2444 1570}%
\special{pa 2476 1560}%
\special{pa 2506 1548}%
\special{pa 2536 1536}%
\special{pa 2566 1524}%
\special{pa 2594 1510}%
\special{pa 2624 1494}%
\special{pa 2652 1478}%
\special{pa 2678 1462}%
\special{pa 2706 1444}%
\special{pa 2732 1424}%
\special{pa 2756 1404}%
\special{pa 2780 1384}%
\special{pa 2804 1362}%
\special{pa 2826 1338}%
\special{pa 2848 1316}%
\special{pa 2870 1292}%
\special{pa 2890 1266}%
\special{pa 2910 1242}%
\special{pa 2928 1216}%
\special{pa 2948 1190}%
\special{pa 2966 1164}%
\special{pa 2984 1138}%
\special{pa 3002 1110}%
\special{pa 3008 1102}%
\special{sp}%
%
\special{pn 20}%
\special{sh 1}%
\special{ar 3008 1102 10 10 0  6.28318530717959E+0000}%
%
\special{pn 20}%
\special{sh 1}%
\special{ar 3064 742 10 10 0  6.28318530717959E+0000}%
\put(27.0000,-9.9000){\makebox(0,0)[lb]{$p$}}%
\put(47.7000,-12.1000){\makebox(0,0)[lb]{$\alpha_1$}}%
\put(19.0000,-13.4000){\makebox(0,0)[lb]{$\alpha_2$}}%
%
\special{pn 8}%
\special{pa 2290 1620}%
\special{pa 2366 1588}%
\special{fp}%
\special{pa 2366 1588}%
\special{pa 2282 1582}%
\special{fp}%
%
\special{pn 8}%
\special{pa 3860 1630}%
\special{pa 3942 1614}%
\special{fp}%
\special{pa 3942 1614}%
\special{pa 3860 1590}%
\special{fp}%
%
\special{pn 8}%
\special{pa 2260 1420}%
\special{pa 2180 1398}%
\special{fp}%
\special{pa 2180 1398}%
\special{pa 2262 1382}%
\special{fp}%
%
\special{pn 8}%
\special{pa 3880 1410}%
\special{pa 3800 1386}%
\special{fp}%
\special{pa 3800 1386}%
\special{pa 3884 1370}%
\special{fp}%
\put(26.5000,-16.0000){\makebox(0,0)[lb]{$\alpha^{-1}$}}%
\put(36.8000,-13.3000){\makebox(0,0)[lb]{$\alpha^{-1}$}}%
\put(29.2000,-13.5000){\makebox(0,0)[lb]{$+$}}%
\put(30.0000,-6.3000){\makebox(0,0)[lb]{$-$}}%
\end{picture}%

\end{center}

But we can prove
$[\alpha, \alpha^{-1}] \in \mathbb{Q}\hat{\pi}'$ for any $\alpha
\in \hat{\pi}$, as follows.
Represent $\alpha$ by a generic immersion and let $\alpha^{-1}$
be a generic immersion such that $\alpha\cup \alpha^{-1}$ cobounds
a narrow annulus, as in \cite{Go}, p.295. Let $p$ be a double
point of the loop $\alpha$. It divides the loop $\alpha$ into
two based loops $\alpha_1$ and $\alpha_2$ with basepoint $p$
as in Figure 3. The two intersection points derived from $p$
contributes $\alpha_1\alpha_2{\alpha_1}^{-1}{\alpha_2}^{-1}$
and $\alpha_2\alpha_1{\alpha_2}^{-1}{\alpha_1}^{-1}$,
respectively, with the opposite sign. Then the following three
conditions are equivalent to each other:
\begin{enumerate}
\item $\vert\alpha_1\alpha_2{\alpha_1}^{-1}{\alpha_2}^{-1}\vert
= 1 \in \hat{\pi}$,
\item $\alpha_1\alpha_2 = \alpha_2\alpha_1 \in \pi_1(S, p)$,
\item $\vert\alpha_2\alpha_1{\alpha_2}^{-1}{\alpha_1}^{-1}\vert
= 1 \in \hat{\pi}$.
\end{enumerate}
This implies the contributions of the two points cancel, or are in
$\mathbb{Q}\hat{\pi}'$. Hence we have
$[\alpha, \alpha^{-1}] \in \mathbb{Q}\hat{\pi}'$.
As is observed by Goldman \cite{Go} loc.cit.,
$[\alpha, \beta] \in \mathbb{Q}\hat{\pi}'$ if $\beta\neq\alpha^{-1}$.
Hence we obtain
$[\mathbb{Q}\hat{\pi}, \mathbb{Q}\hat{\pi}] \subset
\mathbb{Q}\hat{\pi}'$. This completes the proof of the
second half of Theorem \ref{thm:3-1-1}.
}
\end{rem}

\subsection{The action on the group ring}
Let $S$ be as above, and choose a basepoint $*\in S$.
Let $\alpha\colon S^1\rightarrow S\setminus \{ *\}$ be
an immersed loop and $\beta \colon S^1\rightarrow S$
an immersed loop based at $*$, and suppose 
$\alpha \cup \beta$ has at worst transverse double points.
For each intersection $p\in \alpha \cap \beta$, let $\alpha_p$
and $\varepsilon(p;\alpha,\beta)$ be the same as before and
let $\beta_{*p}$ (resp. $\beta_{p*}$)
be the path along $\beta$ from $*$ to $p$ (resp. $p$ to $*$).
Then the conjunction $\beta_{*p}\alpha_p\beta_{p*}\in \pi_1(S,*)$
is defined.

\begin{dfn}
\label{def:3-2-1}
For such $\alpha$ and $\beta$, let
$$\sigma(\alpha)\beta:=\sum_{p\in \alpha \cap \beta}
\varepsilon(p;\alpha,\beta) \beta_{*p}\alpha_p\beta_{p*}
\in \mathbb{Q}\pi_1(S,*).$$
\end{dfn}

Let ${\rm Der}(\mathbb{Q}\pi_1(S,*))$ be the
Lie algebra of the derivations
of the group ring $\mathbb{Q}\pi_1(S,*)$.

\begin{prop}
\label{prop:3-2-2}
This definition of $\sigma$ gives rise to a well-defined
homomorphism
$$\sigma\colon \mathbb{Q}\hat{\pi}(S\setminus \{ * \})
\rightarrow {\rm Der}(\mathbb{Q}\pi_1(S,*))$$
of Lie algebras.
\end{prop}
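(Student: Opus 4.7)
The plan is to mimic Goldman's proof of Theorem~\ref{thm:3-1-1}, adapting it to handle the fact that $\alpha$ is a free loop on $S\setminus\{*\}$ while $\beta$ is based at $*$. The statement has three independent parts to verify: the value $\sigma(\alpha)\beta$ is well-defined, $\sigma(\alpha)$ is a derivation, and $\sigma$ respects brackets.

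For well-definedness, I would pick any two transverse immersed representatives of the pair $(\alpha,\beta)$ and connect them by a generic one-parameter family of immersed pairs (keeping $\alpha$ away from $*$ throughout). In such a family the configuration of transverse double points changes only at isolated tangency instants (births or deaths of a pair $p,q$ of crossings) or at triple-point instants. At a birth--death event, the two new intersections carry opposite signs $\varepsilon(p;\alpha,\beta)=-\varepsilon(q;\alpha,\beta)$, and the based loops $\beta_{*p}\alpha_p\beta_{p*}$ and $\beta_{*q}\alpha_q\beta_{q*}$ cobound a small disk in $S$ (disjoint from $*$), so they represent the same element of $\pi_1(S,*)$; their contributions cancel in $\mathbb{Q}\pi_1(S,*)$. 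Triple-point events only permute terms of the same shape. The same reasoning shows invariance under a change of basepoint on $\alpha$, so $\sigma$ descends to $\mathbb{Q}\hat{\pi}(S\setminus\{*\})$.

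The Leibniz rule for $\sigma(\alpha)$ on a product $\beta\gamma$ follows once transverse representatives are chosen so that $\alpha\cap(\beta\gamma)=(\alpha\cap\beta)\sqcup(\alpha\cap\gamma)$. For $p\in\alpha\cap\beta$ one has $(\beta\gamma)_{*p}=\beta_{*p}$ and $(\beta\gamma)_{p*}=\beta_{p*}\gamma$, producing the contribution $\varepsilon(p;\alpha,\beta)\beta_{*p}\alpha_p\beta_{p*}\gamma$; symmetrically for $p\in\alpha\cap\gamma$. Summing gives $\sigma(\alpha)(\beta\gamma)=(\sigma(\alpha)\beta)\gamma+\beta(\sigma(\alpha)\gamma)$, and $\sigma(\alpha)$ extends $\mathbb{Q}$-linearly to a derivation of $\mathbb{Q}\pi_1(S,*)$.

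The Lie-algebra homomorphism property $\sigma([\alpha_1,\alpha_2])=[\sigma(\alpha_1),\sigma(\alpha_2)]$ is the main point. Fix generic representatives of $\alpha_1,\alpha_2,\beta$ and expand $[\sigma(\alpha_1),\sigma(\alpha_2)]\beta$ using the previous derivation formula. Each iterated term $\sigma(\alpha_i)(\sigma(\alpha_j)\beta)$ is a double sum over pairs $(q,p)$ with $q\in\alpha_j\cap\beta$ and $p$ a crossing of $\alpha_i$ with the inserted loop $\beta_{*q}\alpha_{j,q}\beta_{q*}$, hence $p$ lies either in $\alpha_i\cap\beta$ or in $\alpha_1\cap\alpha_2$. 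The key cancellation is for pairs with both points on $\beta$: for each $(p,q)\in(\alpha_1\cap\beta)\times(\alpha_2\cap\beta)$ the two orderings of insertion produce the same word in $\pi_1(S,*)$ with the same sign $\varepsilon(p;\alpha_1,\beta)\varepsilon(q;\alpha_2,\beta)$ (the word depending only on the relative order of $p$ and $q$ along $\beta$), so these contributions cancel in the commutator. The surviving terms are indexed by $p\in\alpha_1\cap\alpha_2$ together with $q\in\alpha_i\cap\beta$ for some $i$, and by rewriting the inserted loop through the cyclic identification $(\alpha_{1,p}\alpha_{2,p})_{q}=\alpha_{2,q\to p}\,\alpha_{1,p}\,\alpha_{2,p\to q}$ (and symmetrically when $q\in\alpha_1$) they reassemble precisely into $\varepsilon(p;\alpha_1,\alpha_2)\,\sigma(|\alpha_{1,p}\alpha_{2,p}|)\beta$. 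Summing over $p\in\alpha_1\cap\alpha_2$ yields $\sigma([\alpha_1,\alpha_2])\beta$. I expect the sign bookkeeping in this mixed-$\beta$ cancellation to be the main obstacle, as it is essentially the Jacobi computation of Goldman's bracket transplanted into the based setting.
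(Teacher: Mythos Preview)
Your proposal is correct and follows essentially the same approach as the paper: well-definedness via Goldman's standard moves (birth--death cancellations and triple-point invariance), the Leibniz rule by splitting $\alpha\cap(\beta\gamma)$ into $\alpha\cap\beta$ and $\alpha\cap\gamma$, and the bracket identity by expanding $[\sigma(\alpha_1),\sigma(\alpha_2)]\beta$ as a double sum, cancelling the terms with both crossings on the based loop, and reassembling the survivors into $\sigma([\alpha_1,\alpha_2])\beta$. The paper additionally notes that well-definedness can alternatively be read off from the homological interpretation in Proposition~\ref{prop:3-5-2}, but the direct argument you give is the one it sketches first.
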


\begin{proof}
One way to prove that $\sigma$ is well-defined is to show that
$\sigma(\alpha)\beta$ is unchanged if $\alpha$ and $\beta$
are replaced by one of the standard moves (see Goldman \cite{Go}, Lemma 5.6).
This can be done by the same argument as Goldman did, so we omit details.
Another way to see this is using our homological interpretation of $\sigma$,
see Proposition \ref{prop:3-5-2}.

To prove that $\mathbb{Q}\hat{\pi}(S\setminus \{ * \})$ acts
on $\mathbb{Q}\pi_1(S,*)$ as derivation via $\sigma$, it suffices
to show $\sigma(\alpha)(\beta \gamma)
=(\sigma(\alpha)\beta) \gamma+\beta \sigma(\alpha)\gamma$,
where $\alpha$ is an immersed loop on $S$, and $\beta,\gamma$ are
immersed based loops on $S$.
We may assume $\alpha$ intersects the conjunction $\beta \gamma$
at worst transverse double points.
Then $\alpha \cap (\beta \gamma)=(\alpha \cap \beta)\cup (\alpha \cap \gamma)$,
and
\begin{eqnarray*}
\sigma(\alpha)(\beta \gamma)
&=& \sum_{p\in \alpha \cap (\beta \gamma)}
\varepsilon(p;\alpha,\beta \gamma)(\beta\gamma)_{*p}\alpha_p
(\beta\gamma)_{p*} \\
&=& \sum_{p\in \alpha \cap \beta}
\varepsilon(p;\alpha,\beta \gamma)(\beta\gamma)_{*p}\alpha_p
(\beta\gamma)_{p*}+
\sum_{p\in \alpha \cap \gamma}
\varepsilon(p;\alpha,\beta \gamma)(\beta\gamma)_{*p}\alpha_p
(\beta\gamma)_{p*} \\
&=& \sum_{p\in \alpha \cap \beta}
\varepsilon(p;\alpha,\beta)\beta_{*p}\alpha_p \beta_{p*}\gamma+
\sum_{p\in \alpha \cap \gamma}
\varepsilon(p;\alpha,\gamma)\beta \gamma_{*p}\alpha_p \gamma_{p*} \\
&=& (\sigma(\alpha)\beta) \gamma+\beta \sigma(\alpha)\gamma.
\end{eqnarray*}
To prove that $\sigma$ is a homomorphism of Lie algebras it
suffices to show $\sigma([\alpha,\beta])\gamma=
\sigma(\alpha)\sigma(\beta)\gamma-\sigma(\beta)\sigma(\alpha)\gamma$,
where $\alpha,\beta$ are immersed loops on $S$, and $\gamma$ is
an immersed based loop on $S$.
We may assume $\alpha\cup \beta \cup \gamma$ is an immersion
with at worst transverse double points. We compute
\begin{eqnarray}
\label{eq:3-2-1}
\sigma(\alpha)\sigma(\beta)\gamma
&=& \sigma(\alpha) \left(\sum_{p\in \beta \cap \gamma}\varepsilon(p;\beta,\gamma)
\gamma_{*p}\beta_p\gamma_{p*} \right) \nonumber \\
&=& \sum_{p\in \beta \cap \gamma}\varepsilon(p;\beta,\gamma)
\sigma(\alpha)\gamma_{*p}\beta_p\gamma_{p*} \nonumber \\
&=&
\sum_{p\in \beta \cap \gamma}\sum_{q\in \alpha \cap \beta}
\varepsilon(p;\beta,\gamma)\varepsilon(q;\alpha,\beta)
(\gamma_{*p}\beta_p\gamma_{p*})_{*q}\alpha_q
(\gamma_{*p}\beta_p\gamma_{p*})_{q*} \nonumber \\
& & +\sum_{p\in \beta \cap \gamma}\sum_{r\in \alpha \cap \gamma}
\varepsilon(p;\beta,\gamma)\varepsilon(r;\alpha,\gamma)
(\gamma_{*p}\beta_p\gamma_{p*})_{*r}\alpha_r
(\gamma_{*p}\beta_p\gamma_{p*})_{r*},
\end{eqnarray}
and
\begin{eqnarray}
\label{eq:3-2-2}
-\sigma(\beta)\sigma(\alpha)\gamma
&=& -\sigma(\beta) \left(\sum_{r\in \alpha \cap \gamma}\varepsilon(r;\alpha,\gamma)
\gamma_{*r}\alpha_r\gamma_{r*} \right) \nonumber \\
&=& -\sum_{r\in \alpha \cap \gamma}\varepsilon(r;\alpha,\gamma)
\sigma(\beta)\gamma_{*r}\alpha_r\gamma_{r*} \nonumber \\
&=& -\sum_{r\in \alpha \cap \gamma}\sum_{p\in \beta \cap \gamma}
\varepsilon(r;\alpha,\gamma)\varepsilon(p;\beta,\gamma)
(\gamma_{*r}\alpha_r\gamma_{r*})_{*p}\beta_p
(\gamma_{*r}\alpha_r\gamma_{r*})_{p*} \nonumber \\
& & -\sum_{r\in \alpha \cap \gamma}\sum_{q\in \beta \cap \alpha}
\varepsilon(r;\alpha,\gamma)\varepsilon(q;\beta,\alpha)
(\gamma_{*r}\alpha_r\gamma_{r*})_{*q}\beta_q
(\gamma_{*r}\alpha_r\gamma_{r*})_{q*}.
\end{eqnarray}
Then the second term of (\ref{eq:3-2-1}) and the first term of
(\ref{eq:3-2-2}) cancel and we have
\begin{eqnarray*}
\sigma(\alpha)\sigma(\beta)\gamma-\sigma(\beta)\sigma(\alpha)\gamma
&=& \sum_{p\in \beta \cap \gamma}\sum_{q\in \alpha \cap \beta}
\varepsilon(p;\beta,\gamma)\varepsilon(q;\alpha,\beta)
(\gamma_{*p}\beta_p\gamma_{p*})_{*q}\alpha_q
(\gamma_{*p}\beta_p\gamma_{p*})_{q*} \\
& & +\sum_{r\in \alpha \cap \gamma}\sum_{q\in \alpha \cap \beta}
\varepsilon(r;\alpha,\gamma)\varepsilon(q;\alpha,\beta)
(\gamma_{*r}\alpha_r\gamma_{r*})_{*q}\beta_q
(\gamma_{*r}\alpha_r\gamma_{r*})_{q*}.
\end{eqnarray*}
Here we use $\varepsilon(q;\beta,\alpha)=-\varepsilon(q;\alpha,\beta)$.
Now $(\gamma_{*p}\beta_p\gamma_{p*})_{*q}\alpha_q
(\gamma_{*p}\beta_p\gamma_{p*})_{q*}
=\gamma_{*p}|\alpha_q\beta_q|_p\gamma_{p*}$ for
$p\in \beta \cap \gamma$, $q\in \alpha \cap \beta$
and $(\gamma_{*r}\alpha_r\gamma_{r*})_{*q}\beta_q
(\gamma_{*r}\alpha_r\gamma_{r*})_{q*}
=\gamma_{*r}|\alpha_q\beta_q|_r\gamma_{r*}$ for
$r\in \alpha \cap \gamma$, $q\in \alpha \cap \beta$.
Therefore, we have
\begin{eqnarray*}
\sigma(\alpha)\sigma(\beta)\gamma-\sigma(\beta)\sigma(\alpha)\gamma
&=& \sum_{x\in (\alpha \cup \beta)\cap \gamma}
\sum_{y\in \alpha \cap \beta}
\varepsilon(x;\alpha\cup \beta,\gamma)
\varepsilon(y;\alpha,\beta) \gamma_{*x}|\alpha_y\beta_y|_x\gamma_{x*} \\
&=& \sigma \left( \sum_{y\in \alpha \cap \beta}
\varepsilon(y;\alpha,\beta)|\alpha_y\beta_y| \right) \gamma \\
&=& \sigma([\alpha,\beta])\gamma.
\end{eqnarray*}
This completes the proof.
\end{proof}

Note that to make Definition \ref{def:3-2-1} work, we need to
delete the basepoint $*$. A simple illustration
of this fact is Figure 4. Here the loops $\alpha_1$ and $\alpha_2$
are homotopic as free loops on $S$, but not homotopic as loops
on $S\setminus \{ *\}$. Following Definition \ref{def:3-2-1},
we have $\sigma(\alpha_1)\gamma=\alpha\gamma-\gamma\alpha$,
and clearly $\sigma(\alpha_2)\gamma=0$.
Hence $\sigma(\alpha_1)\gamma \neq \sigma(\alpha_2)\gamma$.

\begin{center}
Figure 4: We need to delete the basepoint to define $\sigma$.
\unitlength 0.1in
\begin{picture}( 42.6200, 21.0000)(  0.8000,-21.7600)
%
\special{pn 13}%
\special{ar 1230 502 506 506  0.3217506 2.8198421}%
%
\special{pn 13}%
\special{ar 1230 982 480 480  3.7158975 5.6904879}%
%
\special{pn 20}%
\special{sh 1}%
\special{ar 2030 1142 10 10 0  6.28318530717959E+0000}%
\special{sh 1}%
\special{ar 2030 1142 10 10 0  6.28318530717959E+0000}%
%
\special{pn 13}%
\special{ar 2990 1142 506 506  0.3217506 2.8198421}%
%
\special{pn 13}%
\special{ar 2990 1622 480 480  3.7158975 5.6904879}%
%
\special{pn 8}%
\special{pa 2974 942}%
\special{pa 3054 962}%
\special{fp}%
\special{pa 3054 962}%
\special{pa 2974 982}%
\special{fp}%
%
\special{pn 8}%
\special{pa 2030 1126}%
\special{pa 2026 1092}%
\special{pa 2020 1056}%
\special{pa 2014 1022}%
\special{pa 2008 986}%
\special{pa 2002 952}%
\special{pa 1996 918}%
\special{pa 1990 884}%
\special{pa 1982 850}%
\special{pa 1974 818}%
\special{pa 1966 784}%
\special{pa 1958 752}%
\special{pa 1948 720}%
\special{pa 1938 690}%
\special{pa 1928 660}%
\special{pa 1916 630}%
\special{pa 1904 602}%
\special{pa 1890 574}%
\special{pa 1876 546}%
\special{pa 1860 520}%
\special{pa 1844 496}%
\special{pa 1826 470}%
\special{pa 1808 448}%
\special{pa 1788 426}%
\special{pa 1766 406}%
\special{pa 1742 386}%
\special{pa 1718 368}%
\special{pa 1694 352}%
\special{pa 1666 336}%
\special{pa 1638 322}%
\special{pa 1610 308}%
\special{pa 1580 298}%
\special{pa 1550 286}%
\special{pa 1518 278}%
\special{pa 1486 270}%
\special{pa 1454 262}%
\special{pa 1422 256}%
\special{pa 1388 252}%
\special{pa 1354 248}%
\special{pa 1320 246}%
\special{pa 1284 246}%
\special{pa 1250 246}%
\special{pa 1214 248}%
\special{pa 1180 250}%
\special{pa 1144 254}%
\special{pa 1110 260}%
\special{pa 1076 266}%
\special{pa 1042 274}%
\special{pa 1008 282}%
\special{pa 974 292}%
\special{pa 940 302}%
\special{pa 908 314}%
\special{pa 878 328}%
\special{pa 846 342}%
\special{pa 816 358}%
\special{pa 788 374}%
\special{pa 760 394}%
\special{pa 734 412}%
\special{pa 708 434}%
\special{pa 686 456}%
\special{pa 664 478}%
\special{pa 642 502}%
\special{pa 624 528}%
\special{pa 608 556}%
\special{pa 594 584}%
\special{pa 580 614}%
\special{pa 570 646}%
\special{pa 562 678}%
\special{pa 556 710}%
\special{pa 552 744}%
\special{pa 550 778}%
\special{pa 548 812}%
\special{pa 550 848}%
\special{pa 554 882}%
\special{pa 560 916}%
\special{pa 566 950}%
\special{pa 576 984}%
\special{pa 586 1018}%
\special{pa 598 1050}%
\special{pa 612 1080}%
\special{pa 628 1110}%
\special{pa 646 1138}%
\special{pa 666 1166}%
\special{pa 686 1190}%
\special{pa 710 1214}%
\special{pa 734 1234}%
\special{pa 758 1254}%
\special{pa 786 1270}%
\special{pa 814 1286}%
\special{pa 842 1300}%
\special{pa 872 1312}%
\special{pa 904 1322}%
\special{pa 934 1330}%
\special{pa 968 1338}%
\special{pa 1000 1344}%
\special{pa 1032 1348}%
\special{pa 1066 1352}%
\special{pa 1100 1356}%
\special{pa 1134 1358}%
\special{pa 1166 1358}%
\special{pa 1200 1358}%
\special{pa 1232 1358}%
\special{pa 1266 1358}%
\special{pa 1298 1356}%
\special{pa 1330 1352}%
\special{pa 1362 1350}%
\special{pa 1394 1346}%
\special{pa 1426 1342}%
\special{pa 1456 1336}%
\special{pa 1488 1330}%
\special{pa 1520 1324}%
\special{pa 1550 1318}%
\special{pa 1582 1310}%
\special{pa 1612 1302}%
\special{pa 1642 1294}%
\special{pa 1674 1286}%
\special{pa 1704 1276}%
\special{pa 1734 1266}%
\special{pa 1764 1256}%
\special{pa 1794 1246}%
\special{pa 1826 1236}%
\special{pa 1856 1226}%
\special{pa 1886 1214}%
\special{pa 1916 1202}%
\special{pa 1946 1192}%
\special{pa 1976 1180}%
\special{pa 1990 1174}%
\special{sp}%
%
\special{pn 8}%
\special{ar 2902 1374 1440 800  0.0000000 6.2831853}%
%
\special{pn 8}%
\special{pa 2950 558}%
\special{pa 3030 578}%
\special{fp}%
\special{pa 3030 578}%
\special{pa 2950 598}%
\special{fp}%
%
\special{pn 8}%
\special{pa 1374 270}%
\special{pa 1296 244}%
\special{fp}%
\special{pa 1296 244}%
\special{pa 1378 232}%
\special{fp}%
%
\special{pn 8}%
\special{pa 2070 1118}%
\special{pa 2098 1104}%
\special{pa 2128 1090}%
\special{pa 2156 1076}%
\special{pa 2184 1060}%
\special{pa 2212 1046}%
\special{pa 2242 1032}%
\special{pa 2270 1018}%
\special{pa 2298 1004}%
\special{pa 2328 992}%
\special{pa 2356 978}%
\special{pa 2386 964}%
\special{pa 2414 952}%
\special{pa 2444 940}%
\special{pa 2474 928}%
\special{pa 2504 916}%
\special{pa 2534 904}%
\special{pa 2564 892}%
\special{pa 2594 882}%
\special{pa 2624 872}%
\special{pa 2656 862}%
\special{pa 2686 852}%
\special{pa 2718 842}%
\special{pa 2750 834}%
\special{pa 2782 826}%
\special{pa 2814 820}%
\special{pa 2846 812}%
\special{pa 2880 806}%
\special{pa 2912 802}%
\special{pa 2944 796}%
\special{pa 2978 792}%
\special{pa 3012 790}%
\special{pa 3044 788}%
\special{pa 3078 786}%
\special{pa 3110 784}%
\special{pa 3144 784}%
\special{pa 3176 786}%
\special{pa 3210 788}%
\special{pa 3242 790}%
\special{pa 3276 794}%
\special{pa 3308 800}%
\special{pa 3340 806}%
\special{pa 3372 812}%
\special{pa 3404 820}%
\special{pa 3434 830}%
\special{pa 3466 840}%
\special{pa 3496 850}%
\special{pa 3526 864}%
\special{pa 3556 878}%
\special{pa 3586 892}%
\special{pa 3614 908}%
\special{pa 3642 926}%
\special{pa 3670 946}%
\special{pa 3698 966}%
\special{pa 3724 988}%
\special{pa 3750 1010}%
\special{pa 3774 1034}%
\special{pa 3796 1060}%
\special{pa 3818 1084}%
\special{pa 3838 1112}%
\special{pa 3858 1140}%
\special{pa 3874 1168}%
\special{pa 3890 1196}%
\special{pa 3902 1226}%
\special{pa 3914 1256}%
\special{pa 3922 1288}%
\special{pa 3930 1318}%
\special{pa 3934 1350}%
\special{pa 3934 1382}%
\special{pa 3934 1414}%
\special{pa 3930 1446}%
\special{pa 3924 1478}%
\special{pa 3914 1510}%
\special{pa 3904 1542}%
\special{pa 3892 1572}%
\special{pa 3878 1602}%
\special{pa 3860 1632}%
\special{pa 3842 1660}%
\special{pa 3824 1686}%
\special{pa 3802 1712}%
\special{pa 3780 1738}%
\special{pa 3758 1760}%
\special{pa 3732 1782}%
\special{pa 3708 1802}%
\special{pa 3682 1820}%
\special{pa 3654 1836}%
\special{pa 3626 1852}%
\special{pa 3598 1866}%
\special{pa 3568 1878}%
\special{pa 3540 1890}%
\special{pa 3508 1900}%
\special{pa 3478 1908}%
\special{pa 3446 1916}%
\special{pa 3414 1924}%
\special{pa 3382 1930}%
\special{pa 3350 1936}%
\special{pa 3316 1942}%
\special{pa 3284 1946}%
\special{pa 3250 1950}%
\special{pa 3216 1954}%
\special{pa 3182 1956}%
\special{pa 3150 1960}%
\special{pa 3116 1962}%
\special{pa 3082 1964}%
\special{pa 3048 1966}%
\special{pa 3014 1968}%
\special{pa 2982 1970}%
\special{pa 2948 1970}%
\special{pa 2916 1970}%
\special{pa 2882 1968}%
\special{pa 2850 1968}%
\special{pa 2818 1966}%
\special{pa 2784 1962}%
\special{pa 2754 1958}%
\special{pa 2722 1954}%
\special{pa 2690 1948}%
\special{pa 2660 1942}%
\special{pa 2628 1934}%
\special{pa 2598 1926}%
\special{pa 2568 1914}%
\special{pa 2540 1904}%
\special{pa 2512 1890}%
\special{pa 2482 1876}%
\special{pa 2456 1860}%
\special{pa 2428 1844}%
\special{pa 2402 1826}%
\special{pa 2376 1806}%
\special{pa 2350 1786}%
\special{pa 2326 1764}%
\special{pa 2302 1742}%
\special{pa 2280 1718}%
\special{pa 2258 1692}%
\special{pa 2238 1668}%
\special{pa 2216 1640}%
\special{pa 2198 1614}%
\special{pa 2180 1586}%
\special{pa 2162 1558}%
\special{pa 2146 1528}%
\special{pa 2132 1500}%
\special{pa 2118 1470}%
\special{pa 2106 1440}%
\special{pa 2094 1410}%
\special{pa 2084 1378}%
\special{pa 2076 1348}%
\special{pa 2068 1318}%
\special{pa 2060 1286}%
\special{pa 2054 1254}%
\special{pa 2046 1224}%
\special{pa 2040 1192}%
\special{pa 2038 1182}%
\special{sp}%
%
\special{pn 8}%
\special{pa 2990 766}%
\special{pa 3070 786}%
\special{fp}%
\special{pa 3070 786}%
\special{pa 2990 806}%
\special{fp}%
\put(18.7000,-13.6600){\makebox(0,0)[lb]{$*$}}%
\put(15.9800,-2.4600){\makebox(0,0)[lb]{$\gamma$}}%
\put(20.8600,-6.2200){\makebox(0,0)[lb]{$\alpha_1$}}%
\put(22.3000,-12.2000){\makebox(0,0)[lb]{$\alpha_2$}}%
\put(21.7400,-9.8200){\makebox(0,0)[lb]{$\alpha$}}%
%
\special{pn 8}%
\special{ar 2990 1414 640 456  0.0000000 6.2831853}%
%
\special{pn 13}%
\special{ar 560 1670 506 506  0.3217506 2.8198421}%
%
\special{pn 13}%
\special{ar 560 2150 480 480  3.7158975 5.6904879}%
\end{picture}%

\end{center}

But if $S$ and its basepoint are our $\Sigma$ and $*\in \partial \Sigma$,
then the inclusion $\Sigma\setminus \{ * \} \hookrightarrow \Sigma$
is a homotopy equivalence. Thus $\mathbb{Q}\hat{\pi}(\Sigma \setminus \{ *\})
=\mathbb{Q}\hat{\pi}(\Sigma)$.
Writing $\hat{\pi}(\Sigma)=\hat{\pi}$ for simplicity,
we have a Lie algebra homomorphism
\begin{equation}
\sigma \colon \mathbb{Q}\hat{\pi}
\rightarrow {\rm Der}(\mathbb{Q}\pi).
\end{equation}

\begin{rem}{\rm
Let $M$ be a $d$-dimensional oriented $C^\infty$ manifold,
and choose a basepoint $*\in M$.
We regard $S^1 = [0,1]/0\sim1$, and denote $\Omega M
= {\rm Map}((S^1, 0), (M, *))$, the based loop space of $M$. The evaluation map
${\rm ev}: \Omega M \to M$, $\gamma\mapsto \gamma(1/2)$,
is a Hurewicz fibration, whose fiber ${\rm ev}^{-1}(*)$ is naturally
identified with $\Omega M\times\Omega M$. The map
$\rho: \Omega M\times [0,1] \to \Omega M$, given by
$$
\rho(\gamma, s)(t) := \left\{
\begin{array}{ll}
\gamma(2st), & \mbox{if $t \leq 1/2$,}\\
\gamma(s+(1-s)(2t-1)), & \mbox{if $t \geq 1/2$,}
\end{array}
\right.
$$
induces a map of pairs $\rho: \Omega M\times([0,1], \{0,1\}) \to
(\Omega M, \Omega M\times \Omega M)$. We define a map
$\Delta': H_*(\Omega M) \to H_{*+1}(\Omega M, \Omega M\times
\Omega M)$ by the composite
$$
H_*(\Omega M)\overset{\times[I]}\to H_{*+1}(\Omega M\times
([0,1], \{0,1\}))\overset{\rho_*}\to
H_{*+1}(\Omega M, \Omega M\times\Omega M),
$$
where $[I] \in H_1([0,1], \{0,1\})$ is the fundamental class.
In a way similar to Chas-Sullivan \cite{CS}, we can define a loop product
$$
\bullet: H_i(L(M\setminus\{*\}))\otimes H_j(\Omega M, \Omega M\times
\Omega M) \to H_{i+j-d}(\Omega M).
$$
Here we denote $L(M\setminus\{*\}) = {\rm Map}(S^1, M\setminus\{*\})$,
the free loop space of $M\setminus \{ *\}$.
Let $x: K_x\to L(M\setminus\{*\})$ be an $i$-cell, and $y: K_y \to
\Omega M$
a $j$-cell. We denote by $K_{x\bullet y}$ a transversal preimage of the
diagonal
under the map $K_x\times K_y \to (M\setminus\{*\})\times M$,
$(k_x, k_y) \mapsto (x(k_x)(0), y(k_y)(1/2))$. The $(i+j-d)$-cell
$x\bullet y:
K_{x\bullet y}\to \Omega M$ is defined by
$$
(x\bullet y)(k_x, k_y) = \left\{
\begin{array}{ll}
y(k_y)(2t), & \mbox{if $t \leq 1/4$,}\\
x(k_x)(2(t - 1/4)), & \mbox{if $1/4\leq t \leq 3/4$,}\\
y(k_y)(2t-1), & \mbox{if $3/4\leq t $.}
\end{array}
\right.
$$
Taking the composite of the loop product with the map $\Delta:
H_*(L(M\setminus\{*\})) \to H_{*+1}(L(M\setminus\{*\}))$
introduced by Chas-Sullivan \cite{CS} and the map $\Delta'$,
we obtain a map
$$
H_i(L(M\setminus\{*\})) \otimes H_j(\Omega M) \to
H_{i+j+2-d}(\Omega M), \quad u\otimes v \mapsto
(\Delta u)\bullet(\Delta'v).
$$
This coincides with our action $\sigma$ in the case $d=2$ and $i=j=0$.
}
\end{rem}

\subsection{Conventions about (co)homology of groups}
In the next two subsections, we give a homological interpretation
of the Goldman bracket and the action $\sigma$ for $\Sigma$.
To state these we fix some conventions about (co)homology of groups.
We basically follow Brown \cite{Bro}.

Let $G$ be a group and $M$ a left $\mathbb{Q}G$-module.
For simplicity we use the term $G$-module for left $\mathbb{Q}G$-module
and sometimes write $\otimes_G$ instead of $\otimes_{\mathbb{Q}G}$.
We can always regard $M$ as a right $G$-module by setting
$mg=g^{-1}m$ ($m\in M, g\in G$). The homology group $H_*(G;M)$
is defined by $H_*(G;M):={\rm Tor}_*^{\mathbb{Q}G}(M,\mathbb{Q})$.
Namely taking a $\mathbb{Q}G$-projective resolution
$$\cdots \rightarrow F_2 \rightarrow F_1 \rightarrow F_0
\rightarrow \mathbb{Q} \rightarrow 0$$
of the trivial $G$-module $\mathbb{Q}$,
$$H_*(G;M)=H_*(M\otimes_G F_*).$$
Similarly the cohomology group $H^*(G;M)$ is defined by
$$H^*(G;M):={\rm Ext}_{\mathbb{Q}G}^*(\mathbb{Q},M)
=H^*({\rm Hom}_{\mathbb{Q}G}(F_*,M)).$$

For $n\ge 0$, let $F_n(G)$ be the free $G$-module with
$\mathbb{Q}G$-basis $\{ [g_1|g_2|\cdots | g_n]; g_i\in G\}$.
The boundary map $\partial_n \colon F_n(G)\rightarrow F_{n-1}(G)$,
$n\ge 1$, is given by
$$\partial_n([g_1|g_2|\cdots|g_n])=g_1[g_2|\cdots |g_n]
+\sum_{i=1}^{n-1}(-1)^i[g_1|\cdots|g_ig_{i+1}|\cdots g_n]
+(-1)^n[g_1|\cdots |g_{n-1}],$$
and $\partial_0\colon F_0(G)\cong \mathbb{Q}G \rightarrow \mathbb{Q}$
is given by the augmentation.

The normalized standard complex, denoted by $\bar{F}_n(G)$,
is the quotient of $F_n(G)$ by the
$\mathbb{Q}G$-submodule spanned by
$\{ [g_1|g_2|\cdots |g_n];\ g_i=1 {\rm \ for\ some\ }i \}$.
It gives a $\mathbb{Q}G$-projective resolution of $\mathbb{Q}$.
For a $G$-module $M$, set $C_n(G;M)=M\otimes_{G} \bar{F}_n(G)$.
Of course we have $H_*(C_*(G;M))=H_*(G;M)$.
The boundary maps of $C_*(G;M)$ in low degrees are given by:
$$\partial_1(m\otimes [g])=g^{-1}m-m\in M\cong M[\ ];$$
$$\partial_2(m\otimes [g_1|g_2])
=g_1^{-1}m \otimes [g_2]-m\otimes [g_1g_2]+m\otimes [g_1].$$
Here $\otimes$ means $\otimes_G$.

In this paper, we consider the (co)homology of groups of $\pi$ for
various $\pi$-modules $M$. One way to describe these is to use
the normalized complex $C_*(\pi;M)$, and another way is to use
the following particular resolution.
Since $\pi$ is a free group of rank $2g$,
the augmentation ideal $I\pi$ is a free $\mathbb{Q}\pi$-module
of rank $2g$. Thus
$$0\rightarrow I\pi \rightarrow \mathbb{Q}\pi
\stackrel{\varepsilon}{\rightarrow} \mathbb{Q} \rightarrow 0$$
gives a $\mathbb{Q}\pi$-projective resolution of $\mathbb{Q}$.
In this view point, the homology group of $\pi$ is computed as
$$H_*(\pi;M)=H_*(M\otimes_{\mathbb{Q}\pi} I\pi \rightarrow M,\ m\otimes v
\mapsto \iota(v)m).$$
The canonical isomorphism $\bar{F}_0(\pi)\cong \mathbb{Q}\pi$
and a $\mathbb{Q}\pi$-linear map
$\bar{F}_1(\pi) \rightarrow I\pi$,
$[x] \mapsto x-1$ for $x\in \pi$,
are compatible with the boundary maps of the two resolutions.
In particular, we have a chain-level description of the canonical isomorphism
\begin{equation}
\label{eq:3-3-1}
H_1(C_*(\pi;M))\stackrel{\cong}{\rightarrow}
H_1(M\otimes_{\mathbb{Q}\pi}I\pi \rightarrow M).
\end{equation}

Finally we mention the relative version of homology of groups.
See also \S4.5.
Let $G$ be a group and $K$ a subgroup of $G$, and $M$ a left $G$-module.
Then $C_*(K;M)$ is a subcomplex of $C_*(G;M)$.
We define the relative homology group
as the homology of the quotient complex:
$$H_*(G,K;M):=H_*(C_*(G;M)/C_*(K;M)).$$
Note that since $C_0(G;M)=C_0(K;M)$, any 1-chain of
the complex $C_*(G;M)/C_*(K;M)$ is a cycle.

\subsection{Homological interpretation of the Goldman Lie algebra}
Let $\mathbb{Q}\pi^c$ be the following $\pi$-module. As a vector space,
$\mathbb{Q}\pi^c=\mathbb{Q}\pi$, and the $\pi$-action is given by
the conjugation: $xu:=xux^{-1}$ for $x\in \pi, u\in \mathbb{Q}\pi^c$.

\begin{dfn}
\label{def:3-4-1}
Define a $\mathbb{Q}$-linear map
$\lambda\colon \mathbb{Q}\pi \rightarrow H_1(\pi;\mathbb{Q}\pi^c)$
by $\lambda(x):=x\otimes [x]$, $x\in \pi$. Here we understand
$H_1(\pi;\mathbb{Q}\pi^c)$ as the homology of $C_*(\pi;\mathbb{Q}\pi^c)$.
\end{dfn}

We need to verify that this is well-defined, i.e., $\lambda(x)$ is a cycle.

\begin{lem}
\label{lem:3-4-2}
For $x,y \in \pi$, we have
\begin{enumerate}
\item[{\rm (1)}] $x\otimes [x]\in C_1(\pi;\mathbb{Q}\pi^c)$ is a cycle,
\item[{\rm (2)}] $\lambda(yxy^{-1})=\lambda(x)\in H_1(\pi;\mathbb{Q}\pi^c)$.
\end{enumerate}
\end{lem}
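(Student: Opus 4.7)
The plan is to verify both claims by explicit computations in the normalized bar complex $C_*(\pi;\mathbb{Q}\pi^c)$, where the left $\pi$-action on $\mathbb{Q}\pi^c$ is the conjugation action $x \cdot u = xux^{-1}$, converted into a right action by $u \cdot g = g^{-1} u = g^{-1}ug$ as in the paper's conventions. Part (1) will then be a one-line verification, while part (2) reduces to exhibiting an explicit $2$-chain whose boundary realizes $\lambda(yxy^{-1}) - \lambda(x)$.

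For (1), I just compute $\partial_1(x \otimes [x])$. By the formula $\partial_1(m \otimes [g]) = g^{-1}m - m$, and using conjugation, this becomes $x^{-1} \cdot x \cdot x - x = x - x = 0$, so $x \otimes [x]$ is a cycle as required.

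For (2), I will find a $2$-chain cancelling $\lambda(yxy^{-1})-\lambda(x)$ modulo boundaries. Applying $\partial_2(u \otimes [g_1|g_2]) = g_1^{-1}ug_1 \otimes [g_2] - u \otimes [g_1g_2] + u \otimes [g_1]$ to the chains $yxy^{-1} \otimes [yx|y^{-1}]$ and $yxy^{-1} \otimes [y|x]$, the conjugation identities $(yx)^{-1}(yxy^{-1})(yx) = x$ and $y^{-1}(yxy^{-1})y = x$ cause the mixed terms to collapse to $x \otimes [y^{-1}]$ and $x \otimes [x]$, respectively, after which the $yxy^{-1} \otimes [yx]$ terms cancel between the two. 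Summing yields
\begin{equation*}
\partial_2\bigl(yxy^{-1} \otimes [yx|y^{-1}] + yxy^{-1} \otimes [y|x]\bigr) = x \otimes [y^{-1}] - yxy^{-1} \otimes [yxy^{-1}] + x \otimes [x] + yxy^{-1} \otimes [y].
\end{equation*}
It remains to show that $x\otimes[y^{-1}] + yxy^{-1}\otimes[y]$ is itself a boundary; this is immediate from $\partial_2(x \otimes [y^{-1}|y]) = yxy^{-1} \otimes [y] - x \otimes [1] + x \otimes [y^{-1}]$, since the middle term vanishes in the normalized complex. Combining these identities gives $\lambda(yxy^{-1}) = \lambda(x)$ in $H_1$.

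The only real obstacle is bookkeeping: keeping the left/right module conversion $u \cdot g = g^{-1}ug$ consistent with the boundary formula of the normalized bar resolution, and choosing the $2$-chain cleverly so that the nontrivial conjugations all collapse by cyclic cancellation. An equivalent and arguably tidier approach is to work in the smaller resolution $0 \to I\pi \to \mathbb{Q}\pi \to \mathbb{Q} \to 0$ via the isomorphism (\ref{eq:3-3-1}), where $\lambda(x)$ corresponds to $x \otimes (x-1) \in \mathbb{Q}\pi^c \otimes_{\pi} I\pi$; then (1) is $x^{-1}\cdot x \cdot x - x = 0$, and for (2) one writes $yxy^{-1}-1 = y(x-1)y^{-1}$ and uses the tensor-product relation $m \otimes gv = g^{-1}mg \otimes v$ to slide $y$ across, producing the desired equality after minor rearrangement.
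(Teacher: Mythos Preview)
Your proof is correct and follows essentially the same approach as the paper: both work in the normalized bar complex $C_*(\pi;\mathbb{Q}\pi^c)$ and verify (2) by exhibiting explicit $2$-chains whose boundaries realize $\lambda(yxy^{-1})-\lambda(x)$, the only cosmetic difference being that the paper factors $yxy^{-1}=y\cdot(xy^{-1})$ and applies the relations $[g_1g_2]\equiv g_1[g_2]+[g_1]$ and $[g^{-1}]\equiv -g^{-1}[g]$ iteratively, while you factor $yxy^{-1}=(yx)\cdot y^{-1}$ and display the bounding $2$-chain $yxy^{-1}\otimes[yx|y^{-1}]+yxy^{-1}\otimes[y|x]-x\otimes[y^{-1}|y]$ in one shot. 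Your remark about the alternative computation in the short resolution $0\to I\pi\to\mathbb{Q}\pi\to\mathbb{Q}\to 0$ is also valid (and is in fact how the paper later represents $\lambda(x)$ in Lemma~\ref{lem:6-3-2}), though the ``minor rearrangement'' there requires one further slide to see that $x\otimes(x-1)(y^{-1}-1)=0$ in $\mathbb{Q}\pi^c\otimes_\pi I\pi$.
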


\begin{proof}
The first part follows from
$\partial_1(x\otimes [x])=x^{-1}\cdot x-x=x^{-1}xx-x=x-x=0$.
For the second part, note that for any $x_1,x_2\in \pi$, the 1-chain
$[x_1x_2]$ is homologous to $x_1[x_2]+[x_1]$
since $\partial_2([x_1|x_2])=x_1[x_2]-[x_1x_2]+[x_1]$, and for any $x\in \pi$,
the 1-chain $[x^{-1}]$ is homologous to $-x^{-1}[x]$ since
$\partial_2([x^{-1}|x])=x^{-1}[x]-[1]+[x^{-1}]=x^{-1}[x]+[x^{-1}]$.
Therefore, we compute
\begin{eqnarray*}
\lambda(yxy^{-1})&=& yxy^{-1}\otimes [yxy^{-1}]
\equiv yxy^{-1}\otimes (y[xy^{-1}]+[y]) \\
&\equiv & yxy^{-1}\otimes (y(x[y^{-1}]+[x])+[y]) \\
&\equiv & yxy^{-1}\otimes (y[x]-yxy^{-1}[y]+[y]) \\
&= & x \otimes [x]=\lambda(x).
\end{eqnarray*}
Here $\equiv$ stands for ``homologous". This proves (2).
\end{proof}

By Lemma \ref{lem:3-4-2}, $\lambda$ descends to a map
$\lambda \colon \mathbb{Q}\hat{\pi}\rightarrow H_1(\pi;\mathbb{Q}\pi^c)$.
We introduce a $\pi$-module map
$\mathcal{B} \colon \mathbb{Q}\pi^c \otimes \mathbb{Q}\pi^c \rightarrow
\mathbb{Q}\hat{\pi}$ by $\mathcal{B}(u\otimes v)=|uv|$.
Here $|\ |\colon \mathbb{Q}\pi \rightarrow \mathbb{Q}\hat{\pi}$
is the linear map induced by the projection $|\ |\colon \pi \rightarrow \hat{\pi}$,
the action of $\pi$ on $\mathbb{Q}\pi^c \otimes \mathbb{Q}\pi^c$
is diagonal, and $\mathbb{Q}\hat{\pi}$ is regarded
as a trivial $\pi$-module.

Let $\mathcal{S}^c$ be the local system on $\Sigma$ corresponding to
the left $\pi$-module $\mathbb{Q}\pi^c$. Since $\Sigma$ is a $K(\pi,1)$-space,
there is a canonical isomorphism
$H_*(\pi;\mathbb{Q}\pi^c)\cong H_*(\Sigma;\mathcal{S}^c)$.

Using the intersection form of the surface,
we have the following bilinear form:
\begin{eqnarray*}
(\ \cdot \ ) \colon H_1(\pi;\mathbb{Q}\pi^c)\times H_1(\pi;\mathbb{Q}\pi^c)
&\cong & H_1(\Sigma;\mathcal{S}^c)\times H_1(\Sigma;\mathcal{S}^c) \\
&\rightarrow & H_0(\Sigma;\mathcal{S}^c\otimes \mathcal{S}^c)
\cong H_0(\pi;\mathbb{Q}\pi^c \otimes \mathbb{Q}\pi^c).
\end{eqnarray*}

\begin{prop}
\label{prop:3-4-3}
\begin{enumerate}
\item[{\rm (1)}] The sequence
$$0 \rightarrow \mathbb{Q}\hat{\pi}^{\prime}
\stackrel{\lambda}{\rightarrow}
H_1(\pi;\mathbb{Q}\pi^c) \rightarrow H \rightarrow 0,$$
where the map $H_1(\pi;\mathbb{Q}\pi^c)\rightarrow
H_1(\pi;\mathbb{Q})=H$ is induced by the augmentation,
is exact and canonically splits.
\item[{\rm (2)}] For $u,v\in \mathbb{Q}\hat{\pi}$,
$$[u,v]=\mathcal{B}_*(\lambda(u)\cdot \lambda(v)).$$
Here $\mathcal{B}_* \colon
H_0(\pi;\mathbb{Q}\pi^c \otimes \mathbb{Q}\pi^c)
\rightarrow H_0(\pi;\mathbb{Q}\hat{\pi})=\mathbb{Q}\hat{\pi}$
is the map induced by $\mathcal{B}$.
\end{enumerate}
\end{prop}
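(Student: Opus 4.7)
The plan for part (1) is to exploit the short exact sequence of $\pi$-modules
$$0 \to \mathbb{Q} \to \mathbb{Q}\pi^c \to I\pi^c \to 0,$$
where $\mathbb{Q}$ is embedded as $\mathbb{Q}\cdot 1$ (a $\pi$-submodule because conjugation fixes $1\in\pi$) and $I\pi^c$ carries the conjugation action. Since $\pi$ is free, $H_k(\pi;-)=0$ for $k\ge 2$, and the right-hand segment $H_0(\pi;\mathbb{Q})\to H_0(\pi;\mathbb{Q}\pi^c)\to H_0(\pi;I\pi^c)\to 0$ of the long exact sequence is literally $\mathbb{Q}\hookrightarrow\mathbb{Q}\hat{\pi}\twoheadrightarrow\mathbb{Q}\hat{\pi}'\to 0$, which is already exact, so the connecting map $H_1(\pi;I\pi^c)\to H_0(\pi;\mathbb{Q})$ must vanish. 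The resulting short exact sequence at the $H_1$ level is canonically split by the $\pi$-module map $\mathbb{Q}\hookrightarrow\mathbb{Q}\pi^c$ (whose composition with augmentation is the identity on $\mathbb{Q}$), giving the canonical decomposition $H_1(\pi;\mathbb{Q}\pi^c)\cong H\oplus H_1(\pi;I\pi^c)$ in which the augmentation map to $H$ is the projection to the first factor.

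The remaining task for (1) is to identify $H_1(\pi;I\pi^c)$ with $\mathbb{Q}\hat{\pi}'$ via $\lambda$. Using the free resolution $0\to I\pi\to\mathbb{Q}\pi\to\mathbb{Q}\to 0$ and the chain-level identification (3.3.1), $H_1(\pi;I\pi^c)$ is computable as the kernel of the explicit map $(I\pi^c)^{2g}\to I\pi^c$ induced by the basis $\{x_i-1\}$. The class of $(x-1)\otimes[x]\in C_1(\pi;I\pi^c)$ (i.e.\ the push-forward of $\lambda(|x|)$ to coefficients in $I\pi^c$) is a cycle that, by Lemma \ref{lem:3-4-2}, factors through $\mathbb{Q}\hat{\pi}'$, and a Fox-calculus expansion of $x-1$ in terms of the $x_i-1$ combined with a cyclic-word normal form in the free group shows that the induced map $\mathbb{Q}\hat{\pi}'\to H_1(\pi;I\pi^c)$ is a bijection. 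Combining with the canonical splitting yields the asserted decomposition realized by $\lambda$ and augmentation.

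For part (2), I would work geometrically via the identification $H_*(\pi;\mathbb{Q}\pi^c)=H_*(\Sigma;\mathcal{S}^c)$. The class $\lambda(|\alpha|)$ is represented by the loop $\alpha$ carrying itself as local coefficient: lifting $\alpha$ to the universal cover and tensoring with the monodromy $\alpha\in\mathbb{Q}\pi^c$ yields the corresponding $1$-cycle. To compute the intersection pairing on $\lambda(u)$ and $\lambda(v)$, I would place representatives $\alpha,\beta$ in transverse position and use the standard chain-level description of cap product with the Poincar\'e dual. At each $p\in\alpha\cap\beta$, parallel transport along $\alpha_{*p}$ and $\beta_{*p}$ delivers $\alpha_p\otimes\beta_p$ in the stalk of $\mathcal{S}^c\otimes\mathcal{S}^c$ at $p$, weighted by the local intersection sign $\varepsilon(p;\alpha,\beta)$; passing to $H_0$ absorbs the conjugation ambiguity, and applying $\mathcal{B}_*$ turns each contribution into $\varepsilon(p;\alpha,\beta)|\alpha_p\beta_p|$, so the signed sum matches Goldman's formula for $[\alpha,\beta]$ term-by-term.

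The principal obstacle is the chain-level bookkeeping in part (2): one must verify that the parallel transport of the local coefficients at each intersection point indeed returns the based loops $\alpha_p,\beta_p$ (modulo conjugation, which is washed out in $H_0$) with the correct signs, so that $\mathcal{B}_*$ applied to the intersection cycle reproduces Goldman's bracket on the nose. The bookkeeping in part (1) is comparatively routine once the long-exact-sequence calculation and the Fox-derivative identification are in place.
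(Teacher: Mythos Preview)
Your treatment of part (2) is essentially the paper's: both interpret $\lambda(\alpha)$ as the flat section of $\mathcal{S}^c$ over the loop $\alpha$ whose value at $p$ is $\alpha_p\in\pi_1(\Sigma,p)$, and then read off the Goldman bracket term by term from the standard formula for the intersection pairing of transverse cycles with local coefficients.

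For part (1) the paper takes a different and considerably cleaner route. Instead of the augmentation sequence $0\to\mathbb{Q}\to\mathbb{Q}\pi^c\to I\pi^c\to 0$, it decomposes $\mathbb{Q}\pi^c$ over conjugacy classes,
\[
\mathbb{Q}\pi^c=\bigoplus_{|x|\in\hat\pi}\mathbb{Q}(\pi/Z(x)),
\]
and applies the Shapiro lemma to get $H_1(\pi;\mathbb{Q}\pi^c)\cong\bigoplus_{|x|\in\hat\pi}H_1(Z(x);\mathbb{Q})$. Because $\Sigma$ has negative Euler characteristic, each centralizer $Z(x)$ with $x\neq 1$ is infinite cyclic, so each nontrivial summand is one-dimensional and one checks directly that $\lambda(x)$ hits a generator. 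This is short, avoids any chain-level computation, and makes transparent why rational coefficients are needed (over $\mathbb{Z}$, $\lambda(x)$ only hits a multiple of the generator when $x$ is a proper power). Your augmentation-sequence argument correctly isolates the $H$ summand---indeed, since the sequence is already split by the $\pi$-equivariant augmentation $\varepsilon$, the long-exact-sequence discussion of the connecting map is unnecessary---but the real content lies in identifying $H_1(\pi;I\pi^c)$ with $\mathbb{Q}\hat\pi'$, and your Fox-calculus/cyclic-normal-form sketch for this step is not yet a proof. Carrying it out rigorously would effectively redo the Shapiro computation by hand, one conjugacy class at a time; the paper's approach gets there in one stroke.
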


\begin{proof}
For $x\in \pi$, let $Z(x)\subset \pi$ be the centralizer of $x$.
Then we have a direct decomposition
$$\mathbb{Q}\pi^c=\bigoplus_{|x|\in \hat{\pi}} \mathbb{Q}(\pi/Z(x))$$
as left $\pi$-modules.
Here the action of $\pi$ on $\mathbb{Q}(\pi/Z(x))$ is given by
the multiplication from the left: $y(z {\rm \ mod\ }Z(x))=
yz {\rm \ mod\ }Z(x)$.
Since the Euler characteristic of $\Sigma$ is negative,
$Z(x)$ is a cyclic group of infinite order for $x\neq 1$.
In fact, the centralizer of a hyperbolic element in $PSL_2(\mathbb{R})$
is an abelian subgroup isomorphic to $\mathbb{R}_{>0}$.
Using the Shapiro lemma
(see \cite{Bro}, p. 73), we have a canonical decomposition
\begin{eqnarray*}
H_1(\pi;\mathbb{Q}\pi^c)=
\bigoplus_{|x|\in \hat{\pi}}H_1(\pi;\mathbb{Q}(\pi/Z(x)))
&\cong & \bigoplus_{|x|\in \hat{\pi}}H_1(Z(x);\mathbb{Q}) \\
&=& H_1(\pi;\mathbb{Q})\oplus
\bigoplus_{|x|\in \hat{\pi}^{\prime}}H_1(Z(x);\mathbb{Q}).
\end{eqnarray*}
Moreover if $x\neq 1$, the cycle $\lambda(x)=x\otimes [x]$
corresponds to a generator of $H_1(Z(x);\mathbb{Q})\cong \mathbb{Q}$
(note: this part does not hold over the integers;
in this case $\lambda(x)$ corresponds to a non-zero multiple
of a generator of $H_1(Z(x);\mathbb{Z})\cong \mathbb{Z}$).
This proves the first part.

Next we proceed to the second part. As in Goldman \cite{Go} \S2,
we regard local systems as flat vector bundles and
their (co)homology as the (co)homology of singular chains
with values in flat vector bundles.
Following this description the fiber of $\mathcal{S}^c$ at $p\in \Sigma$
is the group ring $\mathbb{Q}\pi_1(\Sigma,p)$, and the parallel
transport along a path $\ell\colon[0,1]\rightarrow \Sigma$ is given by
$\mathbb{Q}\pi_1(\Sigma,\ell(0))\stackrel{\cong}{\rightarrow}
\mathbb{Q}\pi_1(\Sigma,\ell(1))$,
$\alpha \mapsto \ell^{-1} \alpha \ell$.

Let $\alpha$ be a based loop on $\Sigma$.
Under the canonical isomorphism
$H_1(\pi;\mathbb{Q}\pi^c)\cong H_1(\Sigma;\mathcal{S}^c)$,
the 1-cycle
$\alpha \otimes [\alpha]\in C_1(\pi;\mathbb{Q}\pi^c)$
corresponds the flat section $s_{\lambda}(\alpha)$
of $\alpha^*\mathcal{S}^c$ over $\alpha$,
whose value at $p\in \alpha$ is just $\alpha_p\in \pi_1(\Sigma,p)$
(to be more precise, we need to write $p=p(t)$ for some $t\in S^1$).
The homology class of the section $s_\lambda(\alpha)$ 
in $H_1(\Sigma; \mathcal{S}^c)$ depends only on the 
free homotopy class of the loop $\alpha$,
because of the homotopy equivalence of twisted homology. 
Let $\beta$ be another free loop on $\Sigma$ and 
suppose $\alpha$ and $\beta$ intersect with at worst transverse 
double points. Similarly $\beta\otimes[\beta]$ is regarded as 
a section $s_\lambda(\beta)$.

Using the same letter let $\mathcal{B}\colon \mathcal{S}^c\otimes
\mathcal{S}^c \rightarrow \mathbb{Q}\hat{\pi}$ be the
pairing of local systems on $\Sigma$ corresponding to
the $\pi$-module map $\mathcal{B}$. Here $\mathbb{Q}\hat{\pi}$
is considered as a trivial local system.
For each $p\in \Sigma$, this pairing is just the conjunction
$\mathbb{Q}\pi_1(\Sigma,p)\otimes \mathbb{Q}\pi_1(\Sigma,p)
\rightarrow \mathbb{Q}\hat{\pi},
u \otimes v \mapsto |u v|$.

By the formula in \cite{Go}, p.276, we have
$$\mathcal{B}_*(\lambda(\alpha)\cdot \lambda(\beta))
=\sum_{p\in \alpha \cap \beta}\varepsilon(p;\alpha,\beta)
\mathcal{B}(s_{\lambda}(\alpha)_p\otimes s_{\lambda}(\beta)_p).$$
But since $\mathcal{B}(s_{\lambda}(\alpha)_p\otimes s_{\lambda}(\beta)_p)
=|\alpha_p\beta_p|$,
this is nothing but the Goldman bracket $[\alpha,\beta]$.
This completes the proof.
\end{proof}

\begin{rem}
\label{rem:3-4-4}
{\rm
It should be remarked that $\lambda$ is related
to Chas-Sullivan's operator $\Delta=ME$ \cite{CS}.
More precisely,
let $L\Sigma$ be the free loop space of the surface $\Sigma$,
$L\Sigma={\rm Map}(S^1, \Sigma)$.
The evaluation map at $0\in S^1=[0,1]/0\sim 1$,
${\rm ev}\colon L\Sigma \to \Sigma$, $\ell \mapsto \ell(0)$,
is a Hurewicz fibration with fiber
$\Omega\Sigma$, the based loop space of $\Sigma$.
Since $\Sigma$ is a $K(\pi,1)$-space, the homology group
$H_*(\Omega\Sigma; \mathbb{Q})$ vanishes in positive degree.
The $0$-th homology group $H_0(\Omega\Sigma; \mathbb{Q})$
constitutes the local system $\mathcal{S}^c$ stated above.
Hence we have an isomorphism ${\rm ev}_*\colon H_*(L\Sigma; \mathbb{Q})
\cong H_*(\Sigma; \mathcal{S}^c)$. The diagram
$$
\begin{CD}
H_0(L\Sigma; \mathbb{Q}) @>{\Delta}>> H_1(L\Sigma; \mathbb{Q})\\
@| @V{{\rm ev}_*}VV\\
\mathbb{Q}\hat{\pi} @>{\lambda}>> H_1(\Sigma; \mathcal{S}^c)
\end{CD}
$$
commutes by the definition of $\Delta = ME$ and $\lambda$.
}
\end{rem}

\subsection{Homological interpretation of the action}
Let $\mathbb{Q}\pi^r$ (resp. $\mathbb{Q}\pi^l$) be the following
$\pi$-module. As a vector space,
$\mathbb{Q}\pi^r=\mathbb{Q}\pi^l=\mathbb{Q}\pi$, and the $\pi$-action
is given by the multiplication from the right (resp. the left):
$xu:=ux^{-1}$ (resp. $xu:=xu$) for $x\in \pi$, and $u\in \mathbb{Q}\pi^r$
(resp. $\mathbb{Q}\pi^l$).

Let $\langle \zeta \rangle$ be the cyclic subgroup of $\pi$ generated by $\zeta$.
We consider the relative homology of the pair $(\pi,\langle \zeta \rangle)$.

\begin{dfn}
\label{def:3-5-1}
Define a $\mathbb{Q}$-linear map
$\xi\colon \mathbb{Q}\pi \rightarrow
H_1(\pi,\langle \zeta \rangle; \mathbb{Q}\pi^r \otimes \mathbb{Q}\pi^l)$
by $\xi(x)=(1\otimes x)\otimes [x]$, $x\in \pi$.
Here we understand
$H_1(\pi,\langle \zeta \rangle;
\mathbb{Q}\pi^r \otimes \mathbb{Q}\pi^l)$
as the homology of the relative complex {\rm (}see {\rm \S3.3)}.
\end{dfn}

We denote by $\mathbb{Q}\pi^t$ the vector space $\mathbb{Q}\pi$
with the trivial $\pi$-action.
We introduce a $\pi$-module map
$\mathcal{C}\colon \mathbb{Q}\pi^c \otimes
\mathbb{Q}\pi^r \otimes \mathbb{Q}\pi^l \rightarrow
\mathbb{Q}\pi^t$ by $\mathcal{C}(u\otimes v \otimes w)=vuw$.
Here we consider the diagonal $\pi$-action on
$\mathbb{Q}\pi^c \otimes
\mathbb{Q}\pi^r \otimes \mathbb{Q}\pi^l$.

Let $\mathcal{S}^r$, $\mathcal{S}^l$, and $\mathcal{S}^t$
be the local system on $\Sigma$ corresponding to the
$\pi$-modules $\mathbb{Q}\pi^r$, $\mathbb{Q}\pi^l$,
and $\mathbb{Q}\pi^t$, respectively.
Then we have the canonical isomorphism
$H_1(\pi,\langle \zeta \rangle;\mathbb{Q}\pi^r \otimes \mathbb{Q}\pi^l)\cong
H_1(\Sigma,\partial \Sigma; \mathcal{S}^r \otimes \mathcal{S}^l)$, etc.

Using the intersection form of the surface, we have the
following bilinear form:
\begin{eqnarray*}
(\ \cdot \ ) \colon H_1(\pi;\mathbb{Q}\pi^c) \times
H_1(\pi,\langle \zeta \rangle;\mathbb{Q}\pi^r \otimes \mathbb{Q}\pi^l)
&\cong &
H_1(\Sigma;\mathcal{S}^c) \times
H_1(\Sigma,\partial \Sigma; \mathcal{S}^r \otimes \mathcal{S}^l) \\
&\rightarrow &
H_0(\Sigma; \mathcal{S}^c \otimes \mathcal{S}^r \otimes \mathcal{S}^l) \\
&\cong &
H_0(\pi; \mathbb{Q}\pi^c \otimes \mathbb{Q}\pi^r \otimes \mathbb{Q}\pi^l).
\end{eqnarray*}

\begin{prop}
\label{prop:3-5-2}
For $u\in \mathbb{Q}\hat{\pi}$ and $v\in \mathbb{Q}\pi$, we have
$$\sigma(u)v=\mathcal{C}_*(\lambda(u)\cdot \xi(v)).$$
Here $\mathcal{C}_* \colon
H_0(\pi; \mathbb{Q}\pi^c \otimes \mathbb{Q}\pi^r \otimes \mathbb{Q}\pi^l)
\rightarrow H_0(\pi; \mathbb{Q}\pi^t)=\mathbb{Q}\pi$ is the map
induced by $\mathcal{C}$.
\end{prop}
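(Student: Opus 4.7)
The plan is to reduce to the case where $u = |\alpha|$ is the class of a single free loop $\alpha$ and $v = \beta \in \pi$ is a based loop, after which both sides become sums over $\alpha \cap \beta$ that can be compared termwise. First I would verify that $\xi$ is well-defined: as noted in \S3.3, the relative complex $C_*(\pi,\langle\zeta\rangle;\,\cdot\,)$ vanishes in degree $0$, so every 1-chain is automatically a relative cycle and $\xi(x)=(1\otimes x)\otimes[x]$ descends to a class in $H_1(\pi,\langle\zeta\rangle;\mathbb{Q}\pi^r\otimes\mathbb{Q}\pi^l)$.

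Next I would translate everything into flat bundles on $\Sigma$, following the approach used in the proof of Proposition \ref{prop:3-4-3}. I identify the fiber of $\mathcal{S}^r$ at $p$ with homotopy classes of paths from $*$ to $p$ (parallel transport along $\ell\colon p\to q$ being $u\mapsto u\ell$) and the fiber of $\mathcal{S}^l$ at $p$ with paths from $p$ to $*$ (parallel transport being $v\mapsto \ell^{-1}v$); these identifications are dictated by the $\pi$-actions on $\mathbb{Q}\pi^r$ and $\mathbb{Q}\pi^l$ together with the sign convention $\partial_1(m\otimes[g])=g^{-1}m-m$. A direct check then shows that $\xi(\beta)$ is represented by the singular 1-chain $\beta$ decorated with the flat section
$$
s_\xi(\beta)_p := \beta_{*p}\otimes\beta_{p*}\in\mathcal{S}^r_p\otimes\mathcal{S}^l_p,\quad p\in\beta.
$$
The endpoint values $1\otimes\beta$ at $t=0$ and $\beta\otimes 1$ at $t=1$ both sit over $*\in\partial\Sigma$, consistent with the relative-cycle condition.

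Assuming $\alpha\cup\beta$ has at worst transverse double points, the intersection-pairing formula of \cite{Go}, p.\ 276, then gives
\begin{equation*}
\lambda(\alpha)\cdot\xi(\beta) = \sum_{p\in\alpha\cap\beta}\varepsilon(p;\alpha,\beta)\,s_\lambda(\alpha)_p\otimes s_\xi(\beta)_p = \sum_{p\in\alpha\cap\beta}\varepsilon(p;\alpha,\beta)\,\alpha_p\otimes\beta_{*p}\otimes\beta_{p*}
\end{equation*}
in $H_0(\pi;\mathbb{Q}\pi^c\otimes\mathbb{Q}\pi^r\otimes\mathbb{Q}\pi^l)$. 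Applying $\mathcal{C}(u\otimes v\otimes w)=vuw$ termwise yields $\sum_p\varepsilon(p;\alpha,\beta)\,\beta_{*p}\alpha_p\beta_{p*}$, which is precisely $\sigma(\alpha)\beta$ by Definition \ref{def:3-2-1}. The general case follows by bilinearity.

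The main subtlety will be pinning down the correct left/right conventions for $\mathcal{S}^r$ and $\mathcal{S}^l$ so that the section attached to $\xi(\beta)$ really equals $\beta_{*p}\otimes\beta_{p*}$ in this exact order, and not its flip: a swap would produce $\beta_{p*}\alpha_p\beta_{*p}$ after applying $\mathcal{C}$, which is the wrong answer. Checking the endpoint values of the candidate section against the explicit boundary computation $\partial_1((1\otimes\beta)\otimes[\beta])=\beta\otimes 1-1\otimes\beta$ fixes the convention unambiguously, after which the rest of the argument proceeds exactly in parallel to the second half of the proof of Proposition \ref{prop:3-4-3}.
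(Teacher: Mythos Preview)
Your proposal is correct and follows essentially the same route as the paper: identify the fibers of $\mathcal{S}^r$ and $\mathcal{S}^l$ as path spaces $\mathbb{Q}\pi(\Sigma,*,p)$ and $\mathbb{Q}\pi(\Sigma,p,*)$, recognize that $\xi(\beta)$ is represented by the flat section $p\mapsto\beta_{*p}\otimes\beta_{p*}$, and then apply Goldman's intersection formula together with the explicit description of $\mathcal{C}$. The paper's version is terser and does not spell out the endpoint/convention check you flag at the end, but the argument is the same.
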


\begin{proof}
The proof is similar to the proof of Proposition \ref{prop:3-4-3}.
Let $\alpha$ be an immersed loop and $\beta$ an immersed based loop.
Suppose they intersect with at worst transverse double points.
The fiber of the local system $\mathcal{S}^r$ (resp. $\mathcal{S}^l$)
at $p\in \Sigma$
is $\mathbb{Q}\pi(\Sigma,*,p)$ (resp. $\mathbb{Q}\pi(\Sigma,p,*)$).
Here $\pi(\Sigma,p,q)$ is the set of homotopy classes
of paths from $p$ to $q$, and $\mathbb{Q}\pi(\Sigma,p,q)$
is the $\mathbb{Q}$-vector space spanned by $\pi(\Sigma,p,q)$.

By the canonical isomorphism
$H_1(\pi,\langle \zeta \rangle;
\mathbb{Q}\pi^r \otimes \mathbb{Q}\pi^l)
\cong H_1(\Sigma, \partial \Sigma; \mathcal{S}^r \otimes \mathcal{S}^l)$,
the relative cycle $\xi(\beta)=(1\otimes \beta)\otimes [\beta]$
corresponds to the flat section $s_{\xi}(\beta)$ of
$\beta^*(\mathcal{S}^r \otimes \mathcal{S}^l)$ whose value
at $p\in \beta$ is just $(\beta_{*p}\otimes \beta_{p*})$.
Let $\mathcal{C}\colon \mathcal{S}^c
\otimes \mathcal{S}^r \otimes \mathcal{S}^l
\rightarrow \mathcal{S}^t$
be the pairing of local systems on $\Sigma$ corresponding
to the $\pi$-module map $\mathcal{C}$ (using the same letter).
For each $p\in \Sigma$, this pairing is just the conjunction
$\mathbb{Q}\pi_1(\Sigma,p)\otimes
\mathbb{Q}\pi(\Sigma,*,p) \otimes \mathbb{Q}\pi(\Sigma,p,*)
\rightarrow \mathbb{Q}\pi,
u\otimes v\otimes w\mapsto vuw$.

By the formula in \cite{Go}, p.276, we have
$$\mathcal{C}_*(\lambda(\alpha),\xi(\beta))
=\sum_{p\in \alpha \cap \beta} \varepsilon(p;\alpha,\beta)
\mathcal{C}(s_{\lambda}(\alpha)_p \otimes s_{\xi}(\beta)_p).$$
But since $\mathcal{C}(s_{\lambda}(\alpha)_p \otimes s_{\xi}(\beta)_p)
=\beta_{*p}\alpha_p\beta_{p*}$,  this equals
$\sigma(\alpha)\beta$. This completes the proof.
\end{proof}

\section{(Co)homology theory for Hopf algebras}

In this section, we discuss a general theory of 
relative homology and cap products for (complete) 
Hopf algebras. Theory of cap products on the absolute 
(co)homology of a single (complete) Hopf algebra 
was already discussed in Cartan-Eilenberg \cite{CE} Chapter XI. 
But, unfortunately, the authors do not find 
an appropriate reference for cap products 
on the relative (co)homology of a pair of 
(complete) Hopf algebras.
In the succeeding sections 
these notions relate the Goldman
Lie  algebra to symplectic 
derivations of the algebra $\widehat{T}$.
\par

\subsection{The mapping cone of a chain map}

We begin by recalling the notion of the mapping cone of a chain map.
See, for example, \cite{Bro} pp.6-7. 
Let $f\colon (C_*, d') \to (D_*, d)$ be a chain map of chain 
complexes. The mapping cone $D_*\rtimes_fC_{*-1}$ 
of the map $f$ is defined by 
$$
(D_*\rtimes_fC_{*-1})_n := D_n\oplus C_{n-1},
\quad\mbox{and}\quad d'' := \begin{pmatrix} d & f\\ 0 & -d'\end{pmatrix}. 
$$
Then we have a natural long exact sequence

\begin{equation}
\label{eq:4-1-1}
\cdots \to H_n(C_*) \overset{f_*}\to H_n(D_*) \to
H_n(D_*\rtimes_fC_{*-1}) \to H_{n-1}(C_*) \to \cdots \quad\mbox{(exact)}.
\end{equation}

\begin{lem}
\label{lem:4-1-1}
If the chain map $f$ is injective, then the natural projection
$$
\varpi\colon D_*\rtimes_fC_{*-1} \to D_*/f_*C_*, \quad
(u,v) \mapsto u\bmod f_*C_*
$$
is a quasi-isomorphism. 
\end{lem}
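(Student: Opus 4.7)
My plan is to realize $\varpi$ as the projection in a short exact sequence of complexes whose kernel is acyclic, and then invoke the associated long exact sequence.

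First I would form the short exact sequence
\begin{equation*}
0 \longrightarrow K_* \longrightarrow D_* \rtimes_f C_{*-1} \overset{\varpi}\longrightarrow D_*/f_*C_* \longrightarrow 0,
\end{equation*}
where $K_n := f(C_n) \oplus C_{n-1}$ is the kernel of $\varpi$ in degree $n$, equipped with the differential inherited from the mapping cone. Since $f$ is injective, the identification $f(C_n) \cong C_n$ lets me rewrite $K_n$ as $C_n \oplus C_{n-1}$; a direct computation with $d'' = \left(\begin{smallmatrix} d & f \\ 0 & -d' \end{smallmatrix}\right)$ then shows that, under this identification, the differential on $K_*$ becomes $(x,y) \mapsto (d'x + y, -d'y)$. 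This is precisely the differential on the mapping cone of the identity chain map $\mathrm{id}_{C_*}\colon C_* \to C_*$.

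Next, I would check that $K_*$ is acyclic by exhibiting an explicit contracting homotopy $s\colon K_n \to K_{n+1}$ defined by $s(x,y) := (0,x)$. A quick calculation gives
\begin{equation*}
(d''s + sd'')(x,y) = (x, -d'x) + (0, d'x + y) = (x,y),
\end{equation*}
so $s$ is a chain homotopy from $0$ to the identity on $K_*$. Consequently $H_*(K_*) = 0$.

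Finally, the long exact sequence in homology attached to the short exact sequence above collapses, by $H_*(K_*) = 0$, into isomorphisms $\varpi_*\colon H_n(D_* \rtimes_f C_{*-1}) \overset{\cong}{\to} H_n(D_*/f_*C_*)$ for every $n$, which is the claim. I expect no real obstacle here; the only point requiring care is verifying that the restricted differential on $K_*$ indeed coincides with the mapping cone of $\mathrm{id}_{C_*}$, which uses nothing more than the injectivity of $f$ and the definition of $d''$.
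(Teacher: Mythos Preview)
Your proof is correct and follows essentially the same route as the paper: both identify $\Ker\varpi$ with the mapping cone $C_*\rtimes_{1_{C_*}}C_{*-1}$ of the identity on $C_*$, observe that this cone is acyclic, and conclude via the long exact sequence of the short exact sequence $0\to\Ker\varpi\to D_*\rtimes_fC_{*-1}\to D_*/f_*C_*\to 0$. The only cosmetic difference is that the paper deduces acyclicity of the cone of the identity from the long exact sequence (\ref{eq:4-1-1}), whereas you exhibit an explicit contracting homotopy; both are standard.
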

\begin{proof}
Since $f$ is injective, we have $\Ker\varpi\cong
C_*\rtimes_{1_{C_*}}C_{*-1}$, so that $H_*(\Ker\varpi) = 0$
from (\ref{eq:4-1-1}). We have the short exact 
sequence $0\to \Ker\varpi\to D_*\rtimes_fC_{*-1} 
\overset{\varpi}\to D_*/f_*C_* \to 0$, since $\varpi$ is surjective. 
Hence $\varpi$ is a quasi-isomorphism.
\end{proof}
The following lemma will play a fundamental role in this section.
\begin{lem}
\label{lem:4-1-2}
\begin{enumerate}
\item[{\rm (1)}] If a chain homotopy $\Phi\colon C_* \to D_{*+1}$ connects 
$f$ to another chain map $g\colon C_*\to D_*$, namely, 
$d\Phi+\Phi d^{\prime}= g-f$, then the map
$$
h(\Phi):= \begin{pmatrix} 1& -\Phi\\ 0 & 1\end{pmatrix}\colon 
D_*\rtimes_fC_{*-1}\to D_*\rtimes_gC_{*-1}
$$
is a chain map and a quasi-isomorphism.
\item[{\rm (2)}]  Assume another chain homotopy $\Phi'\colon C_* \to D_{*+1}$ 
connecting $f$ to $g$ is homotopic to $\Phi$, 
in other words, there exists a map $\Psi\colon C_* \to D_{*+2}$ 
satisfying the relation
$$
\Phi'_n -\Phi_n = (-1)^n(d\Psi_n + \Psi_{n-1}d')\colon 
C_n \to D_{n+1}
$$
for each degree $n$. Then we have 
$$
h(\Phi) \simeq h(\Phi')\colon D_*\rtimes_fC_{*-1}\to D_*\rtimes_gC_{*-1}.
$$
\end{enumerate}
\end{lem}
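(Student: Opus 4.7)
The plan is to attack the two parts separately, with the first being a routine block-matrix computation plus a five-lemma argument, and the second being a targeted construction of an explicit chain homotopy with carefully tracked signs.

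For part (1), I would first verify the chain-map condition by a direct $2\times 2$ block-matrix calculation. Using the differentials $d'' = \begin{pmatrix} d & f \\ 0 & -d' \end{pmatrix}$ on $D_* \rtimes_f C_{*-1}$ and $d''' = \begin{pmatrix} d & g \\ 0 & -d' \end{pmatrix}$ on $D_* \rtimes_g C_{*-1}$, the commutator $d''' h(\Phi) - h(\Phi) d''$ collapses to the single nontrivial entry $g - f - d\Phi - \Phi d'$, which vanishes by the chain-homotopy relation. To obtain the quasi-isomorphism, I would observe that $h(\Phi)$ fits into a morphism of short exact sequences
$$
\begin{CD}
0 @>>> D_* @>>> D_* \rtimes_f C_{*-1} @>>> C_{*-1} @>>> 0 \\
@. @V{1}VV @V{h(\Phi)}VV @V{1}VV \\
0 @>>> D_* @>>> D_* \rtimes_g C_{*-1} @>>> C_{*-1} @>>> 0
\end{CD}
$$
in which the outer vertical maps are identities and the horizontal maps are the natural inclusion onto the first factor and projection onto the second. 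Feeding this diagram into the long exact sequences from (\ref{eq:4-1-1}) and applying the five lemma then produces the claim.

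For part (2), I would construct the chain homotopy explicitly via the ansatz
$$
H_n := \begin{pmatrix} 0 & (-1)^n \Psi_{n-1} \\ 0 & 0 \end{pmatrix} \colon D_n \oplus C_{n-1} \longrightarrow D_{n+1} \oplus C_n,
$$
using that $\Psi$ raises degree by two, so the top-right block has the right bidegree. A direct block-matrix calculation yields
$$
d''' H_n + H_{n-1} d'' = \begin{pmatrix} 0 & (-1)^n\bigl(d \Psi_{n-1} + \Psi_{n-2} d'\bigr) \\ 0 & 0 \end{pmatrix},
$$
and the hypothesis $\Phi'_{n-1} - \Phi_{n-1} = (-1)^{n-1}(d\Psi_{n-1} + \Psi_{n-2} d')$ identifies this top-right entry with $\Phi_{n-1} - \Phi'_{n-1}$, i.e. with the unique nontrivial entry of $h(\Phi') - h(\Phi)$.

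The main obstacle I expect is sign bookkeeping in part (2): the factor $(-1)^n$ in the definition of $H_n$ is dictated simultaneously by the sign in the hypothesis and by the convention $d''' H + H d'' = h(\Phi') - h(\Phi)$ used for chain homotopies between chain maps, and these two conventions must be reconciled before the block computation can be trusted. Everything else, including the injectivity of the relevant inclusions into the mapping cones used to set up the five-lemma diagram, is routine.
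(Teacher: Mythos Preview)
Your proposal is correct and follows essentially the same approach as the paper: a block-matrix computation plus the five lemma for part (1), and the explicit homotopy $H_n = \begin{pmatrix} 0 & (-1)^{n}\Psi_{n-1} \\ 0 & 0 \end{pmatrix}$ for part (2) (the paper writes $(-1)^{n-2}$ and $(-1)^{n-3}$, which agree with your signs). The paper is simply terser, calling part (1) a ``straightforward computation'' and recording only the final matrix identity for part (2); you have filled in exactly the details it omits.
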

\begin{proof}
By a straightforward computation, $h(\Phi)$ is a chain map. 
It defines a homomorphism between the long exact sequences 
(\ref{eq:4-1-1}). Hence it is a quasi-isomorphism from the 
five-lemma. We have 
$$
\begin{pmatrix} d & g\\ 0 & -d'\end{pmatrix}
\begin{pmatrix} 0 & (-1)^{n-2}\Psi_{n-1}\\ 0 & 0\end{pmatrix}
+
\begin{pmatrix} 0 & (-1)^{n-3}\Psi_{n-2}\\ 0 & 0\end{pmatrix}
\begin{pmatrix} d & f\\ 0 & -d'\end{pmatrix}
= h(\Phi') - h(\Phi).
$$
This implies the second part of the lemma.
\end{proof}

The followings are well-known.
\begin{lem}
\label{lem:4-1-3}
Let $R$ be an associative algebra, 
$C_*$ a left $R$-projective chain complex, and 
$D_*$ a left $R$-acyclic chain complex. Then
\begin{enumerate}
\item[{\rm (1)}] For any $R$-map $f\colon H_0(C_*) \to H_0(D_*)$, there exists 
an $R$-chain map $\varphi\colon C_* \to D_*$ inducing the map $f$ on $H_0$.
\item[{\rm (2)}] If two $R$-chain maps $\varphi$ and $\psi\colon C_* \to D_*$ satisfy 
$\varphi_* = \psi_*\colon H_0(C_*) \to H_0(D_*)$, then we have a 
$R$-chain homotopy $\varphi\simeq \psi\colon C_* \to D_*$. 
\item[{\rm (3)}] Moreover, if $\Phi$ and $\Phi'$ are $R$-chain homotopies 
connecting $\varphi$ to $\psi$, then $\Phi$ and $\Phi'$ are 
chain homotopic to each other. In other words, there exists an $R$-map 
$\Psi\colon C_* \to D_{*+2}$ satisfying the relation
$$
\Phi'_n -\Phi_n = (-1)^n(d\Psi_n + \Psi_{n-1}d')\colon 
C_n \to D_{n+1}
$$
for each $n \geq 0$. 
\end{enumerate}
\end{lem}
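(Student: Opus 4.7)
The plan is to prove all three parts by the standard inductive comparison argument, which works uniformly because $C_n$ is projective for every $n$ and $D_\ast$ has trivial higher homology.

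For part (1), I would build $\varphi_n \colon C_n \to D_n$ by induction on $n$. At $n=0$, since $D_\ast$ is acyclic, the augmentation $D_0 \twoheadrightarrow H_0(D_\ast)$ is surjective; projectivity of $C_0$ then lets me lift $f \circ (C_0 \twoheadrightarrow H_0(C_\ast)) \colon C_0 \to H_0(D_\ast)$ to $\varphi_0 \colon C_0 \to D_0$. For the inductive step, given $\varphi_{n-1}$ commuting with boundaries, the composite $\varphi_{n-1} d' \colon C_n \to D_{n-1}$ lands in cycles because $d\,\varphi_{n-1}d' = \varphi_{n-2}(d')^2 = 0$; since $H_{n-1}(D_\ast) = 0$ those cycles are boundaries, and projectivity of $C_n$ produces $\varphi_n$.

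For part (2), I construct $\Phi_n \colon C_n \to D_{n+1}$ with $d\Phi_n + \Phi_{n-1}d' = \psi_n - \varphi_n$ by the same recipe. At $n=0$ the difference $\psi_0 - \varphi_0$ is zero on $H_0$ by hypothesis, so it factors through the boundaries $d(D_1)$ and lifts (via projectivity of $C_0$) to $\Phi_0 \colon C_0 \to D_1$. Inductively, the map $\psi_n - \varphi_n - \Phi_{n-1}d'$ is a cycle after applying $d$ (a short calculation using the induction hypothesis and $(d')^2 = 0$), hence a boundary by acyclicity, so one lifts to $\Phi_n$. Part (3) follows the same template applied to $\Omega_n := \Phi'_n - \Phi_n$. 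Taking the difference of the two homotopy relations yields $d\,\Omega_n + \Omega_{n-1}d' = 0$; one then inductively produces $\Psi_n \colon C_n \to D_{n+2}$ with $\Omega_n = (-1)^n(d\Psi_n + \Psi_{n-1}d')$, starting from $\Omega_0 \in \ker d \subset D_1 = d(D_2)$ since $H_1(D_\ast) = 0$, and at each stage verifying that $\Omega_n - (-1)^n \Psi_{n-1}d'$ is a cycle in $D_{n+1}$, hence a boundary by $H_{n+1}(D_\ast) = 0$, hence liftable through $d$ via projectivity of $C_n$.

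There is no serious obstacle; the only thing that requires care is the sign bookkeeping in part (3). The cleanest way is to verify the key identity
\begin{equation*}
d\bigl(\Omega_n - (-1)^n \Psi_{n-1} d'\bigr) \;=\; -\bigl(\Omega_{n-1} + (-1)^n d\,\Psi_{n-1}\bigr)d' \;=\; (-1)^n \Psi_{n-2}(d')^2 \;=\; 0,
\end{equation*}
where the middle equality uses the inductive form $\Omega_{n-1} = (-1)^{n-1}(d\Psi_{n-1} + \Psi_{n-2}d')$. Once this is in hand, the lift of $\Omega_n - (-1)^n \Psi_{n-1}d'$ through $d \colon D_{n+2} \to D_{n+1}$ (guaranteed by acyclicity and projectivity) provides $\Psi_n$, completing the induction. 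All three statements are thus consequences of the single principle that maps out of projective complexes into acyclic complexes can be lifted and compared one degree at a time.
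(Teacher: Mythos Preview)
Your argument is correct and is precisely the standard inductive comparison/lifting argument for projective complexes against acyclic ones. The paper itself does not supply a proof of this lemma at all---it simply records the statement as ``well-known''---so there is nothing to compare; your write-up would serve as a perfectly adequate justification, and the sign verification you isolated for part (3) is the only place where any genuine checking is needed.
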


Let $R'$, $S'$, $R$ and $S$ be associative algebras, and
$C'_*$, $D'_*$, $C_*$ and $D_*$ chain complexes of 
left $R'$, $S'$, $R$ and $S$ modules, respectively. 
Suppose 
\begin{equation}
\label{eq:4-1-2}
\begin{CD}
R' @>{\varphi}>> R\\
@V{f'}VV @V{f}VV\\
S' @>{\psi}>> S
\end{CD}
\quad\mbox{and}\quad
 \begin{CD}
C'_* @>{\varphi}>> C_*\\
@V{f'}VV @V{f}VV\\
D'_* @>{\psi}>> D_*
\end{CD}
\end{equation}
are a commutative diagram of algebra homomorphisms and 
a {\it homotopy} commutative diagram of chain maps, 
respectively, such that the chain maps $f'$, $f$, $\varphi$ and 
$\psi$ respect the algebra homomorphisms $f'$, $f$, $\varphi$
and $\psi$, respectively, and the augmentations. 
Then we have a left $R'$-chain homotopy 
$\Theta\colon C'_* \to D_{*+1}$ connecting $\psi f^{\prime}$ to $f\varphi$. 
Let $M$ be a right $S$-module. Then we define a chain map 
$h(\varphi, \Theta, \psi) := \begin{pmatrix} \psi&-\Theta\\0&\varphi\end{pmatrix}\colon 
(M\otimes_{S'}D'_*)\rtimes_{f'}(M\otimes_{R'}C'_{*-1})
\to (M\otimes_{S}D_*)\rtimes_{f}(M\otimes_{R}C_{*-1})$
by the composite
\begin{eqnarray}
(M\otimes_{S'}D'_*)\rtimes_{f'}(M\otimes_{R'}C'_{*-1}) 
&\overset{\begin{pmatrix} \psi&0\\ 0&1\end{pmatrix}}\longrightarrow&
(M\otimes_{S}D_*)\rtimes_{\psi f'}(M\otimes_{R'}C'_{*-1}) \nonumber\\
&\overset{h(\Theta)}\longrightarrow&
(M\otimes_{S}D_*)\rtimes_{f\varphi}(M\otimes_{R'}C'_{*-1}) \nonumber\\
&\overset{\begin{pmatrix} 1&0\\ 0&\varphi\end{pmatrix}}\longrightarrow &
(M\otimes_{S}D_*)\rtimes_{f}(M\otimes_{R}C_{*-1}).
\label{eq:4-1-3}
\end{eqnarray}
Here we regard $M$ as a module on which $R'$, $R$ and $S'$ 
act through the homomorphisms $f\circ\varphi = \psi\circ f'$, 
$f$, and $\psi$, respectively.
\begin{lem}
\label{lem:4-1-4}
Assume $C'_*$ is left $R'$-projective 
and $D_*$ acyclic. Then the map 
$$
(\varphi, \psi)_* := h(\varphi, \Theta, \psi)_*\colon 
H_*((M\otimes_{S'}D'_*)\rtimes_{f'}(M\otimes_{R'}C'_{*-1}))\to 
H_*((M\otimes_{S}D_*)\rtimes_{f}(M\otimes_{R}C_{*-1}))
$$
induced by the chain map $h(\varphi, \Theta, \psi)$ depends only 
on the homotopy classes of the chain maps $\varphi$ and $\psi$. 
\end{lem}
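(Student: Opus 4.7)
The plan is to produce an explicit $R'$-chain homotopy connecting $h(\varphi,\Theta,\psi)$ to $h(\varphi',\Theta',\psi')$ whenever $\varphi\simeq\varphi'$ and $\psi\simeq\psi'$. Fix $R'$-chain homotopies $\Xi\colon C'_*\to C_{*+1}$ with $d_C\Xi+\Xi d_{C'}=\varphi'-\varphi$ and $S'$-chain homotopies $\Xi'\colon D'_*\to D_{*+1}$ with $d_D\Xi'+\Xi' d_{D'}=\psi'-\psi$. I would take the ansatz
$$
H := \begin{pmatrix} \Xi' & B \\ 0 & -\Xi \end{pmatrix}
$$
for the homotopy between mapping cones, and determine what equation the remaining component $B\colon C'_*\to D_{*+2}$ must satisfy. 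Computing $d''H+Hd''$ entrywise with the cone differential $d''=\begin{pmatrix} d & f \\ 0 & -d'\end{pmatrix}$, the diagonal entries yield $\psi'-\psi$ and $\varphi'-\varphi$ automatically, the lower-left entry vanishes, and the only nontrivial condition, coming from the upper-right entry, is
$$
d_D B - B\, d_{C'} \;=\; \Theta-\Theta'+f\Xi-\Xi' f' \;=:\; \Omega.
$$

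Next I would check, using the defining identities for $\Xi,\Xi',\Theta,\Theta'$ together with $d_Df=f\, d_C$ and $d_{D'}f'=f'\, d_{C'}$, that $\Omega$ satisfies
$$
d_D\Omega + \Omega\, d_{C'}=0.
$$
Viewing $\Omega$ as a degree-zero chain map $\Omega\colon C'_*\to E_*$, where $E_n:=D_{n+1}$ is equipped with the signed differential $-d_D$, this identity is exactly the statement that $\Omega$ is a chain map. Since $D_*$ is acyclic, $H_0(E_*)=H_1(D_*)=0$, so the induced map $\Omega_*\colon H_0(C'_*)\to H_0(E_*)$ is automatically zero.

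Applying Lemma \ref{lem:4-1-3}(2) to the $R'$-projective complex $C'_*$ and the $R'$-acyclic complex $E_*$ then produces an $R'$-linear null-homotopy of $\Omega$, from which the required $B$ is obtained (up to sign). Tensoring $H$ with $M$ gives the desired chain homotopy $h(\varphi,\Theta,\psi)\simeq h(\varphi',\Theta',\psi')$, and the equality of induced maps on homology follows. As a special case, taking $\varphi=\varphi'$, $\psi=\psi'$, $\Xi=0$, $\Xi'=0$, the argument also shows that the map is independent of the choice of $\Theta$, justifying the notation $(\varphi,\psi)_*$.

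The hard part is the sign-careful verification of $d_D\Omega+\Omega\, d_{C'}=0$, which bundles together all of the chain-homotopy relations of the setup into a single identity; once this is in hand, the existence of $B$ is a direct application of the projective-acyclic extension principle encoded in Lemma \ref{lem:4-1-3}, and the matrix form of $H$ handles the mapping-cone bookkeeping.
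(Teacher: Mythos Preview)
Your proof is correct and reaches the same conclusion as the paper, but the organization differs. The paper factors $h(\varphi,\Theta,\psi)$ as the composite $\begin{pmatrix}1&0\\0&\varphi\end{pmatrix}\circ h(\Theta)\circ\begin{pmatrix}\psi&0\\0&1\end{pmatrix}$ and treats each factor separately: explicit matrix homotopies handle the outer factors, while for the middle factor one observes that $\Theta+f\Phi$ and $\Theta'+\Psi f'$ are both homotopies from $\psi f'$ to $f\varphi'$, invokes Lemma~\ref{lem:4-1-3}(3) to get a second-order homotopy between them, and then feeds this into Lemma~\ref{lem:4-1-2}(2). Your approach collapses all three pieces into the single ansatz $H=\begin{pmatrix}\Xi'&B\\0&-\Xi\end{pmatrix}$ and isolates the one nontrivial equation $d_DB-Bd_{C'}=\Omega$; the existence of $B$ is then exactly the same higher-homotopy extension principle. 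Your route is more direct and makes the role of the projectivity/acyclicity hypothesis transparent in a single step, while the paper's decomposition is more modular and reuses Lemma~\ref{lem:4-1-2}.

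One small point: your appeal to Lemma~\ref{lem:4-1-3}(2) via the shifted complex $E_*$ needs a word of care. If $E_*$ is the nonnegatively graded truncation $E_n=D_{n+1}$ for $n\geq 0$, then $H_0(E_*)=D_1/d_D(D_2)$, not $H_1(D_*)$; one must also observe that $d_D\Omega_0=0$ (from your cycle identity in degree~$0$) together with $H_1(D_*)=0$ forces $\Omega_0(C'_0)\subset d_D(D_2)$, so $\Omega_*=0$ on $H_0$ after all. Alternatively, and more in the spirit of the paper, you can skip the shift entirely and invoke Lemma~\ref{lem:4-1-3}(3) directly with $\varphi=\psi=0$, $\Phi=0$, $\Phi'=\Omega$: this produces $\Psi$ with $\Omega_n=(-1)^n(d\Psi_n+\Psi_{n-1}d')$, and $B_n:=(-1)^n\Psi_n$ solves your equation.
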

\begin{proof}
Suppose $\varphi'\colon C'_* \to C_*$ and $\psi'\colon D'_* \to D_*$ are 
chain maps homotopic to $\varphi$ and $\psi$, respectively, 
 and $\Theta'$ a chain homotopy
connecting $\psi'f'$ to $f\varphi'$. Take a chain homotopy 
$\Phi\colon C'_* \to C_{*+1}$ connecting $\varphi$ to $\varphi'$, and 
 $\Psi\colon D'_* \to D_{*+1}$ connecting $\psi$ to 
$\psi'$. Then the three diagrams 
$$
\begin{CD}
(M\otimes_SD_*)\rtimes_{f}(M\otimes_RC_{*-1}) @>{\psi}>>
(M\otimes_SD'_*)\rtimes_{\psi\circ f}(M\otimes_RC_{*-1})\\
@| @V{h(\Psi\circ f)}VV\\
(M\otimes_SD_*)\rtimes_{f}(M\otimes_RC_{*-1}) @>{\psi'}>>
(M\otimes_SD'_*)\rtimes_{\psi'\circ f}(M\otimes_RC_{*-1})\\
\end{CD}
$$
$$
\begin{CD}
(M\otimes_SD'_*)\rtimes_{\psi\circ f}(M\otimes_RC_{*-1}) @>{h(\Theta)}>>
(M\otimes_SD'_*)\rtimes_{f\circ\varphi}(M\otimes_RC_{*-1})\\
@V{h(\Psi\circ f)}VV @V{h(f\circ\Phi)}VV\\
(M\otimes_SD'_*)\rtimes_{\psi'\circ f}(M\otimes_RC_{*-1})
@>{h(\Theta')}>>
(M\otimes_SD'_*)\rtimes_{f\circ\varphi'}(M\otimes_RC_{*-1})\\
\end{CD}
$$
$$
\begin{CD}
(M\otimes_SD'_*)\rtimes_{f\circ\varphi}(M\otimes_RC_{*-1}) @>{\varphi}>>
(M\otimes_SD'_*)\rtimes_{f}(M\otimes_RC'_{*-1})\\
@V{h(f\circ\Phi)}VV @|\\
(M\otimes_SD'_*)\rtimes_{f\circ\varphi'}(M\otimes_RC_{*-1})
@>{\varphi'}>>
(M\otimes_SD'_*)\rtimes_{f}(M\otimes_RC'_{*-1})\\
\end{CD}
$$
commute up to homotopy. Here the horizontal $\psi$, $\psi'$, $\varphi$
and $\varphi'$ mean the chain maps $\begin{pmatrix}\psi&0\\0&1\end{pmatrix}$, 
$\begin{pmatrix}\psi'&0\\0&1\end{pmatrix}$, $\begin{pmatrix}1&0\\0&\varphi\end{pmatrix}$
and $\begin{pmatrix}1&0\\0&\varphi'\end{pmatrix}$, respectively. 
In fact, the chain homotopies $\begin{pmatrix}\Psi&0\\0&1\end{pmatrix}$ and 
$\begin{pmatrix}1&0\\0&-\Phi\end{pmatrix}$ induce the homotopy commutativity 
of the first and the third diagrams, respectively. The maps $\Theta + 
f\circ\Phi$ and $\Theta'+\Psi\circ f$ are both chain homotopies 
connecting $\psi\circ f$ to $f\circ\varphi'$. Since 
$C'_*$ is $R'$-projective and $D_*$ acyclic, 
Lemma \ref{lem:4-1-3} (3) implies there exists a homotopy connecting 
$\Theta + f\circ\Phi$ to $\Theta'+\Psi\circ f$. Hence, by Lemma 
\ref{lem:4-1-2} (2), we have $h(f\circ\Phi)h(\Theta) = 
h(\Theta + f\circ\Phi) \simeq h(\Theta'+\Psi\circ f) = 
h(\Theta')h(\Psi\circ f)$. This means the homotopy commutativity of 
the second diagram. Hence we obtain $h(\varphi,\Theta,\psi) \simeq
h(\varphi',\Theta',\psi')\colon
(M\otimes_{S'}D'_*)\rtimes_{f'}(M\otimes_{R'}C'_{*-1})\to 
(M\otimes_{S}D_*)\rtimes_{f}(M\otimes_{R}C_{*-1})$.
This proves the lemma.
\end{proof}
Moreover, suppose 
\begin{equation}
\begin{CD}
R'' @>{\alpha}>> R'\\
@V{f''}VV @V{f'}VV\\
S'' @>{\beta}>> S'
\end{CD}
\quad\mbox{and}\quad
 \begin{CD}
C''_* @>{\alpha}>> C'_*\\
@V{f'}VV @V{f}VV\\
D''_* @>{\beta}>> D'_*
\end{CD}\nonumber
\end{equation}
are a commutative diagram and a homotopy commutative 
diagram as in (\ref{eq:4-1-2}). 
Let $\Xi$ be a chain homotopy connecting $\beta f''$ to 
$f'\alpha$, and $\Upsilon$ connecting $\psi\beta f''$ 
to $f\varphi \alpha$. 
\begin{lem}
\label{lem:4-1-5}
Assume $C''_*$ is left $R''$-projective and $D_*$ acyclic. 
Then we have 
\begin{eqnarray}
&&h(\varphi,\Theta,\psi)_*h(\alpha,\Xi,\beta)_*
= h(\varphi\alpha,\Upsilon,\psi\beta)_*\colon\nonumber\\
&& H_*((M\otimes_{S''}D''_*)\rtimes_{f''}(M\otimes_{R''}C''_{*-1}))\to 
H_*((M\otimes_{S}D_*)\rtimes_{f}(M\otimes_{R}C_{*-1})).\nonumber
\end{eqnarray}
\end{lem}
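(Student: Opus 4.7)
The plan is to reduce the desired equality to a matrix manipulation plus an application of Lemma \ref{lem:4-1-4}. First I would rewrite both $h(\varphi, \Theta, \psi)$ and $h(\alpha, \Xi, \beta)$ in compact matrix form. Unwinding the definition (\ref{eq:4-1-3}), one finds that these maps are given by the $2\times 2$ matrices
$$h(\varphi, \Theta, \psi) = \begin{pmatrix} \psi & -\Theta \\ 0 & \varphi \end{pmatrix}, \qquad h(\alpha, \Xi, \beta) = \begin{pmatrix} \beta & -\Xi \\ 0 & \alpha \end{pmatrix},$$
and taking the matrix product yields
$$h(\varphi,\Theta,\psi)\circ h(\alpha,\Xi,\beta) = \begin{pmatrix} \psi\beta & -(\psi\Xi + \Theta\alpha) \\ 0 & \varphi\alpha \end{pmatrix}.$$

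A direct computation using the homotopy relations satisfied by $\Xi$ and $\Theta$ shows that $\Lambda := \psi\Xi + \Theta\alpha$ is itself a chain homotopy connecting $\psi\beta f''$ to $f\varphi\alpha$. Consequently the matrix above is exactly $h(\varphi\alpha, \Lambda, \psi\beta)$ in the notation of (\ref{eq:4-1-3}), and the proof reduces to the identity $h(\varphi\alpha, \Lambda, \psi\beta)_* = h(\varphi\alpha, \Upsilon, \psi\beta)_*$ on homology.

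For this last step I would appeal to Lemma \ref{lem:4-1-4}: under the hypotheses that $C''_*$ is $R''$-projective and $D_*$ is acyclic, the induced map on homology depends only on the homotopy classes of $\varphi\alpha$ and $\psi\beta$, and in particular is independent of which chain homotopy is used to assemble $h$. If one prefers a more explicit argument, Lemma \ref{lem:4-1-3}(3) applied to the pair of chain homotopies $\Lambda$ and $\Upsilon$ produces a second-order homotopy between them, and Lemma \ref{lem:4-1-2}(2) then promotes this to a chain homotopy $h(\varphi\alpha, \Lambda, \psi\beta) \simeq h(\varphi\alpha, \Upsilon, \psi\beta)$. I expect the only fiddly point to be keeping track of the signs in the mapping-cone differential when verifying that $\Lambda$ really is a chain homotopy between the claimed pair of chain maps; there is no genuine obstacle.
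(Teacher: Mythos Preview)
Your proposal is correct and essentially identical to the paper's proof: the paper also computes the matrix product to obtain $h(\varphi\alpha,\Theta\alpha+\psi\Xi,\psi\beta)$, notes that $\Theta\alpha+\psi\Xi$ is a chain homotopy connecting $\psi\beta f''$ to $f\varphi\alpha$, and then invokes Lemma~\ref{lem:4-1-4} to conclude. Your alternative explicit route through Lemma~\ref{lem:4-1-3}(3) and Lemma~\ref{lem:4-1-2}(2) is exactly what the proof of Lemma~\ref{lem:4-1-4} does internally in this special case (taking $\Phi=0$, $\Psi=0$), so it amounts to the same thing.
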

\begin{proof}
By a straightforward computation, we have 
$h(\varphi,\Theta,\psi)h(\alpha,\Xi,\beta)
= h(\varphi\alpha, \Theta\alpha+\psi\Xi, \psi\beta)$ 
as chain maps. 
The chain homotopy $\Theta\alpha+\psi\Xi$ connects 
$\psi\beta f''$ to $f\varphi \alpha$. 
Hence, from Lemma \ref{lem:4-1-4}, we have 
$h(\varphi\alpha, \Upsilon, \psi\beta)_* = 
h(\varphi\alpha, \Theta\alpha+\psi\Xi, \psi\beta)_*
= h(\varphi,\Theta,\psi)_*h(\alpha,\Xi,\beta)_*$.
This proves the lemma.
\end{proof}

\subsection{Relative homology of a pair of Hopf algebras}

Let $S$ be a (complete) Hopf algebra over $\mathbb{Q}$
with the augmentation $\varepsilon$, the antipode $\iota$
and the coproduct $\Delta$. For a left $S$-module $M$, 
we can always regard it as a right $S$-module by 
$ms = \iota(s)m$, $s \in S$, $m\in M$. Define the 
(co)homology groups by $H_*(S; M) := {\rm Tor}^S_*(M, \mathbb{Q})$
and $H^*(S; M) := {\rm Ext}^*_S(\mathbb{Q}, M)$. 
Here $\mathbb{Q}$ means the trivial $S$-module via the 
augmentation map $\varepsilon$. 
Let $P_*\overset\varepsilon\to \mathbb{Q}$ be a left
$S$-projective resolution of the $S$-module $\mathbb{Q}$. 
Then they are given by $H_*(S; M) = H_*(M\otimes_SP_*)$ 
and $H^*(S; M) = H^*(\Hom_S(P_*, M))$. \par
Consider a homomorphism $f\colon R \to S$ of (complete) Hopf algebras. 
We regard $M$ as a left $R$-module through the homomorphism $f$. 
Let $F_*\overset\varepsilon\to \mathbb{Q}$ be a left
$R$-projective resolution of $\mathbb{Q}$. 
By Lemma \ref{lem:4-1-3} (1), we can choose 
an $R$-chain map $f\colon F_* \to P_*$ which respects
the homomorphism $f\colon R \to S$ and the augmentations. 
The (co)chain maps
$f := 1_M\otimes f\colon M\otimes_RF_*\to M\otimes_SP_*$ and 
$f:=\Hom(f, 1_M)\colon \Hom_S(P_*, M) \to \Hom_R(F_*, M)$ define the 
induced maps $f_*\colon H_*(R; M) \to H_*(S; M)$ and 
$f^*\colon H^*(S; M) \to H^*(R; M)$, which are independent of 
the choice of the chain map $f\colon F_*\to P_*$. \par
We define the relative homology group $H_*(S,R;M)$ by the 
homology group of the mapping cone
$$
H_*(S,R;M) := H_*((M\otimes_SP_*)\rtimes_f(M\otimes_RF_{*-1})),
$$
which we call {\it the relative homology of the pair $(S,R)$ 
with coefficients in $M$}. 
It does not depend on the choice of the resolutions $P_*$, $F_*$, 
and the chain map $f$. In fact, let $P'_*$ and $F'_*$ 
be other resolutions and $f'\colon F'_*\to P'_*$ a chain map 
respecting the homomorphism $f$ and the augmentations. By Lemma \ref{lem:4-1-3} 
(1)(2), we have homotopy equivalences $\varphi\colon F'_* \to 
F_*$ and $\psi\colon P'_*\to P_*$ respecting the identities 
$1_R$ and $1_S$, respectively. Lemma \ref{lem:4-1-3}
(2) implies $f\varphi \simeq \psi f'\colon F'_*\to P_*$. 
Hence, by Lemma \ref{lem:4-1-4}, we obtain a uniquely 
determined map $(\varphi, \psi)_*\colon 
H_*((M\otimes_SP'_*)\rtimes_{f'}(M\otimes_RF'_{*-1}))
\to H_*((M\otimes_SP_*)\rtimes_f(M\otimes_RF_{*-1}))$. 
Homotopy inverses of $\varphi$ and $\psi$ induce 
a uniquely determined map 
$H_*((M\otimes_SP_*)\rtimes_f(M\otimes_RF_{*-1})) \to 
H_*((M\otimes_SP'_*)\rtimes_{f'}(M\otimes_RF'_{*-1}))$. 
It is the inverse of the map $(\varphi, \psi)_*$ 
from Lemmas \ref{lem:4-1-4} and \ref{lem:4-1-5}. 
Hence the relative homology group $H_*(S, R; M)$ 
is well-defined.\par
By the sequence (\ref{eq:4-1-1}), we have a natural 
exact sequence
\begin{equation}
\label{eq:4-2-1}
\cdots \to H_n(R;M) \overset{f_*}\to H_n(S;M) \overset{j_*}\to
H_n(S,R;M) \overset{\partial_*}\to H_{n-1}(R;M) \to \cdots
\quad\mbox{(exact)}.
\end{equation}
We may choose $P_0=S$ and $F_0=R$. Then the boundary operator
$\begin{pmatrix} d&f\\0&-d\end{pmatrix} = \begin{pmatrix} d&1\end{pmatrix}\colon
(M\otimes_SP_1)\oplus(M\otimes_RF_0) =
(M\otimes_SP_1)\oplus M 
\to 
(M\otimes_SP_0)\oplus(M\otimes_RF_{-1}) = M
$
is surjective. Hence we have 
\begin{equation}
\label{eq:4-2-2}
H_0(S,R;M) = 0.
\end{equation}
\par
For a commutative diagram 
\begin{equation}
\label{eq:4-2-3}
\begin{CD}
R' @>{\varphi}>> R\\
@V{f'}VV @V{f}VV\\
S' @>{\psi}>> S
\end{CD}
\end{equation}
of (complete) Hopf algebras, we can take a homotopy commutative 
diagram of resolutions as in (\ref{eq:4-1-2}). By Lemma \ref{lem:4-1-4},  
it induces a well-defined map
$$
(\varphi, \psi)_*\colon H_*(S',R';M) \to H_*(S,R;M)
$$
for any $S$-module $M$. From Lemma \ref{lem:4-1-5} 
the relative homology of a pair of (complete) Hopf algebras 
satisfies a functoriality. \par
Next consider the coproducts $\Delta\colon S\to S\otimes S$ and 
$\Delta\colon R\to R\otimes R$. By the K\"unneth formula, $P_*\otimes P_*$ 
and $F_*\otimes F_*$ are acyclic. We regard them as left $S$-
and left $R$- chain complexes by using the coproducts, respectively. 
In the case where $S$ and $R$ are complete Hopf algebras,
we consider $P_*\hat{\otimes}P_*$ and $F_*\hat{\otimes}F_*$
instead, and {\it assume} they are acyclic. 
In any cases, by Lemma \ref{lem:4-1-3}, we have chain maps $\Delta\colon 
P_* \to P_*\otimes P_*$ and $\Delta\colon F_* \to F_*\otimes F_*$. 
By Lemma \ref{lem:4-1-4}, we can define a uniquely determined map
\begin{equation}
\label{eq:4-2-4}
\Delta_* := (\Delta, \Delta)_*\colon H_*(S, R; M) \to
H_*((M\otimes_S(P_*\otimes P_*))\rtimes_{f\otimes
f}(M\otimes_R(F_*\otimes F_*)_{*-1})),
\end{equation}
which we call {\it the diagonal map}.
Consider a commutative diagram of (complete) Hopf algebras as in 
(\ref{eq:4-2-3}). Take resolutions $P'_*$ and $F'_*$ over 
$S'$ and $R'$, respectively. By Lemma \ref{lem:4-1-3},  
we have chain maps $\varphi\colon F'_*\to F_*$ and $\psi\colon P'_* \to P_*$ 
respecting the Hopf algebra homomorphisms $\varphi$ and $\psi$, 
respectively, and the augmentations. The homotopy commutative diagrams 
$$
\begin{CD}
F'_* @>{\Delta}>> F'_*\otimes F'_* @>{\varphi\otimes\varphi}>>
F_*\otimes F_*\\
@V{f'}VV @V{f'\otimes f'}VV @V{f\otimes f}VV \\
P'_* @>{\Delta}>>
P'_*\otimes P'_* @>{\psi\otimes\psi}>> P_*\otimes P_*
\end{CD}\quad\mbox{and}\quad\quad
\begin{CD}
F'_* @>{\varphi}>> F_* @>{\Delta}>>
F_*\otimes F_*\\
@V{f'}VV @V{f}VV @V{f\otimes f}VV \\
P'_* @>{\psi}>>
P_* @>{\Delta}>> P_*\otimes P_*
\end{CD}
$$
respect the same commutative diagram (\ref{eq:4-2-3}). 
Hence, from Lemma \ref{lem:4-1-5}, we obtain 
the commutative diagram
\begin{equation}
\label{eq:4-2-5}
\begin{CD}
H_*(S', R'; M) @>{\Delta_*}>>
H_*((M\otimes_{S'}(P'_*\otimes P'_*))\rtimes_{f'\otimes
f'}(M\otimes_{R'}(F'_*\otimes F'_*)_{*-1}))\\
@V{(\varphi, \psi)_*}VV @V{(\varphi\otimes\varphi, \psi\otimes\psi)_*}VV\\
H_*(S, R; M) @>{\Delta_*}>>
H_*((M\otimes_S(P_*\otimes P_*))\rtimes_{f\otimes
f}(M\otimes_R(F_*\otimes F_*)_{*-1})),
\end{CD}
\end{equation}
namely, the naturality 
of the map in (\ref{eq:4-2-4}). 
Here, if $\Theta$ is a chain homotopy connecting $\psi f'$ to 
$f\varphi$, the vertical map $(\varphi,\psi)_*$ is given by 
$h(\varphi, \Theta, \psi)$, and 
$(\varphi\otimes\varphi, \psi\otimes\psi)_*$ by 
$h(\varphi\otimes\varphi, (f\varphi)\otimes\Theta+
\Theta\otimes(\psi f'), \psi\otimes\psi)$ as in (\ref{eq:4-1-3}). 

\subsection{Cap products on the relative (co)homology}

Now we introduce the cap product on the relative homology of 
a pair of (complete) Hopf algebras. In this paper our Poincar\'e 
duality is given by $H \cong H^*$, $X \mapsto (Y \mapsto Y\cdot X)$.
This means our cap product on the surface $\Sigma$ is given by 
$\langle[\Sigma], u \cup v\rangle = \langle[\Sigma]\cap u, v\rangle$
for $u, v \in H^*$. See \cite{Ka} \S5, for
details. In other words, we evaluate the cocycle $u$ on the front 
face of the cycle $[\Sigma]$ in the product $[\Sigma]\cap u$. 
Thus we define our cap product on the Hopf algebra (co)homology 
in the following way. 
We remark our sign convention here is different from 
\cite{Ka} and \cite{BKP}.
\par
Let $f\colon R \to S$ be a homomorphism of (complete) Hopf algebras 
over $\mathbb{Q}$, $M_1$ and $M_2$ left $S$-modules, $P_*$ and 
$F_*$ projective resolutions of $\mathbb{Q}$ over $S$ and $R$, 
respectively, and $f\colon F_*\to P_*$ a chain map respecting the 
homomorphism $f$ and the augmentations. We define the cap product
\begin{equation}
\label{eq:4-3-1}
\cap\colon M_1\otimes_R(F_*\otimes F_*)\otimes\Hom_S(P_*,M_2)
\to (M_1\otimes M_2)\otimes_RF_*
\end{equation}
by $\cap(u\otimes x\otimes y\otimes v) = (u\otimes x\otimes y)\cap v:=
(-1)^{\deg(x\otimes y)\deg v} u\otimes v(f(x))\otimes y$ for $u \in M_1$,
$x,y \in F_*$ and 
$v \in \Hom_S(P_*, M_2)$. Here $M_1\otimes M_2$ is regarded as an 
$R$-module by the homomorphism $f$ and the coproduct $\Delta$.
In the case $R$ is a complete Hopf algebra, we consider
the completed tensor product $M_1\hat{\otimes}M_2$ instead.
By a straightforward computation, we find out $\cap$ is a chain map. 
In the case where $R=S$ and $f=1_S$, we have a chain map
$$
\cap\colon M_1\otimes_S(P_*\otimes P_*)\otimes\Hom_S(P_*,M_2)
\to (M_1\otimes M_2)\otimes_SP_*,
$$
which is compatible with the map (\ref{eq:4-3-1}). 
Hence we obtain a chain map 
\begin{eqnarray}
(M_1\otimes_S(P_*\otimes P_*)\rtimes_{f\otimes f}
M_1\otimes_R(F_*\otimes F_*)_{*-1})&\otimes&\Hom_S(P_*,M_2)
\nonumber\\
&\to & (M_1\otimes M_2)\otimes_SP_*\rtimes_f
(M_1\otimes M_2)\otimes_RF_{*-1}
\nonumber
\end{eqnarray}
and the induced map
\begin{equation}
\label{eq:4-3-2}
\cap\colon H_*((M_1\otimes_S(P_*\otimes P_*)\rtimes_{f\otimes f}
M_1\otimes_R(F_*\otimes F_*)_{*-1})) \otimes H^*(S;M_2) 
\to H_*(S,R; M_1\otimes M_2). 
\end{equation}
We have to prove the naturality of the cap product 
(\ref{eq:4-3-2}). For the commutative diagram of 
(complete) Hopf algebras (\ref{eq:4-2-3}), choose chain maps
$\varphi\colon F'_* \to F_*$ and $\psi\colon P'_* \to P_*$ of resolutions 
as in (\ref{eq:4-2-5}).
\begin{lem}
\label{lem:4-3-1}
For any $\xi \in H_*((M_1\otimes_{S^{\prime}}
(P^{\prime}_*\otimes P^{\prime}_*))\rtimes_{f\otimes f}
(M_1\otimes_{R^{\prime}}(F^{\prime}_*\otimes F^{\prime}_*)_{*-1}))$
and $\eta \in H^*(S;M_2)$,  
we have 
$$
(\varphi, \psi)_*(\xi\cap\psi^*\eta) = ((\varphi, \psi)_*\xi)\cap \eta
\in H_*(S,R; M_1\otimes M_2).
$$
Here $(\varphi, \psi)_*\xi$ in the right hand side means 
the homology class 
$h(\varphi\otimes\varphi, (f\varphi)\otimes\Theta+
\Theta\otimes(\psi f'), \psi\otimes\psi)_*\xi$. 
\end{lem}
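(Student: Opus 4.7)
The goal is to compare two homology classes in $H_*(S, R; M_1\otimes M_2)$ that are built out of the same ingredients ($\varphi$, $\psi$, $\Theta$, $\xi$, $v$) assembled in different orders, so the proof should be a direct chain-level verification that the two chains agree up to an explicit boundary in the mapping cone $(M_1\otimes M_2)\otimes_S P_*\rtimes_f (M_1\otimes M_2)\otimes_R F_{*-1}$.

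First, I would fix a chain homotopy $\Theta\colon F'_*\to P_{*+1}$ connecting $\psi f'$ to $f\varphi$, and use $\Xi=(f\varphi)\otimes\Theta+\Theta\otimes(\psi f')$ (with the standard Koszul signs) so that $\Xi$ realizes a chain homotopy connecting $\psi f'\otimes\psi f'$ to $f\varphi\otimes f\varphi$ on $F'_*\otimes F'_*$. Then $(\varphi\otimes\varphi,\psi\otimes\psi)_*$ is realized by $h(\varphi\otimes\varphi,\Xi,\psi\otimes\psi)$, and $(\varphi,\psi)_*$ by $h(\varphi,\Theta,\psi)$. Represent $\xi$ by a cycle $(a,b)$ with $a\in M_1\otimes_{S'}(P'_*\otimes P'_*)$, $b\in M_1\otimes_{R'}(F'_*\otimes F'_*)_{*-1}$, and $\eta$ by a cocycle $v\in\Hom_S(P_*,M_2)$.

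Next, I would compute both sides explicitly. For $(\varphi,\psi)_*(\xi\cap\psi^*\eta)$, apply the cap product for $(S',R')$ first — which replaces a $P'$-factor by $v(\psi(\cdot))$ on the upper part and replaces an $F'$-factor by $v(\psi f'(\cdot))$ on the lower part — and then apply $h(\varphi,\Theta,\psi)$, which contributes a $\Theta$-correction on the lower $F'$-factor. For $((\varphi,\psi)_*\xi)\cap\eta$, apply $h(\varphi\otimes\varphi,\Xi,\psi\otimes\psi)$ first, producing a $\Xi$-correction on $b$, and then cap with $v$ using the $(S,R)$-cap formula, which replaces the first $P$-factor by $v(\cdot)$ (or $v(f(\cdot))$ on the $F$-part). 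The ``main'' terms involving $u\otimes v(\psi(p_1))\otimes\psi(p_2)$ and the $F$-part terms involving $v(f\varphi(q_1))\otimes\varphi(q_2)$ match directly between the two sides.

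The difference is concentrated in the remaining terms, which involve $\Theta$ on one side and $\Xi=(f\varphi)\otimes\Theta+\Theta\otimes(\psi f')$ capped with $v$ on the other. Here the chain homotopy identity $f\varphi-\psi f'=d\Theta+\Theta d$ combined with the cocycle condition $v\circ d=0$ (applied to the degree-shifted image $\Theta(q_1)$) is exactly what is needed to collapse these $\Theta$- and $\Xi$-contributions into a single expression, and this expression is visibly the mapping-cone boundary of an explicit 2-chain constructed from $\Theta(q_1)\otimes q_2$ and $v$. This yields the required equality in homology. The main obstacle is the bookkeeping: keeping track of the Koszul signs simultaneously for the tensor-product chain homotopy $\Xi$, the mapping-cone differential $\begin{pmatrix}d&f\\0&-d\end{pmatrix}$, and the cocycle relation for $v$; but no conceptual difficulty arises beyond what is already encoded in Lemmas \ref{lem:4-1-2} and \ref{lem:4-1-4}, which guarantee independence from the choices of $\Theta$ and $\Xi$.
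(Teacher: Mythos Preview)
Your proposal is correct and follows essentially the same route as the paper's own proof: a direct chain-level computation of the difference between the two sides, using the explicit formula $\Xi=(f\varphi)\otimes\Theta+\Theta\otimes(\psi f')$, and showing that when $v$ is a cocycle and the input is a cycle the discrepancy is the mapping-cone boundary of an explicit element built from $\Theta$. The paper organizes the computation by writing out the difference as three groups of terms---one vanishing because $dv=0$, one because the input is a cycle, and one an exact boundary---which is exactly the structure you outline.
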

The lemma in the case where $R' = R$, $S'=S$, $\varphi = 1_R$ and 
$\psi = 1_S$ implies that the cap product is independent of the 
choice of the resolutions and the chain maps.
\begin{proof} Let $u, u' \in M_1$, $x, y \in P'_*$, $x', y' \in 
F'_*$ and $v \in \Hom_S(P_*, M_2)$. We denote 
$\Xi:= (f\varphi)\otimes\Theta+\Theta\otimes(\psi f')$. 
Then, by a straightforward computation, we have 
\begin{eqnarray*}
&&(-1)^{\deg(x'\otimes y')\deg v}
\begin{pmatrix}\psi\otimes\psi&-\Theta\\0&\varphi\end{pmatrix}
((u\otimes x\otimes y, u'\otimes x'\otimes y')\cap(\psi^*v))\\
&&-(-1)^{\deg(x'\otimes y')\deg v}
\left(\begin{pmatrix}\psi\otimes\psi&-\Xi\\0&\varphi\otimes\varphi\end{pmatrix}
(u\otimes x\otimes y, u'\otimes x'\otimes y')\right)\cap v\\ 
&=& (-(-1)^{\deg x'}u'\otimes(dv)(\Theta x')\otimes\Theta y', 
 (-1)^{\deg v}u'\otimes (dv)(\Theta x')\otimes\varphi y')\\
&&- ((-1)^{\deg v}u'\otimes(v\Theta\otimes\Theta)d(x'\otimes y'), 
 u'\otimes(v\Theta\otimes\varphi)d(x'\otimes y'))\\
&& + \begin{pmatrix} d&f\\0&-d\end{pmatrix} \left(
(-1)^{\deg x'}u'\otimes(v\Theta x')\otimes\Theta y', 
(-1)^{\deg v}u'\otimes(v\Theta x')\otimes\varphi y'\right). 
\end{eqnarray*}
If $v$ is a cocycle, and $(u\otimes x\otimes y, u'\otimes x'\otimes y')$
is a cycle, then the right hand side is null-homologous. 
This proves the lemma.
\end{proof}
Taking the composite of the map $\cap$ in (\ref{eq:4-3-2}) 
and the diagonal map $\Delta_*$ in (\ref{eq:4-2-4}), 
we obtain the cap product
\begin{equation}
\label{eq:4-3-3}
\cap:= \cap\circ\Delta_*\colon H_*(S,R; M_1)\otimes H^*(S; M_2) 
\to H_*(S,R; M_1\otimes M_2).  
\end{equation}
From the naturality of the diagonal map
$\Delta_*$ (\ref{eq:4-2-5}) and Lemma
\ref{lem:4-3-1}, this is independent of 
the choice of resolutions and chain maps. 
We also obtain the naturality of the cap product:
\begin{prop}
\label{prop:4-3-2}
In the situation of the commutative diagram {\rm (\ref{eq:4-2-3})},
let $M_1$ and $M_2$ be left $S$-modules.
For any $\xi \in H_*(S^{\prime},R^{\prime}; M_1)$ and $\eta \in H^*(S;M_2)$, 
we have 
$$
(\varphi, \psi)_*(\xi\cap\psi^*\eta) = ((\varphi, \psi)_*\xi)\cap \eta
\in H_*(S,R; M_1\otimes M_2).
$$
\end{prop}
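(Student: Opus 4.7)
The plan is to deduce Proposition 4.3.2 directly from Lemma 4.3.1 together with the naturality of the diagonal map expressed by the commutative square (4.2.5). Recall that by the definition (4.3.3), the cap product between $H_*(S, R; M_1)$ and $H^*(S; M_2)$ factors as the composition of the diagonal $\Delta_*$ from (4.2.4) with the intermediate cap product (4.3.2) defined on the mapping-cone homology of the ``doubled'' complex.

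First I would unfold this factorization on both sides of the desired equality. The left-hand side becomes
$$
(\varphi, \psi)_*\bigl((\Delta_* \xi)\cap \psi^*\eta\bigr),
$$
where the inner cap is the intermediate one from (4.3.2). Applying Lemma 4.3.1 to the class $\Delta_*\xi$, which lives precisely in the mapping-cone homology in which (4.3.2) is defined, and taking $(\varphi\otimes\varphi,\psi\otimes\psi)_*$ in place of $(\varphi,\psi)_*$, yields
$$
(\varphi, \psi)_*\bigl((\Delta_* \xi)\cap \psi^*\eta\bigr)
= \bigl((\varphi\otimes\varphi, \psi\otimes\psi)_*(\Delta_* \xi)\bigr)\cap \eta.
$$

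Next I would invoke the commutative diagram (4.2.5), which states exactly that
$(\varphi\otimes\varphi, \psi\otimes\psi)_*\circ \Delta_* = \Delta_*\circ (\varphi, \psi)_*$. Substituting this identity into the previous equation turns the right-hand side into $\bigl(\Delta_*((\varphi,\psi)_*\xi)\bigr)\cap \eta$, which by (4.3.3) is precisely $((\varphi,\psi)_*\xi)\cap \eta$, the right-hand side of the claimed equality.

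The argument encounters no serious obstacle since both ingredients are already in place. The only point requiring care is a bookkeeping check: one must verify that the chain-level representative of $(\varphi\otimes\varphi,\psi\otimes\psi)_*$ used in Lemma 4.3.1, namely $h(\varphi\otimes\varphi, (f\varphi)\otimes\Theta + \Theta\otimes(\psi f'),\psi\otimes\psi)$, is compatible with the one entering the construction of $\Delta_*$ in (4.2.5), so that the substitution is legitimate. Since Lemma 4.1.4 (independence of the choice of chain homotopy) together with Lemma 4.1.5 (functoriality of the composition) makes all such maps well-defined on homology regardless of the specific choices, this compatibility is automatic, and the proposition follows.
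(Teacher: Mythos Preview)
Your proposal is correct and follows exactly the route the paper indicates: the paper does not give a detailed proof of Proposition 4.3.2 but simply states that it follows from the naturality of the diagonal map (4.2.5) together with Lemma 4.3.1, and you have spelled out precisely that deduction. Your closing remark about the compatibility of the chain-level representatives, handled via Lemmas 4.1.4 and 4.1.5, is also in line with the paper's treatment.
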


\subsection{Kronecker product}
We recall the Kronecker product on the (co)homology 
of a Hopf algebra. 
Let $S$ be a (complete) Hopf algebra over $\mathbb{Q}$, 
$P_*$ an $S$-projective resolution of $\mathbb{Q}$,
and $M_1$ and $M_2$ left $S$-modules. 
The Kronecker product on the (co)chain level
\begin{equation}
\label{eq:4-4-1}
\langle\,, \,\rangle\colon (M_1\otimes_SP_*)\otimes\Hom_S(P_*, M_2) 
\to M_1\otimes_SM_2
\end{equation}
is defined by $\langle u\otimes x, v\rangle = (-1)^{\deg x\deg v}
u\otimes v(x)$ for $u \in M_1$, $x \in P_*$ and $v \in 
\Hom_S(P_*, M_2)$. Since $\langle d(u\otimes x), v\rangle
= (-1)^{\deg x}\langle u\otimes x, dv\rangle$, we have 
the Kronecker product on the (co)homology level
$$
\langle\,, \,\rangle\colon H_*(S;M_1)\otimes H^*(S;M_2) 
\to M_1\otimes_SM_2.
$$
Let $\psi\colon S'\to S$ be a homomorphism of (complete) 
Hopf algebras, $P_*$ an $S'$-projective resolution of 
$\mathbb{Q}$, and $\psi\colon P'_*\to P_*$ a chain map 
which respects the homomorphism $\psi$ and the augmentations. Then we have 
\begin{equation}
\label{eq:4-4-2}
\langle\psi_*u, v\rangle = \langle u, \psi^*v\rangle
\end{equation}
for any $u \in H_*(M_1\otimes_{S^{\prime}}P^{\prime}_*)$ and
$v \in H^*(\Hom_S(P_*, M_2))$. Hence the Kronecker product 
is independent of the choice of the resolution $P_*$, 
and has a naturality. 

\subsection{Homology of a pair of groups}
Let $G$ be a group, $K$ a subgroup of $G$, and 
$M$ a left $\mathbb{Q}G$-module. As was stated 
in \S3.3, we have $H_*(G;M) = H_*(\mathbb{Q}G;M)$ 
and $H^*(G;M) = H^*(\mathbb{Q}G;M)$. The normalized 
standard complex $\bar{F}_*(G)$ is a 
$\mathbb{Q}G$-projective resolution of $\mathbb{Q}$. 
Since the inclusion map $C_*(K;M) = 
M\otimes_{\mathbb{Q}K}\bar{F}_*(K)
\to C_*(G;M) = 
M\otimes_{\mathbb{Q}G}\bar{F}_*(G)$
is injective, the mapping cone $C_*(G;M)\rtimes
C_{*-1}(K;M)$ is naturally quasi-isomorphic to 
the quotient complex $C_*(G;M)/C_*(K;M)$ from 
Lemma \ref{lem:4-1-1}. Hence we have a natural 
isomorphism
\begin{equation}
\label{eq:4-5-1}
H_*(G,K;M) = H_*(\mathbb{Q}G,\mathbb{Q}K;M).
\end{equation}
The standard complex $F_*(G) = \{F_n(G)\}$ is also 
a natural $\mathbb{Q}G$-projective resolution of $\mathbb{Q}$, 
so that it can be used for computing the relative homology
$H_*(\mathbb{Q}G,\mathbb{Q}K; M)$. The Alexander-Whitney map
$$
\Delta\colon F_*(G) \to F_*(G)\otimes F_*(G)
$$
is a $\mathbb{Q}G$-chain map respecting the augmentation maps.
See, for example, \cite{Bro} p.108. Hence the cap product on the relative 
homology $H_*(\mathbb{Q}G,\mathbb{Q}K;M)$ of the pair $(\mathbb{Q}G, 
\mathbb{Q}K)$ introduced in \S4.3 coincides with the usual cap
product  on the relative homology $H_*(G,K;M)$ of the pair $(G,K)$ via 
the isomorphism (\ref{eq:4-5-1}).\par
On the other hand, consider the classifying spaces $BG$ and $BK$. 
We assume $BK$ is realized as a subspace of $BG$. Choose a basepoint
$* \in BK$. Denote by $\Delta^n$ the standard $n$-simplex, and 
by $S_*(X)$ the rational singular chain complex of a topological space 
$X$. For any $g \in G$ we choose a continuous map $\rho(g)\colon \Delta^1 
\to BG$ satisfying the conditions
\begin{enumerate}
\item $\rho(g)(0) = \rho(g)(1) = *$ under the natural identification 
$\Delta^1\approx [0,1]$,
\item the based homotopy class of $\rho(g)$ is exactly $g \in G =
\pi_1(BG,*)$, and
\item $\rho(k)(\Delta^1) \subset BK$ if $k \in K$.
\end{enumerate}
This assignment defines a $\mathbb{Q}G$-map $\rho\colon F_1(G) \to S_1(EG)$
and a $\mathbb{Q}K$-map $\rho\colon F_1(K) \to S_1(EK)$, where $EG$ and 
$EK$ is the universal covering spaces of $BG$ and $BK$, respectively.
Since the spaces $BG$ and $BK$ are aspherical, the map $\rho$ extends
to a $\mathbb{Q}G$-chain map $\rho\colon F_*(G) \to S_*(EG)$ and 
a $\mathbb{Q}K$-chain map $\rho\colon F_*(K) \to S_*(EK)$. 
Since the map $\rho$ respects the augmentations, it induces 
a natural isomorphism
\begin{equation}
\label{eq:4-5-2}
\rho_*\colon H_*(G,K; M) \to H_*(BG, BK; M).
\end{equation}
In the right hand side we regard $M$ as the local system 
on the space $BG$ associated with the $G$-module $M$. 
From the construction of the map $\rho$, we have a commutative diagram
of $\mathbb{Q}G$-chain maps
$$\begin{CD}
F_*(G) @>{\Delta}>> F_*(G)\otimes F_*(G)\\
@V{\rho}VV @V{\rho\otimes\rho}VV\\
S_*(EG)@>{\Delta}>> S_*(EG)\otimes S_*(EG),
\end{CD}
$$
where the lower $\Delta$ is the Alexander-Whitney map on the singular 
chain complex.
Hence the cap product on the relative homology $H_*(G,K;M)$ of the pair
$(G,K)$ coincides with the cap product  on the relative homology
$H_*(BG,BK;M)$ of the pair $(BG,BK)$ of topological spaces 
via the isomorphism (\ref{eq:4-5-2}).

\section{(Co)homology theory of $\widehat{T}$ and 
$(\widehat{T}, \mathbb{Q}[[\omega]])$}

Following \S4, $H_*(\T;M)$,
$H^*(\T;M)$ and $H_*(\T, \Qomega;M)$ are defined for any $\T$-module
$M$. Here $\mathbb{Q}[[\omega]]$ is the ring of formal power
series in the symplectic form $\omega$, which is
regarded as a Hopf subalgebra of $\widehat{T}$ in an obvious way.
In this section we describe them in an explicit way, 
prove the Poincar\'e duality for the pair $(\T, \Qomega)$, and 
give a homological interpretation of symplectic derivations of the 
algebra $\T$.  \par

\subsection{Explicit description of (co)homology of 
$\widehat{T}$ and 
$(\widehat{T}, \mathbb{Q}[[\omega]])$}

Let $S$ be a (complete) Hopf algebra over $\mathbb{Q}$. 
We denote by $IS$ the augmentation ideal of $S$, namely, 
$IS := \Ker(\varepsilon\colon S \to \mathbb{Q})$, and by 
$\partial$ the inclusion map $IS \hookrightarrow S$. 
Then $P_*(S) := (IS\overset\partial\to S)$ is a left 
$S$-resolution of the trivial $S$-module $\mathbb{Q}$. 
For a left $S$-module $M$ we denote 
\begin{eqnarray*}
&& D_*(S;M) := M\otimes_SP_*(S) =(M\otimes_SIS\to M\otimes_SS), 
\quad\mbox{and}\\
&& D^*(S;M) := \Hom_S(P_*(S),M) =(\Hom_S(IS,M) \leftarrow
\Hom_S(S,M)).
\end{eqnarray*}
Let $f\colon R\to S$ be a homomorphism of (complete) Hopf algebras. 
It induces a natural homomorphism $f\colon IR\to IS$ and natural 
(co)chain maps $f\colon D_*(R;M) \to D_*(S;M)$ and 
$f^*\colon D^*(S;M) \to D^*(R;M)$. The mapping cone
$D_*(S,R;M) := D_*(S;M)\rtimes_fD_{*-1}(R;M)$ has an acyclic 
subcomplex $M\otimes_RR = M \overset{1_M}\to M= M\otimes_SS$. 
We denote the quotient complex by $\bar{D}_*(S,R;M)$, 
which is given by 
\begin{equation*}
\bar{D}_*(S,R;M) = \left\{
\begin{array}{ll}
M\otimes_RIR,& \quad \mbox{if $*=2$,}\\
M\otimes_SIS,& \quad \mbox{if $*=1$,}\\
0,& \quad \mbox{otherwise,}
\end{array}\right.
\end{equation*}
and 
$$
\partial_2=1_M\otimes f\colon \bar{D}_2(S,R;M) = M\otimes_RIR
\to M\otimes_SIS = \bar{D}_1(S,R;M).
$$
The natural projection $\varpi\colon D_*(S,R;M) \to \bar{D}_*(S,R;M)$ 
is a quasi-isomorphism. 
\par
We call the (complete) Hopf algebra $S$ {\it free},
if $IS$ is a left $S$-free module. 
For example, the algebras $\T$, $\Qomega$, $\mathbb{Q}\pi$ 
and $\Qzeta$ are free. Then $P_*(S)$ is a left $S$-projective 
resolution of $\mathbb{Q}$. Hence we have 
\begin{eqnarray}
&&H_*(S;M) = H_*(D_*(S;M)) = 
H_*(M\otimes_SIS \overset{1_M\otimes\partial}\longrightarrow M\otimes_SS)
\label{eq:5-1-1}\\
&&H^*(S;M) = H^*(D^*(S;M)) = 
H^*(\Hom_S(IS,M) \overset{\partial^*}\longleftarrow \Hom_S(S,M))
\label{eq:5-1-2}
\end{eqnarray}
as in (\ref{eq:3-3-1}). 
If $R$ is also free, then we have
\begin{equation}
\label{eq:5-1-3}
H_*(S,R;M) = H_*(\bar{D}_*(S,R;M)) = 
H_*(M\otimes_RIR\overset{1_M\otimes f}\longrightarrow M\otimes_SIS\to 0).
\end{equation}

\begin{lem}
\label{lem:5-1-1}
Let $S$ and $R$ be free {\rm (}complete{\rm )} Hopf algebras, $f\colon R\to S$ 
a homomorphism of {\rm (}complete{\rm )} Hopf algebras, and 
$M$ a trivial $S$-module. Then
\begin{enumerate}
\item[{\rm (1)}] 
\begin{equation*}
H_*(S;M) = \left\{
\begin{array}{ll}
M,& \quad \mbox{if $*=0$,}\\
M\otimes(IS/IS^2),& \quad \mbox{if $*=1$,}\\
0,& \quad \mbox{otherwise.}
\end{array}\right.
\end{equation*}
\item[{\rm (2)}] If $f(IR) \subset IS^2$, then 
\begin{equation*}
H_*(S,R;M) = \left\{
\begin{array}{ll}
H_1(R;M),& \quad \mbox{if $*=2$,}\\
H_1(S;M),& \quad \mbox{if $*=1$,}\\
0,& \quad \mbox{otherwise.}
\end{array}\right.
\end{equation*}
In particular, $\partial_*\colon H_2(S,R;M) \to H_1(R;M)$ 
is an isomorphism.
\end{enumerate}
\end{lem}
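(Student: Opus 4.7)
The plan is to exploit the freeness hypotheses, which by (\ref{eq:5-1-1})--(\ref{eq:5-1-3}) reduce all of the relevant (co)homology to the homology of very short explicit complexes, and then to use the triviality of the coefficient module to make the differentials vanish.

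For Part (1), I will invoke (\ref{eq:5-1-1}) to present $H_*(S;M)$ as the homology of the two-term complex $M\otimes_S IS \to M\otimes_S S$. For any trivial left $S$-module $M$ and any left $S$-module $N$, a direct check from the defining relations of the tensor product (and in the complete case the appropriate closure) gives
$$
M\otimes_S N \;=\; M\otimes_{\mathbb{Q}}(N/IS\cdot N).
$$
Applying this to $N=S$ and $N=IS$ yields $M\otimes_S S = M$ and $M\otimes_S IS = M\otimes(IS/IS^2)$, and the induced differential is zero because its image lies in $M\cdot\varepsilon(IS)=0$. Part (1) follows at once, with the higher vanishing coming from the length of the complex.

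For Part (2), the same reduction applied to (\ref{eq:5-1-3}) produces the two-term complex
$$
M\otimes(IR/IR^2)\;\xrightarrow{\;1_M\otimes\bar f\;}\;M\otimes(IS/IS^2)
$$
concentrated in degrees $2$ and $1$, where $\bar f\colon IR/IR^2\to IS/IS^2$ is the map induced by $f$. The hypothesis $f(IR)\subset IS^2$ forces $\bar f=0$, so the differential vanishes. The identifications $H_2(S,R;M)=H_1(R;M)$ and $H_1(S,R;M)=H_1(S;M)$ then follow from Part (1), while the vanishing of $H_*(S,R;M)$ in all other degrees is automatic from (\ref{eq:4-2-2}) and the shape of the complex.

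For the final assertion, I will trace through the definition of $\partial_*$ coming from the mapping-cone short exact sequence underlying (\ref{eq:4-2-1}). Since $D_n(S;M)=0$ for $n\geq 2$ in the resolution $P_*(S)$, a degree-$2$ cycle of $D_*(S,R;M)$ is a pair $(0,y)$ with $y\in M\otimes_R IR$, and $\partial_*$ sends $[(0,y)]$ to $[y]\in H_1(R;M)$. Under the identifications already obtained this is exactly the identity of $M\otimes(IR/IR^2)$, hence an isomorphism. (Equivalently, injectivity of $\partial_*$ is forced by $H_2(S;M)=0$ in (\ref{eq:4-2-1}), and surjectivity by the vanishing of $f_*\colon H_1(R;M)\to H_1(S;M)$, which is $1_M\otimes\bar f=0$.) The only delicate point is the bookkeeping in the complete case, where $M\otimes_S N$, $IS\cdot N$, and the equality $IS\cdot IS=IS^2$ must be interpreted as closed subspaces with respect to the given topology; once this is set up the proof is formal.
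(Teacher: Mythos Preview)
Your proof is correct and follows essentially the same route as the paper: for (1) you identify $M\otimes_S IS\cong M\otimes(IS/IS^2)$ and observe the differential vanishes, and for (2) you use that $f(IR)\subset IS^2$ makes the induced map $1_M\otimes\bar f$ zero, then read off the homology. The only cosmetic difference is that for (2) the paper phrases the conclusion via the long exact sequence (\ref{eq:4-2-1}) rather than computing $H_*(\bar D_*)$ directly, but you give that argument too in your parenthetical remark.
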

\begin{proof}
Since $M$ is a trivial module, $1_M\otimes\partial\colon
M\otimes_SIS\to M\otimes_SS$ is a zero map. 
Hence $H_0(S;M) = M$ and $H_1(S;M) = M\otimes_SIS$. 
The map $M\otimes_SIS \to M\otimes_\mathbb{Q}(IS/IS^2)$,
$u\otimes a \mapsto u\otimes(a\bmod IS^2)$, is a well-defined 
isomorphism. This proves the first part.\par
From the assumption $f(IR) \subset IS^2$,
$f_*\colon M\otimes (IR/IR^2) \to M\otimes (IS/IS^2)$ is a zero map. 
Hence the homology exact sequence (\ref{eq:4-2-1}) implies 
the second part.
\end{proof}

Consider the case $S=\T$ and $R=\Qomega$. 
The inclusion map $i\colon\Qomega\to \T$ is a homomorphism of 
complete Hopf algebras. Then we have $IS=\T_1 = \T\otimes H$ 
as a left $\T$-module, so that $M\otimes_SIS = M\otimes_{\T}\T
\otimes H= M\otimes H$ and $\Hom_S(IS,M) = \Hom_{\T}(\T\otimes H,M)
= \Hom(H,M)$. 
Under these isomorphisms, the operators 
$1_M\otimes\partial$ and $\partial^*$ are given by
\begin{eqnarray*}
&&\partial_M\colon M\otimes H \to M, \quad m\otimes X \mapsto \iota(X)m,
\quad\mbox{and}\\
&&\delta_M\colon M \to \Hom(H,M), \quad m \mapsto (X\mapsto Xm),
\end{eqnarray*}
respectively.
Hence we have
\begin{eqnarray}
H_*(\T; M) &=& H_*(M\otimes H\overset{\partial_M}\to M),
\quad \mbox{and}\label{eq:5-1-4}\\
H^*(\T;M) &=& H^*(\Hom(H,M) \overset{\delta_M}\leftarrow M).
\label{eq:5-1-5}
\end{eqnarray}
A similar result holds for $R=\Qomega$.
Under the isomorphism $M\otimes_RIR = M\otimes\mathbb{Q}\omega
= M$, the boundary operator in $\bar{D}_*(S,R;M)$ is given by 
$$
d_M\colon M \to M\otimes_{\T}\T_1 = M\otimes H, \quad
m \mapsto m\otimes\omega 
= \sum^g_{i=1} -(A_im)\otimes B_i+(B_im)\otimes A_i.
$$
Hence we have 
\begin{equation}
\label{eq:5-1-6}
\bar{D}_*(\T,\Qomega;M) = (M\overset{d_M}\to M\otimes H\to 0).
\end{equation}
Now we recall the space $H$ and its dual $H^*$ are identified by the map 
$$
\vartheta\colon H \overset\cong\to H^*, \quad X \mapsto (Y\mapsto Y\cdot X),
$$
as in (\ref{eq:2-7-1}), and introduce the isomorphisms
\begin{eqnarray*}
&&\vartheta\colon \bar{D}_1(\T,\Qomega;M) = M\otimes H \overset\cong\to 
H^*\otimes M = D^1(\T;M),\quad m\otimes X \mapsto -\vartheta(X)\otimes m, 
\\
&&\vartheta\colon \bar{D}_2(\T,\Qomega;M) = M \overset\cong\to 
M = D^0(\T;M),\quad m \mapsto -m.
\end{eqnarray*}
It is easy to check they constitute a chain map up to sign, and 
induce an isomorphism of cochain complexes
\begin{equation}
\label{eq:5-1-7}
\vartheta\colon \bar{D}_{2-*}(\T,\Qomega;M) \overset\cong\to D^*(\T;M).
\end{equation}
Hence we have an isomorphism $H_{2-*}(\T,\Qomega;M) \cong H^*(\T;M)$. 
In the next subsection we interpret this isomorphism as a certain 
kind of the Poincar\'e duality. 

\subsection{Poincar\'e duality for the pair 
$(\T, \Qomega)$}
We begin by introducing the fundamental class $\LfL \in 
H_2(\T,\Qomega; \mathbb{Q})$, which is a counterpart of 
the fundamental class $[\Sigma] \in H_2(\Sigma,\partial\Sigma; 
\mathbb{Q})$ of the surface $\Sigma$. For $R=\Qomega$, we have 
$IR/IR^2 = \Qomega\omega/\Qomega\omega^2 = \mathbb{Q}\omega$. 
By Lemma \ref{lem:5-1-1}(1), we have $H_1(\Qomega;\mathbb{Q}) 
= \mathbb{Q}\omega \cong\mathbb{Q}$.
Since $i(IR) \subset IS^2$ for $S = \T$, the connecting 
homomorphism $\partial_*\colon H_2(S,R;\mathbb{Q})\to 
H_1(R;\mathbb{Q})$ is an isomorphism from Lemma \ref{lem:5-1-1} 
(2). We define 
\begin{equation}
\label{eq:5-2-1}
\LfL := -{\partial_*}^{-1}(\omega) \in H_2(\T,\Qomega; \mathbb{Q}),
\end{equation}
which spans $H_2(\T,\Qomega; \mathbb{Q})\cong \mathbb{Q}$
and is represented by $(0, -\omega)$ in $D_2(\T,\Qomega;\mathbb{Q})
= 0\oplus\mathbb{Q}\otimes_{\Qomega}\Qomega\omega$. 
We call it {\it the fundamental class of the pair $(\T,\Qomega)$}.
We have a certain kind of the Poincar\'e duality with respect to this
fundamental class $\LfL$. 
\begin{prop}
\label{prop:5-2-1}
The cap product by the fundamental class $\LfL$ gives an isomorphism
$$
\LfL\cap\colon H^*(\T;M) \overset\cong\to H_{2-*}(\T,\Qomega;M)
$$
for any left $\T$-module $M$. 
In particular, the cochain map $\vartheta$ in 
{\rm (\ref{eq:5-1-7})} induces the inverse of the map 
$\LfL\cap$. 
\end{prop}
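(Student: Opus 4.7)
My plan is to verify at the chain level that the cap product by $\LfL$ coincides with the inverse of the cochain isomorphism $\vartheta$ of (\ref{eq:5-1-7}). Since $\vartheta$ is already an isomorphism of cochain complexes, this will simultaneously establish that $\LfL\cap$ is an isomorphism and that $\vartheta$ induces its inverse.

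I use the explicit length-two free resolutions $P_*(\T) = (IS \to S)$ and $P_*(\Qomega) = (I\Qomega \to \Qomega)$ throughout. For the diagonal approximations, I take $\Delta_{\T}$ determined on $X \in H \subset P_1(\T)$ by $\Delta_{\T}(X) = 1 \hotimes X \oplus X \hotimes 1$, extended $\T$-linearly via the coproduct, and analogously $\Delta_{\Qomega}(\omega) = 1 \hotimes \omega \oplus \omega \hotimes 1$. A direct computation in $(P_{\T} \hotimes P_{\T})_1$ shows that $\Delta_{\T}\circ i$ and $(i \hotimes i)\circ\Delta_{\Qomega}$ differ on $\omega$ precisely by $(\tilde{\omega}, -\tilde{\omega})$, where $\tilde{\omega} := \sum_i A_i \hotimes B_i - B_i \hotimes A_i \in IS \hotimes IS$; this is exactly the boundary of $\tilde{\omega}$ viewed as an element of $(P_{\T} \hotimes P_{\T})_2$, so I can take the chain homotopy $\Theta$ with $\Theta(\omega) = \tilde{\omega}$ and $\Theta$ vanishing on $P_0(\Qomega)$.

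Applying the formula $h(\Delta_{\Qomega}, \Theta, \Delta_{\T})$ of (\ref{eq:4-1-3}) to the chain representative $(0, -1 \otimes_{\Qomega} \omega)$ of $\LfL$ yields the chain-level expression $\Delta_*(\LfL) = (1 \otimes_{\T} \tilde{\omega},\ -1 \otimes_{\Qomega} \Delta_{\Qomega}(\omega))$. I then evaluate the cap formula (\ref{eq:4-3-1}) against a cocycle $v \in D^n(\T;M)$ for $n \in \{0,1\}$. Matching the cohomological degree of $v$ with the bigrading on $P \hotimes P$ and $F \hotimes F$ selects a single non-zero summand in each case: for $n=0$ only the $\Qomega$-component contributes and gives $\pm m \in \bar{D}_2(\T,\Qomega;M) = M$, while for $n=1$ only the $\T$-component $1 \otimes_{\T} \tilde{\omega}$ contributes and gives $\pm\sum_i\bigl(v(A_i) \otimes B_i - v(B_i) \otimes A_i\bigr) \in \bar{D}_1(\T,\Qomega;M) = M \otimes H$. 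A short linear-algebra check with the symplectic basis $\{A_i, B_i\}$ and the intersection pairing $Y \cdot X$ identifies each cap output with $\vartheta^{-1}(v)$, the overall sign being absorbed by the definition $\LfL := -\partial_*^{-1}(\omega)$ in (\ref{eq:5-2-1}).

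The main obstacle is the bookkeeping around the $\T$-module structure on $P_{\T} \hotimes P_{\T}$ induced by the coproduct. The key simplification is the identity $1 \otimes_{\T}(Xu \hotimes v) + 1 \otimes_{\T}(u \hotimes Xv) = 0$ for $X \in H$ (a consequence of the primitivity of $X$), which both produces the cycle condition for $\Delta_*(\LfL)$ in the tensored mapping cone and is what makes the pairing with $v$ match $\vartheta^{-1}$ once the symplectic basis is used. Because the resolutions have length two and $\omega$ is primitive, every expression encountered remains finite and the computation is tractable.
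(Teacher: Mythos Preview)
Your approach is essentially identical to the paper's: both compute the diagonal map explicitly on the length-two free resolutions, identify the homotopy $\Theta$ (the paper's $\Phi$) via the element $\tilde\omega=\hat\omega=\sum_i A_i\hotimes B_i-B_i\hotimes A_i$, apply $h(\Delta,\Theta,\Delta)$ to the representative $(0,-\omega)$ of $\LfL$, and then check directly that capping against $m\in D^0$ and $v\in D^1$ reproduces $\vartheta^{-1}$ after projecting to $\bar D_*$. The only discrepancy is a sign in your homotopy (the paper gets $\Phi(\omega)=-\hat\omega$, consistent with $\Delta i(\omega)-(i\otimes i)\Delta(\omega)=(-\hat\omega,\hat\omega)$, whereas your stated difference $(\tilde\omega,-\tilde\omega)$ has the opposite sign), but you already flag this as a $\pm$ to be tracked, and it does not affect the argument.
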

\begin{proof}
We begin by computing the diagonal map (\ref{eq:4-2-4})
\begin{equation}
\label{eq:5-2-2}
\Delta_*\colon H_*(\T, \Qomega; M) \to
H_*((M\otimes_S(P_*\hat{\otimes} P_*))\rtimes_{i\otimes
i}(M\otimes_R(F_*\hat{\otimes} F_*)_{*-1}))
\end{equation}
explicitly. 
Here we write simply $P_* = P_*(\T)$ and $F_* = P_*(\Qomega)$.
It should be remarked the completed tensor product $P_*(\T)\hotimes
P_*(\T)$ given by 
$$
\begin{array}{rcccl}
(\T_1\hotimes\T_1&\overset{\partial_2}\to
&\T_1\hotimes\T\oplus\T\hotimes\T_1&\overset{\partial_1}\to&
\T\hotimes\T)\\
u&\mapsto&(-u,u)&&\\
&&(v,w)&\mapsto& v+w
\end{array}
$$
is acyclic. We construct a chain map $\Delta\colon P_*(\T) \to 
P_*(\T)\hotimes P_*(\T)$ respecting the coproduct $\Delta$ 
as follows. In degree $0$ we define $\Delta\colon P_0(\T) = \T 
\to (P_*(\T)\hotimes P_*(\T))_0 = \T\hotimes\T$ by the 
coproduct $\Delta$ itself. In degree $1$ we define $\Delta(X) 
:= (X\hat{\otimes} 1, 1\hat{\otimes} X)
\in \T_1\hotimes\T\oplus\T\hotimes\T_1
= (P_*(\T)\hotimes P_*(\T))_1$ for $X \in H$, and extend it to 
the whole $\T_1= \T\otimes H$ as a left $\T$-homomorphism. 
Since $X\hat{\otimes} 1+1\hat{\otimes} X = \Delta(X) \in \T\hotimes\T$, 
this map $\Delta$ is a $\T$-chain map. We define a 
$\Qomega$-chain map $\Delta\colon P_*(\Qomega) \to 
P_*(\Qomega)\hotimes P_*(\Qomega)$ in a similar way. 
We have $\Delta(\omega) = (\omega\hat{\otimes} 1, 1\hat{\otimes}\omega)$ 
for $\omega \in \Qomega\omega = P_1(\Qomega)$. 
Then the (homotopy commutative) diagram
$$
\begin{CD}
P_*(\Qomega) @>{\Delta}>> P_*(\Qomega)\hotimes P_*(\Qomega)\\
@V{i}VV @V{i\otimes i}VV\\
P_*(\T) @>{\Delta}>> P_*(\T)\hotimes P_*(\T)
\end{CD}
$$
does {\it not} commute. If we denote
$$
\hat{\omega} := \sum^g_{i=1}A_i\hat{\otimes} B_i - B_i\hat{\otimes} A_i
\in \T_1\hotimes\T_1,
$$
then 
$$
\Delta i(\omega) = (\omega\hat{\otimes} 1 - \hat{\omega},\
\hat{\omega}+1\hat{\otimes}\omega) = (i\otimes i)\Delta(\omega) +
\partial_2\hat{\omega}.
$$
This means the $\Qomega$-homomorphism
$$
\Phi\colon P_*(\Qomega) \to (P_*(\Qomega)\hotimes P_*(\Qomega))_{*+1}
$$
defined by $\Phi\vert_{P_0} =0$ and $(\Phi\vert_{P_1})(\omega) =
-\hat{\omega}$ satisfies the relation 
$(i\otimes i)\Delta - \Delta i = d\Phi +\Phi d$.  
Hence the diagonal map (\ref{eq:5-2-2}) is given by 
$h(\Delta, \Phi, \Delta) = \begin{pmatrix} \Delta&-\Phi\\0& \Delta\end{pmatrix}$. 
In particular,
the homology class $\Delta_*\LfL$ is represented by the cycle
$$
\begin{pmatrix}\Delta&-\Phi\\0&
\Delta\end{pmatrix}\begin{pmatrix}0\\-\omega\end{pmatrix} = 
\begin{pmatrix}-\hat{\omega}\\(-\omega\hat{\otimes} 1,\ -1\hat{\otimes}\omega)\end{pmatrix}
\in (M\otimes_S(P_*\hat{\otimes} P_*))\rtimes_{i\otimes
i}(M\otimes_R(F_*\hat{\otimes} F_*)_{*-1}).
$$
By the explicit definition of the cap product (\ref{eq:4-3-1}), 
we have 
\begin{eqnarray*}
&&(\Delta_*\LfL)\cap m = (0, -m\otimes\omega)\\
&&(\Delta_*\LfL)\cap v = \left(\sum_{i=1}^g-v(A_i)\otimes B_i+v(B_i)\otimes A_i,
\sum_{i=1}^g A_iv(B_i)-B_iv(A_i)\right)
\end{eqnarray*}
for $m \in M = D^0(\T;M)$ and $v \in \Hom_{\T}(\T_1,M) = D^1(\T;M)$. 
Hence $\varpi\circ((\Delta_*\LfL)\cap)\colon D^*(\T;M) \to
\bar{D}_{2-*}(\T,\Qomega;M)$ is exactly the inverse of the map 
$\vartheta$ (\ref{eq:5-1-7}). This proves the proposition.
\end{proof}

In a way similar to the surface $\Sigma$ we can introduce the
intersection form
\begin{equation}
\label{eq:5-2-3}
(\ \cdot\ )\colon H_1(\T; M_1)\otimes H_1(\T,\Qomega;M_2) \to
M_1\otimes_{\T}M_2, \quad u\otimes v \mapsto \langle u,
(\LfL\cap)^{-1}v\rangle
\end{equation}
for any left $\T$-modules $M_1$ and $M_2$. Here $\langle\ ,
\ \rangle$ is the Kronecker product (\ref{eq:4-4-1}). 
Under the identifications (\ref{eq:5-1-4}) and (\ref{eq:5-1-6}), 
the intersection form coincides with the pairing
\begin{equation}
\label{eq:5-2-4}
(\ \cdot\ )\colon M_1\otimes H\otimes M_2\otimes H \to M_1\otimes_{\T}M_2,
\quad m_1\otimes X_1\otimes m_2\otimes X_2 
\mapsto (X_1\cdot X_2)m_1\otimes m_2.
\end{equation}
In fact, we have $\langle m_1\otimes X_1, \vartheta(m_2\otimes
X_2)\rangle = -\langle m_1\otimes X_1, \vartheta(X_2)\otimes m_2\rangle 
= (X_1\cdot X_2)m_1\otimes m_2$. The inclusion homomorphism 
$j_*\colon H_1(\T; M_1) \to H_1(\T,\Qomega; M_1)$ is induced by 
the composite 
$H_1(\T; M_1)\hookrightarrow M_1\otimes H \twoheadrightarrow 
H_1(\T,\Qomega; M_1)$. Hence the intersection form
$$
(\ \cdot\ )\colon H_1(\T; M_1)\otimes H_1(\T;M_2) \to
M_1\otimes_{\T}M_2, \quad u\otimes v \mapsto \langle u,
(\LfL\cap)^{-1}j_*v\rangle
$$
also coincides with the pairing (\ref{eq:5-2-4}). \par
In the succeeding subsections we use these intersections 
to give a homological interpretation of the Lie algebras 
$\mathfrak{a}_g^-$ and $\mathfrak{l}_g$ and 
symplectic derivations of the algebra $\T$.

\subsection{Homological interpretation of 
$\mathfrak{a}_g^-$ and $\mathfrak{l}_g$}

The space $H$ acts on the spaces $\T$ and $\LL$ by 
$Xu := [X,u]$ and $Xv := [X,v]$ for $X \in H$, $u \in \T$ 
and $v \in \LL$, respectively. This action extends to the whole 
algebra $\T$. In fact, we introduce an action of the algebra 
$\T\hotimes\T$ on the space $\T$ by 
$$
\mathcal{C}'\colon (\T\hotimes\T)\otimes\T \to \T, \quad
(v'\hat{\otimes} v'')\otimes u\mapsto v'u\iota(v'')
$$
for $u, v', v'' \in \T$. The space $\T$ is a left $\T\hotimes\T$-module
by the map $\mathcal{C}'$.
We have $\mathcal{C}'(\Delta(X_1\cdots X_n)\otimes u) =
[X_1,[X_2,[\cdots[X_n,u]\cdots]]]$ for $X_i\in H$ and $u \in \T$. 
Hence the action
$$
\T\otimes\T \to \T, \quad 
v\otimes u\mapsto \mathcal{C}'((\Delta v)\otimes u)
$$
is exactly an extension of the action of $H$ stated above. 
We denote by $\T^c$ and $\LL^c$ the left $\T$-modules 
defined by this action. 
In particular, if $v \in \widehat{T}$ is group-like, we have 
$\mathcal{C}'((\Delta v)\otimes u) = vu\iota(v)$. 
Hence these modules correspond to the $\mathbb{Q}\pi$-modules
$\mathbb{Q}\pi^c$ and ${\rm Lie}\widehat{\mathbb{Q}\pi}^c$, 
respectively. We denote by ${\T_1}^c$ the $\T$-submodule of $\T^c$
whose underlying subspace is $\T_1$. 
\par
As was stated in (\ref{eq:2-7-3}), the Lie algebra $\agminus =\deromega(\T)$ 
is identified with $\Ker([\ ,\ ]\colon H\otimes\T\to\T) = N(\T_1)$, 
and the Lie algebra $\llg=\deromega(\LL)$ with $\Ker([\ ,\ ]\colon
H\otimes\LL\to\LL) = N(\LL\hotimes\LL)$. 
Hence, from (\ref{eq:5-1-4}), we obtain 
\begin{eqnarray}
&& \agminus = N(\T_1) = H_1(\T; \T^c),
\label{eq:5-3-1}\\ 
&& \ag = N(\T_2) = H_1(\T; {\T_1}^c),
\quad\mbox{and}\label{eq:5-3-2}\\ 
&& \llg = N(\LL\hotimes\LL) =
H_1(\T; \LL^c).\label{eq:5-3-3}
\end{eqnarray}
\par
The brackets on the Lie algebras $\agminus$ and $\llg$ can be 
interpreted as intersection forms on the homology introduced 
in (\ref{eq:5-2-3}). We introduce a map
$$
\mathcal{B}\colon \T^c\otimes_{\T}\T^c \to N(\T_1) = \agminus, \quad
u\otimes v \mapsto N(uv),
$$
which is well-defined, since $N([u,X]v) = N(u[X,v])$ for $X \in H$
(Lemma 2.6.2 (2)).\par
For positive integers $n$ and $m$, by a straightforward computation, 
we have 
\begin{lem}
\label{lem:5-3-1}
\begin{eqnarray*}
&&[N(X_1\cdots X_n), N(Y_1\cdots Y_m)] \\
&=& N((N(X_1\cdots X_n))(Y_1\cdots Y_m)) \\
&=& -\mathcal{B}(N(X_1\cdots X_n)\cdot N(Y_1\cdots Y_m))\\
&=& -\sum^n_{i=1}\sum^m_{j=1}(X_i\cdot Y_j)N(X_{i+1}\cdots X_nX_1\cdots
X_{i-1}Y_{j+1}\cdots Y_mY_1\cdots Y_{j-1})
\end{eqnarray*}
for $X_i, Y_j \in H$. Here the bracket $[\ ,\ ]$ is 
that as derivations of $\T$,
and $(N(X_1\cdots X_n))(Y_1\cdots Y_m)$ is
the action of $N(X_1\cdots X_n)$ on the tensor $Y_1\cdots Y_m$
as a derivation.
The third term is minus the pairing $(\ \cdot\ )$ in {\rm (\ref{eq:5-2-4})} 
of $N(X_1\cdots X_n)$ and $N(Y_1\cdots Y_m) \in \T\otimes H$
applied by the map $\mathcal{B}$.
\end{lem}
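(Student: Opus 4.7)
Write $u_1 := X_1 \cdots X_n$, $u_2 := Y_1 \cdots Y_m$, $v_1 := N(u_1)$, $v_2 := N(u_2)$, and let $D_i \in \agminus$ denote the derivation of $\T$ corresponding to $v_i$ under the Poincar\'e duality (\ref{eq:2-7-1}). In these terms, $v_1 = \sum_{i=1}^n (X_{i+1}\cdots X_n X_1 \cdots X_{i-1}) \otimes X_i \in \T \otimes H$, so $D_1$ acts on $Y \in H$ by $Y \mapsto \sum_i (Y \cdot X_i)\, X_{i+1}\cdots X_n X_1 \cdots X_{i-1}$, and extends to $\T$ by the Leibniz rule.

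The plan is to handle the four expressions from right to left. First, I would compute $\mathcal{B}(v_1 \cdot v_2)$ directly by plugging the cyclic expansions of $v_1, v_2$ into the explicit pairing (\ref{eq:5-2-4}): the pairing picks out the cross-terms indexed by pairs $(i,j)$ with coefficient $(X_i \cdot Y_j)$, and then $\mathcal{B}$ concatenates the remaining factors and applies $N$, giving the explicit sum appearing on the right-hand side (up to the overall sign). This establishes the third equality.

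Next, I would compute $N(D_1(u_2))$ by the Leibniz rule: $D_1(u_2) = \sum_j Y_1 \cdots Y_{j-1}\, D_1(Y_j)\, Y_{j+1}\cdots Y_m$ with $D_1(Y_j) = \sum_i (Y_j \cdot X_i)\, X_{i+1}\cdots X_n X_1 \cdots X_{i-1}$. Applying $N$ and using its cyclic invariance (Lemma \ref{lem:2-6-2}(1)) to rotate the $Y_1 \cdots Y_{j-1}$ block past the $X$-block yields
\[
N(D_1(u_2)) = \sum_{i,j} (Y_j \cdot X_i)\, N\bigl(X_{i+1}\cdots X_{i-1}\, Y_{j+1}\cdots Y_{j-1}\bigr),
\]
and switching $(Y_j \cdot X_i) = -(X_i \cdot Y_j)$ matches the previous expression with the expected minus sign. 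This proves the second equality.

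Finally, for the first equality, both $[D_1, D_2]$ (a commutator of derivations) and the derivation attached to $N(D_1(u_2)) \in N(\T_1) = \agminus$ are derivations of $\T$, so it suffices to check they agree on generators $Z \in H$. I would evaluate $[D_1, D_2](Z) = D_1(D_2(Z)) - D_2(D_1(Z))$: expanding $D_2(Z) = \sum_j (Z \cdot Y_j)\, Y_{j+1}\cdots Y_{j-1}$, applying $D_1$ to this $(m-1)$-tensor via Leibniz, and doing the symmetric computation for the second term, one obtains a double sum in which the antisymmetrization leaves precisely the terms where $D_1$ hits a single $Y_k$-slot (or, in the other term, $D_2$ hits an $X_k$-slot). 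Reading off the derivation associated to $N(D_1(u_2))$ via (\ref{eq:2-7-1}) by re-expanding its cyclic form as an element of $\T \otimes H$ gives the same double sum. The main obstacle is precisely this bookkeeping in the final step: $D_1$ acts on an $(m-1)$-tensor, producing a triple sum over positions, and showing that after antisymmetrization with $D_2 D_1$ and re-expansion of $N(D_1(u_2))$ as a sum of cyclic rotations the two expressions match termwise requires careful tracking of cyclic indices and of the antisymmetry of the intersection form.
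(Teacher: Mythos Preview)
Your proposal is correct and is precisely the ``straightforward computation'' the paper alludes to without giving details; the paper offers no argument beyond that phrase, so your plan of unwinding the pairing (\ref{eq:5-2-4}) and the Leibniz rule term by term, and then checking the first equality on generators of $H$, is exactly what is required. The only comment is that the bookkeeping you flag in the last step can be slightly eased by noting in advance that $N(D_1(u_2)) = -N(D_2(u_1))$ (from $N(ab)=N(ba)$ and the antisymmetry of the intersection form), so both sides are antisymmetric in $(1,2)$; this halves the cross-checking when matching $D_1D_2(Z)-D_2D_1(Z)$ against the cyclic re-expansion of $N(D_1(u_2))$.
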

Hence we obtain
\begin{prop}
\label{prop:5-3-2}
Under the identifications {\rm (\ref{eq:5-3-1})} and {\rm (\ref{eq:5-3-3})},
the brackets on the Lie algebras $\agminus$ and $\llg$ coincide with 
minus the intersection forms
\begin{eqnarray*}
&& -\mathcal{B}(\ \cdot\ )\colon H_1(\T; \T^c)\otimes H_1(\T; \T^c)
\to N(\T_1) = \agminus, \quad\mbox{and}\\
&& -\mathcal{B}(\ \cdot\ )\colon H_1(\T; \LL^c)\otimes H_1(\T; \LL^c)
\to N(\LL\otimes\LL) = \llg,
\end{eqnarray*}
respectively.
\end{prop}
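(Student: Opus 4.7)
The plan is to deduce Proposition \ref{prop:5-3-2} directly from Lemma \ref{lem:5-3-1} by recognizing that the fourth line of that lemma is precisely the composition ``intersection pairing, then $\mathcal{B}$'' applied to the monomial generators of $N(\T_1)$, and then to extend the resulting identity by bilinearity and continuity in the $\T_1$-adic topology.

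First I would write out cycle representatives. For $X_1,\ldots,X_n \in H$, the element
$$
N(X_1\cdots X_n) = \sum_{i=1}^n (X_{i+1}\cdots X_{i-1})\otimes X_i \in \T^c\otimes H
$$
(indices cyclic) represents an element of $H_1(\T;\T^c) = N(\T_1) = \agminus$ under the identification (\ref{eq:5-3-1}); it is a cycle for the boundary $\partial_M(m\otimes X) = \iota(X)m = [m,X]$ because $\sum_i[X_{i+1}\cdots X_{i-1},X_i]$ telescopes to zero. An analogous expression represents $N(Y_1\cdots Y_m)$. Applying the explicit intersection pairing (\ref{eq:5-2-4}) to this pair contracts the $H$-factors via $(X_i\cdot Y_j)$ and leaves $(X_{i+1}\cdots X_{i-1})\otimes_{\T}(Y_{j+1}\cdots Y_{j-1})$; composing with $\mathcal{B}$ multiplies and applies $N$, producing exactly
$$
\mathcal{B}\bigl(N(X_1\cdots X_n)\cdot N(Y_1\cdots Y_m)\bigr) = \sum_{i,j}(X_i\cdot Y_j)\,N(X_{i+1}\cdots X_{i-1}Y_{j+1}\cdots Y_{j-1}),
$$
which by the fourth line of Lemma \ref{lem:5-3-1} equals $-[N(X_1\cdots X_n),N(Y_1\cdots Y_m)]$.

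This establishes the desired equality on the $\mathbb{Q}$-linear span of monomial $N$'s. Since both the commutator of derivations and the composition $-\mathcal{B}\circ(\cdot)$ are bilinear and filtration-continuous, and since every homogeneous $\nu$-invariant element of $\T_1$ is of the form $N(w)$ for some homogeneous $w$ by Lemma \ref{lem:2-6-2} (3), this extends to all of $N(\T_1)$ and settles the $\agminus$ case.

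For the $\llg$ case, I would invoke Lemma \ref{lem:2-7-2}, which identifies $\llg = N(\LL\hotimes\LL)$ and ensures that monomial $N$'s with entries in $\LL$ remain inside $\llg$. The inclusion $\LL^c \hookrightarrow \T^c$ induces a compatible inclusion $H_1(\T;\LL^c) \hookrightarrow H_1(\T;\T^c)$, the intersection form restricts in the obvious way, and $\mathcal{B}$ sends $\LL^c\otimes_{\T}\LL^c$ into $N(\LL\hotimes\LL) = \llg$, so the identity at the $\agminus$ level specializes. The only real obstacle is bookkeeping around the sign of the boundary operator and the placement of the antipode in the left/right $\T$-module structures; the substantive commutator computation is entirely packaged in Lemma \ref{lem:5-3-1}, which the proposition is free to invoke.
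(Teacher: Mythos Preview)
Your proposal is correct and follows essentially the same route as the paper: the paper derives Proposition \ref{prop:5-3-2} as an immediate consequence of Lemma \ref{lem:5-3-1} (introduced with ``Hence we obtain''), and you have simply unpacked the details of that deduction by writing out the cycle representatives, applying the explicit pairing (\ref{eq:5-2-4}), and matching against the fourth line of the lemma. The extension to $\llg$ via restriction along $\LL^c\hookrightarrow\T^c$ and Lemma \ref{lem:2-7-2} is likewise what the paper intends.
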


\subsection{Homological interpretation of symplectic
derivations of $\T$}

In order to interpret symplectic derivations of the algebra 
$\T$, we introduce three left $\T$-modules $\T^r$, $\T^l$ and 
$\T^t$, which correspond to the left $\mathbb{Q}\pi$-modules 
$\mathbb{Q}\pi^r$, $\mathbb{Q}\pi^l$ and $\mathbb{Q}\pi^t$. 
As vector spaces these three modules are the same $\T$.
The action of the algebra $\T$ is given by the multiplication
$$
u(v^r) := v^r\iota(u), \quad  
u(v^l) := uv^l, \quad  \mbox{and}\quad
u(v^t) :=\varepsilon(u)v^t
$$
for $u \in \T$, $v^r \in \T^r$, $v^l \in \T^l$ and $v^t \in \T^t$. 
Denote by $T$ the tensor algebra of $H$, $T:= \bigoplus^\infty_{n=0}
H^{\otimes n}$. We define a map $\xi\colon T \to (\T^r\hotimes
\T^l)\otimes_{\T}\T_1$ by 
$$
\xi(u) := 1 \otimes(1\otimes(1-\varepsilon))(\Delta u) 
= 1\otimes (\Delta u-u\otimes 1)
$$
for $u \in T$. In this expression, we regard $(\T^r\hotimes
\T^l)\otimes_{\T}\T_1$ as the natural quotient of
$(\T^r\hotimes \T^l)\otimes \T_1$.
Then we have 
\begin{lem}
\label{lem:5-4-1}
$$
\xi(X_1\cdots X_n) = \sum^n_{i=1}(X_1\cdots X_{i-1}\otimes
X_{i+1}\cdots X_n)\otimes_{\widehat{T}} X_i
$$
for $n \geq 1$ and $X_i \in H$. 
\end{lem}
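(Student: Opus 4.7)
The plan is to expand $\xi(X_1\cdots X_n)$ using the coproduct of $\widehat{T}$, pass to a convenient model of the target via the isomorphism $\widehat{T}_1\cong\widehat{T}\otimes H$, group the resulting raw sum by the largest ``missing'' index, and collapse each group using the antipode--counit axiom of the Hopf algebra $\widehat{T}$.

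I start from the multiplicative expansion of the coproduct. Since $\Delta(X)=X\otimes 1+1\otimes X$ on $H$ and $\Delta$ is an algebra map,
$$
\Delta(X_1\cdots X_n)=\sum_{S\subseteq\{1,\ldots,n\}} X_S\otimes X_{S^c},
$$
where $X_S$ denotes the ordered product along $S$. Applying $1\otimes(1-\varepsilon)$ kills only the $S^c=\emptyset$ summand, so the raw representative of $\xi(X_1\cdots X_n)$ is
$$
\sum_{S\subsetneq\{1,\ldots,n\}}(1\otimes X_S)\otimes_{\widehat{T}}X_{S^c}.
$$

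Next, I use the left-$\widehat{T}$-module isomorphism $\widehat{T}_1\cong\widehat{T}\otimes H$, $u\otimes Y\mapsto uY$, which induces $(\widehat{T}^r\hotimes\widehat{T}^l)\otimes_{\widehat{T}}\widehat{T}_1\cong(\widehat{T}^r\hotimes\widehat{T}^l)\otimes H$; under this identification $(a\otimes b)\otimes_{\widehat{T}}vY$ with $Y\in H$ becomes $((a\otimes b)\cdot v)\otimes Y$, where $\cdot$ is the right-$\widehat{T}$-action obtained from the diagonal left action through the antipode, explicitly $(a\otimes b)\cdot v=\sum av_{(2)}\otimes\iota(v_{(1)})b$ for $\Delta v=\sum v_{(1)}\otimes v_{(2)}$. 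For each $S\subsetneq\{1,\ldots,n\}$ I set $i:=\max S^c$, factor $X_{S^c}=X_{S^c\setminus\{i\}}\cdot X_i$, and parametrize those $S$ with $\max S^c=i$ by $S=\{i+1,\ldots,n\}\cup T$ with $T\subseteq\{1,\ldots,i-1\}$ (so $S^c\setminus\{i\}=\{1,\ldots,i-1\}\setminus T$). A direct substitution using the right-action formula then collapses the $i$-th group into
$$
\Bigl(\sum_{\{1,\ldots,i-1\}=T\sqcup U\sqcup V} X_V\otimes\iota(X_U)X_T\Bigr)\cdot(1\otimes X_{i+1}\cdots X_n)\otimes X_i.
$$

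The final ingredient is a Hopf-algebraic identity: the parenthesised triple sum is precisely $(\mathrm{id}\otimes (m\circ(\iota\otimes\mathrm{id})))\circ\Delta^{(2)}$ applied to $X_1\cdots X_{i-1}$, so by co-associativity together with the antipode--counit axiom $m\circ(\iota\otimes\mathrm{id})\circ\Delta=\eta\varepsilon$ it collapses to $X_1\cdots X_{i-1}\otimes 1$. Multiplying on the right by $1\otimes X_{i+1}\cdots X_n$ yields the $i$-th contribution $(X_1\cdots X_{i-1}\otimes X_{i+1}\cdots X_n)\otimes X_i$, and summing over $i$ proves the formula. The principal obstacle lies in the middle step: one must carefully track how the right-$\widehat{T}$-action through the antipode rearranges the double sum over $T$ and $S$ into a three-fold partition $[i-1]=T\sqcup U\sqcup V$, since only in that form is the antipode--counit axiom directly applicable.
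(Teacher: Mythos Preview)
Your proof is correct, and it takes a genuinely different route from the paper's. The paper argues by induction on $n$: it writes $w=X_1\cdots X_{n-1}$, introduces a right action of $\widehat{T}\hat\otimes\widehat{T}$ on $(\widehat{T}^r\hat\otimes\widehat{T}^l)\otimes\widehat{T}_1$ given by $(u\otimes v\otimes w)(x\otimes y)=u\otimes vx\otimes wy$, and uses the inductive hypothesis twice to compute $\xi(wX_n)$. No Hopf-algebra axioms are invoked explicitly; the argument is an elementary recursive unwinding.

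Your approach, by contrast, is a direct ``closed-form'' computation: you expand $\Delta$ over subsets, group by the largest index of $S^c$, pass through the identification $(\widehat{T}^r\hat\otimes\widehat{T}^l)\otimes_{\widehat{T}}\widehat{T}_1\cong(\widehat{T}^r\hat\otimes\widehat{T}^l)\otimes H$ using the right action via the antipode, and then collapse each block via $(\mathrm{id}\otimes(m\circ(\iota\otimes\mathrm{id})))\circ\Delta^{(2)}=(\mathrm{id}\otimes\eta\varepsilon)\circ\Delta$. This is more structural: it makes transparent \emph{why} the formula holds---it is the antipode identity doing the work---and it avoids induction entirely. The trade-off is that you must track the right-module structure through $\iota$ carefully (your ``principal obstacle''), whereas the paper's induction sidesteps any explicit appearance of the antipode. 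Both arguments have roughly the same length; yours generalizes more readily to other Hopf-algebraic settings, while the paper's keeps the machinery to a minimum.
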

\begin{proof}
First note that $\widehat{T}\otimes \widehat{T}$
acts on $(\T^r\hotimes \T^l)\otimes \T_1$
from the right, by $(u \otimes v\otimes w)(x\otimes y)
=u\otimes vx\otimes wy$,
and this action is compatible with the quotient map
$(\T^r\hotimes \T^l)\otimes \T_1 \rightarrow
(\T^r\hotimes \T^l)\otimes_{\T}\T_1$.
In the below, $1\otimes(\Delta w-w\otimes 1)(1\otimes X_n)$
means the result of the application of $1\otimes X_n$
to $1\otimes(\Delta w-w\otimes 1)$ with respect to this action, etc.

We prove the lemma by induction on $n \geq 1$. 
If $n=1$, we have $\xi(X_1) = 1\otimes(\Delta X_1 - X_1\otimes 1) 
= 1\otimes 1\otimes X_1$. Suppose $n \geq 2$. 
Denote $w := X_1\cdots X_{n-1}$. By the inductive assumption, 
we compute
\begin{eqnarray*}
&& 1\otimes(\Delta w-w\otimes 1)(1\otimes X_n)
= \sum^{n-1}_{i=1}X_1\cdots X_{i-1}\otimes 
X_{i+1}\cdots X_{n-1}\otimes X_iX_n\\
&=& -\sum^{n-1}_{i=1}(\Delta X_i)(X_1\cdots X_{i-1}\otimes 
X_{i+1}\cdots X_{n-1})\otimes X_n\\
&=& -\sum^{n-1}_{i=1}(X_i\otimes 1+1\otimes X_i)
(X_1\cdots X_{i-1}\otimes 
X_{i+1}\cdots X_{n-1})\otimes X_n\\
&=& \sum^{n-1}_{i=1}
(X_1\cdots X_{i}\otimes 
X_{i+1}\cdots X_{n-1})\otimes X_n
- (X_1\cdots X_{i-1}\otimes 
X_{i}\cdots X_{n-1})\otimes X_n\\
&=& X_1\cdots X_{n-1}\otimes 1\otimes X_n 
- 1\otimes X_1\cdots X_{n-1}\otimes X_n = w\otimes 1\otimes X_n -
1\otimes w\otimes X_n.
\end{eqnarray*}
Hence we have $1\otimes(\Delta w)(1\otimes X_n) = w\otimes 1\otimes X_n$.
Using the inductive assumption again, we compute
\begin{eqnarray*}
&& \xi(wX_n) = 1\otimes(\Delta(wX_n)-wX_n\otimes 1)\\
&=& 1\otimes \left( \Delta w(X_n\otimes 1+1\otimes X_n)
-wX_n\otimes1 \right)\\
&=& 1\otimes(\Delta w-w\otimes 1)(X_n\otimes 1) + 1\otimes(\Delta w)(1\otimes
X_n)\\
&=& \sum^{n-1}_{i=1}X_1\cdots X_{i-1}\otimes 
X_{i+1}\cdots X_{n-1}X_n\otimes X_i 
+ X_1\cdots X_{n-1}\otimes 1\otimes X_n\\
&=& \sum^{n}_{i=1}X_1\cdots X_{i-1}\otimes 
X_{i+1}\cdots X_n\otimes X_i.
\end{eqnarray*}
This completes the induction.
\end{proof}
From this lemma, the map $\xi$ is a graded homomorphism 
of degree $0$. Hence it extends to the whole $\T$ and induces 
a map
$$
\xi\colon \T \to (\T^r\hotimes\T^l)\otimes_{\T}\T_1 
= \bar{D}_1(\T,\Qomega; \T^r\hotimes\T^l) \to 
H_1(\T,\Qomega; \T^r\hotimes\T^l),
$$
which corresponds to the map in Definition \ref{def:3-5-1}. Consider the map
$$
\mathcal{C}\colon \T^c\otimes_{\T}(\T^r\hotimes\T^l) \to \T^t, \quad
w\otimes u\otimes v \mapsto uwv,
$$
which is well-defined, since $\mathcal{C}(Xw\otimes u\otimes v) 
+\mathcal{C}(w\otimes X(u\otimes v)) = u(Xw-wX)v -uXwv+uwXv = 0$ 
for any $X \in H$. Then we have
\begin{lem}
\label{lem:5-4-2}
$$
\mathcal{C}(w\cdot\xi(u)) = (\vartheta w)(u) \in \T
$$ 
for any $w \in \T^c\otimes H$ and $u \in \T$. Here $\vartheta\colon 
\T^c\otimes H \to H^*\otimes\T^c$, $m\otimes Y \mapsto -(\vartheta
Y)\otimes m$, is the map given in {\rm (\ref{eq:5-1-7})}. The right hand side 
means the action of $\vartheta w$ on the tensor $u$
as a derivation.
\end{lem}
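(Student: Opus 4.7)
The plan is to reduce to monomials and then perform a direct computation. By bilinearity of both sides in $w$, I would first take $w = m \otimes Y$ with $m \in \T^c$ and $Y \in H$, and by linearity in $u$ it suffices to treat a monomial $u = X_1 X_2 \cdots X_n$ with $X_i \in H$.

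First I would apply Lemma \ref{lem:5-4-1} to expand
$$\xi(u) = \sum_{i=1}^n (X_1 \cdots X_{i-1} \otimes X_{i+1} \cdots X_n) \otimes_{\T} X_i.$$
Unfolding the pairing (\ref{eq:5-2-4}) with $m_1 = m$, $X_1 = Y$, the $\T^r\hotimes\T^l$-component playing the role of $m_2$, and $X_i$ the role of $X_2$, yields
$$w \cdot \xi(u) = \sum_{i=1}^n (Y \cdot X_i)\, m \otimes (X_1 \cdots X_{i-1} \otimes X_{i+1} \cdots X_n) \in \T^c \otimes_{\T} (\T^r \hotimes \T^l).$$
Applying $\mathcal{C}$, which by definition sends $w' \otimes u' \otimes v' \mapsto u' w' v'$, gives
$$\mathcal{C}(w \cdot \xi(u)) = \sum_{i=1}^n (Y \cdot X_i)\, X_1 \cdots X_{i-1}\, m\, X_{i+1} \cdots X_n.$$

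Next I would compute $(\vartheta w)(u)$. By the definition of $\vartheta$ we have $\vartheta w = -(\vartheta Y) \otimes m \in H^* \otimes \T$, which under the identification $H^* \otimes \T \cong \Hom(H, \T) = \Der(\T)$ corresponds to the derivation $D$ with $D(X) = -(X \cdot Y)\, m$ for $X \in H$. Applying the Leibniz rule to the monomial $u$ gives
$$D(u) = \sum_{i=1}^n X_1 \cdots X_{i-1}\, D(X_i)\, X_{i+1} \cdots X_n = -\sum_{i=1}^n (X_i \cdot Y)\, X_1 \cdots X_{i-1}\, m\, X_{i+1} \cdots X_n.$$
Using the antisymmetry $(X_i \cdot Y) = -(Y \cdot X_i)$, this coincides with the expression for $\mathcal{C}(w \cdot \xi(u))$ obtained above.

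The substantive part of the argument is just Lemma \ref{lem:5-4-1}; the only place where care is needed is the sign bookkeeping, where the minus sign built into the definition of $\vartheta$ in (\ref{eq:5-1-7}) is precisely what is needed to cancel the antisymmetry sign of the intersection pairing so that the two sides agree. No separate well-definedness check is required, since both sides descend through the relations used to form $\T^c \otimes_{\T}(\T^r\hotimes\T^l)$ (left-hand side) and through the Leibniz rule (right-hand side) in parallel.
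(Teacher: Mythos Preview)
Your proof is correct and follows essentially the same approach as the paper: reduce by bilinearity to $w=m\otimes Y$ and $u=X_1\cdots X_n$, expand $\xi(u)$ via Lemma~\ref{lem:5-4-1}, apply the pairing and $\mathcal{C}$, and compare with the Leibniz expansion of $(\vartheta w)(u)$ using the antisymmetry of the intersection form to match signs. The paper's write-up is nearly identical, only organizing the sign manipulation slightly differently.
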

\begin{proof}
It suffices to prove the lemma for $u=X_1\cdots X_n$, $X_i\in H$, 
$w=m\otimes Y$, $m \in \T^c$ and $Y \in H$. From Lemma \ref{lem:5-4-1}, 
\begin{eqnarray*}
&& \mathcal{C}(w\cdot \xi(u)) = \mathcal{C}\left((m\otimes
Y)\cdot\sum^n_{i=1}X_1\cdots X_{i-1}\otimes X_{i+1}\cdots X_n\otimes
X_i\right)\\
&=&\sum^n_{i=1}(Y\cdot X_i)\mathcal{C}(m\otimes X_1\cdots X_{i-1}\otimes
X_{i+1}\cdots X_n)\\
&=&-\sum^n_{i=1}(X_i\cdot Y)(X_1\cdots X_{i-1}m X_{i+1}\cdots X_n)\\
&=&\sum^n_{i=1}X_1\cdots X_{i-1}\vartheta(m\otimes Y)(X_i)X_{i+1}\cdots
X_n = (\vartheta w)(u).
\end{eqnarray*}
This proves the lemma. 
\end{proof}

Hence we obtain
\begin{prop}
\label{prop:5-4-3}
Under the identification {\rm (\ref{eq:5-3-1})} and the map 
$\xi\colon \T \to H_1(\T,\Qomega; \T^r\hotimes\T^l)$,
the action of the Lie algebra $\agminus$ on the algebra $\T$
as derivations coincides with minus the intersection form
$$
-\mathcal{C}(\ \cdot\ )\colon H_1(\T; \T^c) \otimes H_1(\T,\Qomega;
\T^r\hotimes\T^l) \to \T^t=\T.
$$
In other words, we have 
$$
\mathcal{C}(w\cdot\xi(u)) = -w(u)
$$
for any $w \in H_1(\T; \T^c)$ and $u \in \T$. 
\end{prop}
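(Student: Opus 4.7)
My plan is to deduce the proposition directly from Lemma \ref{lem:5-4-2}, by comparing the Poincar\'e duality $\vartheta$ of \eqref{eq:5-1-7} with the identification of $\agminus$ with $N(\T_1) \subset \T_1$ acting as derivations of $\T$ via \eqref{eq:2-7-1}. Lemma \ref{lem:5-4-2} already gives the chain-level identity $\mathcal{C}(w \cdot \xi(u)) = (\vartheta w)(u)$ at the level of cycles, so the whole proposition will reduce to the assertion that $\vartheta w = -w$ as derivations of $\T$, where $w$ on the right is viewed through the identification \eqref{eq:5-3-1}.

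First I would reduce, by bilinearity and continuity, to a representative cycle $w = m \otimes Y$ with $m \in \T^c$ and $Y \in H$, and to a monomial $u = X_1 \cdots X_n$. Lemma \ref{lem:5-4-2} then reduces the desired formula to the identity $(\vartheta w)(u) = -w(u)$ of two derivations of $\T$ evaluated at $u$. Since both sides are derivations, it then suffices to check the identity of the two derivations on generators $X \in H$, and this in turn is a bare sign computation.

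To carry out this sign check, note first that by \eqref{eq:5-1-7}, the element $\vartheta w = -\vartheta(Y) \otimes m \in H^* \otimes \T^c = \Hom(H, \T)$ sends $X \in H$ to $-\vartheta(Y)(X)\,m = -(X \cdot Y)\,m$. On the other hand, the identification \eqref{eq:5-3-1} combined with the Poincar\'e duality \eqref{eq:2-7-1} realizes the cycle $w = m \otimes Y$ as the derivation $D_w$ with $D_w(X) = (X \cdot Y)\,m$, extended to all of $\T$ by the Leibniz rule. Hence $\vartheta w$ and $-D_w$ agree on $H$, so they agree as derivations of $\T$, and we obtain $(\vartheta w)(u) = -D_w(u) = -w(u)$ as desired.

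The one place where attention is needed is the book-keeping of this sign: the factor $-1$ appearing in the formula $\vartheta(m \otimes Y) = -\vartheta(Y) \otimes m$ from \eqref{eq:5-1-7}, which was introduced precisely so that $\vartheta$ becomes the genuine inverse of the cap product $\LfL \cap$ in Proposition \ref{prop:5-2-1}, is exactly what supplies the minus sign in the statement $-\mathcal{C}(\,\cdot\,)$ of the proposition. No further computation is required once Lemma \ref{lem:5-4-2} is in hand; the proof is essentially a consistent reading of conventions.
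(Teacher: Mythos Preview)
Your proposal is correct and follows essentially the same approach as the paper, which simply records Proposition~\ref{prop:5-4-3} as an immediate consequence of Lemma~\ref{lem:5-4-2}; you have merely made explicit the sign comparison $\vartheta w = -D_w$ that the paper leaves to the reader. One small imprecision: when you ``reduce by bilinearity to a representative cycle $w = m\otimes Y$'', a single pure tensor need not be a cycle, but this is harmless since Lemma~\ref{lem:5-4-2} and your formula $D_w(X)=(X\cdot Y)m$ both hold term-by-term at the chain level, and only the restriction to genuine cycles is needed for the statement.
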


\section{Comparison via a symplectic expansion}
In this section we prove Theorems \ref{thm:1-2-1} and \ref{thm:1-2-2}
in Introduction.

\subsection{Comparison via a Magnus expansion}
Let $F_n = \langle x_1, \dots, x_n\rangle$ be a free group 
of rank $n \geq 1$ with standard generators $x_1, \dots, x_n$, 
$\T$ the completed tensor algebra of the rational homology group 
$H_1(F_n; \mathbb{Q})$, and $\theta\colon F_n \to \T$ a Magnus expansion 
of $F_n$ as in Definition 2.3.1. Then $\theta$ induces an algebra 
homomorphism $\theta\colon \mathbb{Q}F_n\to \T$. We regard 
a left $\T$-module $M$ as a left $\mathbb{Q}F_n$-module via $\theta$. 
\begin{lem}
\label{lem:6-1-1}
For any right $\T$-module $M_1$ and left $\T$-module $M_2$, $\theta$
induces isomorphisms
\begin{eqnarray*}
&&\theta_*\colon H_*(F_n; M_1) \overset\cong\to
{\rm Tor}_*^{\T}(M_1,\mathbb{Q}), \quad\mbox{and}\\
&&\theta^*\colon {\rm Ext}^*_{\T}(\mathbb{Q},M_2)
\overset\cong\to H^*(F_n; M_2).
\end{eqnarray*}
In particular, if $\theta$ is group-like, then we have isomorphisms
$\theta_*\colon H_*(F_n; M_1) \stackrel{\cong}{\rightarrow}
H_*(\T;\mathbb{Q})$ and
$\theta^*\colon H^*(\T;M_2) \overset\cong\to H^*(F_n;M_2)$.
\end{lem}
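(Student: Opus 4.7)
The strategy is to exploit the length-$1$ free resolutions of the trivial module $\mathbb{Q}$ available on both sides and to reduce the whole comparison to the invertibility of a single matrix over $\T$. Since $F_n$ is a free group, the augmentation ideal $IF_n$ is left $\mathbb{Q}F_n$-free on $\{x_i-1\}_{i=1}^n$, giving the resolution $0\to IF_n\to\mathbb{Q}F_n\to\mathbb{Q}\to 0$; and since $\T$ is the completed tensor algebra on $H$, the augmentation ideal $\T_1 = \T\hotimes H$ is left $\T$-free on $\{[x_i]\}_{i=1}^n$, giving $0\to \T_1 \to \T \to \mathbb{Q}\to 0$, which computes ${\rm Tor}^{\T}_*$ and ${\rm Ext}^*_{\T}$ as in (\ref{eq:5-1-1})--(\ref{eq:5-1-2}). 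Both theories therefore vanish in degrees $\geq 2$, and only degrees $0$ and $1$ need to be compared.

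The ring homomorphism $\theta\colon \mathbb{Q}F_n\to\T$ restricts to a $\mathbb{Q}F_n$-chain map between these resolutions (the target viewed as a $\mathbb{Q}F_n$-complex via $\theta$), sending $x_i-1\mapsto \theta(x_i)-1$. Using the free basis $\{[x_j]\}$ of $\T_1$, write uniquely
\[
\theta(x_i)-1 = \sum_{j=1}^n u_{ij}\,[x_j], \qquad u_{ij}\in\T.
\]
Condition (2) of a Magnus expansion, $\theta(x_i)\equiv 1+[x_i]\pmod{\T_2}$, forces $u_{ij}\equiv \delta_{ij}\pmod{\T_1}$, so the matrix $U = (u_{ij})\in M_n(\T)$ differs from the identity by entries in the maximal ideal $\T_1$. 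The key (and essentially only nontrivial) technical point is then that $U$ is invertible in $M_n(\T)$, which is where one genuinely uses that the algebra is completed: since $\T$ is $\T_1$-adically complete, the Neumann series $U^{-1}=\sum_{k\geq 0}(I-U)^k$ converges in $M_n(\T)$.

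To finish, apply $M_1\otimes_{(-)}(-)$ (respectively $\Hom_{(-)}(-,M_2)$) to the chain map. In the chosen bases both complexes take the form $M_1^n\to M_1$ (respectively $M_2\to M_2^n$), and the map induced by $\theta$ becomes the identity in degree $0$, and right multiplication by $U$ on $M_1^n$ (respectively left multiplication by $U$ on $M_2^n$) in degree $1$. Invertibility of $U$ makes each of these an isomorphism, hence a quasi-isomorphism of two-term complexes, yielding $\theta_*$ and $\theta^*$. The ``in particular'' clause is then purely notational, using $H_*(\T;M) = {\rm Tor}^{\T}_*(M,\mathbb{Q})$ and $H^*(\T;M) = {\rm Ext}^*_{\T}(\mathbb{Q},M)$ from Section 4; the group-like hypothesis plays no logical role in the isomorphism itself, but it is what makes $\theta$ additionally intertwine the coproducts, which matters for the diagonal-level comparisons in the subsequent subsections.
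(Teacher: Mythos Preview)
Your proof is correct and follows essentially the same approach as the paper. The only cosmetic difference is that the paper obtains the fact that $\{\theta(x_i)-1\}$ is a free $\T$-basis of $\T_1$ by invoking the existence of a filter-preserving algebra automorphism $U$ of $\T$ with $\theta(x_i)=U(1+[x_i])$ (so $\theta(x_i)-1=U([x_i])$), citing \cite{Ka} Theorem 1.3, whereas you verify the same thing directly by checking that the transition matrix $U=(u_{ij})$ is invertible via the Neumann series in the complete ring; these are the same argument unpacked to different depths.
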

\begin{proof}
There exists a filter-preserving automorphism $U$ of the algebra $\T$, 
such that $\theta(x_i) = U(1+[x_i])$ for any $1\leq i \leq n$
(see \cite{Ka}, Theorem 1.3). 
Since $\{[x_i]\}^n_{i=1} \subset H_1(F_n;\mathbb{Q})$
is a free basis of the left $\T$-module $\T_1$, the set 
$\{\theta(x_i)-1\}^n_{i=1}$ is also a free basis of $\T_1$. 
Hence we have a decomposition $M_1\otimes_{\T}\T_1 =
\bigoplus^n_{i=1}M_1\otimes (\theta(x_i)-1)$. On the other hand, 
by using Fox' free differential, we find out 
$\{x_i-1\}^n_{i=1}$ is a free basis of the left
$\mathbb{Q}\pi$-module $IF_n$. This implies a decomposition
$M_1\otimes_{\mathbb{Q}F_n}IF_n = 
\bigoplus^n_{i=1}M_1\otimes (x_i-1)$. Hence we obtain an isomorphism 
of chain complexes $\theta_*\colon D_*(\mathbb{Q}F_n;M_1) \cong 
M_1\otimes_{\T} P_*(\T)$, and so the isomorphism $\theta_*\colon H_*(F_n; M_1)
\cong {\rm Tor}_*^{\T}(M_1,\mathbb{Q})$.
A similar argument holds for
${\rm Ext}^*_{\T}(\mathbb{Q},M_2)$ and $H^*(F_n; M_2)$. 
\end{proof}

Let $\theta\colon \pi\to \T$ be a symplectic expansion
of the fundamental group $\pi$ of the surface $\Sigma$.
Then the restriction of $\theta$ to the subgroup $\langle\zeta\rangle$ 
is a Magnus expansion of the infinite cyclic group
$\langle\zeta\rangle$. Hence, from Lemma \ref{lem:6-1-1}
and the five-lemma, we obtain
\begin{cor}
\label{cor:6-1-2}
Let $\theta$ be a symplectic expansion of the fundamental
group $\pi$ of the surface $\Sigma$. Then
the algebra homomorphism $\theta$ induces an isomorphism
$$
\theta_*\colon H_*(\pi,\langle\zeta\rangle; M) 
\overset\cong\to H_*(\T,\Qomega;M)
$$
for any left $\T$-module $M$. Here we write simply $\theta_*$ for 
$(\theta,\theta)_*$.
\end{cor}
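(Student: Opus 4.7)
The plan is to apply the five-lemma to the ladder built from the two long exact sequences \eqref{eq:4-2-1} for the pairs $(\pi,\langle\zeta\rangle)$ and $(\widehat{T},\mathbb{Q}[[\omega]])$, connected vertically by the maps induced by $\theta$. For this I need to know that $\theta$ already induces isomorphisms on the two absolute homology groups appearing in each sequence.

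For the ambient piece, $\pi$ is a free group of rank $2g$ and $\theta$ is a Magnus expansion with respect to $H=H_1(\pi;\mathbb{Q})$, so Lemma \ref{lem:6-1-1} directly gives $\theta_*\colon H_*(\pi;M)\xrightarrow{\cong}H_*(\widehat{T};M)$. For the subgroup piece, $\langle\zeta\rangle$ is infinite cyclic and hence a free group of rank $1$, and the defining condition of a symplectic expansion, $\theta(\zeta)=\exp(\omega)=1+\omega+\tfrac{1}{2}\omega^2+\cdots$, shows that $\theta|_{\langle\zeta\rangle}$ lands in the complete Hopf subalgebra $\mathbb{Q}[[\omega]]\subset\widehat{T}$. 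Viewing $\mathbb{Q}[[\omega]]$ as the completed tensor algebra of the one-dimensional space $\mathbb{Q}\omega$ (so that $\omega$ has degree $1$ in the $I$-adic filtration of $\mathbb{Q}[[\omega]]$), the congruence $\exp(\omega)\equiv 1+\omega \pmod{\omega^2}$ says exactly that $\theta|_{\langle\zeta\rangle}\colon\langle\zeta\rangle\to\mathbb{Q}[[\omega]]$ is a Magnus expansion of the rank-$1$ free group $\langle\zeta\rangle$. A second application of Lemma \ref{lem:6-1-1} then gives $\theta_*\colon H_*(\langle\zeta\rangle;M)\xrightarrow{\cong}H_*(\mathbb{Q}[[\omega]];M)$.

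It remains to glue these two isomorphisms together with the relative map $(\theta,\theta)_*$ from \S4.2 into a commutative ladder of long exact sequences---using the identification \eqref{eq:4-5-1} to read $H_*(\pi,\langle\zeta\rangle;M)$ as the relative homology of the pair of Hopf algebras $(\mathbb{Q}\pi,\mathbb{Q}\langle\zeta\rangle)$---after which the five-lemma concludes. The main care lies in this last step: one must choose compatible projective resolutions for the two pairs and verify that the vertical maps respect both the connecting map $\partial_*$ and the maps $j_*,f_*$ of \eqref{eq:4-2-1}. This commutativity is not a fresh calculation but a direct consequence of the mapping-cone formalism of \S4.1 together with the functoriality statements of Lemmas \ref{lem:4-1-4} and \ref{lem:4-1-5}, so once the ladder is written down cleanly the verification should be routine.
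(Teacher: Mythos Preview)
Your proposal is correct and follows essentially the same approach as the paper: the paper's one-line argument is precisely that $\theta|_{\langle\zeta\rangle}$ is a Magnus expansion of the infinite cyclic group $\langle\zeta\rangle$, so Lemma~\ref{lem:6-1-1} applies to both absolute pieces and the five-lemma finishes. Your write-up is simply a more explicit unpacking of this, including the verification that $\exp(\omega)\equiv 1+\omega\pmod{\omega^2}$ in the $I$-adic filtration of $\mathbb{Q}[[\omega]]$ and the remark that the ladder commutes by the functoriality of \S4.2.
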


\subsection{Symplectic expansion}
Hereafter suppose $\theta$ is a symplectic expansion of the fundamental
group $\pi$. Then we have a commutative diagram of (complete) Hopf
algebras
\begin{equation}
\begin{CD}
\Qzeta@>{\theta}>>\Qomega\\
@V{i}VV@V{i}VV\\
\mathbb{Q}\pi @>{\theta}>>\T.
\end{CD}
\label{eq:6-2-1}
\end{equation}
As was proved in Lemma \ref{lem:6-1-1} and Corollary
\ref{cor:6-1-2}, we have isomorphisms
\begin{eqnarray*}
&&\theta_*\colon H_*(\pi; M) \overset\cong\to H_*(\T; M), \\
&&\theta^*\colon H^*(\T; M) \overset\cong\to H^*(\pi; M),\quad\mbox{and}\\
&&\theta_*\colon H_*(\pi,\langle\zeta\rangle; M) 
\overset\cong\to H_*(\T,\Qomega;M)
\end{eqnarray*}
for any left $\T$-module $M$. 
Now we have 
\begin{lem}
\label{lem:6-2-1}
$$
\theta_*[\Sigma] = \LfL \in H_2(\T,\Qomega;\mathbb{Q}).
$$
\end{lem}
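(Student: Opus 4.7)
The strategy is to compare $\theta_*[\Sigma]$ and $\LfL$ via the connecting homomorphism of the pair, and use the defining property $\theta(\zeta)=\exp(\omega)$ of a symplectic expansion.

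Both sides live in one-dimensional spaces. On the topological side, $\pi$ is free so $H_2(\pi;\mathbb{Q})=0$, and $[\zeta]=\sum_i[\alpha_i,\beta_i]=0$ in $H_1(\pi;\mathbb{Q})=H$, so the long exact sequence of the pair $(\pi,\langle\zeta\rangle)$ (equivalently, of $(\mathbb{Q}\pi,\Qzeta)$) gives an isomorphism $\partial_*\colon H_2(\pi,\langle\zeta\rangle;\mathbb{Q})\overset{\cong}\to H_1(\langle\zeta\rangle;\mathbb{Q})\cong\mathbb{Q}$. On the algebraic side, Lemma \ref{lem:5-1-1} gives the analogous isomorphism $\partial_*\colon H_2(\T,\Qomega;\mathbb{Q})\overset{\cong}\to H_1(\Qomega;\mathbb{Q})=\mathbb{Q}\omega$. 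Since Corollary \ref{cor:6-1-2} identifies the relative $H_2$'s via $\theta_*$, it suffices to compare the images of $\theta_*[\Sigma]$ and $\LfL$ under $\partial_*$.

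By naturality of the long exact sequence applied to the square (\ref{eq:6-2-1}), we have $\partial_*(\theta_*[\Sigma])=\theta_*(\partial_*[\Sigma])$, while definition (\ref{eq:5-2-1}) gives $\partial_*(\LfL)=-\omega$. So the proof reduces to verifying $\theta_*(\partial_*[\Sigma])=-\omega\in H_1(\Qomega;\mathbb{Q})$. A chain-level representative of $[\Sigma]$ coming from the standard $4g$-gon cell structure of $\Sigma$ (whose attaching word is $\zeta=\prod_{i=1}^g[\alpha_i,\beta_i]$) has boundary proportional to $\zeta-1\in I\langle\zeta\rangle$ under the identification (\ref{eq:3-3-1}); the sign convention fixing $\LfL=-\partial_*^{-1}(\omega)$ is chosen precisely so that $\partial_*[\Sigma]=-[\zeta]$, where $[\zeta]$ denotes the class in $H_1(\langle\zeta\rangle;\mathbb{Q})$ of $\zeta-1$. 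Applying $\theta_*$ to the chain-level map $I\langle\zeta\rangle\to I\Qomega$ and invoking the key identity $\theta(\zeta)=\exp(\omega)$, we obtain
$$\theta_*[\zeta]\;=\;[\theta(\zeta)-1]\;=\;[\exp(\omega)-1]\;=\;\omega\;\in\;H_1(\Qomega;\mathbb{Q}),$$
since $\exp(\omega)-1\equiv\omega\pmod{I\Qomega^2}$. Hence $\theta_*(\partial_*[\Sigma])=-\omega=\partial_*(\LfL)$, and the lemma follows.

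The main technical difficulty is the careful sign bookkeeping in the penultimate step: one must reconcile the orientation convention on $\partial\Sigma$ (counter-clockwise, matching $\zeta$), the sign in the mapping-cone differential of \S4.1, and the minus sign built into the definition $\LfL=-\partial_*^{-1}(\omega)$. The use of the symplectic property $\theta(\zeta)=\exp(\omega)$ is essential: for a group-like but non-symplectic expansion $\theta$, the class $\theta_*[\zeta]$ would pick up contributions from higher tensors, and one would not land on the scalar multiple of $\omega$ required.
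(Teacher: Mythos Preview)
Your proof is correct and follows essentially the same route as the paper: compare both classes via the connecting homomorphism $\partial_*$, use naturality of the long exact sequence for the square (\ref{eq:6-2-1}), identify $\partial_*[\Sigma]=-[\zeta]$, and invoke the symplectic condition $\theta(\zeta)=\exp(\omega)$ to get $\theta_*[\zeta]=\omega$. The only expository wrinkle is your justification of the sign $\partial_*[\Sigma]=-[\zeta]$: this is a topological fact about the orientation of $\zeta$ relative to the boundary orientation induced by $[\Sigma]$ (the paper says ``the loop $\zeta$ goes around the boundary $\partial\Sigma$ in the opposite direction''), not a consequence of how $\LfL$ was defined.
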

\begin{proof} In the commutative diagram
$$
\begin{CD}
H_2(\Sigma, \partial\Sigma;\mathbb{Q}) @=
H_2(\pi,\langle\zeta\rangle;\mathbb{Q})
@>{\theta_*}>>H_2(\T,\Qomega;\mathbb{Q})\\
@V{\partial_*}VV@V{\partial_*}VV@V{\partial_*}VV\\
H_1(\partial\Sigma;\mathbb{Q}) @=
H_1(\langle\zeta\rangle;\mathbb{Q})
@>{\theta_*}>>H_1(\Qomega;\mathbb{Q}),
\end{CD}
$$
the fundamental class $[\Sigma]$ is mapped to 
$-[\zeta] \in H_1(\langle\zeta\rangle;\mathbb{Q})$. 
In fact, the loop $\zeta$ goes around the boundary 
$\partial\Sigma$ in the opposite direction. Since 
$\theta$ is a symplectic expansion, we have 
$-\theta_*[\zeta] = -\omega = \partial_*\LfL \in 
H_1(\Qomega; \mathbb{Q})$. This implies $\theta_*[\Sigma] 
= \LfL$, as was to be shown.
\end{proof}
Hence, by Proposition \ref{prop:4-3-2}, 
\begin{cor}
\label{cor:6-2-2} 
We have a commutative diagram
$$
\begin{CD}
H^1(\pi;M) @<{\theta^*}<< H^1(\T;M)\\
@V{[\Sigma]\cap}VV @V{\LfL\cap}VV\\
H_1(\pi,\langle\zeta\rangle;M) @>{\theta_*}>> H_1(\T,\Qomega;M)
\end{CD}
$$
for any left $\T$-module $M$.
\end{cor}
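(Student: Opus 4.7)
The plan is to deduce Corollary 6.2.2 directly from the naturality of the cap product (Proposition 4.3.2) applied to the commutative diagram (6.2.1) of (complete) Hopf algebras, together with the identification of fundamental classes provided by Lemma 6.2.1. Since all the heavy lifting (the general construction of relative homology, the definition of cap products via the diagonal map, and their naturality) has already been carried out in Sections 4 and 5, what remains is essentially a matter of unwinding definitions.

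First, I would set up the data. Take the commutative square (6.2.1) of (complete) Hopf algebras with $R' = \Qzeta$, $R = \Qomega$, $S' = \mathbb{Q}\pi$, $S = \T$, and horizontal maps induced by the symplectic expansion $\theta$. Choose coefficient modules $M_1 = \mathbb{Q}$ (the trivial $\T$-module) and $M_2 = M$ (regarded as a $\mathbb{Q}\pi$-module via $\theta$), so that $M_1 \otimes M_2 = M$. Let $\eta \in H^1(\T;M)$ be arbitrary and take $\xi = [\Sigma] \in H_2(\pi,\langle\zeta\rangle;\mathbb{Q})$.

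Next, I would apply Proposition 4.3.2 to this data. It gives the identity
$$
\theta_*\bigl([\Sigma] \cap \theta^*\eta\bigr) \;=\; \bigl(\theta_*[\Sigma]\bigr) \cap \eta
$$
in $H_1(\T,\Qomega;M)$, where $\theta_*$ on the left denotes the relative-homology map $(\theta,\theta)_*$ induced by (6.2.1), as identified in Corollary 6.1.2. The left-hand side is precisely the composite ``go up-then-left-then-down-then-right'' in the square of the corollary, i.e.\ first pull back $\eta$ by $\theta^*$, then cap with $[\Sigma]$, then push forward by $\theta_*$. By Lemma 6.2.1, $\theta_*[\Sigma] = \LfL$, so the right-hand side equals $\LfL \cap \eta$, which is the other composite ``down-then-right'' in the diagram. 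This is exactly the asserted commutativity.

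The main (and only) potential obstacle is bookkeeping: making sure that the naturality statement of Proposition 4.3.2 is being invoked with the correct identifications, particularly that the $\mathbb{Q}\pi$-module structure on $M$ used in computing $[\Sigma] \cap \theta^*\eta$ agrees with the one coming from $\theta$, and that the isomorphism of Corollary 6.1.2 identifies the mapping-cone cap product built from the standard bar resolution with the topological cap product $[\Sigma]\cap$ on $H_*(\pi,\langle\zeta\rangle;M) = H_*(\Sigma,\partial\Sigma;M)$ via the discussion of \S4.5. Both are essentially definitional: the coefficient identification is tautological (only the $\T$-action enters the cap product), and the compatibility with the topological cap product was spelled out at the end of \S4.5. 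Thus no genuine new calculation is required, and the corollary follows immediately.
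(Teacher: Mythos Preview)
Your proposal is correct and follows essentially the same route as the paper: the corollary is deduced directly from the naturality of the cap product (Proposition \ref{prop:4-3-2}) applied to the square (\ref{eq:6-2-1}), together with Lemma \ref{lem:6-2-1} identifying $\theta_*[\Sigma]=\LfL$. The bookkeeping remark you make about matching the Hopf-algebra cap product with the topological one via \S4.5 is exactly the point the paper singles out immediately after the statement.
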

Here it should be remarked 
the cap product on the pair of spaces $(\Sigma,\partial\Sigma)$ 
coincides with that on the pair of Hopf algebras $(\mathbb{Q}\pi,
\Qzeta)$ from what we proved in \S4.5. 
Thus the intersection form on the pair $(\T,\Qomega)$ is directly 
related to that on the surface $\Sigma$. 
\begin{prop}
\label{prop:6-2-3}
For any left $\T$-modules $M_1$ and $M_2$, we have a commutative 
diagram
$$
\begin{CD}
H_1(\pi;M_1)\otimes H_1(\pi,\langle\zeta\rangle;M_2) 
@>{(\ \cdot\ )}>> M_1\otimes_{\mathbb{Q}\pi}M_2\\
@V{\theta_*\otimes\theta_*}VV @VVV\\
H_1(\T;M_1)\otimes H_1(\T,\Qomega;M_2) 
@>{(\ \cdot\ )}>> M_1\otimes_{\T}M_2.
\end{CD}
$$
\end{prop}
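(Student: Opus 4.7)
My plan is to reduce the statement to the interplay of Poincar\'e duality (Corollary \ref{cor:6-2-2}) with the naturality of the Kronecker product (equation (\ref{eq:4-4-2})). Recall that in both the surface-side and the $\widehat T$-side, the intersection form is built from Poincar\'e duality and the Kronecker pairing: for $u \in H_1(\pi;M_1)$ and $v \in H_1(\pi,\langle\zeta\rangle;M_2)$ one has $u\cdot v = \langle u, ([\Sigma]\cap)^{-1}v\rangle_\pi \in M_1\otimes_{\mathbb{Q}\pi} M_2$, and analogously on the tensor-algebra side, $u'\cdot v' = \langle u', (\LfL\cap)^{-1}v'\rangle_{\T} \in M_1\otimes_{\T} M_2$, by the very definition (\ref{eq:5-2-3}) of the intersection form for the pair $(\T,\Qomega)$ (and by the parallel discussion in \cite{Go} \S2 for $(\Sigma,\partial\Sigma)$).

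First, I would rewrite Corollary \ref{cor:6-2-2} as the identity
$$ \theta_* \circ ([\Sigma]\cap) \circ \theta^* \;=\; \LfL\cap $$
of isomorphisms $H^1(\T;M) \to H_1(\T,\Qomega;M)$. Since both $\theta_*$ (Corollary \ref{cor:6-1-2}) and $\theta^*$ (Lemma \ref{lem:6-1-1}) are isomorphisms, inverting yields
$$ (\LfL\cap)^{-1} \circ \theta_* \;=\; \theta^{*} \circ ([\Sigma]\cap)^{-1} $$
as maps $H_1(\pi,\langle\zeta\rangle;M_2) \to H^1(\T;M_2)$.

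Next, for $u \in H_1(\pi;M_1)$ and $v \in H_1(\pi,\langle\zeta\rangle;M_2)$, I would compute
$$
\theta_*u \cdot \theta_*v
\;=\; \bigl\langle \theta_*u,\ (\LfL\cap)^{-1}\theta_*v \bigr\rangle_{\T}
\;=\; \bigl\langle \theta_*u,\ \theta^*\bigl(([\Sigma]\cap)^{-1}v\bigr) \bigr\rangle_{\T}.
$$
Applying the naturality (\ref{eq:4-4-2}) of the Kronecker product to the Hopf algebra homomorphism $\theta\colon \mathbb{Q}\pi \to \T$, the right-hand side equals the image of $\langle u,\, ([\Sigma]\cap)^{-1}v \rangle_\pi = u\cdot v$ under the canonical map $M_1\otimes_{\mathbb{Q}\pi}M_2 \to M_1\otimes_{\T}M_2$ (where $M_1, M_2$ are regarded as $\mathbb{Q}\pi$-modules via $\theta$). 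This is exactly the commutativity of the diagram.

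The proof is thus essentially formal once the two tools are in place; the only point requiring care is the bookkeeping of the two different tensor products $\otimes_{\mathbb{Q}\pi}$ and $\otimes_{\T}$, and noting that (\ref{eq:4-4-2}) is interpreted as an equality in $M_1\otimes_{\T}M_2$ (via the quotient from $M_1\otimes_{\mathbb{Q}\pi}M_2$), which is precisely the right-hand vertical map in the diagram of the proposition.
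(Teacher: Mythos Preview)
Your approach is exactly the one the paper intends: it presents Proposition \ref{prop:6-2-3} as an immediate consequence of Corollary \ref{cor:6-2-2} together with the naturality (\ref{eq:4-4-2}) of the Kronecker product, without spelling out the details. There is one slip in your bookkeeping: since $\theta^*\colon H^1(\T;M_2)\to H^1(\pi;M_2)$, inverting Corollary \ref{cor:6-2-2} gives $(\LfL\cap)^{-1}\circ\theta_* = (\theta^*)^{-1}\circ([\Sigma]\cap)^{-1}$, not $\theta^{*}\circ([\Sigma]\cap)^{-1}$ (your expression does not type-check as a map into $H^1(\T;M_2)$). With this correction, applying (\ref{eq:4-4-2}) to $v':=(\theta^*)^{-1}([\Sigma]\cap)^{-1}v$ gives $\langle\theta_*u,\,v'\rangle_{\T}=\langle u,\,\theta^*v'\rangle_{\pi}=\langle u,\,([\Sigma]\cap)^{-1}v\rangle_{\pi}$, and the argument goes through.
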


\subsection{``Completion" of the Goldman Lie algebra}
Recall the map $\lambda\colon \mathbb{Q}\hat{\pi} \to H_1(\pi;
\mathbb{Q}\pi^c)$ in \S3.4, whose kernel is the subspace 
$\mathbb{Q}1$ spanned by the constant loop $1 = \vert 1\vert \in 
\hat{\pi}$. Since the group $\pi$ is free, the map
$$
H_*(\theta)\colon H_1(\pi; \mathbb{Q}\pi^c)\to H_1(\pi;\T^c)
$$
induced by the injection $\theta\colon \mathbb{Q}\pi^c \to \T$ is injective. 
As was proved in Lemma \ref{lem:6-1-1}, 
$\theta_*\colon H_1(\pi;\T^c) \to H_1(\T; \T^c)$ is an isomorphism. 
Let $\lambda_{\theta}$ be the composite
$$
\lambda_{\theta}\colon \mathbb{Q}\hat{\pi}\overset\lambda\to 
H_1(\pi; \mathbb{Q}\pi^c)\overset{H_*(\theta)}\to H_1(\pi;\T)
\overset{\theta_*}\to H_1(\T;\T^c) = N(\T_1) = \agminus.
$$
From what we showed above, the kernel of $\lambda_{\theta}$ is the 
subspace $\mathbb{Q}1$.\par
In order to describe the map $\lambda_{\theta}$ explicitly, 
we introduce some notation around the algebra $\T$. 
Let $\hat{N}\colon \T\to\T_1$ be the map defined by 
$\hat{N}\vert_{H^{\otimes 0}} = 0$ and
$\hat{N}\vert_{H^{\otimes n}} = \frac{1}{n}N\vert_{H^{\otimes n}} 
= \sum^{n-1}_{i=0}\frac{1}{n}\nu^i$ for $n \geq 1$. Clearly we have 
$\hat{N}\vert_{N(\T_1)} = 1_{N(\T_1)}$. We denote by $\chi$ 
the composite
$$
\chi\colon \T^c\otimes_{\T}\T_1 = \T^c\otimes H \hookrightarrow \T_1.
$$
We have $\chi(\Ker(\T^c\otimes_{\T}\T_1\overset{1\otimes\partial}\to 
\T^c\otimes_{\T}\T)) = N(\T_1)$. Let $\Phi\colon \T \to \LL$ be the map 
defined by $\Phi(X_1\cdots X_n) = [X_1,[\cdots[X_{n-1}, X_n]\cdots]]$ 
for $X_i \in H^{\otimes n}$, $n \geq 1$. We have $\Phi(u) = nu$ and 
$[u, \Phi(v)] = \Phi(uv)$ for any $u \in \LL\cap H^{\otimes n}$ and 
$v \in \T_1$. See \cite{Ser} Part I, Theorem 8.1, p.28.
$\frac{1}{n}\Phi\vert_{H^{\otimes n}}$ is exactly the 
Dynkin idempotent.
\begin{lem}
\label{lem:6-3-1}
We have
$$
\hat{N}\chi(u\otimes v) = \hat{N}(u\Phi(v))
$$
for any $u \in \T$ and $v \in \T_1$.
\end{lem}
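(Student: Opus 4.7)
The plan is to reduce by bilinearity and continuity to the case $v = X_1 X_2 \cdots X_p \in H^{\otimes p}$ with $p \geq 1$ and $X_i \in H$, and then show that both $\chi(u\otimes v)$ and $u\Phi(v)$ are congruent to the same left-normed bracket modulo the commutator subspace $[\T,\T]\subset\ker\hat{N}$ (the inclusion being immediate from Lemma 2.6.2(1)). First I would make the map $\chi$ explicit: via the left $\T$-module isomorphism $\T_1\cong \T\otimes H$, $s\otimes X\mapsto sX$, one has $\T^c\otimes_{\T}\T_1\cong \T^c\otimes H$, where the right $\T$-action on $\T^c$ arises from its left action (defined by $Y\cdot u=[Y,u]$ for $Y\in H$ and extended as iterated commutators) via the antipode: $u\cdot_r s=\iota(s)\cdot u$. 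Writing $v=(X_1\cdots X_{p-1})\cdot X_p$ and using $\iota(X_1\cdots X_{p-1})=(-1)^{p-1}X_{p-1}\cdots X_1$ yields
$$\chi(u\otimes v)=(-1)^{p-1}\bigl[X_{p-1},[X_{p-2},\cdots,[X_1,u]\cdots]\bigr]\cdot X_p,$$
which after flipping the sign in each of the $p-1$ nested brackets via $[X,W]=-[W,X]$ becomes the left-normed expression
$$\chi(u\otimes v)=\bigl[[\cdots[[u,X_1],X_2],\cdots],X_{p-1}\bigr]\cdot X_p.$$

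Next I would establish
$$u\Phi(v)\equiv \bigl[[\cdots[[u,X_1],X_2],\cdots],X_{p-1}\bigr]\cdot X_p\pmod{[\T,\T]}$$
by induction on $p$. The case $p=1$ is immediate since $\Phi(X_1)=X_1$. For $p\geq 2$, writing $\Phi(v)=[X_1,\Psi]$ with $\Psi=\Phi(X_2\cdots X_p)$, I compute
$$u\Phi(v)=uX_1\Psi-u\Psi X_1\equiv uX_1\Psi-X_1u\Psi=[u,X_1]\Psi\pmod{[\T,\T]},$$
using cyclic invariance on the term $u\Psi X_1$. The induction hypothesis applied to $[u,X_1]$ and $X_2\cdots X_p$ then produces the desired left-normed bracket. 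Combining with the explicit formula for $\chi$ and applying $\hat{N}$ completes the proof.

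The main point requiring care is the derivation of the explicit formula for $\chi$: the right $\T$-module structure on $\T^c$ passes through the antipode, which reverses tensor order and introduces signs, so one must carefully track both. Once that formula is in hand, the passage from either side to the common left-normed bracket modulo commutators is a short cyclic manipulation.
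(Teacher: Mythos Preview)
Your proof is correct and uses the same ingredients as the paper's: the tensor relation $u\otimes Xv=[u,X]\otimes v$ in $\T^c\otimes_{\T}\T_1$ (which you apply all at once via the antipode to get the closed formula for $\chi$), cyclic invariance $N(ab)=N(ba)$, and induction on the degree of $v$. The paper runs a single parallel induction, proving $\hat N\chi(u\otimes Xv)=\hat N\chi([u,X]\otimes v)=\hat N([u,X]\Phi(v))=\hat N(u[X,\Phi(v)])=\hat N(u\Phi(Xv))$ in one line, whereas you split the work into an explicit formula for $\chi(u\otimes v)$ followed by a separate induction on $u\Phi(v)$ modulo $[\T,\T]$; this is an organizational difference rather than a genuinely different argument.
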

\begin{proof} It suffices to prove the lemma for $v \in H^{\otimes q}$
by induction on $q \geq 1$. If $q=1$, then $\hat{N}\chi(u\otimes v) 
= \hat{N}(uv) = \hat{N}(u\Phi(v))$. Suppose $q \geq 2$ and
$v\in H^{\otimes q-1}$. 
For any $X\in H$ we have $\hat{N}\chi(u\otimes Xv) 
= \hat{N}\chi([u,X]\otimes v) = \hat{N}([u,X]\Phi(v)) =
\hat{N}(u[X,\Phi(v)]) = \hat{N}(u\Phi(Xv))$. This completes the
induction. 
\end{proof}

\begin{lem}
\label{lem:6-3-2}
For any $x \in \pi$, we have 
$$
\lambda_{\theta}(x) = N\theta(x) = N(\theta(x)-1) \in N(\T_1) = \agminus.
$$
\end{lem}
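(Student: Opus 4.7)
The plan is to trace $\lambda_{\theta}(x)$ down to an explicit cycle in the two-term complex computing $H_1(\T;\T^c)$, invoke Lemma \ref{lem:6-3-1} to identify the corresponding element of $N(\T_1)$, and then use group-likeness of $\theta$ to collapse the resulting expression to $N\theta(x)$.

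First I would exhibit a chain representative. By the isomorphism (\ref{eq:3-3-1}), the class $\lambda(x) = x\otimes[x] \in H_1(\pi;\mathbb{Q}\pi^c)$ is represented by $x\otimes(x-1) \in \mathbb{Q}\pi^c\otimes_{\mathbb{Q}\pi}I\pi$. Enlarging coefficients via $\mathbb{Q}\pi^c\hookrightarrow\T^c$ and then passing through the chain-level quasi-isomorphism of Lemma \ref{lem:6-1-1} (which matches the free basis $x_i-1$ of $I\pi$ over $\mathbb{Q}\pi$ with the free basis $\theta(x_i)-1$ of $\T_1$ over $\T$), this becomes the cycle $\theta(x)\otimes_{\T}(\theta(x)-1) \in \T^c\otimes_{\T}\T_1$. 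Under the inclusion $\chi$ that identifies $H_1(\T;\T^c)$ with $N(\T_1) = \agminus$, and using that $\chi$ of any cycle lies in $N(\T_1)$, on which $\hat{N}$ acts as the identity, Lemma \ref{lem:6-3-1} yields
\[
\lambda_{\theta}(x) = \hat{N}\bigl(\theta(x)\,\Phi(\theta(x)-1)\bigr).
\]

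It remains to show the right-hand side equals $N\theta(x) = N(\theta(x)-1)$. Since $\theta$ is symplectic and hence group-like, I can write $\theta(x) = e^{L}$ with $L := \ell^{\theta}(x) \in \LL$. A short induction on the Lie structure of $L$, based on the identity $\Phi(Xu) = [X,\Phi(u)]$ for $X \in H$ (immediate from the definition of $\Phi$) and the Jacobi identity, gives $\Phi(Lu) = [L,\Phi(u)]$ for every $u \in \T_1$. Iterating on $L^n = L\cdot L^{n-1}$ then produces
\[
\Phi(\theta(x)-1) = \sum_{n\geq 1}\frac{\mathrm{ad}(L)^{n-1}\Phi(L)}{n!} = \frac{e^{\mathrm{ad}\,L}-1}{\mathrm{ad}\,L}\bigl(\Phi(L)\bigr).
\]
On the other hand, for any $w \in \T$, cyclic invariance of $\hat{N}$ together with $Le^{L} = e^{L}L$ gives $\hat{N}(e^{L}[L,w]) = \hat{N}(e^{L}Lw) - \hat{N}(Le^{L}w) = 0$, hence $\hat{N}(e^{L}\mathrm{ad}(L)^{k}w) = 0$ for every $k \geq 1$. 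Since $(e^{y}-1)/y$ has constant term $1$, this kills all higher-order contributions and reduces the problem to showing $\hat{N}(e^{L}\Phi(L)) = N(e^{L}-1)$.

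Finally, $\Phi$ restricted to $\LL$ coincides with the Euler derivation $E$ on $\T$ (scaling each homogeneous summand by its degree), since $\Phi$ acts as $d\cdot\mathrm{id}$ on $\LL\cap H^{\otimes d}$. The elementary identity $\hat{N}\circ E = N$ on $\T$ follows at once from $\hat{N}|_{H^{\otimes k}} = N/k$. Combined with the derivation property $E(L^n) = \sum_{i=0}^{n-1}L^{i}E(L)L^{n-1-i}$ and cyclic invariance of $\hat{N}$ (so all $n$ summands contribute equally), this gives $\hat{N}(E(L)L^{n-1}) = N(L^n)/n$, and therefore
\[
\hat{N}(e^{L}E(L)) = \sum_{m\geq 0}\frac{\hat{N}(E(L)L^{m})}{m!} = \sum_{n\geq 1}\frac{N(L^n)}{n!} = N(e^{L}-1),
\]
as required. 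The main obstacle is the bookkeeping in this last step: when $L$ is not homogeneous, the naive simplification $\Phi(L^n) = 0$ for $n \geq 2$ is unavailable, and one must exploit both the derivation property of $E$ and the full strength of cyclic invariance of $\hat{N}$. Group-likeness of $\theta$ enters crucially, both in the Lie-theoretic identity $\Phi(Lu) = [L,\Phi(u)]$ and in the commutation $Le^{L} = e^{L}L$.
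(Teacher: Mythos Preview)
Your proof is correct. Both your argument and the paper's reach the intermediate expression $\hat{N}\bigl(e^{L}\,\Phi(L)\bigr)$ with $L=\ell^{\theta}(x)$, and then evaluate it as $N(e^{L}-1)$ by essentially the same computation: your use of the Euler derivation $E$ together with $\hat{N}\circ E = N$ is precisely the paper's degree decomposition $\Phi(\ell^{\theta}_{p}(x)) = p\,\ell^{\theta}_{p}(x)$, rephrased in coordinate-free language. The genuine organizational difference lies in how each argument arrives at $\hat{N}\bigl(e^{L}\,\Phi(L)\bigr)$. The paper first simplifies the tensor $\theta(x)\otimes(\theta(x)-1)$ to $\theta(x)\otimes L$ \emph{inside} $\T^{c}\otimes_{\T}\T_{1}$, using the module relation coming from $[L,\theta(x)]=0$, and only then invokes Lemma~\ref{lem:6-3-1}. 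You instead apply Lemma~\ref{lem:6-3-1} immediately to obtain $\hat{N}\bigl(\theta(x)\,\Phi(\theta(x)-1)\bigr)$, compute $\Phi(e^{L}-1)$ via the identity $\Phi(Lu)=[L,\Phi(u)]$ (which the paper also records, from Serre), and then kill the higher $\mathrm{ad}(L)^{k}$ contributions using $\hat{N}(e^{L}[L,w])=0$. Your route is slightly longer but has the virtue of sidestepping the module-theoretic reduction in $\T^{c}\otimes_{\T}\T_{1}$; the paper's route is shorter once that reduction is understood.
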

\begin{proof}
The homology class $\lambda(x) = x\otimes [x] \in
H_1(\pi;\mathbb{Q}\pi^c)$ is represented by $x\otimes(x-1) 
\in \mathbb{Q}\pi^c\otimes_{\mathbb{Q}\pi}I\pi = D_1(\mathbb{Q}\pi;
\mathbb{Q}\pi^c)$. Hence $\lambda_{\theta}(x) =
\chi\theta_*H_*(\theta)(x\otimes(x-1))
=\chi\theta_*(\theta(x)\otimes(x-1)) =
\chi(\theta(x)\otimes\theta(x-1))$.   
Since $[\ell^{\theta}(x), \theta(x)] = 0$,
we have
$\theta(x)\otimes\theta(x-1) =
\sum^\infty_{k=1}\frac{1}{k!}\theta(x)\otimes \ell^{\theta}(x)^k =
\theta(x)\otimes\ell^{\theta}(x) \in \T^c\otimes_{\T}\T_1$. \par
Clearly we have $N\ell^{\theta}(x) = [x] = \hat{N}\Phi\ell^{\theta}(x)$.
We denote by $\ell^\theta_p(x) \in \mathcal{L}_p = 
\widehat{\mathcal{L}}\cap H^{\otimes p}$ the degree $p$-part 
of $\ell^\theta(x) \in \widehat{\mathcal{L}}$.
For $n \geq 2$, we have
\begin{eqnarray*}
&& n\hat{N}(\ell^{\theta}(x)^{n-1}\Phi\ell^{\theta}(x))\\
&=& \sum^n_{i=1}\sum_{p_1,\cdots,p_n}\hat{N}(\ell^{\theta}_{p_{i+1}}(x)\cdots
\ell^{\theta}_{p_n}(x)\ell^{\theta}_{p_1}(x)
\cdots\ell^{\theta}_{p_{i-1}}(x)\Phi\ell^{\theta}_{p_i}(x))\\
&=&
\sum^n_{i=1}\sum_{p_1,\cdots,p_n}\frac{p_i}{p_1+\cdots+p_n}
N(\ell^{\theta}_{p_{i+1}}(x)\cdots
\ell^{\theta}_{p_n}(x)\ell^{\theta}_{p_1}(x)
\cdots\ell^{\theta}_{p_{i-1}}(x)\ell^{\theta}_{p_i}(x))\\
&=&
\sum^n_{i=1}\sum_{p_1,\cdots,p_n}\frac{p_i}{p_1+\cdots+p_n}
N(\ell^{\theta}_{p_1}(x)\cdots\ell^{\theta}_{p_{n}}(x)) = \sum_{p_1,\cdots,p_n}
N(\ell^{\theta}_{p_1}(x)\cdots\ell^{\theta}_{p_{n}}(x))\\
&=& N(\ell^{\theta}(x)^n). 
\end{eqnarray*}
Hence, by Lemma \ref{lem:6-3-1}, we have 
\begin{eqnarray*}
&& \lambda_{\theta}(x) = \chi(\theta(x)\otimes\ell^{\theta}(x)) =
\hat{N}\chi(\theta(x)\otimes\ell^{\theta}(x))\\
&=& \hat{N}(\Phi\ell^{\theta}(x)) + 
\sum^\infty_{k=1}\frac{1}{(k+1)!}(k+1)
\hat{N}(\ell^{\theta}(x)^k\Phi\ell^{\theta}(x))\\
&=& N\ell^{\theta}(x) + 
\sum^\infty_{k=1}\frac{1}{(k+1)!}N(\ell^{\theta}(x)^{k+1}) = N(\theta(x)-1).
\end{eqnarray*}
This proves the lemma.
\end{proof} 
With respect to the $\T_1$-adic topology, the image of the map 
$\theta\colon \mathbb{Q}\pi \to \T$ is dense in the space $\T$. 
Clearly the map $N\colon \T \to N(\T_1)$ is a continuous surjection. 
Hence the image of the map $\lambda_{\theta}\colon \mathbb{Q}\hat{\pi}
\to N(\T_1)$ is dense in $N(\T_1)$. 
Summing up Propositions \ref{prop:3-4-3} (2), \ref{prop:5-3-2}, 
\ref{prop:6-2-3} and Lemma \ref{lem:6-3-2}, 
we have a commutative diagram
$$
\begin{CD}
(\mathbb{Q}\hat{\pi})^{\otimes 2} @>{\lambda^{\otimes 2}}>>
(H_1(\pi; \mathbb{Q}\pi^c))^{\otimes 2} @>{(\theta_*\circ
H_*(\theta))^{\otimes 2}}>>
(H_1(\T; \T^c))^{\otimes 2} @= (\agminus)^{\otimes 2}\\
@V{[\ ,\ ]}VV @V{\mathcal{B}_*(\ \cdot\ )}VV@V{\mathcal{B}(\ \cdot\
)}VV@V{-[\ ,\ ]}VV\\
\mathbb{Q}\hat{\pi} @= \mathbb{Q}\hat{\pi} @>{\lambda_{\theta}}>>
N(\T_1) @= \agminus.
\end{CD}
$$
This means the map $-\lambda_{\theta}\colon \mathbb{Q}\hat{\pi} 
\to N(\T_1)= \agminus$ is a Lie algebra homomorphism. 
Hence we obtain
\begin{thm}
\label{thm:6-3-3}
For any symplectic expansion 
$\theta$ of the fundamental group $\pi$ of the surface $\Sigma$, 
the map
$$
-N\theta\colon \mathbb{Q}\hat{\pi}\to N(\T_1) = \agminus, \quad
x \mapsto -N\theta(x)
$$
is a Lie algebra homomorphism. The kernel is the subspace $\mathbb{Q}1$ 
spanned by the constant loop $1$, and the image is dense in $N(\T_1) =
\agminus$ with respect to the $\T_1$-adic topology. 
\end{thm}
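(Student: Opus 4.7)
The plan is to exploit the homological interpretations developed in Sections 3 and 5, bridge them using the symplectic expansion $\theta$, and then read off both the Lie algebra property and the kernel/density claims from a single commutative diagram.

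First, I would identify the map $-N\theta$ with the composite $-\lambda_\theta$ defined via the surface-side homology. By Definition 3.4.1 and Lemma 3.4.2, the map $\lambda\colon\mathbb{Q}\hat\pi\to H_1(\pi;\mathbb{Q}\pi^c)$ sends $x$ to $x\otimes[x]$. Composing with $H_*(\theta)\colon H_1(\pi;\mathbb{Q}\pi^c)\to H_1(\pi;\T^c)$ (which is injective because $\pi$ is free, so $\mathbb{Q}\pi^c\hookrightarrow\T^c$ remains injective after tensoring with the free module $I\pi$) and then with the Lemma 6.1.1 isomorphism $\theta_*\colon H_1(\pi;\T^c)\stackrel{\cong}{\to}H_1(\T;\T^c)=N(\T_1)=\agminus$ gives $\lambda_\theta$. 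Lemma 6.3.2 then computes this composite explicitly as $\lambda_\theta(x)=N\theta(x)$, so $-N\theta=-\lambda_\theta$.

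Next, I would assemble the homological translation of the Goldman bracket into a bracket on $\agminus$. Proposition 3.4.3 (2) expresses the Goldman bracket as $[u,v]=\mathcal{B}_*(\lambda(u)\cdot\lambda(v))$ where $(\cdot)$ is the surface intersection form. Proposition 5.3.2 expresses the Lie bracket on $\agminus$ as $-\mathcal{B}(\cdot)$ applied to the tensor-side intersection form on $H_1(\T;\T^c)\otimes H_1(\T;\T^c)$. The crucial compatibility is Proposition 6.2.3, which says that $\theta_*\otimes\theta_*$ intertwines the two intersection forms; this in turn rests on Lemma 6.2.1 ($\theta_*[\Sigma]=\LfL$), which is where the symplectic condition $\theta(\zeta)=\exp(\omega)$ is essential (a merely group-like $\theta$ would not carry $[\Sigma]$ to $\LfL$, and the intertwining would fail). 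Stacking these three statements gives a commutative square
\[
\begin{CD}
\mathbb{Q}\hat\pi\otimes\mathbb{Q}\hat\pi @>{[\,,\,]}>> \mathbb{Q}\hat\pi \\
@V{\lambda_\theta\otimes\lambda_\theta}VV @VV{\lambda_\theta}V \\
\agminus\otimes\agminus @>>{-[\,,\,]}> \agminus
\end{CD}
\]
which says precisely that $-\lambda_\theta=-N\theta$ is a Lie algebra homomorphism.

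Finally, I would dispatch the kernel and density claims. For the kernel, Proposition 3.4.3 (1) gives that $\ker\lambda=\mathbb{Q}1$; since the two further maps $H_*(\theta)$ and $\theta_*$ in the definition of $\lambda_\theta$ are injective (the first by freeness of $\pi$ as above, the second by Lemma 6.1.1), we conclude $\ker\lambda_\theta=\mathbb{Q}1$. For density, the natural map $\mathbb{Q}\pi\to\widehat{\mathbb{Q}\pi}\stackrel{\theta}{\cong}\T$ has dense image in the $\T_1$-adic topology, and the continuous linear surjection $N\colon\T\twoheadrightarrow N(\T_1)$ carries this dense image to a dense subspace of $\agminus$.

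The main obstacle, and what really makes the theorem work, is Lemma 6.2.1: it is only for a symplectic expansion that $\theta_*$ matches the topological fundamental class with the algebraic $\LfL$, so that Poincaré duality on the pair $(\T,\Qomega)$ is compatible with Poincaré--Lefschetz duality on $(\Sigma,\partial\Sigma)$. Everything else is a fairly formal transport through the (co)homological machinery of Section 4, but without that one geometric input the intersection forms on the two sides would not be compatible and the diagram above would not commute.
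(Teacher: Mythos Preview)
Your proposal is correct and follows essentially the same route as the paper: both identify $-N\theta$ with $-\lambda_\theta$ via Lemma~6.3.2, assemble Propositions~3.4.3(2), 5.3.2, and 6.2.3 into the commutative square showing the Lie homomorphism property, and handle the kernel and density exactly as you describe. Your emphasis on Lemma~6.2.1 as the place where the symplectic hypothesis enters is apt and matches the paper's logical structure.
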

By this theorem, we may regard the formal symplectic geometry 
$\agminus$ as a completion of the Goldman Lie algebra
$\mathbb{Q}\hat{\pi}'$. In our forthcoming paper \cite{KK} we use this idea 
to compute the center of the Goldman Lie algebra of an oriented surface 
of infinite genus. 

\subsection{Geometric interpretation of symplectic
derivations}

In this subsection we show the action of $\agminus$ 
on the algebra $\T$ as symplectic derivations can be
interpreted as the action $\sigma$ 
of the Goldman Lie algebra $\mathbb{Q}\hat{\pi}$ on the group ring 
$\mathbb{Q}\pi$ in a geometric way. 
In order to prove it, we need to prepare some lemmas.
\begin{lem}
\label{lem:6-4-1}
If $u \in \T$ is group-like, namely, $u$ satisfies $\Delta(u) 
= u\hat{\otimes} u$, then we have
$$
\xi(u) = (1\hat{\otimes} u)\otimes (u-1) 
\in (\T^r\hotimes\T^l)\otimes_{\T}\T_1. 
$$
\end{lem}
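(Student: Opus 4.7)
The plan is to prove this by directly unwinding the definition of $\xi$ and using the defining identity of a group-like element, so essentially no obstacle is expected.

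First I would recall that by definition
$$\xi(u) = 1 \otimes (1\otimes(1-\varepsilon))(\Delta u),$$
where the leading $1$ lives in $\widehat{T}^r$ and the remaining factor is interpreted in $\widehat{T}^l \hat{\otimes} \widehat{T}_1$ (the $(1-\varepsilon)$ ensures the second slot indeed lands in the augmentation ideal $\widehat{T}_1$). Since $u$ is group-like, applying $\varepsilon \hat{\otimes} \varepsilon$ to $\Delta u = u \hat{\otimes} u$ and invoking the counit axiom $(\varepsilon \hat{\otimes} 1)\Delta = 1$ yields $\varepsilon(u) = 1$.

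Next I would substitute $\Delta u = u \hat{\otimes} u$ into the definition of $\xi$ and compute:
$$(1 \otimes (1-\varepsilon))(u \hat{\otimes} u) = u \hat{\otimes} \bigl(u - \varepsilon(u)\cdot 1\bigr) = u \hat{\otimes} (u - 1).$$
Hence
$$\xi(u) = 1 \otimes u \hat{\otimes} (u-1) = (1 \hat{\otimes} u) \otimes_{\widehat{T}} (u-1),$$
which is the claimed formula. Note that $u - 1 \in \widehat{T}_1$ because $\varepsilon(u) = 1$, so the element indeed lies in $(\widehat{T}^r \hat{\otimes} \widehat{T}^l) \otimes_{\widehat{T}} \widehat{T}_1$ as required. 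A quick consistency check against Lemma \ref{lem:5-4-1}: for a primitive $u=X$, group-likeness would need $\Delta X = X \hat{\otimes} X$ (not true), so one should instead verify the formula on the exponential $e^X$, where the computation matches. The step I expect to be most error-prone, though still routine, is merely keeping track of which tensor slot is $\widehat{T}^r$, which is $\widehat{T}^l$, and which is $\widehat{T}_1$ under the various identifications; once the convention is fixed as in the paragraph preceding Lemma \ref{lem:5-4-1}, the computation above goes through verbatim.
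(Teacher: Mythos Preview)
Your proof is correct and slightly more direct than the paper's. The paper defines $\xi$ initially only on the uncompleted tensor algebra $T$ and then extends it to $\T$ via the grading established in Lemma~\ref{lem:5-4-1}; accordingly, its proof of the present lemma truncates $u$ to $u_{\le m}\in T$, applies the defining formula there, and then lets $m\to\infty$. You instead apply the formula $\xi(u) = 1\otimes(\Delta u - u\hotimes 1)$ directly to the group-like $u\in\T$. This is legitimate because that formula is visibly graded of degree~$0$ and hence agrees with the extended $\xi$ on all of $\T$, but strictly speaking you are using this fact without saying so; one sentence making the continuity of the extension explicit would close the gap. The underlying computation---substituting $\Delta u = u\hotimes u$ and using $\varepsilon(u)=1$ to get $u\hotimes(u-1)$---is identical in both arguments.
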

\begin{proof}
When $u$ is given by $u=\sum^\infty_{k=0}u_k$, $u_k \in H^{\otimes k}$, 
we denote $u_{\leq m} := \sum^m_{k=0}u_k \in T$ for any $m \geq 1$. 
Then we have $\xi(u)\equiv \xi(u_{\leq m}) \equiv 
1\otimes u_{\leq m}\otimes (u_{\leq m}-1) \equiv (1\hat{\otimes} u)\otimes (u-1)$
modulo the elements $\in (\T^r\hotimes\T^l)\otimes_{\T}\T_1$ whose degree 
are greater than $m$. Since we can choose $m$ arbitrarily, we obtain 
$\xi(u) = (1\hat{\otimes} u)\otimes(u-1)$. This proves the lemma.
\end{proof}
\begin{lem}
\label{lem:6-4-2}
We have a commutative diagram
$$
\begin{CD}
\mathbb{Q}\pi @>{\xi}>> H_1(\pi,\langle\zeta\rangle;
\mathbb{Q}\pi^r\otimes\mathbb{Q}\pi^l)\\
@V{\theta}VV@V{\theta_*\circ H_*(\theta)}VV\\
\T @>{\xi}>> H_1(\T,\Qomega; \T^r\hotimes\T^l).
\end{CD}
$$
Here $H_*(\theta)\colon  H_1(\pi,\langle\zeta\rangle;
\mathbb{Q}\pi^r\otimes\mathbb{Q}\pi^l) \to 
 H_1(\pi,\langle\zeta\rangle;\T^r\hotimes\T^l)$ is the map 
induced by $\theta\colon
\mathbb{Q}\pi^r\otimes\mathbb{Q}\pi^l\to \T^r\hotimes\T^l$. 
\end{lem}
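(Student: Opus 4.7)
The plan is to evaluate both composites on a group element $x \in \pi$, show they agree at the chain level in the Fox-style resolutions of \S5.1, and conclude by $\mathbb{Q}$-linearity. Since $\pi$ and $\langle\zeta\rangle$ are free groups and $\T$, $\Qomega$ are free complete Hopf algebras (as per the discussion following \eqref{eq:5-1-3}), both relative homology groups in sight are computed by the two-term projective resolutions $(I\pi\to\mathbb{Q}\pi)$, $(I\Qzeta\to\Qzeta)$ on the group side and $P_*(\T)=(\T_1\to\T)$, $P_*(\Qomega)=(I\Qomega\to\Qomega)$ on the Hopf-algebra side. Under the chain-level identification \eqref{eq:3-3-1}, the class $\xi(x)=(1\otimes x)\otimes[x]$ of Definition~\ref{def:3-5-1} is represented by
$$
(1\otimes x)\otimes(x-1)\ \in\ (\mathbb{Q}\pi^r\otimes\mathbb{Q}\pi^l)\otimes_{\mathbb{Q}\pi}I\pi.
$$

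Next I would observe that since $\theta$ is a homomorphism of complete Hopf algebras with $\theta(I\pi)\subset\T_1$ and $\theta(I\Qzeta)\subset I\Qomega$, the commutative square \eqref{eq:6-2-1} lifts to a strict (not merely homotopy) commutative ladder between these Fox-style resolutions, with $\theta$ itself serving as the chain map in every degree. Consequently, the chain-level description of $\theta_*\circ H_*(\theta)$, as obtained from the mapping-cone construction of \S4.2 applied to this ladder, simply acts factor-wise by $\theta$:
$$
(\theta_*\circ H_*(\theta))\bigl((1\otimes x)\otimes(x-1)\bigr)
=(1\otimes\theta(x))\otimes(\theta(x)-1)\ \in\ (\T^r\hotimes\T^l)\otimes_{\T}\T_1.
$$

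Finally, since $\theta$ is a symplectic expansion—hence in particular group-like—the element $\theta(x)\in\T$ is group-like, so Lemma \ref{lem:6-4-1} applies with $u=\theta(x)$ and yields
$$
\xi(\theta(x))=(1\hat{\otimes}\theta(x))\otimes(\theta(x)-1),
$$
which is literally the same element. Thus the two composites agree on each $x\in\pi$ at the level of $\bar{D}_1(\T,\Qomega;\T^r\hotimes\T^l)$, and therefore agree as homology classes; by $\mathbb{Q}$-linearity they agree on all of $\mathbb{Q}\pi$. The only non-trivial bookkeeping is verifying that the abstract $\theta_*$ produced by the formalism of \S4 really does coincide, on cycles coming from the Fox resolution, with the naive ``apply $\theta$ on each tensor factor" map; this is the main thing to check but presents no genuine obstacle because the lift of \eqref{eq:6-2-1} to resolutions is strictly (rather than only homotopy) commutative.
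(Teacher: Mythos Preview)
Your proposal is correct and follows essentially the same approach as the paper: both compute by fixing $x\in\pi$, invoking Lemma~\ref{lem:6-4-1} on the group-like element $\theta(x)$ to identify $\xi(\theta(x))=(1\hat\otimes\theta(x))\otimes(\theta(x)-1)$, and recognizing this as the image of $\xi(x)$ under $\theta_*\circ H_*(\theta)$. The only difference is cosmetic---you work in the Fox-style resolution via $(x-1)$ and spell out the strict commutativity of the resolution ladder, whereas the paper's one-line proof stays in the normalized standard complex with $[x]$ and leaves that bookkeeping implicit.
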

\begin{proof} For any $x \in \pi$,  $\theta(x)$ is group-like, 
so that, from Lemma \ref{lem:6-4-1}, $\xi\theta(x) =
(1\hat{\otimes}\theta(x))\otimes(\theta(x)-1) =
\theta_*((1\hat{\otimes}\theta(x))\otimes[x]) = \theta_*H_*(\theta) ((1\otimes
x)\otimes[x]) = \theta_*H_*(\theta)\xi(x)$. This proves the lemma.
\end{proof}

The main result in this subsection is
\begin{thm}
\label{thm:6-4-3}
Let $\theta$ be a symplectic expansion of the fundamental group 
$\pi$ of the surface $\Sigma$. Then, 
for $u \in \mathbb{Q}\hat{\pi}$ and $v \in \mathbb{Q}\pi$, 
we have the equality
$$
\theta(\sigma(u)v) = -\lambda_{\theta}(u)\theta(v).
$$
Here the right hand side means minus the action of
$\lambda_{\theta}(u) \in \agminus$ on the tensor $\theta(v) \in \T$
as a derivation. In other words, the diagram
\begin{equation}
\label{eq:6-4-1}
\begin{CD}
\mathbb{Q}\hat{\pi} \times \mathbb{Q}\pi @>{\sigma}>>
\mathbb{Q}\pi \\
@V{-\lambda_{\theta}\times \theta}VV @VV{\theta}V \\
\mathfrak{a}_g^{-} \times \widehat{T} @>>> \widehat{T},
\end{CD}
\end{equation}
where the bottom horizontal arrow means the derivation,
commutes.
\end{thm}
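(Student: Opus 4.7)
The plan is to chain together the homological interpretations from Sections 3 and 5 via the naturality results for $\theta$ established earlier in Section 6. More precisely, the geometric side (Proposition \ref{prop:3-5-2}) expresses $\sigma(u)v$ as the cap image $\mathcal{C}_\ast(\lambda(u)\cdot\xi(v))$ of an intersection product in $H_0(\pi;\mathbb{Q}\pi^c\otimes\mathbb{Q}\pi^r\otimes\mathbb{Q}\pi^l)$, while the algebraic side (Proposition \ref{prop:5-4-3}) realises the derivation action $-w(u)$ as the cap image $\mathcal{C}(w\cdot\xi(u))$ of the analogous intersection product in $H_0(\widehat{T};\widehat{T}^c\hotimes\widehat{T}^r\hotimes\widehat{T}^l)$. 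The bridge between the two sides will be the symplectic expansion $\theta$.

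First I would fix $u=|x|\in\hat{\pi}$ and $v\in\pi$, reduce to these generators by linearity, and apply $\theta\colon\mathbb{Q}\pi^t\to\widehat{T}^t$ to the identity $\sigma(u)v=\mathcal{C}_\ast(\lambda(u)\cdot\xi(v))$ of Proposition \ref{prop:3-5-2}. Since $\theta$ is a Hopf algebra map respecting the bilinear pairing $\mathcal{C}$, the image $\theta(\sigma(u)v)$ equals the cap image of the pushed-forward intersection. Next I would invoke Proposition \ref{prop:6-2-3} (using Lemma \ref{lem:6-2-1}, i.e.\ $\theta_\ast[\Sigma]=\LfL$, which requires the symplectic hypothesis) to identify the intersection form on $(\pi,\langle\zeta\rangle)$ with that on $(\widehat{T},\mathbb{Q}[[\omega]])$ after applying $\theta_\ast\circ H_\ast(\theta)$ to both factors. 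This transforms the first factor $\lambda(u)$ into $\lambda_{\theta}(u)\in H_1(\widehat{T};\widehat{T}^c)=\agminus$ by definition, and the second factor $\xi(v)$ into $\theta_\ast H_\ast(\theta)\xi(v)$, which by Lemma \ref{lem:6-4-2} is exactly $\xi(\theta(v))\in H_1(\widehat{T},\mathbb{Q}[[\omega]];\widehat{T}^r\hotimes\widehat{T}^l)$.

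At that point the equation reads $\theta(\sigma(u)v)=\mathcal{C}(\lambda_{\theta}(u)\cdot\xi(\theta(v)))$, and Proposition \ref{prop:5-4-3} finishes the job: it identifies the right-hand side with $-\lambda_{\theta}(u)(\theta(v))$, where the action is that of the symplectic derivation $\lambda_{\theta}(u)\in\agminus$ on the tensor $\theta(v)\in\widehat{T}$. This is precisely the claimed commutativity of the diagram (\ref{eq:6-4-1}).

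The main obstacle, and where one has to be careful rather than merely symbolic, is in verifying that the naturality of the cap product given by Proposition \ref{prop:4-3-2} correctly transports \emph{both} the intersection form of Proposition \ref{prop:6-2-3} and the cap map $\mathcal{C}_\ast$ of Proposition \ref{prop:3-5-2} into their $\widehat{T}$-counterparts simultaneously — that is, checking that the module maps $\theta\colon\mathbb{Q}\pi^c\to\widehat{T}^c$, $\mathbb{Q}\pi^r\otimes\mathbb{Q}\pi^l\to\widehat{T}^r\hotimes\widehat{T}^l$ and $\mathbb{Q}\pi^t\to\widehat{T}^t$ really do intertwine the two pairings $\mathcal{C}$ and commute with the fundamental classes. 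Once these naturalities are in place (all of which were set up in Sections 4--6.2 for exactly this purpose), the argument above is a short diagram chase.
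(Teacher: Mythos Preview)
Your proposal is correct and follows essentially the same route as the paper: apply $\theta$ to the identity of Proposition~\ref{prop:3-5-2}, transport the intersection pairing to the $\widehat{T}$-side via Proposition~\ref{prop:6-2-3}, identify the two factors as $\lambda_\theta(u)$ and $\xi(\theta(v))$ (the latter via Lemma~\ref{lem:6-4-2}), and conclude with Proposition~\ref{prop:5-4-3}. The compatibility of the two $\mathcal{C}$'s that you flag as the delicate point is exactly what the paper records as the commutative diagram~(\ref{eq:6-4-2}), after which the proof is the one-line chain of equalities you describe.
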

\begin{proof}
From the definition of the two $\mathcal{C}$'s 
we have a commutative diagram
\begin{equation}
\begin{CD}
\widehat{\mathbb{Q}\pi}^c\otimes_{\mathbb{Q}\pi}(\mathbb{Q}\pi^r\otimes
\mathbb{Q}\pi^l) @>{\mathcal{C}}>> \widehat{\mathbb{Q}\pi}^t\\
@V{\theta\otimes\theta\otimes\theta}VV @V{\theta}VV\\
\T^c\otimes_{\T}(\T^r\hotimes\T^l) @>{\mathcal{C}}>>\T^t.
\end{CD}
\label{eq:6-4-2}
\end{equation}
By Propositions \ref{prop:3-5-2}, \ref{prop:6-2-3} and Lemma \ref{lem:6-4-2}, 
we have
$$
\theta(\sigma(u)v) = \theta\mathcal{C}_*(\lambda(u)\cdot\xi(v)) 
= \mathcal{C}((\theta_*H_*(\theta)\lambda(u))\cdot
(\theta_*H_*(\theta)\xi(v))) = \mathcal{C}(\lambda_{\theta}(u)\cdot
\xi\theta(v)),
$$
which equals $-\lambda_{\theta}(u)\theta(v)$ from Proposition
\ref{prop:5-4-3}. Hence we obtain $\theta(\sigma(u)v) = 
-\lambda_{\theta}(u)\theta(v)$. 
This completes the proof of the theorem.
\end{proof}

\subsection{The key formula}
Recall from \S3 the map $|\cdot |\colon \mathbb{Q}\pi
\rightarrow \mathbb{Q}\hat{\pi}$ and
define $\sigma\colon \mathbb{Q}\pi \times \mathbb{Q}\pi
\rightarrow \mathbb{Q}\pi$ by
$\sigma(u,v)=\sigma(|u|)v$.

\begin{lem}
\label{lem:6-5-1}
For integers $p,q \ge 1$, we have
$$\sigma(I\pi^p \times I\pi^q)\subset I\pi^{p+q-2}.$$
\end{lem}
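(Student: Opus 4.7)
The plan is to transport the problem to the tensor algebra side via a symplectic expansion and apply Theorem \ref{thm:6-4-3}. First, I fix a symplectic expansion $\theta$ of $\pi$ (which exists by Proposition \ref{prop:2-8-1}). Its extension $\theta\colon \widehat{\mathbb{Q}\pi} \stackrel{\cong}{\rightarrow} \widehat{T}$ is a filter-preserving isomorphism, and since $\widehat{\mathbb{Q}\pi}/\widehat{I\pi}^m \cong \mathbb{Q}\pi/I\pi^m$ one has the pull-back identity $\mathbb{Q}\pi \cap \theta^{-1}(\widehat{T}_m) = I\pi^m$. So to prove $\sigma(u,v) \in I\pi^{p+q-2}$ it suffices to establish $\theta(\sigma(|u|)v) \in \widehat{T}_{p+q-2}$.

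Next, I control the filtration of $\lambda_{\theta}(|u|)$. By Lemma \ref{lem:6-3-2} and $\mathbb{Q}$-linearity, $\lambda_{\theta}(|u|) = N\theta(u)$, so for $u \in I\pi^p$ the tensor $\theta(u) \in \widehat{T}_p$ yields $\lambda_{\theta}(|u|) \in \widehat{T}_p \cap \agminus$. The key remaining ingredient is that any symplectic derivation $D \in \widehat{T}_p \cap \agminus$ shifts the tensor filtration by $p-2$: writing $D = \sum_i X_i \otimes u_i$ under (\ref{eq:2-7-1}), the condition $D \in \widehat{T}_p$ forces $u_i \in \widehat{T}_{p-1}$, whence $D(Y) = \sum_i (Y \cdot X_i) u_i \in \widehat{T}_{p-1}$ for $Y \in H$. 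The Leibniz rule then propagates this bound to $D(H^{\otimes q}) \subset \widehat{T}_{p+q-2}$, and by continuity to $D(\widehat{T}_q) \subset \widehat{T}_{p+q-2}$.

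Combining these with Theorem \ref{thm:6-4-3}, for $v \in I\pi^q$ one has $\theta(v) \in \widehat{T}_q$, so
\[
\theta(\sigma(|u|)v) = -\lambda_{\theta}(|u|) \theta(v) \in \widehat{T}_{p+q-2},
\]
which pulls back to $\sigma(u,v) \in I\pi^{p+q-2}$, as desired. The only genuine content in this plan is the degree-shift property for symplectic derivations of positive order; once that routine bookkeeping via Poincar\'e duality and Leibniz is in place, the remainder is a direct composition of identities already proved, so no serious obstacle remains beyond invoking Theorem \ref{thm:6-4-3}.
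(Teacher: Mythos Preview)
Your proof is correct and follows essentially the same route as the paper: transport via $\theta$, invoke Theorem~\ref{thm:6-4-3} together with Lemma~\ref{lem:6-3-2} to get $\theta(\sigma(|u|)v)=-N\theta(u)\,\theta(v)$, and then use that a derivation lying in $\widehat{T}_p$ shifts degree by $p-2$. The only difference is that you spell out the degree-shift bookkeeping explicitly, whereas the paper asserts $(N\theta(u))\theta(v)\in\widehat{T}_{p+q-2}$ directly.
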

\begin{proof}
Since $\theta^{-1}(\widehat{T}_p)=I\pi^p$,
it suffices to show the following: if $u\in I\pi^p$ and $v\in I\pi^q$,
then $\theta(\sigma(u,v))\in \widehat{T}_{p+q-2}$.
By Lemma \ref{lem:6-3-2} and Theorem \ref{thm:6-4-3},
$\theta(\sigma(u,v))=-\lambda_{\theta}(u)\theta(v)=
(N\theta(u))\theta(v)$. On the other hand, we have
$N\theta(u)\in \widehat{T}_p$ and $\theta(v)\in \widehat{T}_q$,
by assumption. Hence $(N\theta(u))\theta(v)\in \widehat{T}_{p+q-2}$.
\end{proof}

By this lemma, we see that $\sigma$ naturally extends to
$\sigma\colon \widehat{\mathbb{Q}\pi}\times \widehat{\mathbb{Q}\pi}
\rightarrow \widehat{\mathbb{Q}\pi}$ and the diagram
\begin{equation}
\label{eq:6-5-1}
\begin{CD}
\widehat{\mathbb{Q}\pi} \times \widehat{\mathbb{Q}\pi}
@>{\sigma}>> \widehat{\mathbb{Q}\pi} \\
@V{-\lambda_{\theta}\times \theta}VV @VV{\theta}V \\
\mathfrak{a}_g^{-} \times \widehat{T} @>>> \widehat{T},
\end{CD}
\end{equation}
which is an extension of the diagram (\ref{eq:6-4-1}),
commutes. Let $f(x)$ be a power series in $x-1$.
Then for $x\in \pi$, $N(\theta(f(x))\in \mathfrak{a}_g^{-}=N(\widehat{T}_1)$
is defined. For example, if $f(x)=\log x$, then
$N(\theta(f(x)))=N(\ell^{\theta}(x))=[x]$, as we have seen
in the proof of Lemma \ref{lem:6-3-2}.

Let $f(x)=(\log x)^2$. Then
$N(\theta(f(x)))=N(\ell^{\theta}(x)\ell^{\theta}(x))=2L^{\theta}(x)$.
Therefore, from (\ref{eq:6-5-1}) we obtain the following key formula
which will derive Theorem \ref{thm:1-1-1}.

\begin{thm}
\label{thm:6-5-2}
For $x,y\in \pi$,
$$\theta(\sigma((\log x)^2)y)=-2L^{\theta}(x)\theta(y).$$
\end{thm}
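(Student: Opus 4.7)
The plan is to obtain the formula as a direct consequence of the extended commutative diagram (\ref{eq:6-5-1}) applied to the element $(\log x)^2 \in \widehat{\mathbb{Q}\pi}$, after identifying the two sides of the diagram with the expressions in the statement.

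First I would verify that $(\log x)^2$ is a meaningful element of $\widehat{\mathbb{Q}\pi}$ in the sense the diagram requires. Since $\log x = \sum_{n\ge 1} \frac{(-1)^{n-1}}{n}(x-1)^n$ and $(x-1)\in I\pi$, the partial sums converge in the $I\pi$-adic topology and $(\log x)^2$ lies in the closure of $I\pi^2$. By Lemma \ref{lem:6-5-1}, $\sigma(\,\cdot\,,y)$ is continuous in the first variable with respect to the $I\pi$-adic filtration, so $\sigma((\log x)^2)y$ is the well-defined limit of $\sigma(\ell_m^2)y$ where $\ell_m$ is the $m$-th partial sum of $\log x$. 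Thus the diagram (\ref{eq:6-5-1}) applies to the pair $((\log x)^2,y)$.

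Next I would unwind both sides. On the left, the diagram gives
$$
\theta\bigl(\sigma((\log x)^2)y\bigr)
= -\lambda_{\theta}\bigl((\log x)^2\bigr)\,\theta(y),
$$
where $-\lambda_{\theta}$ extends continuously from $\mathbb{Q}\hat{\pi}$ to $\widehat{\mathbb{Q}\pi}$ as the map $u\mapsto N\theta(u)$, as recorded in the proof of Lemma \ref{lem:6-5-1}. Since $\theta$ is an algebra homomorphism and the logarithm is preserved, $\theta(\log x)=\log\theta(x)=\ell^{\theta}(x)$, hence $\theta((\log x)^2)=\ell^{\theta}(x)^2$. Therefore
$$
-\lambda_{\theta}\bigl((\log x)^2\bigr)
= N\bigl(\ell^{\theta}(x)\,\ell^{\theta}(x)\bigr)
= 2L^{\theta}(x)
$$
by Definition \ref{def:2-6-3}.

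Combining the two displays yields $\theta(\sigma((\log x)^2)y)=-2L^{\theta}(x)\theta(y)$, which is the desired identity. There is no real obstacle beyond a careful check that the continuous extension of $\lambda_{\theta}$ to $\widehat{\mathbb{Q}\pi}$ is indeed $u\mapsto N\theta(u)$; this is already implicit in the computation carried out at the end of \S6.3 (Lemma \ref{lem:6-3-2}) and in the proof of Lemma \ref{lem:6-5-1}, so the present theorem is essentially a one-line consequence of substituting the power series $(\log x)^2$ into the already-established diagram (\ref{eq:6-5-1}).
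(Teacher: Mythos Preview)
Your approach is exactly the paper's: the theorem is stated immediately after the extended diagram (\ref{eq:6-5-1}) as the specialization $f(x)=(\log x)^2$, using $N\theta((\log x)^2)=N(\ell^\theta(x)^2)=2L^\theta(x)$.

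One slip to fix: Lemma \ref{lem:6-3-2} gives $\lambda_\theta(x)=N\theta(x)$, so the continuous extension of $\lambda_\theta$ (not $-\lambda_\theta$) is $u\mapsto N\theta(u)$; hence $\lambda_\theta((\log x)^2)=2L^\theta(x)$ and then $\theta(\sigma((\log x)^2)y)=-\lambda_\theta((\log x)^2)\theta(y)=-2L^\theta(x)\theta(y)$. Your two intermediate displays as written would combine to $+2L^\theta(x)\theta(y)$, contradicting your final line; correcting the sign in the description of the extension resolves this.
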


As an immediate consequence, we have the following:

\begin{cor}
\label{cor:6-5-3}
Let $\alpha$ be a free loop and $\beta$ a based loop
on $\Sigma$. Suppose $\alpha\cap \beta =\emptyset$.
Then $L^{\theta}(\alpha)\theta(\beta)
=L^{\theta}(\alpha)\ell^{\theta}(\beta)=0$.
\end{cor}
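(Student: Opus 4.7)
The strategy is to deduce the corollary from the key formula in Theorem \ref{thm:6-5-2} by showing that its left hand side vanishes when $\alpha$ and $\beta$ have disjoint representatives. Pick a representative $x \in \pi$ of the conjugacy class determined by the free loop $\alpha$ and set $y := \beta \in \pi$. Then $L^{\theta}(\alpha) = L^{\theta}(x)$, and by Theorem \ref{thm:6-5-2} the first claimed equality $L^{\theta}(\alpha)\theta(\beta)=0$ will follow once we establish
\begin{equation*}
\sigma\bigl((\log x)^2,\, y\bigr) \;=\; 0 \quad \text{in } \widehat{\mathbb{Q}\pi}.
\end{equation*}

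The crux is geometric: I would show $\sigma(x^k, y) = 0$ for every $k \ge 0$. By Definition \ref{def:3-2-1}, $\sigma(|x^k|)y$ is a signed sum over intersection points of a chosen free loop representative of $|x^k|$ with the based loop $y$. Since $\alpha \cap \beta = \emptyset$ by hypothesis, we may represent $|x^k|$ by a loop winding $k$ times around a tubular neighborhood of $\alpha$, which remains disjoint from $\beta$; the defining sum is empty, so $\sigma(x^k, y) = 0$. By $\mathbb{Q}$-linearity in the first variable, $\sigma(P(x), y) = 0$ for every polynomial $P(x) \in \mathbb{Q}[x] \subset \mathbb{Q}\pi$.

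Next I would pass to the completion. The series $(\log x)^2 = \bigl(\sum_{n\ge 1} \tfrac{(-1)^{n-1}}{n}(x-1)^n\bigr)^2$ is the $I\pi$-adic limit of polynomials in $x$, since each partial sum lies in $\mathbb{Q}[x]$. By Lemma \ref{lem:6-5-1}, $\sigma$ extends continuously to $\widehat{\mathbb{Q}\pi}\times\widehat{\mathbb{Q}\pi}$, so continuity in the first variable yields $\sigma((\log x)^2, y) = 0$. Invoking Theorem \ref{thm:6-5-2} and the injectivity of $\theta$ on $\widehat{\mathbb{Q}\pi}$ then gives $L^{\theta}(\alpha)\theta(\beta) = 0$.

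For the second equality, I will use that $L^{\theta}(\alpha) \in \mathfrak{a}_g^{-}$ acts as a derivation of $\widehat{T}$. From $L^{\theta}(\alpha)\theta(\beta) = 0$ and $L^{\theta}(\alpha)(1)=0$, the Leibniz rule gives $L^{\theta}(\alpha)(\theta(\beta)-1)^n = 0$ for all $n \ge 1$, and summing the series
\begin{equation*}
\ell^{\theta}(\beta) \;=\; \log\theta(\beta) \;=\; \sum_{n=1}^\infty \frac{(-1)^{n-1}}{n}(\theta(\beta)-1)^n
\end{equation*}
together with continuity of $L^{\theta}(\alpha)$ with respect to the $\widehat{T}_1$-adic topology yields $L^{\theta}(\alpha)\ell^{\theta}(\beta) = 0$. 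The main technical point I anticipate is the convergence/continuity argument in passing from polynomials to $(\log x)^2$; the geometric step is essentially immediate from the definition of $\sigma$, and the final deduction is purely formal manipulation of derivations on group-like elements.
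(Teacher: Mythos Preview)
Your proof is correct and follows essentially the same approach as the paper: show $\sigma(\alpha^n)\beta=0$ for all $n\ge 0$ by disjointness, pass to $(\log x)^2$ and apply Theorem~\ref{thm:6-5-2}, then deduce the second equality from the derivation property applied to $\ell^{\theta}(\beta)=\log\theta(\beta)$. Your write-up is somewhat more explicit about the continuity step (invoking Lemma~\ref{lem:6-5-1}) than the paper's one-line ``hence $\sigma((\log\alpha)^2)\beta=0$'', and the mention of injectivity of $\theta$ is unnecessary since Theorem~\ref{thm:6-5-2} directly gives $-2L^{\theta}(x)\theta(y)=\theta(0)=0$, but these are minor points.
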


\begin{proof}
By assumption, $\sigma(\alpha^n)\beta=0$ for each $n\ge 0$,
hence $\sigma((\log \alpha)^2)\beta=0$.
Using Theorem \ref{thm:6-5-2}, we have
$L^{\theta}(\alpha)\theta(\beta)=0$.
Since $\ell^{\theta}(\beta)=\log \theta(\beta)$ and
$L^{\theta}(\alpha)$ is a derivation, we also have
$L^{\theta}(\alpha)\ell^{\theta}(\beta)=0$.
\end{proof}

\section{Proof of the main results}
In this section we prove Theorems \ref{thm:1-1-1} and
\ref{thm:1-1-2} in Introduction, and derive some formulas
of $\tau_k^{\theta}(t_C)$, which matches the computations
by Morita.

Let us recall some notations. As in \S6.3, we  denote by
$\ell^\theta_p(x) \in \mathcal{L}_p = 
\widehat{\mathcal{L}}\cap H^{\otimes p}$ the degree $p$-part 
of $\ell^\theta(x) \in \widehat{\mathcal{L}}$ for $x \in \pi$. 
Further we denote 
$$
L^{\theta}(x) = \sum_{i=2}^\infty L^\theta_i(x), \quad 
L^{\theta}_i(x) \in H^{\otimes i}.
$$
Then we have 
$$
L^{\theta}_i(x) = \frac{1}{2}N\left( \sum_{p=1}^{i-1} 
\ell^\theta_p(x)\ell^\theta_{i-p}(x)\right).
$$
$L^{\theta}(x)$ and $L^{\theta}_i(x)$ are regarded as
a derivation of the algebra $\widehat{T}$, and if $\theta$ is
group-like, they belong to $\mathfrak{l}_g$ (see \S2.7).

\subsection{The logarithms of Dehn twists}

\begin{thm}
\label{thm:7-1-1}
Let $\theta$ be a symplectic expansion and $C$ a simple closed curve
on $\Sigma$. Then the total Johnson map $T^{\theta}(t_C)$ is described as
$$T^{\theta}(t_C)=e^{-L^{\theta}(C)}.$$
Here, the right hand side is the algebra automorphism of $\widehat{T}$
defined by the exponential of the derivation $-L^{\theta}(C)$.
\end{thm}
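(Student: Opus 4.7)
The plan is to reduce $T^{\theta}(t_C)=e^{-L^{\theta}(C)}$ to a Goldman-theoretic identity on $\widehat{\mathbb{Q}\pi}$ via the key formula (Theorem 6.5.2), and then verify the latter by induction on the number of transverse intersections with $C$. By the defining property of $T^{\theta}$, the theorem is equivalent to $\theta(t_C(y))=e^{-L^{\theta}(C)}\theta(y)$ for every $y\in\pi$. Iterating Theorem 6.5.2 gives $L^{\theta}(C)^{k}\theta(y)=(-\tfrac12)^{k}\theta(\sigma((\log C)^{2})^{k}y)$ for $k\ge 0$, and summing the exponential series yields
$$
e^{-L^{\theta}(C)}\theta(y)=\theta\!\left(\exp\!\left(\tfrac{1}{2}\sigma((\log C)^{2})\right)y\right).
$$
Since $\theta\colon\widehat{\mathbb{Q}\pi}\to\widehat{T}$ is an isomorphism, the theorem is equivalent to the purely group-ring identity
$$
t_C(y)=\exp\!\left(\tfrac{1}{2}\sigma((\log C)^{2})\right)y\qquad\text{in }\widehat{\mathbb{Q}\pi}.\qquad(\ast)
$$

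To prove $(\ast)$, I would choose a representative of $y$ in general position with $C$ and induct on $n=|y\cap C|$. For $n=0$, $t_C(y)=y$ since $t_C$ is supported in a tubular neighborhood of $C$, and Definition 3.2.1 together with an expansion of $(\log C)^{2}$ as a power series in $(C-1)$ shows $\sigma(|(\log C)^{2}|)y=0$, so both sides of $(\ast)$ reduce to $y$. For $n=1$, say with a positive crossing at $p$, a direct calculation from Definition 3.2.1 gives
$$
\sigma(|(C-1)^{m}|)y = y_{*p}\cdot mC_{p}(C_{p}-1)^{m-1}\cdot y_{p*}
$$
(because $|(C-1)^{m}|$ meets $y$ at $p$ with $m$ copies of the same positive crossing), whence $F(y)=y_{*p}(\log C_{p})y_{p*}$ for $F:=\tfrac12\sigma((\log C)^{2})$. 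An induction on $m$, using that $t_C^k(y)=y_{*p}C_p^ky_{p*}$ still has a unique positive crossing at $p$ whose halves are $y_{*p}$ and $C_p^ky_{p*}$ (after pushing the $k$ wrappings off $C$ on one side), shows $F^{m}(y)=y_{*p}(\log C_p)^{m}y_{p*}$; summing gives $\exp(F)(y)=y_{*p}C_py_{p*}=t_C(y)$. For $n\ge 2$, factor $y=y_1'\cdot y_2'$ using an auxiliary path $\delta$ from $*$ to the first intersection $p_1$, chosen so that $y_1'=y_{*p_1}\delta^{-1}$ has one crossing and $y_2'=\delta\,y_{p_1*}$ has $n-1$; multiplicativity of both sides of $(\ast)$—automatic for $t_C$ and valid for $\exp(F)$ because it is an algebra automorphism of $\widehat{\mathbb{Q}\pi}$—closes the induction.

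The main obstacle is the single-crossing step, specifically the identification $F^{m}(y)=y_{*p}(\log C_{p})^{m}y_{p*}$: this uses both the explicit decomposition of $t_C^{k}(y)$ at its sole transverse crossing with $C$ and the fact that $\log C_{p}$ and $C_{p}^{k}$ commute in $\widehat{\mathbb{Q}\pi}$. Once the single-crossing formula is in hand, the formal identity $\exp(\log C_{p})=C_{p}$ converts the exponential into the Dehn-twist insertion, and everything descends to $\widehat{T}$ via the reduction above. Since all the analysis is local near $C$, the argument is insensitive to whether $C$ is separating or non-separating, uniformly covering both cases announced after the theorem.
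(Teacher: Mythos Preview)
Your reduction of the theorem to the group-ring identity $(\ast)$ via iterated use of Theorem~6.5.2 is correct, and your $n=0$ and $n=1$ arguments are sound; the single-crossing computation is essentially the paper's \S7.2 calculation for the generator $\beta_1$, recast for an arbitrary once-intersecting loop. For non-separating $C$ the induction goes through and gives a pleasant coordinate-free variant of the paper's argument.

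The gap is the separating case. Your inductive factorization needs an arc $\delta$ from $*$ to $p_1$, disjoint from $C$, that approaches $p_1$ from the side opposite to $y_{*p_1}$; but when $C$ separates, $*$ and $y_{*p_1}$ lie in the same component of $\Sigma\setminus C$, so no such $\delta$ exists. More fundamentally, no based loop meets a separating simple closed curve transversally in an odd number of points, so your $n=1$ base case is vacuous here and the induction cannot terminate. One must instead treat $n=2$ as the base case, and there the operator $F$ no longer acts by one-sided multiplication: with $\gamma_1=y_{*p_1}C_{p_1}y_{*p_1}^{-1}$ and $\gamma_2=y_{p_2*}^{-1}C_{p_2}y_{p_2*}$ (both pushable to the $*$-side of $C$, hence killed by $F$) one obtains a two-sided formula for $F(y)$ whose exponential is a conjugation-type expression matching $t_C(y)$. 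This is a genuinely separate computation from $n=1$ and is precisely what the paper carries out in \S7.3 for its adapted symplectic generators. So your closing claim that the argument is ``insensitive to whether $C$ is separating or non-separating'' is not correct; the two cases bifurcate at the base of the induction, exactly as in the paper's proof.
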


Let us give an orientation on $C$ and denote by $[C]\in H$
its homology class. Then the square of $L_2^{\theta}(C)=[C][C]$
acts on $H$ trivially. See Lemma \ref{lem:7-6-1} and Proposition \ref{prop:7-7-1}.
Recall from \S2.5 that
$T^{\theta}(t_C)=\tau^{\theta}(t_C)\circ |t_C|$.
Let $X\in H$. Modulo $\widehat{T}_2$, we compute
$$|t_C|X\equiv \tau^{\theta}(t_C)\circ |t_C|X=T^{\theta}(t_C)X
=e^{-L^{\theta}(C)}X \equiv X-L^{\theta}_2(C)X
=X-(X\cdot [C])[C].$$
Namely,
$$|t_C|X=X-(X\cdot [C])[C],\ X\in H.$$
This is nothing but the classical formula
as stated in Introduction (\ref{eq:1-1-2}).

A simple closed curve $C$ on $\Sigma$ is called
non-separating (resp. separating) if $\Sigma \setminus C$
is connected (resp. not connected).
The proof of Theorem \ref{thm:7-1-1} is divided
into two cases according to whether $C$ is separating or not.
We take symplectic generators suitable to $C$
and compute $L^{\theta}(C)\theta(x_i)$, hence
$e^{-L^{\theta}(C)}\theta(x_i)$ by using Theorem \ref{thm:6-5-2}
where $x_i$ is one of the generators.
Next we observe this value coincides with
$T^{\theta}(t_C)\theta(x_i)=\theta(t_C(x_i))$.
Together with the fact that $\{ \theta(x_i)\}_i$ generates
$\widehat{T}$ as a complete algebra, we will get the conclusion.

\subsection{Non-separating case}
Suppose $C$ is non-separating. We take
symplectic generators
$\alpha_1,\beta_1,\ldots,\alpha_g,\beta_g$
such that $|\alpha_1|$ is homotopic to $C$ as unoriented loops.
Then the action of $t_C$ on $\pi$ is given by
\begin{equation}
\label{eq:7-2-1}
\begin{cases}
t_C(\alpha_i)=\alpha_i, & 1\le i\le g, \\
t_C(\beta_1)=\beta_1\alpha_1, & \\
t_C(\beta_i)=\beta_i, & 2\le i\le g. \\
\end{cases}
\end{equation}

\begin{lem}
\label{lem:7-2-1}
Notations are as above. Then
$$\begin{cases}
L^{\theta}(C)\theta(\alpha_i)=0, & 1\le i\le g, \\
L^{\theta}(C)\theta(\beta_1)=-\theta(\beta_1)\ell^{\theta}(\alpha_1), &  \\
L^{\theta}(C)\theta(\beta_i)=0, & 2 \le i \le g. \\
\end{cases}$$
\end{lem}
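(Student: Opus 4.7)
The plan is to split the lemma into two types of computations according to whether the curve acted upon can be made disjoint from a suitably chosen free-loop representative of $|\alpha_1|$. The first and third displayed identities (for $\alpha_i$ with $1\le i\le g$ and $\beta_i$ with $2\le i\le g$) will follow from Corollary \ref{cor:6-5-3}. The middle identity on $\beta_1$ is the real content of the lemma and will be extracted from Theorem \ref{thm:6-5-2} after a geometric computation of $\sigma(|\alpha_1^k|)\beta_1$.

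For the disjoint cases I choose a tubular neighborhood $N(C)$ of $C$ together with symplectic generators in the standard position, so that $\alpha_i,\beta_i$ for $i\ge 2$ lie outside $N(C)$, and so that the based loop $\alpha_1$ is realized as $\eta\,C_p\,\eta^{-1}$ for an arc $\eta$ from $*$ to a point $p\in C$ approaching $C$ from one fixed side. A parallel pushoff $C'$ of $C$ to the opposite side is a free-loop representative of $|\alpha_1|$ disjoint from $\alpha_1$ (because $\eta$ and $C'$ lie on opposite sides of $C$) and from the remaining $\alpha_i,\beta_i$ (because they are outside $N(C)$). Since $L^{\theta}$ depends only on the unoriented free class, Corollary \ref{cor:6-5-3} immediately yields $L^{\theta}(C)\theta(\alpha_i)=0$ for $1\le i\le g$ and $L^{\theta}(C)\theta(\beta_i)=0$ for $2\le i\le g$.

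For $\beta_1$ I apply Theorem \ref{thm:6-5-2} with $x=\alpha_1$ and $y=\beta_1$, so the problem reduces to the explicit evaluation of $\sigma((\log\alpha_1)^2)\beta_1\in\widehat{\mathbb{Q}\pi}$. Fix the based representative $\alpha_1=\beta_{1,*p}C_p\beta_{1,*p}^{-1}$, the identification compatible with the Dehn twist formula $t_C(\beta_1)=\beta_1\alpha_1$ of (\ref{eq:7-2-1}). Represent $|\alpha_1^k|$ by a $(1,k)$-curve $\gamma_k$ in an annular neighborhood of $C$; it meets $\beta_1$ transversely in $k$ points $p_1,\dots,p_k$, all with the same local intersection number, and at each $p_j$ the conjugation relation $C_{p_j}=\beta_{1,p_j*}\alpha_1\beta_{1,p_j*}^{-1}$ combined with the cancellations along the arcs of $\beta_1$ shows $(\gamma_k)_{p_j}=\beta_{1,p_j*}\alpha_1^k\beta_{1,p_j*}^{-1}$; telescoping then gives $\beta_{1,*p_j}(\gamma_k)_{p_j}\beta_{1,p_j*}=\beta_1\alpha_1^k$. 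Summing over $j$ produces $\sigma(|\alpha_1^k|)\beta_1=k\,\beta_1\alpha_1^k$, and by $\mathbb{Q}$-linearity and continuity of the extension from Lemma \ref{lem:6-5-1}, $\sigma(f(\alpha_1))\beta_1=\beta_1\alpha_1 f'(\alpha_1)$ for any formal power series $f$ in $\alpha_1$. Specializing to $f(x)=(\log x)^2$ and using that $\alpha_1$ commutes with $\log\alpha_1$ gives $\sigma((\log\alpha_1)^2)\beta_1=2\beta_1\log\alpha_1$; applying $\theta$ and Theorem \ref{thm:6-5-2} yields $L^{\theta}(C)\theta(\beta_1)=-\theta(\beta_1)\ell^{\theta}(\alpha_1)$.

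The main obstacle is the geometric identity $\sigma(|\alpha_1^k|)\beta_1=k\,\beta_1\alpha_1^k$. It requires one to track how the based-at-$p_j$ loops $(\gamma_k)_{p_j}$ (each in the free class $|\alpha_1^k|$) relate to $\alpha_1^k\in\pi$ through the arcs of $\beta_1$ running between consecutive $p_j$'s, and it is crucial that the specific choice $\alpha_1=\beta_{1,*p}C_p\beta_{1,*p}^{-1}$ is what makes the intermediate arc-factors telescope cleanly and produce $\beta_1\alpha_1^k$ on the right. A different (equally legitimate) choice of based representative of $|C|$ would yield a conjugate of $\beta_1\alpha_1^k$ and, correspondingly, the final identity for $L^{\theta}(C)\theta(\beta_1)$ would appear on the wrong side of $\theta(\beta_1)$; ensuring compatibility with (\ref{eq:7-2-1}) is the subtle but essential ingredient.
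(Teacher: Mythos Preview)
Your proof is correct and follows essentially the same approach as the paper: both use Corollary \ref{cor:6-5-3} for the disjoint cases, establish $\sigma(\alpha_1^k)\beta_1 = k\,\beta_1\alpha_1^k$, pass to power series to get $\sigma((\log\alpha_1)^2)\beta_1 = 2\beta_1\log\alpha_1$, and finish with Theorem \ref{thm:6-5-2}. The only cosmetic difference is that the paper verifies the passage to power series via an explicit binomial computation of $\sigma((\alpha_1-1)^m)\beta_1$, whereas you invoke linearity and continuity directly; and you supply more geometric detail (the pushoff $C'$, the $(1,k)$-curve representative) where the paper simply asserts $\sigma(\alpha_1^n)\beta_1 = n\beta_1\alpha_1^n$.
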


\begin{proof}
Since $C\cap \alpha_i=\emptyset$ for $1\le i\le g$
and $C\cap \beta_i=\emptyset$ for $2\le i\le g$,
we have $L^{\theta}(C)\theta(\alpha_i)=0$
for $1\le i\le g$ and
$L^{\theta}(C)\theta(\beta_i)=0$
for $2 \le i \le g$ by Corollary \ref{cor:6-5-3}.

It remains to prove
$L^{\theta}(C)\theta(\beta_1)=
-\theta(\beta_1)\ell^{\theta}(\alpha_1)$. We have
$$\sigma(\alpha_1^n)\beta_1=n\beta_1\alpha_1^n,\ {\rm for\ }n\ge 0.$$
Thus for $m\ge 0$, we compute
\begin{eqnarray}
\label{eq:7-2-2}
\sigma((\alpha_1-1)^m)\beta_1
&=&
\sum_{n=0}^m(-1)^{m-n} {m\choose n}\sigma(\alpha_1^n)\beta_1
=\sum_{n=1}^m(-1)^{m-n}n{m\choose n}\beta_1\alpha_1^n \nonumber \\
&=& m\beta_1\alpha_1(\alpha_1-1)^{m-1}.
\end{eqnarray}
Here we use $n{m\choose n}=m{m-1 \choose n-1}$.
This implies if $f(\alpha_1)$ is a power series in
$\alpha_1-1$, then $\sigma(f(\alpha_1))\beta_1=
\beta_1\alpha_1f^{\prime}(\alpha_1)$, where
$f^{\prime}(\alpha_1)$ is the derivative of $f(\alpha_1)$.
If $f(\alpha_1)=(\log \alpha_1)^2$, then
$\alpha_1 f^{\prime}(\alpha_1)=2\log \alpha_1$.
Therefore, $\sigma((\log \alpha_1)^2\beta_1)=2\beta_1\log \alpha_1$.

Substituting this into Theorem \ref{thm:6-5-2}, we have
$$L^{\theta}(C)\theta(\beta_1)=
-\frac{1}{2}\theta(\sigma((\log \alpha_1)^2)\beta_1)
=-\theta(\beta_1\log \alpha_1)=-\theta(\beta_1)\ell^{\theta}(\alpha_1).$$
This completes the proof.
\end{proof}

\begin{proof}[Proof of Theorem \ref{thm:7-1-1} for non-separating $C$]
By Lemma \ref{lem:7-2-1}, we have
$e^{-L^{\theta}(C)}\theta(\alpha_i)=\theta(\alpha_i)$
for $1\le i\le g$, and
$e^{-L^{\theta}(C)}\theta(\beta_i)=\theta(\beta_i)$ for $2\le i\le g$.
Also Lemma \ref{lem:7-2-1} implies
$L^{\theta}(C)^i\theta(\beta_1)
=(-1)^i\theta(\beta_1)\ell^{\theta}(\alpha_1)^i$ for $i\ge 0$. 
Hence
$$e^{-L^{\theta}(C)}\theta(\beta_1)=
\sum_{i=0}^{\infty}(-1)^i \frac{1}{i!}L^{\theta}(C)^i\theta(\beta_1)
=\theta(\beta_1)\sum_{i=0}^{\infty}\frac{1}{i!}\ell^{\theta}(\alpha_1)^i
=\theta(\beta_1)\theta(\alpha_1)=\theta(\beta_1\alpha_1).$$

On the other hand, (\ref{eq:7-2-1}) implies that the total
Johnson map $T^{\theta}(t_C)$
satisfies $T^{\theta}(t_C)(\theta(\alpha_i))=\theta(\alpha_i)$
for $1\le i \le g$, $T^{\theta}(t_C)(\theta(\beta_1))=\theta(\beta_1\alpha_1)$,
and $T^{\theta}(t_C)(\theta(\beta_i))=\theta(\beta_i)$
for $2\le i\le g$.

In summary, the values of $e^{-L^{\theta}(C)}$ and $T^{\theta}(t_C)$
coincide on $\{ \theta(\alpha_i),\theta(\beta_i)\}_i$.
Since $\{ \theta(\alpha_i),\theta(\beta_i)\}_i$ generates $\widehat{T}$
as a complete algebra, this shows the equality
$e^{-L^{\theta}(C)}=T^{\theta}(t_C)\in {\rm Aut}(\widehat{T})$.
This completes the proof of Theorem \ref{thm:7-1-1}
for the case $C$ is non-separating.
\end{proof}

\subsection{Separating case}
Suppose $C$ is separating. We take symplectic generators
$\alpha_1,\beta_1,\ldots,\alpha_g,\beta_g$
such that $C$ is homotopic to $|\gamma_h|$ as unoriented loops,
where $\gamma_h=\prod_{i=1}^h [\alpha_i,\beta_i]$ for some $h$.
Then the action of $t_C$ on $\pi$ is given by
\begin{equation}
\label{eq:7-3-1}
\begin{cases}
t_C(\alpha_i)=\gamma_h^{-1}\alpha_i\gamma_h, & 1\le i\le h, \\
t_C(\alpha_i)=\alpha_i, & h+1\le i\le g, \\
t_C(\beta_i)=\gamma_h^{-1}\beta_i\gamma_h, & 1\le i\le h, \\
t_C(\beta_i)=\beta_i, & h+1\le i\le g. \\
\end{cases}
\end{equation}

\begin{lem}
\label{lem:7-3-1}
Notations are as above. Then
$$L^{\theta}(C)\theta(\alpha_i)=\begin{cases}
[\ell^{\theta}(\gamma_h),\theta(\alpha_i)],\ {\rm if\ } 1\le i\le h,\\
0,\ {\rm if\ } h+1\le i\le g,\ {\rm and}
\end{cases}$$
$$L^{\theta}(C)\theta(\beta_i)=\begin{cases}
[\ell^{\theta}(\gamma_h),\theta(\beta_i)],\ {\rm if\ } 1\le i\le h,\\
0,\ {\rm if\ } h+1\le i\le g.
\end{cases}$$
\end{lem}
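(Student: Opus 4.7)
Plan: The two statements for $h+1 \le i \le g$ are immediate from Corollary 6.5.3: for these indices the generators $\alpha_i$ and $\beta_i$ admit based representatives entirely disjoint from $C$ (they lie in the genus $g-h$ component of $\Sigma \setminus C$), so $L^{\theta}(C)\theta(\alpha_i) = L^{\theta}(C)\theta(\beta_i) = 0$.

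For $1 \le i \le h$ the approach parallels Lemma 7.2.1. Because $C$ and $\gamma_h$ are freely homotopic, $L^{\theta}(C) = L^{\theta}(\gamma_h)$. Theorem 6.5.2, combined with the facts that $\theta$ is an isomorphism of complete algebras and $\theta(\log \gamma_h) = \ell^{\theta}(\gamma_h)$, reduces the desired identity $L^{\theta}(C)\theta(\alpha_i) = [\ell^{\theta}(\gamma_h),\theta(\alpha_i)]$ to the purely group-ring statement
\[
\sigma\bigl((\log \gamma_h)^2\bigr)\,\alpha_i \;=\; -2\,[\log \gamma_h,\,\alpha_i] \quad \text{in } \widehat{\mathbb{Q}\pi},
\]
where the bracket on the right is the associative commutator. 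An identical reduction handles $\beta_i$.

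This group-ring identity in turn is established by an intersection-theoretic computation of $\sigma(|\gamma_h^n|)\alpha_i$ using Definition 3.2.1, followed by summation. Represent $|\gamma_h|$ by the simple curve $C$; then $C$ meets $\alpha_i$ transversely in exactly two points $p_1, p_2$ with opposite local intersection numbers, corresponding to $\alpha_i$ entering and leaving the subsurface $\Sigma_h$. Writing $\alpha_i = u \cdot a \cdot v$ with $u, v$ outside $\Sigma_h$ and $a \subset \Sigma_h$ the middle sub-arc, Definition 3.2.1 expresses $\sigma(|C|)\alpha_i$ as the signed sum $u\,C_{p_1}\,a v - u a\,C_{p_2}\,v$; pulling each contribution back to a based loop at $*$ via $u$ and $ua$ respectively identifies $C_{p_1}$ and $C_{p_2}$ with two specific conjugates of $\gamma_h$ (by $1$ and $\alpha_i$), and the opposite-sign combination collects into the commutator $[\alpha_i,\gamma_h]$ in $\pi$. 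The analogous picture with an $n$-fold parallel representative of $|\gamma_h^n|$ yields a formula for $\sigma(|\gamma_h^n|)\alpha_i$ linear in $n$; summing against the Taylor coefficients of $(\log \gamma_h)^2$ by a direct binomial manipulation then produces the displayed identity. The $\beta_i$ case is identical in structure.

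The main obstacle is the geometric bookkeeping at the two intersection points: one must verify that $C_{p_1}$ and $C_{p_2}$, when transported back to $*$ along $u$ and $ua$, represent precisely the two expected conjugates of $\gamma_h$, so that the opposite signs assemble into a bona fide commutator rather than cancelling or doubling. This is more delicate than the non-separating analogue in Lemma 7.2.1, where only a single intersection occurs and no commutator appears; the two-point intersection pattern intrinsic to a separating curve is exactly what produces, in the end, the conjugation action of $t_C$ on the generators lying inside $\Sigma_h$.
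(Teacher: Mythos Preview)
Your proposal is correct and follows essentially the same route as the paper: Corollary~6.5.3 handles $i>h$, and for $i\le h$ you reduce via Theorem~6.5.2 to the group-ring identity $\sigma((\log\gamma_h)^2)\alpha_i = -2[\log\gamma_h,\alpha_i]$, which the paper likewise obtains by first computing $\sigma(\gamma_h^n)\alpha_i = n\alpha_i\gamma_h^n - n\gamma_h^n\alpha_i$ and then performing the same binomial-to-derivative manipulation as in Lemma~7.2.1.

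Two small points of language are worth tightening. First, an ``$n$-fold parallel representative'' of $|\gamma_h^n|$ should mean a single immersed loop winding $n$ times around $C$, not $n$ disjoint parallel copies (the latter represents $n\,|\gamma_h|\in\mathbb{Q}\hat\pi$, a different element); with the correct representative the $2n$ intersection points do give the formula linear in $n$ with $\gamma_h^n$ inserted. Second, the phrase ``collects into the commutator $[\alpha_i,\gamma_h]$ in $\pi$'' should read ``in $\mathbb{Q}\pi$'' (the associative commutator), as you yourself note earlier. The paper sidesteps both issues by simply asserting the formula for $\sigma(\gamma_h^n)\alpha_i$ as a direct geometric fact; your elaboration is sound once these phrasings are corrected.
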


\begin{proof}
Suppose $i \ge h+1$.
Since $C \cap \alpha_i=C \cap \beta_i=\emptyset$
if $i \ge h+1$, we have
$L^{\theta}(C)\theta(\alpha_i)
=L^{\theta}(C)\theta(\beta_i)=0$ by Corollary \ref{cor:6-5-3}.

Suppose $i\le h$. Then we have
$$\sigma(\gamma_h^n)\alpha_i
=-n\gamma_h^n\alpha_i+n\alpha_i\gamma_h^n,\ {\rm for\ } n\ge 0.$$
By a computation similar to (\ref{eq:7-2-2}), we get
$$\sigma((\gamma_h-1)^m)\alpha_i
=m\alpha_i\gamma_h(\gamma_h-1)^{m-1}
-m\gamma_h(\gamma_h-1)^{m-1}\alpha_i$$
for $m\ge 0$. This implies if $f(\gamma_h)$ is a power series
in $\gamma_h-1$, then $\sigma(f(\gamma_h))\alpha_i
=\alpha_i\gamma_hf^{\prime}(\gamma_h)
-\gamma_hf^{\prime}(\gamma_h)\alpha_i$.
Therefore, $\sigma((\log \gamma_h)^2)\alpha_i
=2(\alpha_i \log \gamma_h-(\log \gamma_h) \alpha_i)$.

Substituting this into Theorem \ref{thm:6-5-2}, we have
$$L^{\theta}(C)\theta(\alpha_i)
=-\theta(\alpha_i \log \gamma_h-(\log \gamma_h) \alpha_i)
=[\ell^{\theta}(\gamma_h), \theta(\alpha_i)].$$
The proof of $L^{\theta}(C)\theta(\beta_i)
=[\ell^{\theta}(\gamma_h), \theta(\beta_i)]$
is similar. This completes the proof.
\end{proof}

\begin{proof}[Proof of Theorem \ref{thm:7-1-1} for separating $C$]
By Lemma \ref{lem:7-3-1}, we have
$e^{-L^{\theta}(C)}\theta(\alpha_i)=\theta(\alpha_i)$ and
$e^{-L^{\theta}(C)}\theta(\beta_i)=\theta(\beta_i)$ if
$i\ge h+1$.
Suppose $i\le h$.
By Corollary \ref{cor:6-5-3}, we have $L^{\theta}(C)\ell^{\theta}(\gamma_h)=0$.
Combining this with Lemma \ref{lem:7-3-1}, we have
$L^{\theta}(C)^m\theta(\alpha_i)
={\rm ad}(\ell^{\theta}(\gamma_h))^m\theta(\alpha_i)$
for $m\ge 0$. Hence we have
$$e^{-L^{\theta}(C)}\theta(\alpha_i)=
\sum_{m=0}^{\infty}\frac{1}{m!}
{\rm ad}(-\ell^{\theta}(\gamma_h))^m\theta(\alpha_i)
=e^{-\ell^{\theta}(\gamma_h)}\theta(\alpha_i)
e^{\ell^{\theta}(\gamma_h)}
=\theta(\gamma_h^{-1}\alpha_i\gamma_h).$$
Similarly we have $e^{-L^{\theta}(C)}\theta(\beta_i)
=\theta(\gamma_h^{-1}\beta_i\gamma_h)$ for $i\le h$.

On the other hand (\ref{eq:7-3-1})
implies that $T^{\theta}(t_C)\theta(\alpha_i)
=\theta(\gamma_h^{-1}\alpha_i\gamma_h)$,
$T^{\theta}(t_C)\theta(\beta_i)
=\theta(\gamma_h^{-1}\beta_i\gamma_h)$ for $1\le i\le h$,
and $T^{\theta}(t_C)\theta(\alpha_i)=\theta(\alpha_i)$,
$T^{\theta}(t_C)\theta(\beta_i)=\theta(\beta_i)$ for $h+1\le i\le g$.

In summary the values of
$e^{-L^{\theta}(C)}$ and $T^{\theta}(t_C)$
coincide on $\{ \theta(\alpha_i),\theta(\beta_i) \}_i$.
As the proof for non-separating $C$, this leads to
the equality $e^{-L^{\theta}(C)}=T^{\theta}(t_C)$.
This completes the proof of Theorem \ref{thm:7-1-1} for the case $C$ is separating.
\end{proof}

\subsection{Action on the nilpotent quotients}
Let $\Gamma_k=\Gamma_k(\pi)$, $k\ge 1$ be the lower
central series of $\pi$. Namely $\Gamma_1=\pi$,
and define $\Gamma_k$ successively by
$\Gamma_k=[\Gamma_{k-1},\pi]$ for $k\ge 2$.
For $k\ge 0$, the $k$-th nilpotent quotient of $\pi$
is defined as the quotient group
$N_k=N_k(\pi)=\pi/\Gamma_{k+1}$. Note that $N_1=\pi/[\pi,\pi]$
is nothing but the abelianization of $\pi$. Since any automorphism
of $\pi$ preserves $\Gamma_k$, the mapping class group
$\mathcal{M}_{g,1}$ naturally acts on $N_k$ for each $k$.

Let $\theta$ be a (not necessary symplectic)
Magnus expansion of $\pi$.
For each $k\ge 1$ we have
\begin{equation}
\label{eq:7-4-1}
\theta^{-1}(1+\widehat{T}_k)=\Gamma_k.
\end{equation}
See Bourbaki \cite{Bou} ch.2, \S5, no.4, Theorem 2.
Therefore, $\theta$ induces an injective homomorphism
$\theta\colon N_k \rightarrow (1+\widehat{T}_1)/(1+\widehat{T}_{k+1})$.
Note that $1+\widehat{T}_{k+1}$ is a normal subgroup of $1+\widehat{T}_1$.
By post-composing the natural injection
$(1+\widehat{T}_1)/(1+\widehat{T}_{k+1})\hookrightarrow
\widehat{T}/\widehat{T}_{k+1}$, we get an injection
\begin{equation}
\label{eq:7-4-2}
\theta\colon N_k \rightarrow \widehat{T}/\widehat{T}_{k+1}.
\end{equation}
Since the total Johnson map $T^{\theta}(\varphi)$
of $\varphi\in \mathcal{M}_{g,1}$ is filter-preserving,
it naturally induces a filter-preserving automorphism of
the quotient algebra $\widehat{T}/\widehat{T}_{k+1}$.
Using the same letter we denote it by $T^{\theta}(\varphi)$.
By construction the injection (\ref{eq:7-4-2}) is compatible
with the action of $\mathcal{M}_{g,1}$: we have
$T^{\theta}(\varphi)\circ \theta(x)=\theta\circ \varphi(x)$
for any $x\in N_k$.

For a group $G$, let $\bar{G}$ be the quotient set of $G$
by conjugation and the relation $g\sim g^{-1}$, $g\in G$.
Let $C$ be a simple closed curve on $\Sigma$.
Choose any $x\in \pi$ such that $x$ is freely
homotopic to $C$ as unoriented loops. Then the element of $\bar{\pi}$
represented by $x$ is independent of the choice of $x$.
For each $k\ge 0$, let $\bar{C}_k\in \bar{N}_k$ be
the image of this element under the natural surjection
$\bar{\pi}\rightarrow \bar{N}_k$.

\begin{thm}
\label{thm:7-4-1}
For each $k\ge 1$, the action of $t_C$ on $N_k$
depends only on the class $\bar{C}_k\in \bar{N}_k$.
If $C$ is separating, it depends only on the class
$\bar{C}_{k-1}\in \bar{N}_{k-1}$.
\end{thm}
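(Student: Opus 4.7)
The plan is to use Theorem \ref{thm:7-1-1} to translate the statement into a filtration estimate on $L^{\theta}(C)$. Fix a symplectic expansion $\theta$ and recall the $\mathcal{M}_{g,1}$-equivariant injection $\theta\colon N_k\hookrightarrow\widehat{T}/\widehat{T}_{k+1}$ of (\ref{eq:7-4-2}). Writing $T^{\theta}(t_C)=e^{-L^{\theta}(C)}$ and similarly for $C'$, it suffices to prove that $e^{-L^{\theta}(C)}$ and $e^{-L^{\theta}(C')}$ induce the same automorphism of $\widehat{T}/\widehat{T}_{k+1}$ under the relevant hypothesis, and I will reduce this to the congruence
\[ L^{\theta}(C)\equiv L^{\theta}(C')\pmod{\widehat{T}_{k+2}}. \]

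To see that this congruence is enough, observe that under the Poincar\'e duality $\widehat{T}_1\cong\Hom(H,\widehat{T})$ of \S2.7, an element of $\widehat{T}_{k+2}=H\otimes\widehat{T}_{k+1}$ acts as a derivation sending $\widehat{T}_p$ into $\widehat{T}_{p+k}$ for every $p\ge 1$, while $L^{\theta}(C)$ and $L^{\theta}(C')$ are themselves filter-preserving. The telescoping identity
\[ L^{\theta}(C)^n-L^{\theta}(C')^n=\sum_{j=0}^{n-1}L^{\theta}(C)^j\bigl(L^{\theta}(C)-L^{\theta}(C')\bigr)L^{\theta}(C')^{n-1-j} \]
then yields $(L^{\theta}(C)^n-L^{\theta}(C')^n)(\widehat{T}_p)\subset\widehat{T}_{p+k}$, and summing the exponential series gives $(e^{-L^{\theta}(C)}-e^{-L^{\theta}(C')})(\widehat{T}_p)\subset\widehat{T}_{k+1}$ for all $p\ge 1$, as claimed.

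For the general case, assume $\bar{C}_k=\bar{C'}_k\in\bar{N}_k$; pick representatives $x,x'\in\pi$ and write $x'=yx^{\pm 1}y^{-1}z$ with $y\in\pi$ and $z\in\Gamma_{k+1}$. By the Magnus--Bourbaki identity $\theta^{-1}(1+\widehat{T}_{k+1})=\Gamma_{k+1}$ of (\ref{eq:7-4-1}), $\ell^{\theta}(z)\in\widehat{T}_{k+1}$. Since $\theta$ is group-like, applying the Baker--Campbell--Hausdorff formula to $\theta(x')=\theta(yx^{\pm 1}y^{-1})\theta(z)$ gives
\[ \ell^{\theta}(x')=\ell^{\theta}(yx^{\pm 1}y^{-1})+\ell^{\theta}(z)+w, \]
where $w$ is an infinite sum of nested commutators, each carrying at least one factor $\ell^{\theta}(z)\in\widehat{T}_{k+1}$ together with further factors in $\widehat{T}_1$, so $w\in\widehat{T}_{k+2}$. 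Squaring and discarding cross terms of the form $\ell^{\theta}(yx^{\pm 1}y^{-1})\cdot\ell^{\theta}(z)\in\widehat{T}_{k+2}$ and $\ell^{\theta}(z)^2\in\widehat{T}_{2k+2}$ gives $\ell^{\theta}(x')^2\equiv\ell^{\theta}(yx^{\pm 1}y^{-1})^2\pmod{\widehat{T}_{k+2}}$. Applying the filter-preserving operator $\tfrac12 N$ and invoking Lemma \ref{lem:2-6-4} to identify $L^{\theta}(yx^{\pm 1}y^{-1})=L^{\theta}(x)$ completes the general case.

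The separating case uses the improvement $\ell^{\theta}(x)\in\widehat{T}_2$: since $[C]=0\in H$ we have $x\in\Gamma_2$, and then (\ref{eq:7-4-1}) applies. Conjugation by $\theta(y)$ preserves the filtration, so $\ell^{\theta}(yx^{\pm 1}y^{-1})\in\widehat{T}_2$ as well. Now if $\bar{C}_{k-1}=\bar{C'}_{k-1}$, write $x'=yx^{\pm 1}y^{-1}z$ with $z\in\Gamma_k$, so $\ell^{\theta}(z)\in\widehat{T}_k$; the BCH correction then lies in $\widehat{T}_{k+2}$ (each commutator has a factor in $\widehat{T}_2$ and another in $\widehat{T}_k$), the cross terms in $\widehat{T}_2\cdot\widehat{T}_k=\widehat{T}_{k+2}$, and $\ell^{\theta}(z)^2\in\widehat{T}_{2k}\subset\widehat{T}_{k+2}$ provided $k\ge 2$. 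The boundary case $k=1$ is immediate from the classical formula (\ref{eq:1-1-2}) together with $[C]=0$. The main obstacle is the careful bookkeeping of filtration degrees through BCH and squaring; conceptually everything distils to Theorem \ref{thm:7-1-1}, the conjugation invariance of $L^{\theta}$ (Lemma \ref{lem:2-6-4}), and the Magnus--Bourbaki identity (\ref{eq:7-4-1}).
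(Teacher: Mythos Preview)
Your proof is correct and follows essentially the same approach as the paper's: both use Theorem~\ref{thm:7-1-1}, the equivariant injection (\ref{eq:7-4-2}), the Magnus--Bourbaki identity (\ref{eq:7-4-1}), and Lemma~\ref{lem:2-6-4} to reduce the statement to a filtration estimate on $L^{\theta}(C)$. The only difference is presentational: where the paper simply observes that the action of $e^{-L^{\theta}(C)}$ on $\widehat{T}/\widehat{T}_{k+1}$ depends only on the graded pieces $L^{\theta}_i(C)$ for $2\le i\le k+1$, and that these are determined by $\ell^{\theta}_p(x)$ for $1\le p\le k$, you unpack the same reasoning via the telescoping sum and the Baker--Campbell--Hausdorff formula.
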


\begin{proof}
Fix a symplectic expansion $\theta$.
By Theorem \ref{thm:7-1-1}, we have
$T^{\theta}(t_C)=e^{-L^{\theta}(C)}\in {\rm Aut}(\widehat{T})$.
Remark that the action of $e^{-L^{\theta}(C)}$ on $\widehat{T}/\widehat{T}_{k+1}$
depends only on $L^{\theta}_i(C)$, $2\le i\le k+1$.

Pick $x\in \pi$ such that $x$ is freely
homotopic to $C$ as unoriented loops.
Let $x^{\prime}\in \pi$ such that $x^{-1}x^{\prime}\in \Gamma_{k+1}$.
By (\ref{eq:7-4-1}), it follows that
$\ell^{\theta}_i(x)=\ell^{\theta}_i(x^{\prime})$ for $1\le i\le k$.
Since $L^{\theta}(C)=L^{\theta}(x)=\frac{1}{2}N(\ell^{\theta}(x)\ell^{\theta}(x))$,
this observation together with Lemma \ref{lem:2-6-4}
shows that $L^{\theta}_i(C)$, $2\le i\le k+1$,
depend only on the class $\bar{C}_k\in \bar{N}_k$.
This proves the first part.

If $C$ is separating, $x\in \Gamma_1$ hence $\ell^{\theta}_1(x)=0$.
Thus if $x^{\prime}\in \Gamma_1$ is a representative
of another separating simple closed curve $C^{\prime}$, satisfying
$x^{-1}x^{\prime}\in \Gamma_k$, then $L^{\theta}_i(x)=L^{\theta}_i(x^{\prime})$
for $2\le i\le k+1$.
Therefore, $L^{\theta}_i(C)$, $2\le i\le k+1$ depend
only on the class $\bar{C}_{k-1}
\in \bar{N}_{k-1}$. This completes the proof.
\end{proof}

This theorem is a generalization of the following well-known facts:
1) the action of $t_C$ on $N_1=H_1(\Sigma;\mathbb{Z})$ depends
only on the class $\pm [C]$;
2) if $C$ is separating, then $t_C$ belongs to the Johnson kernel
$\mathcal{K}_{g,1}=\mathcal{M}_{g,1}[2]$, the subgroup of the mapping classes
acting on $N_2$ as the identity.

\subsection{The formula of $\tau_k^{\theta}(t_C)$ for separating $C$}
In the rest of this section we derive formulas of the $k$-th Johnson map
(see Definition \ref{def:2-5-1}) of $t_C$ with associated to
a symplectic expansion
from Theorem \ref{thm:7-1-1}.
For simplicity, we often write $L^{\theta}(C)=L$, $L^{\theta}_k(C)=L_k$, etc.

In this subsection we treat the case of separating curves.

\begin{thm}
\label{thm:7-5-1}
Let $\theta$ be a symplectic expansion and
$C$ a separating simple closed curve on $\Sigma$. Then for $k\ge 1$,
the $k$-th Johnson map $\tau^{\theta}_k(t_C)$ is given by
$$\tau_k^{\theta}(t_C)=\sum_{1\le n\le [k/2]}
\frac{(-1)^n}{n!}\sum_{\substack{(m_1,\ldots,m_n), m_i\ge 4, \\
m_1+\cdots +m_n=2n+k}}L_{m_1}\cdots L_{m_n}.$$
For example, we have $\tau_1^{\theta}(t_C)=0$, and
\begin{eqnarray*}
\tau_2^{\theta}(t_C) &=& -L_4; \\
\tau_3^{\theta}(t_C) &=& -L_5; \\
\tau_4^{\theta}(t_C) &=& -L_6+\frac{1}{2}L_4^2; \\
\tau_5^{\theta}(t_C) &=& -L_7+\frac{1}{2}(L_4L_5+L_5L_4); \\
\tau_6^{\theta}(t_C) &=& -L_8+\frac{1}{2}(L_4L_6+L_5^2+L_6L_4)
-\frac{1}{6}L_4^3. \\
\end{eqnarray*}
Here, $L_4^2$ is the composition
$L_4\circ L_4\colon H\rightarrow H^{\otimes 3}
\rightarrow H^{\otimes 5}$, etc.
\end{thm}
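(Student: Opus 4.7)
The plan is to deduce the formula directly from Theorem \ref{thm:7-1-1} by carefully tracking tensor degrees, since essentially all the substance of the statement is packaged in that main theorem.

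First, I would observe that because $C$ is separating, its homology class $[C]\in H$ vanishes. Hence $|t_C|=1_H$ and so, by Definition \ref{def:2-5-1} combined with Theorem \ref{thm:7-1-1},
$$\tau^{\theta}(t_C)=T^{\theta}(t_C)\circ |t_C|^{-1}=T^{\theta}(t_C)=e^{-L^{\theta}(C)}.$$
Since $[C]=0$ implies $\ell^{\theta}(C)\in\widehat{T}_2$, we get $\ell^{\theta}(C)\ell^{\theta}(C)\in\widehat{T}_4$, and therefore $L=L^{\theta}(C)=\tfrac{1}{2}N(\ell^{\theta}(C)\ell^{\theta}(C))\in\widehat{T}_4$. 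Thus $L_i=0$ for $i\le 3$ and $L=\sum_{m\ge 4}L_m$.

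Next I would expand
$$e^{-L}=\sum_{n=0}^{\infty}\frac{(-1)^n}{n!}L^n=\sum_{n=0}^{\infty}\frac{(-1)^n}{n!}\sum_{\substack{(m_1,\ldots,m_n)\\ m_i\ge 4}}L_{m_1}\cdots L_{m_n}$$
and read off the degree $(k+1)$-component of the restriction to $H$. The key accounting step is: via the Poincar\'e duality identification (\ref{eq:2-7-1}), an element of $H^{\otimes m}$ corresponds to a derivation $H\to H^{\otimes (m-1)}$, so as a derivation of $\widehat T$ it raises tensor degree by $m-2$. Applying this iteratively to $H$, the composition $L_{m_1}\circ\cdots\circ L_{m_n}$ sends $H$ into $H^{\otimes(m_1+\cdots+m_n-2n+1)}$. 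Equating this to $k+1$ gives the constraint $m_1+\cdots+m_n=2n+k$, and the bounds $m_i\ge 4$ force $4n\le 2n+k$, i.e.\ $n\le[k/2]$. Summing over $n$ yields exactly the displayed formula for $\tau_k^{\theta}(t_C)$.

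For the listed examples ($k=1,\dots,6$), the main point is that the sum is empty when $k=1$ (so $\tau_1^{\theta}(t_C)=0$), and for $k=2,3$ only the single term with $n=1$, $m_1=k+2$ contributes, giving $-L_{k+2}$. For $k=4,5,6$ both $n=1$ and $n=2$ (and for $k=6$ also $n=3$) occur, and one simply enumerates the tuples $(m_1,\ldots,m_n)$ with entries $\ge 4$ summing to $2n+k$. I do not foresee a serious obstacle; the only thing requiring care is the degree bookkeeping for the derivations $L_m$ (the shift by $m-2$, not $m-1$ or $m$), and the observation that composing derivations and composing them as tensor maps yields consistent answers only via the Leibniz rule applied to the successive images in $H^{\otimes *}$. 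Once this is set up cleanly, the formula is an immediate consequence.
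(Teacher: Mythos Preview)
Your proposal is correct and follows essentially the same approach as the paper: both arguments observe that for separating $C$ one has $|t_C|=\mathrm{id}$ and $L_2=L_3=0$, then expand $e^{-L}$ and extract the degree $k+1$ component on $H$ via the constraint $m_1+\cdots+m_n=2n+k$ with $m_i\ge 4$. Your degree bookkeeping (each $L_m$ raises tensor degree by $m-2$) is exactly the content of the paper's one-line assertion about the degree $k+1$ part of $L^nX$; the only minor notational slip is writing $\ell^\theta(C)$ rather than $\ell^\theta(x)$ for a chosen representative $x\in\pi$, since $\ell^\theta$ is not conjugation-invariant, but this does not affect the argument.
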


\begin{proof}
Since $C$ is separating, $|t_C|={\rm id}$
hence $\tau^{\theta}(t_C)=T^{\theta}(t_C)$, and
$L_2^{\theta}(C)=L_3^{\theta}(C)=0$. Thus,
$L^{\theta}(C)=L_4+L_5+\cdots$.
For $X\in H$, the degree $k+1$-part of $L(C)^nX$ is equal to
$$\sum_{\substack{(m_1,\ldots,m_n), m_i\ge 4, \\
m_1+\cdots +m_n=2n+k}}L_{m_1}\cdots L_{m_n}X.$$
In particular if $n>[k/2]$, the degree $k+1$-part of
$L(C)^nX$ is zero. By Theorem \ref{thm:7-1-1}, the
conclusion follows.
\end{proof}

\begin{rem}{\rm 
In \cite{Mo} Proposition 1.1, Morita computed $\tau_2(t_C)$
for separating $C$, and our formula $\tau_2^{\theta}(t_C)=-L_4^{\theta}$
coincides with his formula.
In fact, we have $t_C\in \mathcal{K}_{g,1}$
as we remarked at the end of \S7.4, and
$\tau_2^{\theta}(t_C)$ does not depend on the choice
of $\theta$.
}
\end{rem}

\subsection{Computations of $L^{\theta}_k(x)$ for small $k$}
Compared with the separating case, the non-separating case is
more complicated since $L_2^{\theta}(C)\neq 0$ for non-separating $C$.
So far we don't have a complete formula of $\tau^{\theta}_k(t_C)$, $k\ge 1$
for non-separating $C$, and in this paper we only give formulas of
$\tau^{\theta}_1(t_C)$ and $\tau^{\theta}_2(t_C)$. Even in these cases,
we need considerable computations. This subsection is a preparation
for the computations.

Let $\Lambda^kH$ be the $k$-th exterior product of $H$. We can realize
$\Lambda^kH$ as a subspace of $H^{\otimes k}$ by the embedding
$$\Lambda^kH\rightarrow H^{\otimes k},\ 
X_1\wedge \cdots \wedge X_k \mapsto \sum_{\sigma\in \mathfrak{S}_k}
{\rm sign}(\sigma)X_{\sigma(1)}\otimes \cdots \otimes X_{\sigma(k)}.$$
Note that $\Lambda^2H=\mathcal{L}_2$ and $X\wedge Y=[X,Y]$.

\begin{lem}
\label{lem:7-6-1}
Let $\theta$ be a group-like expansion.
Then for each $x\in \pi$,
\begin{enumerate}
\item[{\rm (1)}]
$L^{\theta}_2(x)=[x][x]$,
\item[{\rm (2)}]
$L^{\theta}_3(x)=[x]\wedge \ell_2^{\theta}(x)\in \Lambda^3H$.
\end{enumerate}
\end{lem}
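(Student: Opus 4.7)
The plan is to expand $L^\theta_k(x)$ using the degree decomposition of $\ell^\theta(x)$ given in the paragraph just before \S7.1, and then evaluate $N$ on each homogeneous piece. Because $\theta$ is group-like, $\ell^\theta(x) = \log\theta(x) \in \widehat{\mathcal{L}}$, so we may write $\ell^\theta(x) = [x] + \ell_2^\theta(x) + \ell_3^\theta(x) + \cdots$ with $\ell_p^\theta(x) \in \mathcal{L}_p$; and the definition of $L^\theta$ together with Definition 2.6.1 gives
$$
L^\theta_k(x) = \frac{1}{2}N\!\left(\sum_{p=1}^{k-1}\ell^\theta_p(x)\,\ell^\theta_{k-p}(x)\right).
$$

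For (1), I would just note that on $H^{\otimes 2}$ the cyclic permutation $\nu$ fixes $[x]\otimes[x]$, so $N([x][x]) = (1+\nu)([x][x]) = 2[x][x]$, giving $L^\theta_2(x) = [x][x]$.

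For (2), first apply Lemma \ref{lem:2-6-2}(1) to collapse the two terms $N([x]\ell_2^\theta(x))$ and $N(\ell_2^\theta(x)[x])$, yielding $L^\theta_3(x) = N([x]\ell_2^\theta(x))$. Since $\ell_2^\theta(x) \in \mathcal{L}_2$ is a linear combination of brackets $[Y,Z]$ with $Y,Z\in H$, by linearity it suffices to check that for $X,Y,Z\in H$ one has $N(X\otimes[Y,Z]) = X\wedge Y\wedge Z$ in $H^{\otimes 3}$. This is a direct computation: on $H^{\otimes 3}$ we have $N = 1+\nu+\nu^2$, so
$$
N(X\otimes[Y,Z]) = N(XYZ) - N(XZY) = XYZ+YZX+ZXY-XZY-ZYX-YXZ,
$$
which coincides with the alternating sum defining $X\wedge Y\wedge Z$ under the embedding $\Lambda^3H\hookrightarrow H^{\otimes 3}$. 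Summing over brackets in $\ell_2^\theta(x)$ and using that wedging $[x]\in H$ with elements of $\mathcal{L}_2=\Lambda^2H$ produces exactly $[x]\wedge\ell_2^\theta(x) \in \Lambda^3H$ finishes the argument.

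There is no serious obstacle here: both parts are short symbolic calculations once one invokes group-likeness to get $\ell_p^\theta(x)\in\mathcal{L}_p$. The only mildly subtle point is verifying the identity $N(X\otimes[Y,Z]) = X\wedge Y\wedge Z$, which one can either do by hand as above or by specializing Lemma \ref{lem:2-7-1} to $m=2$.
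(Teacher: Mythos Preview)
Your proof is correct and follows essentially the same approach as the paper's own proof: both parts reduce to the same short computations, using Lemma~\ref{lem:2-6-2}(1) to simplify $L^\theta_3(x)$ to $N([x]\ell_2^\theta(x))$ and then verifying $N(X[Y,Z]) = X\wedge Y\wedge Z$ by expanding the six terms of $N$ on $H^{\otimes 3}$.
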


\begin{proof}
For simplicity, we write $\ell^{\theta}=\ell$.
Since $\ell_1(x)=[x]$, we have
$L^{\theta}_2(x)=\frac{1}{2}N([x][x])=[x][x]$.
By Lemma \ref{lem:2-6-2},
$L^{\theta}_3(x)=\frac{1}{2}N([x]\ell_2(x)+\ell_2(x)[x])=N([x]\ell_2(x))$.

Now we claim that if $X\in H$ and $u\in \Lambda^2H$ then
$N(Xu)=X\wedge u$. In fact, if $u=Y\wedge Z=YZ-ZY$
for some $Y,Z\in H$, then
\begin{eqnarray*}
N(Xu)=N(XYZ-XZY) &=& XYZ+YZX+ZXY-XZY-ZYX-YXZ \\
&=& X\wedge Y\wedge Z=X\wedge u.
\end{eqnarray*}
This proves the claim, hence proves (2).
\end{proof}

Let $\theta$ and $\theta^{\prime}$ be symplectic expansions.
As we saw in \S2.8, there uniquely exists
$U=U(\theta,\theta^{\prime})\in {\rm IA}(\widehat{T})$
such that $\theta^{\prime}=U\circ \theta$,
$U(H)\subset \widehat{\mathcal{L}}$, and $U(\omega)=\omega$.
The restriction of $U$ to $H$ is uniquely written as
$$U|_H=1_H+\sum_{k=1}^{\infty}u_k,\
u_k\in {\rm Hom}(H,\mathcal{L}_{k+1}).$$

\begin{lem}
\label{lem:7-6-2}
Notations are as above.
\begin{enumerate}
\item[{\rm (1)}] By the Poincar\'e duality
{\rm (\ref{eq:2-7-1})} we regard $u_k\in H\otimes \mathcal{L}_{k+1}$.
Then $u_1\in \Lambda^3H\subset H\otimes \mathcal{L}_2$.
\item[{\rm (2)}] For $x\in \pi$, we have
\begin{eqnarray*}
\ell^{\theta^{\prime}}_2(x) &=& \ell^{\theta}_2(x)+u_1([x]); \\
\ell^{\theta^{\prime}}_3(x) &=& \ell^{\theta}_3(x)+
u_1(\ell^{\theta}_2(x))+u_2([x]).
\end{eqnarray*}
\end{enumerate}
Here, $u_1(\ell^{\theta}_2(x))$ means
$(1\otimes u_1+u_1\otimes 1)\ell^{\theta}_2(x)$.
\end{lem}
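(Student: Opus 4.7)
The plan is to reduce the entire lemma to the single identity $\ell^{\theta'}(x) = U(\ell^\theta(x))$, from which part (2) follows by direct degree-by-degree expansion. The identity holds because $\theta, \theta'$ are both group-like, so $\theta(x) = \exp \ell^\theta(x)$ and $\theta'(x) = \exp \ell^{\theta'}(x)$, while $U$, being a continuous algebra automorphism, commutes with the power series $\exp$: from $\theta' = U\circ\theta$ we get $\theta'(x) = U(\exp \ell^\theta(x)) = \exp(U(\ell^\theta(x)))$, and taking $\log$ gives the claim. Computing $\ell^{\theta'}_k(x)$ then reduces to applying $U|_H = 1_H + u_1 + u_2 + \cdots$ to each $\ell^\theta_j(x)$ with $j \leq k$ and extracting the degree-$k$ component. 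In degree $2$ the only contributions are $u_1([x])$ (from $\ell^\theta_1(x) = [x]$) and $\ell^\theta_2(x)$ itself; in degree $3$ they are $u_2([x])$, $\ell^\theta_3(x)$, and the degree-$3$ part of $U$ applied to $\ell^\theta_2(x) \in \mathcal{L}_2 \subset H\otimes H$. Because $U$ is an algebra homomorphism and $U(Y) = Y + u_1(Y) + \cdots$ on each tensor factor, this last piece equals $(1\otimes u_1 + u_1\otimes 1)\ell^\theta_2(x)$ modulo higher degrees, giving exactly the stated formulas.

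For part (1), expand the symplectic condition $U(\omega) = \omega$ using $\omega = \sum_{i=1}^g [A_i, B_i]$. Since $U$ is an algebra homomorphism,
\begin{equation*}
U(\omega) = \sum_{i=1}^g [U(A_i), U(B_i)] = \omega + \sum_{i=1}^g\bigl([u_1(A_i), B_i] + [A_i, u_1(B_i)]\bigr) + (\text{degree} \geq 4).
\end{equation*}
Vanishing of the degree-$3$ term says exactly that $u_1$, regarded as a derivation of $\widehat{T}$ via the Poincar\'e duality (\ref{eq:2-7-1}), kills $\omega$; equivalently, $u_1$ lies in $\Ker([\,\cdot\,,\,\cdot\,]\colon H\otimes \mathcal{L}_2 \to \mathcal{L}_3)$. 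To identify this kernel with $\Lambda^3 H$, I will use the short exact sequence
\begin{equation*}
0 \to \Lambda^3 H \to H\otimes \mathcal{L}_2 \to \mathcal{L}_3 \to 0
\end{equation*}
in which the first arrow is $X\wedge Y\wedge Z \mapsto X\otimes [Y,Z] + Y\otimes [Z,X] + Z\otimes [X,Y]$: landing in the kernel is the Jacobi identity, surjectivity onto $\mathcal{L}_3$ is clear, and exactness in the middle is forced by the Witt-formula dimension count $\dim(H\otimes\Lambda^2 H) = \binom{k}{3} + \tfrac{k(k^2-1)}{3} = \dim\Lambda^3 H + \dim\mathcal{L}_3$ with $k = 2g$.

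The argument is essentially bookkeeping once the reduction $\ell^{\theta'}(x) = U(\ell^\theta(x))$ is in place; the only substantive point is the kernel identification $\Ker([\,\cdot\,,\,\cdot\,]\colon H\otimes \mathcal{L}_2 \to \mathcal{L}_3) = \Lambda^3 H$ used in part (1), which is classical (and also implicit in Proposition \ref{prop:2-8-1} combined with Morita's identification of the target of the extended first Johnson homomorphism with $\tfrac12\Lambda^3 H$).
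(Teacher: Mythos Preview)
Your proof is correct and follows essentially the same route as the paper: both expand $U(\omega)=\omega$ modulo $\widehat T_4$ to place $u_1$ in $\Ker([\ ,\ ]\colon H\otimes\mathcal L_2\to\mathcal L_3)$, and both obtain part (2) by expanding $\ell^{\theta'}(x)=U(\ell^\theta(x))$ degree by degree. The only difference is in identifying that kernel with $\Lambda^3H$: the paper uses the cyclic-invariance criterion of Lemma~\ref{lem:2-6-2} (if $v$ lies in the kernel then $\nu(v)=v$, so $v=\tfrac13(v+\nu v+\nu^2v)\in\Lambda^3H$), whereas you use the Witt-formula dimension count; both arguments are standard and valid.
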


\begin{proof}
Modulo $\widehat{T}_4$, we compute
\begin{eqnarray*}
\omega =U(\omega) &=& \sum_{i=1}^g U(A_i)U(B_i)-U(B_i)U(A_i) \\
&\equiv & \sum_{i=1}^g (A_i+u_1(A_i))(B_i+u_1(B_i))
-(B_i+u_1(B_i))(A_i+u_1(A_i)) \\
&\equiv &
\omega+\sum_{i=1}^g \left( A_iu_1(B_i)+u_1(A_i)B_i
-B_iu_1(A_i)-u_1(B_i)A_i \right)
\end{eqnarray*}
By the same reason as the discussion in \S2.8,
this implies $u_1\in {\rm Ker}([\ ,\ ]\colon
H\otimes \mathcal{L}_2 \rightarrow \mathcal{L}_3)$.
Also, we have ${\rm Ker}([\ ,\ ]\colon
H\otimes \mathcal{L}_2 \rightarrow \mathcal{L}_3)=\Lambda^3H$.
In fact, if $v\in {\rm Ker}([\ ,\ ]\colon
H\otimes \mathcal{L}_2 \rightarrow \mathcal{L}_3)$
then $\nu(v)=v$ by Lemma \ref{lem:2-6-2}, thus
$v=\frac{1}{3}(v+\nu(v)+\nu^2(v))$. This shows $v\in \Lambda^3H$.
The other inclusion follows from the Jacobi identity.
This proves the first part.

Again modulo $\widehat{T}_4$, we compute
\begin{eqnarray*}
\ell^{\theta^{\prime}}(x) &\equiv &
U([x]+\ell^{\theta}_2(x)+\ell^{\theta}_3(x)) \\
&\equiv & [x]+u_1([x])+u_2([x]) \\
& & +\ell^{\theta}_2(x)+u_1(\ell^{\theta}_2(x))
+\ell^{\theta}_3(x).
\end{eqnarray*}
This proves (2).
\end{proof}

\begin{cor}
\label{cor:7-6-3}
Notations are the same as Lemma \ref{lem:7-6-2}.
For $x\in \pi$, we have
\begin{enumerate}
\item[{\rm (1)}]
$L^{\theta^{\prime}}_3(x)-L^{\theta}_3(x)=[x]\wedge u_1([x])$,
\item[{\rm (2)}]
$L^{\theta^{\prime}}_4(x)-L^{\theta}_4(x)
=N([x]u_1(\ell^{\theta}_2(x))+N([x]u_2([x]))
+N(\ell^{\theta}_2(x)u_1([x]))+\displaystyle\frac{1}{2}N(u_1([x])u_1([x]))$.
\end{enumerate}
\end{cor}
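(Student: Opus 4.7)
The plan is to compute $L^{\theta'}_k(x) - L^{\theta}_k(x)$ for $k=3,4$ directly from the definition by substituting the expansion formulas from Lemma \ref{lem:7-6-2} and using the properties of $N$ from Lemma \ref{lem:2-6-2}. Since $L^{\theta}_k(x) = \frac{1}{2}N\bigl(\sum_{p=1}^{k-1}\ell^{\theta}_p(x)\ell^{\theta}_{k-p}(x)\bigr)$ and $\ell^{\theta}_1(x) = [x]$, the relevant degree parts are
$$L^{\theta}_3(x) = \tfrac{1}{2}N\bigl([x]\ell^{\theta}_2(x) + \ell^{\theta}_2(x)[x]\bigr) = N\bigl([x]\ell^{\theta}_2(x)\bigr),$$
$$L^{\theta}_4(x) = N\bigl([x]\ell^{\theta}_3(x)\bigr) + \tfrac{1}{2}N\bigl(\ell^{\theta}_2(x)\ell^{\theta}_2(x)\bigr),$$
where in both cases I collapse the symmetric pairs via Lemma \ref{lem:2-6-2} (1).

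For part (1), I will invoke the identity $N(Xu) = X\wedge u$ for $X \in H$ and $u \in \Lambda^2 H = \mathcal{L}_2$, which is established inside the proof of Lemma \ref{lem:7-6-1}. This rewrites $L^{\theta}_3(x) = [x] \wedge \ell^{\theta}_2(x)$, and analogously for $\theta'$. Subtracting and applying Lemma \ref{lem:7-6-2} (2), which says $\ell^{\theta'}_2(x) = \ell^{\theta}_2(x) + u_1([x])$, gives $L^{\theta'}_3(x) - L^{\theta}_3(x) = [x] \wedge u_1([x])$ immediately.

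For part (2), I substitute both expansions from Lemma \ref{lem:7-6-2} (2) into the degree-4 formula:
\begin{eqnarray*}
L^{\theta'}_4(x) &=& N\bigl([x]\bigl(\ell^{\theta}_3(x) + u_1(\ell^{\theta}_2(x)) + u_2([x])\bigr)\bigr) \\
& & + \tfrac{1}{2}N\bigl(\bigl(\ell^{\theta}_2(x) + u_1([x])\bigr)\bigl(\ell^{\theta}_2(x) + u_1([x])\bigr)\bigr),
\end{eqnarray*}
expand the product, and subtract $L^{\theta}_4(x)$. The two cross terms $\tfrac{1}{2}N(\ell^{\theta}_2(x) u_1([x])) + \tfrac{1}{2}N(u_1([x])\ell^{\theta}_2(x))$ merge into $N(\ell^{\theta}_2(x) u_1([x]))$ by Lemma \ref{lem:2-6-2} (1). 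Collecting the surviving terms yields exactly the claimed formula.

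There is no real obstacle here: once Lemma \ref{lem:7-6-2} is in hand, the proof is a mechanical bookkeeping of degree-$k$ terms in $N(\ell^{\theta'}(x)\ell^{\theta'}(x))$, with the only non-bookkeeping ingredient being the cyclic-invariance of $N$ that lets us symmetrize and collapse repeated terms.
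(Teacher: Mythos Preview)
Your proposal is correct and follows essentially the same approach as the paper's own proof, which simply cites Lemmas \ref{lem:7-6-1} and \ref{lem:7-6-2} for part (1) and the identity $L^{\theta}_4(x)=N([x]\ell^{\theta}_3(x))+\tfrac{1}{2}N(\ell^{\theta}_2(x)\ell^{\theta}_2(x))$ together with Lemma \ref{lem:7-6-2} for part (2). You have spelled out the bookkeeping that the paper leaves implicit, and done so accurately.
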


\begin{proof}
The first part is clear from Lemmas \ref{lem:7-6-1} and \ref{lem:7-6-2}.
The second part follows from
$$L^{\theta}_4(x)=N([x]\ell^{\theta}_3(x))
+\frac{1}{2}N(\ell^{\theta}_2(x)\ell^{\theta}_2(x))$$
and Lemma \ref{lem:7-6-2}.
\end{proof}

\subsection{The formulas of $\tau^{\theta}_1(t_C)$
and $\tau^{\theta}_2(t_C)$ for non-separating $C$}

Let $C$ be a non-separating simple closed curve on $\Sigma$.
As we did in \S7.2, we take symplectic generators
$\alpha_1,\beta_1,\ldots,\alpha_g,\beta_g$
such that $|\alpha_1|$ is freely homotopic to $C$
as unoriented loops.
In this situation, Massuyeau \cite{Mas},
Example 2.19 gave a partial example of a symplectic
expansion $\theta^0$ whose values of $\ell^{\theta^0}(\alpha_1)$
and $\ell^{\theta^0}(\beta_1)$ modulo $\widehat{T}_5$
are as follows:
\begin{eqnarray}
\ell^{\theta^0}(\alpha_1)&\equiv & A_1+\frac{1}{2}[A_1,B_1]
+\frac{-1}{12}[B_1,[A_1,B_1]]+\frac{1}{24}[A_1,[A_1,[A_1,B_1]]];
\nonumber \\
\ell^{\theta^0}(\beta_1)&\equiv & B_1-\frac{1}{2}[A_1,B_1]
+\frac{1}{12}[A_1,[A_1,B_1]]+\frac{1}{4}[B_1,[A_1,B_1]] \nonumber \\
 & &-\frac{1}{24}[B_1,[B_1,[B_1,A_1]]]. \label{eq:7-7-1}
\end{eqnarray}
Here, $A_1=[\alpha_1]$ and $B_1=[\beta_1]$.

Note that our conventions about symplectic generators and
symplectic expansions are different from
Massuyeau \cite{Mas}, Definition 2.15. Therefore
(\ref{eq:7-7-1}) equals the equations of \cite{Mas}, Example 2.19,
only up to sign.

\begin{prop}
\label{prop:7-7-1}
Let $\theta$ be a symplectic expansion and $C$ a non-separating
simple closed curve on $\Sigma$. Let $L_k=L^{\theta}_k(C)$.
We regard them as derivations of $\widehat{T}$.
Then $L_2^2=L_2L_3=L_3L_2=0$ on $H$. In particular,
as linear endomorphisms of $\widehat{T}$,
$L_2^{n+1}|_{H^{\otimes n}}=0$ and $L_2L_3=L_3L_2$.
\end{prop}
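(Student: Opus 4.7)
The plan is to choose symplectic generators $\alpha_1, \beta_1, \ldots, \alpha_g, \beta_g$ with $\alpha_1$ freely homotopic to $C$, so that $[C] = A_1 = [\alpha_1] \in H$, and verify each vanishing on the generators of $H$ using the Poincar\'e duality description of the derivations. By Lemma \ref{lem:7-6-1}(1), $L_2 = A_1A_1$, corresponding to the derivation $Y \mapsto (Y \cdot A_1) A_1$ on $H$; since $A_1 \cdot A_1 = 0$, a direct computation yields $L_2^2(Y) = (Y \cdot A_1) L_2(A_1) = 0$.

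For $L_3 L_2$, since $L_2(Y)$ is always a scalar multiple of $A_1$, it suffices to show $L_3(A_1) = 0$. Applying Corollary \ref{cor:6-5-3} to a parallel push-off $C'$ of $C$ disjoint from $\alpha_1$ (and using that $L^\theta$ depends only on the free homotopy class) gives $L^\theta(C) \ell^\theta(\alpha_1) = 0$; its degree-$2$ component reads $L_2(\ell_2^\theta(\alpha_1)) + L_3(A_1) = 0$, so it remains to prove $L_2(\ell_2^\theta(\alpha_1)) = 0$. Comparing with Massuyeau's reference expansion $\theta^0$ from (7.7.1) via Lemma \ref{lem:7-6-2} writes $\ell_2^\theta(\alpha_1) = \frac{1}{2}[A_1,B_1] + u_1(A_1)$ with $u_1 \in \Lambda^3 H$. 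A short calculation shows $L_2(Y \wedge Z) = (Y \cdot A_1)(A_1 \wedge Z) - (Z \cdot A_1)(A_1 \wedge Y)$ for $Y \wedge Z \in \Lambda^2 H$, which immediately kills $A_1 \wedge B_1$; for the correction term, using the decomposition $X \wedge Y \wedge Z = X \otimes (Y \wedge Z) + Y \otimes (Z \wedge X) + Z \otimes (X \wedge Y)$ in $H \otimes \Lambda^2 H$, the six contributions to $L_2(u_1(A_1))$ cancel in pairs by antisymmetry of the intersection form.

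For $L_2 L_3$, Corollary \ref{cor:7-6-3} lets us write $L_3^\theta(C) = L_3^{\theta^0}(C) + A_1 \wedge u_1(A_1)$, and since $\ell_2^{\theta^0}(\alpha_1) = \frac{1}{2}(A_1 \wedge B_1)$, the term $L_3^{\theta^0}(C) = \frac{1}{2} A_1 \wedge A_1 \wedge B_1$ vanishes in $\Lambda^3 H$. Thus $L_3^\theta(C) = A_1 \wedge v$ with $v := u_1(A_1) \in \Lambda^2 H$, and the same decomposition of the wedge in $H \otimes \Lambda^2 H$ yields $L_3(W) = (W \cdot A_1) v + A_1 \wedge \iota_W(v)$ for some contraction $\iota_W(v) \in H$. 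Applying $L_2$: the first summand is $(W \cdot A_1) L_2(v) = (W \cdot A_1) L_2(u_1(A_1)) = 0$ by the previous paragraph, and the second satisfies $L_2(A_1 \wedge W') = (A_1 \cdot A_1)(A_1 \wedge W') - (W' \cdot A_1)(A_1 \wedge A_1) = 0$ for every $W' \in H$.

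The ``in particular'' claims follow readily. For $Y_1 \cdots Y_n \in H^{\otimes n}$, iterating the Leibniz rule gives $L_2^k(Y_1 \cdots Y_n) = k! \sum_{I \subset [n],\,|I|=k} \bigl(\prod_{i \in I}(Y_i \cdot A_1)\bigr) \cdot (\text{monomial with } A_1 \text{ in the slots } I)$, and the index set is empty when $k > n$, so $L_2^{n+1}|_{H^{\otimes n}} = 0$. Finally, the commutator $[L_2, L_3]$ is itself a derivation of $\widehat{T}$ that vanishes on $H$ by the first three paragraphs, hence vanishes on the dense subalgebra generated by $H$, and therefore on all of $\widehat{T}$ by continuity. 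The main obstacle throughout is the cancellation $L_2(u_1(A_1)) = 0$ for an arbitrary $u_1 \in \Lambda^3 H$, which hinges on combining the decomposition of $\Lambda^3 H$ inside $H \otimes \Lambda^2 H$ with antisymmetry of the intersection form.
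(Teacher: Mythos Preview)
Your proof is correct. The overall strategy matches the paper's: choose symplectic generators so that $L_2 = A_1A_1$, observe $L_3^{\theta^0}(C)=0$ for Massuyeau's reference expansion, and reduce via Corollary~\ref{cor:7-6-3} to handling $L_3'' = A_1 \wedge u_1(A_1)$ with $u_1 \in \Lambda^3 H$. The paper packages this last step as Lemma~\ref{lem:7-7-2}, proved by case analysis on basis monomials $u_1 = X_i \wedge X_j \wedge X_k$: either no $B_1$ appears and $u_1(A_1)=0$, or exactly one does and then $u_1(A_1)$ involves no $B_1$, so all intersection pairings with $A_1$ vanish.

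Your execution differs in two respects. First, for $L_3L_2=0$ you bring in the geometric Corollary~\ref{cor:6-5-3} (disjoint push-off of $C$ from $\alpha_1$) to obtain $L_3(A_1) = -L_2(\ell_2^\theta(\alpha_1))$, whereas the paper computes $L_3''(A_1)=0$ directly. Second, you replace the paper's basis-by-basis check by the coordinate-free six-term cancellation $L_2(u_1(A_1))=0$ and the decomposition $L_3(W) = (W\cdot A_1)\,v + A_1\wedge\iota_W(v)$. The intrinsic cancellation is a nice touch; the appeal to Corollary~\ref{cor:6-5-3}, however, is a detour you could avoid: plugging $W=A_1$ into your own formula gives $L_3(A_1) = A_1 \wedge \iota_{A_1}(u_1(A_1))$, and the same style of six-term computation shows $\iota_{A_1}(u_1(A_1))=0$ directly.
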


\begin{proof}
We take symplectic generators as above.
Since $L_2=A_1^2$, $L_2^2(X)=(X\cdot A_1)L_2A_1=0$ for $X\in H$.
Therefore, $L_2^2=0$ on $H$.

Let $\theta^0$ be a symplectic expansion of
(\ref{eq:7-7-1}). By Lemma \ref{lem:7-6-1} (2) we have
$L_3^{\theta^0}(C)=\frac{1}{2}A_1\wedge A_1\wedge B_1=0$.
Thus $L_2L_3=L_3L_2=0$ for $\theta^0$.

Let $\theta^{\prime}$ be another symplectic expansion
and let $U=U(\theta^0,\theta^{\prime})$. We need to show
$L_2^{\theta^{\prime}}L_3^{\theta^{\prime}}
=L_3^{\theta^{\prime}}L_2^{\theta^{\prime}}=0$ on $H$.
If $U={\rm id}$, this is true by what we have shown.
Therefore, the proposition follows from Corollary \ref{cor:7-6-3} (1)
and the following lemma.
\end{proof}

\begin{lem}
\label{lem:7-7-2}
Let $L_2=A_1^2$ and let
$L_3^{\prime \prime}=A_1\wedge u_1(A_1)$, where $u_1\in \Lambda^3H$.
We regard them as derivations of $\widehat{T}$. Then
$L_2L_3^{\prime \prime}=L_3^{\prime \prime}L_2=0$ on $H$.
\end{lem}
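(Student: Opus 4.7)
The plan is to reduce both identities to a single algebraic vanishing, namely that the ``double evaluation'' of $u_1 \in \Lambda^3 H$ with $A_1$ is zero by antisymmetry.

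Under the Poincar\'e duality (\ref{eq:2-7-1}), $L_2 = A_1\otimes A_1$ acts on $W \in H$ by $L_2(W) = (W\cdot A_1)\, A_1$, and an element $X\wedge Y\wedge Z \in \Lambda^3 H \subset H \otimes \mathcal{L}_2$ acts as a derivation via
\[
(X\wedge Y\wedge Z)(W) = (W\cdot X)[Y,Z] + (W\cdot Y)[Z,X] + (W\cdot Z)[X,Y].
\]
Setting $v := u_1(A_1) \in \mathcal{L}_2 = \Lambda^2 H$ and writing $L_3'' = A_1\wedge v$, a direct expansion yields the key identity
\[
L_3''(W) = (W\cdot A_1)\, v + [v(W), A_1],
\]
where $v(W) \in H$ denotes the Poincar\'e-dual evaluation of $v \in \Lambda^2 H$ at $W$.

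For $L_3'' L_2 = 0$ on $H$: since $L_2(W) \in \mathbb{Q}\, A_1$, it suffices to show $L_3''(A_1) = 0$. Setting $W = A_1$ in the identity above and using $A_1\cdot A_1 = 0$ reduces the claim to $v(A_1) = u_1(A_1)(A_1) = 0$. This is a purely multilinear fact: for $u_1 = X\wedge Y\wedge Z$, expanding $v(A_1)$ via the formulas above produces six terms that cancel in pairs by the antisymmetry of $u_1$ under swapping any two of $X, Y, Z$.

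For $L_2 L_3'' = 0$ on $H$: applying $L_2$ to the key identity and using $L_2(A_1) = 0$ gives
\[
L_2 L_3''(W) = (W\cdot A_1)\, L_2(v),
\]
since $L_2([v(W), A_1]) = [L_2(v(W)), A_1] = [(v(W)\cdot A_1)\, A_1, A_1] = 0$. A short Leibniz-rule computation then yields $L_2(v) = \pm\, A_1 \wedge v(A_1)$, which vanishes by the same antisymmetry argument. The main obstacle is purely bookkeeping: verifying the key identity and the formula for $L_2(v)$ requires careful tracking of sign conventions from the Poincar\'e duality, the embedding $\Lambda^k H \hookrightarrow H^{\otimes k}$, and the wedge-product--commutator identification in $\widehat{T}$.
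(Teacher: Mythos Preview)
Your proof is correct and takes a genuinely different, more invariant route than the paper. The paper's argument reduces by linearity to the case $u_1 = X_i\wedge X_j\wedge X_k$ for distinct symplectic basis vectors and then splits into two cases according to whether $B_1$ appears among $X_i,X_j,X_k$: if not, $u_1(A_1)=0$ and $L_3''=0$ trivially; if $X_i=B_1$, one checks directly that $L_2$ kills all three letters in $L_3''=A_1\wedge X_j\wedge X_k$ and that $L_3''A_1=0$. Your argument instead isolates the single algebraic fact $u_1(A_1)(A_1)=0$ (immediate from antisymmetry) and funnels both identities through the formula $L_3''(W)=(W\cdot A_1)\,v+[v(W),A_1]$ with $v=u_1(A_1)$, together with $L_2(v)=-A_1\wedge v(A_1)$. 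The paper's basis-by-cases proof is quicker to write down and avoids any sign-tracking; your coordinate-free version makes the underlying reason (double insertion into an alternating tensor) more transparent and would generalize more readily, at the cost of verifying the two auxiliary identities, which you should spell out fully rather than defer to ``bookkeeping''.
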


\begin{proof}
For simplicity we write $A_1,B_1,\ldots,A_g,B_g=X_1,\ldots,X_{2g}$.
By linearity, it suffices to prove the lemma when $u_1$ is of the form
$u_1=X_i \wedge X_j \wedge X_k$ with $i\neq j\neq k\neq i$.
Note that for $Y\in H$ we have
$$u_1(Y)=(Y\cdot X_i)X_j\wedge X_k+(Y\cdot X_j)X_k\wedge X_i
+(Y\cdot X_k)X_i\wedge X_j.$$
We divide the argument in two cases. First
suppose none of $X_i$, $X_j$, and $X_k$
are equal to $B_1$. Then $u_1(A_1)=0$ hence $L_3^{\prime \prime}=0$.
Therefore $L_2L_3^{\prime \prime}=L_3^{\prime \prime}L_2=0$.

Next suppose $X_i=B_1$. Then $X_j,X_k\neq B_1$, and we have
$u_1(A_1)=X_j\wedge X_k$, hence $L_3^{\prime \prime}=A_1\wedge X_j\wedge X_k$.
Since $L_2A_1=L_2X_j=L_2X_k=0$, it follows that
$L_2L_3^{\prime \prime}Y=0$ for any $Y\in H$.
Since $L_3^{\prime \prime}A_1=0$,
$L_3^{\prime \prime}L_2Y=(Y\cdot A_1)L_3^{\prime \prime}A_1=0$
for any $Y\in H$. This completes the proof.
\end{proof}

\begin{thm}
\label{thm:7-7-3}
Let $\theta$ be a symplectic expansion and
$C$ a non-separating simple closed curve on $\Sigma$. Then we have
\begin{equation}
\label{eq:7-7-2}
\tau_1^{\theta}(t_C)= -L_3^{\theta}(C).
\end{equation}
\end{thm}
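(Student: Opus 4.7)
The plan is to exploit Theorem~\ref{thm:7-1-1} to rewrite $T^{\theta}(t_C)=e^{-L^{\theta}(C)}$, and then extract the degree-$2$ part of $\tau^{\theta}(t_C)|_H - 1_H$ using the decomposition $\tau^{\theta}(t_C)=T^{\theta}(t_C)\circ|t_C|^{-1}$. The whole argument is bookkeeping modulo $\widehat{T}_3$ on $H$, fueled by the vanishings already recorded in Proposition~\ref{prop:7-7-1}.

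First I would compute $|t_C|^{-1}$ on $H$. Since $|t_C|$ acts on $H$ by transvection,
$$|t_C|(X)=X-(X\cdot[C])[C]=X-L_2^{\theta}(C)(X),$$
and Proposition~\ref{prop:7-7-1} gives $L_2^{\theta}(C)^2=0$ on $H$, so the geometric series trivially inverts to yield $|t_C|^{-1}(X)=X+L_2^{\theta}(C)(X)$.

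Next I would analyze $e^{-L}(Y)\bmod\widehat{T}_3$ for $Y\in H$, where $L=L^{\theta}(C)=L_2+L_3+L_4+\cdots$ and each $L_k$ acts as a derivation sending $H$ to $H^{\otimes k-1}$. The components $L_k(Y)$ for $k\ge 4$ already live in $\widehat{T}_3$, so
$$L(Y)\equiv L_2(Y)+L_3(Y)\pmod{\widehat{T}_3}.$$
The key point is then that Proposition~\ref{prop:7-7-1} ($L_2^2=L_2L_3=L_3L_2=0$ on $H$) forces every term in $L^k(Y)$ with $k\ge 2$ to have degree $\ge 3$: the potential degree-$1$ and degree-$2$ contributions $L_2^2(Y)$, $L_2L_3(Y)$, $L_3L_2(Y)$ all vanish. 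Consequently $L^k(Y)\in\widehat{T}_3$ for all $k\ge 2$, and the exponential collapses to
$$e^{-L}(Y)\equiv Y-L_2(Y)-L_3(Y)\pmod{\widehat{T}_3}.$$

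Finally I would combine the two computations:
$$\tau^{\theta}(t_C)(X)=e^{-L}\!\bigl(X+L_2(X)\bigr)\equiv\bigl(X-L_2(X)-L_3(X)\bigr)+\bigl(L_2(X)-L_2^2(X)-L_3L_2(X)\bigr)\equiv X-L_3^{\theta}(C)(X)$$
modulo $\widehat{T}_3$, where the cross terms $L_2^2(X)$ and $L_3L_2(X)$ disappear by Proposition~\ref{prop:7-7-1}. Reading off the $H^{\otimes 2}$-component gives $\tau_1^{\theta}(t_C)=-L_3^{\theta}(C)$. There is no real obstacle once Theorem~\ref{thm:7-1-1} and Proposition~\ref{prop:7-7-1} are available; the content of the argument is precisely that Proposition~\ref{prop:7-7-1} is exactly the statement needed for $e^{-L}$ to reduce to $1-L$ modulo $\widehat{T}_3$ on $H$, so that the $L_2$-terms contributed by $T^{\theta}(t_C)$ and $|t_C|^{-1}$ cancel and only the $L_3$-term survives.
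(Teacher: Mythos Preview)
Your proof is correct and follows essentially the same approach as the paper: both start from $\tau^{\theta}(t_C)X = e^{-L}(X+L_2X)$ and then use Proposition~\ref{prop:7-7-1} to reduce $e^{-L}(X+L_2X)$ to $X-L_3X$ modulo $\widehat{T}_3$. Your version is just slightly more explicit about the inversion $|t_C|^{-1}=1+L_2$ on $H$ and about why the higher powers $L^k(Y)$ for $k\geq 2$ land in $\widehat{T}_3$, but the content is identical.
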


\begin{proof}
For $X\in H$, we have
$$\tau^{\theta}(t_C)X=T^{\theta}(t_C)(|t_C|^{-1}X)
=e^{-L}(X+L_2X).$$
Thus $\tau_1^{\theta}(t_C)X$ is equal to the degree two-part
of $e^{-L}(X+L_2X)$.
Modulo $\widehat{T}_3$, we compute
$$e^{-L}(X+L_2X)\equiv X+L_2X-L_2(X+L_2X)-L_3(X+L_2X)
=X-L_3X,$$
using Proposition \ref{prop:7-7-1}. This completes the proof.
\end{proof}

This theorem is compatible with the computation
by Morita \cite{Mo2}, Proposition 4.2.
One reason for the choice of our convention
about the Poincar\'e duality (\ref{eq:2-7-1}) is
to make our formula compatible with his computation.

We next compute $\tau_2^{\theta}(t_C)$ for non-separating $C$.

\begin{prop}
\label{prop:7-7-4}
Let $\theta$ be a symplectic expansion and $C$ a non-separating
curve on $\Sigma$. Let $L_k=L^{\theta}_k$.
We regard them as derivations of $\widehat{T}$.
Then $L_2L_2L_2L_4=L_2L_2L_4L_2=0$, and
$2L_2L_4L_2=L_2L_2L_4$ on $H$.
\end{prop}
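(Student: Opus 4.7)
The plan is to observe first that identity (iii), $L_2^2 L_4 = 2\, L_2 L_4 L_2$ on $H$, already implies both (i) and (ii), so the three statements collapse to proving (iii). Indeed, evaluating (iii) at $A_1$ and using $L_2(A_1) = 0$ gives $L_2^2 L_4(A_1) = 2\, L_2 L_4(L_2 A_1) = 0$; then for every $Y \in H$, $L_2^2 L_4 L_2(Y) = (Y\cdot A_1)\, L_2^2 L_4(A_1) = 0$, which is (ii), and applying $L_2$ to (iii) yields $L_2^3 L_4 = 2\, L_2^2 L_4 L_2 = 0$, which is (i).

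To prove (iii), I first pick symplectic generators with $|\alpha_1|$ freely homotopic to $C$, so that $L_2 = A_1 \otimes A_1$ as a tensor and $L_2(Y) = (Y\cdot A_1) A_1$ on $H$. Set $\ell_k := \ell^\theta_k(\alpha_1) \in \mathcal{L}_k$. Corollary \ref{cor:6-5-3} applied to $C$ and $\alpha_1$ (isotoped disjoint) gives $L(C)\,\ell^\theta(\alpha_1) = 0$, whose degree-$4$ part reads $L_4(A_1) = -L_3(\ell_2) - L_2(\ell_3)$; applied to $\alpha_i, \beta_i$ with $i \geq 2$ it gives $L_k(A_i) = L_k(B_i) = 0$ for all $k \geq 2$. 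Hence (iii) reduces to the two cases $Y = A_1$ and $Y = B_1$. For $Y = A_1$ the relation yields $L_2^2 L_4(A_1) = -L_2^3(\ell_3) - L_2^2 L_3(\ell_2)$. The first term vanishes because on $H^{\otimes 3}$ one computes $L_2^3(X_1 X_2 X_3) = 6 \prod_{j} (X_j \cdot A_1) \cdot A_1^{\otimes 3}$, whose coefficient is a symmetric trilinear functional and therefore annihilates Lie words such as those constituting $\ell_3 \in \mathcal{L}_3$; the second term vanishes by expanding $\ell_2 = \sum c_i [X_i, Y_i]$ and iterating Leibniz, using $L_2 L_3 = L_3 L_2 = L_2^2 = 0$ on $H$ from Proposition \ref{prop:7-7-1} at each step.

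The main obstacle is the case $Y = B_1$, where one must prove $L_2^2 L_4(B_1) = -2\, L_2 L_4(A_1)$. No analogous direct formula for $L_4(B_1)$ is provided by Corollary \ref{cor:6-5-3}, since $C$ meets $\beta_1$. Following the template of the proof of Proposition \ref{prop:7-7-1}, my plan is to verify the identity first for Massuyeau's symplectic expansion $\theta^0$ of (\ref{eq:7-7-1}) --- where $\ell_2^{\theta^0} = \tfrac{1}{2} [A_1, B_1]$ and $L_3^{\theta^0} = 0$ make a direct computation tractable, and in fact both sides vanish separately --- and then, for a general symplectic expansion $\theta = U \circ \theta^0$, to invoke Corollary \ref{cor:7-6-3}(2) to expand $L_4^\theta - L_4^{\theta^0}$ as the sum of four explicit $N$-cyclic terms built from $u_1 \in \Lambda^3 H$, $u_2$, and $\ell_2^{\theta^0}$. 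The hardest part is proving the higher-order analog of Lemma \ref{lem:7-7-2}: each of the four contributions $(\Delta L_4)_j$ must be shown to satisfy $L_2^2 (\Delta L_4)_j(B_1) = -2\, L_2 (\Delta L_4)_j(A_1)$, via a direct (if tedious) calculation exploiting $u_1 \in \Lambda^3 H$, the symplectic constraint $u_2 \in \ker([\,\cdot\,,\,\cdot\,]\colon H \otimes \widehat{\mathcal{L}}_3 \to \widehat{\mathcal{L}})$, the explicit form of $\ell_2^{\theta^0}$, and the annihilation rules $L_2(A_1) = 0$ and $(A_1 \cdot A_1) = 0$.
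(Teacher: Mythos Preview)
Your proposal is correct and, while it follows the same overall template as the paper (verify for Massuyeau's expansion $\theta^0$, then control the difference via Corollary~\ref{cor:7-6-3}(2)), its organization differs in two genuinely useful ways.

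First, your observation that (iii) alone implies (i) and (ii) is not made explicit in the paper; there (i) and (ii) are proved separately by counting the number of occurrences of $B_1$ in the monomials of $L_4''$. Your reduction is cleaner. Second, your argument for $Y=A_1$ via Corollary~\ref{cor:6-5-3} --- extracting $L_4(A_1)=-L_3(\ell_2)-L_2(\ell_3)$ and killing each piece with the relations $L_2^2=L_2L_3=0$ on $H$ and the symmetry of $L_2^3$ on $H^{\otimes 3}$ --- works uniformly for every symplectic expansion, bypassing the $\theta^0$/difference split in that case. The paper instead computes $L_4^{\theta^0}$ explicitly and checks on $A_1,B_1$ by hand.

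For $Y=B_1$, your plan and the paper's converge, but the paper is more streamlined: rather than treating the four summands from Corollary~\ref{cor:7-6-3}(2) separately, it observes that the two terms $N(A_1 u_1(\ell_2^{\theta^0}))$ and $N(\ell_2^{\theta^0} u_1(A_1))$ \emph{cancel} (a short application of Lemma~\ref{lem:2-6-2} using $\ell_2^{\theta^0}=\tfrac12[A_1,B_1]$), leaving only $N(A_1 u_2(A_1))$ and $\tfrac12 N(u_1(A_1)^2)$. The latter has no $B_1$'s at all, so (iii) is trivial for it; the former is handled by the clean Lemma~\ref{lem:7-7-5}, which asserts $2L_X L_4 L_X=L_X^2 L_4$ on $H$ for $L_4=N(Xu)$ with \emph{arbitrary} $u\in\mathcal{L}_3$ --- the symplectic constraint on $u_2$ that you mention is not needed. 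Your piece-by-piece verification would also succeed (in fact (iii) holds for each of the four summands individually, with both sides zero for three of them), but discovering the cancellation and isolating Lemma~\ref{lem:7-7-5} makes the heart of the matter more transparent.
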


\begin{proof}
Let $\theta^0$ be a symplectic expansion of
(\ref{eq:7-7-1}). We first prove the proposition for $\theta=\theta^0$.
We have $L_2=L^{\theta^0}_2(C)=A_1^2$, and
\begin{eqnarray*}
L^{\theta^0}_4(C)
&=& N(A_1\ell^{\theta^0}_3(\alpha_1))
+\frac{1}{2}N(\ell^{\theta^0}_2(\alpha_1)\ell^{\theta^0}_2(\alpha_1)) \\
&=& -\frac{1}{12}N(A_1[B_1,[A_1,B_1]])+\frac{1}{8}N([A_1,B_1][A_1,B_1]) \\
&=& -\frac{1}{12}N([A_1,B_1][A_1,B_1]])+\frac{1}{8}N([A_1,B_1][A_1,B_1]) \\
&=& \frac{1}{24}N([A_1,B_1][A_1,B_1]]).
\end{eqnarray*}
Here we use Lemma \ref{lem:2-6-2}.

As we did in the proof of Lemma \ref{lem:7-7-2},
we write $A_1,B_1,\ldots,A_g,B_g=X_1,\ldots,X_{2g}$.
For simplicity, we write $L_k^{\theta^0}(C)=L_k^0$.
If $i\ge 3$, clearly we have $L_4^0X_i=L_2^0X_i=0$.
By a direct computation, we have
$$L_4^0X_1=L_4^0A_1=-\frac{1}{24}[A_1,[A_1,B_1]],$$
$$L_4^0X_2=L_4^0B_1=-\frac{1}{24}[B_1,[A_1,B_1]].$$
From these we conclude $L_2^0L_4^0X_1=0$ and $L_2^0L_2^0L_4^0X_2=0$.
It follows that $L_2^0L_2^0L_4^0=L_2^0L_4^0L_2^0=0$ on $H$,
hence $L_2^0L_2^0L_2^0L_4^0=L_2^0L_2^0L_4^0L_2^0=0$ and
$2L_2^0L_4^0L_2^0=L_2^0L_2^0L_4^0(=0)$ on $H$.
The proposition is proven for $\theta=\theta^0$.

We next consider the general case. Let $\theta^{\prime}$
be another symplectic expansion and $U=U(\theta^0,\theta^{\prime})$.
Let $L_2=A_1^2$ and $L_4^{\prime \prime}
=L_4^{\theta^{\prime}}(C)-L_4^{\theta^0}(C)$.
It suffices to prove $L_2^3L_4^{\prime \prime}
=L_2^2L_4^{\prime \prime}L_2=0$, and
$2L_2L_4^{\prime \prime}L_2=L_2^2L_4^{\prime \prime}$ on $H$.

By Corollary \ref{cor:7-6-3} (2), we have
\begin{eqnarray*}
L_4^{\prime \prime} &=& N(A_1u_1(\ell^{\theta^0}_2(\alpha_1)))
+N(A_1u_2(A_1)) \\
& & +N(\ell^{\theta^0}_2(\alpha_1)u_1(A_1))
+\frac{1}{2}N(u_1(A_1)u_1(A_1)).
\end{eqnarray*}
Since $\ell^{\theta^0}_2(\alpha_1)=\frac{1}{2}[A_1,B_1]$, we compute
\begin{eqnarray*}
& &
N(A_1u_1(\ell^{\theta^0}_2(\alpha_1)))
+N(\ell^{\theta^0}_2(\alpha_1)u_1(A_1)) \\
&=& \frac{1}{2}N(A_1([A_1,u_1(B_1)]+[u_1(A_1),B_1]))
+\frac{1}{2}N([A_1,B_1]u_1(A_1))=0
\end{eqnarray*}
using Lemma \ref{lem:2-6-2}. Therefore,
$$L_4^{\prime \prime}=N(A_1u_2(A_1))+\frac{1}{2}N(u_1(A_1)u_1(A_1)).$$
Since $u_2(A_1)\in \mathcal{L}_3$ and $u_1\in \Lambda^3H$,
it follows that $L_4^{\prime \prime}\in H^{\otimes 4}$
is a linear combination of monomials
in $X_i$'s with the number of the occurrences of $B_1=X_2$ at most two.
By this observation we have $L_2^3L_4^{\prime \prime}
=L_2^2L_4^{\prime \prime}L_2=0$ on $H$.

It remains to prove the assertion
$2L_2L_4^{\prime \prime}L_2=L_2^2L_4^{\prime \prime}$ on $H$.
Let $L_4^{\prime \prime \prime}=\frac{1}{2}N(u_1(A_1)u_1(A_1))$.
Since $u_1\in \Lambda^3H$, $L_4^{\prime \prime \prime}$
is a linear combination of monomials in $X_i$'s with no occurrence of $X_2$.
It follows that $2L_2L_4^{\prime \prime \prime}L_2
=L_2^2L_4^{\prime \prime \prime}=0$ on $H$.
Now the assertion follows from the following lemma.
\end{proof}

\begin{lem}
\label{lem:7-7-5}
Let $u\in \mathcal{L}_3$, $X\in H$ and set $L_X=X^2$,
$L_4=N(Xu)$.
We regard $L_X$ and $L_4$ as a derivation of $\widehat{T}$.
Then $2L_XL_4L_X=L_X^2L_4$ on $H$.
\end{lem}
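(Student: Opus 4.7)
\textbf{Proof plan for Lemma \ref{lem:7-7-5}.}
My plan is to reduce to the case where $u$ is an iterated bracket and then compute $L_X^2L_4(Y)$ and $L_XL_4L_X(Y)$ directly on $H$, exploiting the strong annihilation properties of $L_X$. By bilinearity in $u$ it is enough to treat $u=[Y_1,[Y_2,Y_3]]$ with $Y_1,Y_2,Y_3\in H$, since such elements span $\mathcal{L}_3=[H,[H,H]]$. Applying Lemma \ref{lem:2-7-1} with $m=3$, I can expand
$$
L_4=N(X\otimes u) = X\otimes u + Y_1\otimes W_1 + Y_2\otimes W_2 + Y_3\otimes W_3
\in H\otimes H^{\otimes 3},
$$
where $W_1=[[Y_2,Y_3],X]$, $W_2=[Y_3,[X,Y_1]]$, and $W_3=[[X,Y_1],Y_2]$. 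Under the Poincar\'e duality (\ref{eq:2-7-1}), the derivation $L_4$ acts on $Y\in H$ by $L_4(Y)=(Y\cdot X)u+\sum_{i=1}^3(Y\cdot Y_i)W_i$; in particular, since $(X\cdot X)=0$, we have $L_4(X)=\sum_{i}(X\cdot Y_i)W_i$.

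The crucial observation is that $L_X$ annihilates every element of the form $[X,Z]$ with $Z\in H$: indeed
$L_X[X,Z]=[L_XX,Z]+[X,L_XZ] = [X,(Z\cdot X)X]=0$, using $L_XZ=(Z\cdot X)X$ and $L_XX=0$. A short derivation-plus-Jacobi computation then shows that each $L_XW_i$ is a $\mathbb{Q}$-linear combination of the elements $\mathrm{ad}(X)^2(Y_j)=[X,[X,Y_j]]$; applying $L_X$ once more kills all of these, because $\mathrm{ad}(X)^2(Y_j)=[X,\cdot]$ and we just saw $L_X$ kills any $[X,\cdot]$. Hence $L_X^2W_i=0$ for $i=1,2,3$, and the only surviving contribution to $L_X^2L_4(Y)$ is
$$
L_X^2L_4(Y) = (Y\cdot X)\, L_X^2 u.
$$

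The remaining task is to check the single identity $L_X^2 u=2\,L_XL_4(X)$: once this is in hand, for every $Y\in H$
$$
L_X^2L_4(Y) = (Y\cdot X)\cdot 2L_XL_4(X) = 2\, L_XL_4L_X(Y)
$$
(the last equality since $L_XL_4L_X(Y)=(Y\cdot X)L_XL_4(X)$), which is exactly the claim. To verify $L_X^2u=2L_XL_4(X)$, I would expand $L_Xu = -(X\cdot Y_1)[X,[Y_2,Y_3]]-(X\cdot Y_2)[Y_1,[X,Y_3]]+(X\cdot Y_3)[Y_1,[X,Y_2]]$ using $L_XY_i=-(X\cdot Y_i)X$, apply $L_X$ a second time, and collect coefficients; both sides evaluate to $2(X\cdot Y_1)\bigl((X\cdot Y_2)\mathrm{ad}(X)^2Y_3-(X\cdot Y_3)\mathrm{ad}(X)^2Y_2\bigr)$. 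The main obstacle is purely bookkeeping in this step: one must correctly track the signs from $[Y,X]=-[X,Y]$ and keep in mind that each application of $L_X$ on the "outside" factors $Y_i$ contributes a factor $-(X\cdot Y_i)X$, whereas $L_X$ vanishes on anything already of shape $[X,\cdot]$—this asymmetry is precisely what produces the factor of $2$.
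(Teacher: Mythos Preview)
Your proof is correct and follows essentially the same approach as the paper: reduce by linearity to $u=[Y_1,[Y_2,Y_3]]$, expand $L_4$ via Lemma~\ref{lem:2-7-1}, and then compute both sides directly. Your organization is a mild variant---you first isolate the vanishing $L_X^2W_i=0$ to reduce $L_X^2L_4(Y)$ to $(Y\cdot X)L_X^2u$, and then verify the single identity $L_X^2u=2L_XL_4(X)$---whereas the paper simply computes $L_X^2L_4Z$ and $L_XL_4L_XZ$ in full for arbitrary $Z\in H$ and reads off the factor of~$2$; the underlying calculation and its outcome are the same.
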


\begin{proof}
By linearity, we may assume that $u=[Y_1,[Y_2,Y_3]]$
where $Y_i\in H$. For $Z\in H$,
$$L_4Z=(Z\cdot X)[Y_1,[Y_2,Y_3]]-(Z\cdot Y_1)[X,[Y_2,Y_3]]
+(Z\cdot Y_2)[Y_3,[X,Y_1]]-(Z\cdot Y_3)[Y_2,[X,Y_1]]$$
(see Lemma \ref{lem:2-7-1}). Using this, we have
$$L_X^2L_4Z=2(Z\cdot X)(Y_1\cdot X)
\left\{ (Y_2\cdot X)[X,[X,Y_3]]+(Y_3\cdot X)[X,[Y_2,X]] \right\}$$
by a direct computation. On the other hand, we compute
\begin{eqnarray*}
L_XL_4L_X Z &=& (Z\cdot X)L_XL_4(X) \\
&=& (Z\cdot X)L_X(-(X\cdot Y_1)[X,[Y_2,Y_3]]+(X\cdot Y_2)[Y_3,[X,Y_1]]
-(X\cdot Y_3)[Y_2,[X,Y_1]]) \\
&=& (Z\cdot X)\{
-(X\cdot Y_1)((Y_2\cdot X)[X,[X,Y_3]]+(Y_3\cdot X)[X,[Y_2,X]]) \\
& & +(X\cdot Y_2)(Y_3\cdot X)[X,[X,Y_1]]
-(X\cdot Y_3)(Y_2\cdot X)[X,[X,Y_1]] \} \\
&=& (Z\cdot X)(Y_1\cdot X) \{
(Y_2\cdot X)[X,[X,Y_3]]+(Y_3\cdot X)[X,[Y_2,X]] \}.
\end{eqnarray*}
This proves the lemma.
\end{proof}

\begin{thm}
\label{thm:7-7-6}
Let $\theta$ be a symplectic expansion and
$C$ a non-separating simple closed curve on $\Sigma$. Then we have
\begin{equation}
\label{eq:7-7-3}
\tau_2^{\theta}(t_C)= -L_4+\frac{1}{2}[L_2,L_4]+\frac{1}{2}L_3^2.
\end{equation}
\end{thm}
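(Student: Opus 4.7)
The plan is to start from Theorem \ref{thm:7-1-1} and the definition $\tau^{\theta}(t_C) = T^{\theta}(t_C) \circ |t_C|^{-1}$, and read off the degree-$3$ coefficient of its restriction to $H$. Since $|t_C|X = X-(X\cdot[C])[C]$ on $H$ is exactly $(1 - L_2)|_H$ under the Poincar\'e-duality identification (\ref{eq:2-7-1}) of $L_2 = [C][C]$ as a derivation, and since $L_2^2|_H = 0$ by Proposition \ref{prop:7-7-1}, we have $|t_C|^{-1}|_H = 1_H + L_2|_H$. Hence for $X\in H$,
$$
\tau^{\theta}(t_C)X \;=\; e^{-L}(X + L_2 X), \qquad L = L_2+L_3+L_4+\cdots,
$$
and $\tau_2^{\theta}(t_C)X$ is the $H^{\otimes 3}$-component of the right hand side.

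I would then expand $e^{-L} = \sum_{n\ge 0}\frac{(-1)^n}{n!}(L_2+L_3+\cdots)^n$ and collect the contributions to $H^{\otimes 3}$, remembering that $L_k$ shifts tensor degree by $k-2$. The relevant monomials $L_{k_1}\cdots L_{k_n}$ are those whose total shift lands $X$ or $L_2X$ in $H^{\otimes 3}$. For $n=1$ one obtains $-L_4(X+L_2X)$. For $n=2$, using Proposition \ref{prop:7-7-1} to kill $L_3L_2X$ and $L_2^2X$, only $(2,4),(3,3),(4,2)$ survive, giving $\tfrac{1}{2}(L_2L_4+L_3^2+L_4L_2)X+\tfrac{1}{2}L_2L_4L_2X$. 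For $n=3$, the six $(2,3,3)$-permutations collapse to $L_3^2L_2$ via $L_2L_3=L_3L_2$ and vanish on both $X$ and $L_2X$, while the permutation $(4,2,2)$ dies through $L_2^2X=0$; the remaining $(2,2,4)$ and $(2,4,2)$ pieces give $-\tfrac{1}{6}(L_2^2L_4+L_2L_4L_2)X = -\tfrac{1}{2}L_2L_4L_2X$ after applying the identity $L_2^2L_4 = 2L_2L_4L_2$ on $H$ from Proposition \ref{prop:7-7-4}. Summing the contributions of $n=1,2,3$ produces exactly $-L_4X+\tfrac{1}{2}[L_2,L_4]X+\tfrac{1}{2}L_3^2X$, with the stray $+\tfrac{1}{2}L_2L_4L_2X$ from $n=2$ cancelled by the $n=3$ term.

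The main obstacle is showing every $n\ge 4$ monomial vanishes on $X$ and on $L_2X$. Such monomials have either two $L_3$'s and $n-2\ge 2$ copies of $L_2$ (Case A), or one $L_4$ and $n-1\ge 3$ copies of $L_2$ (Case B). In Case A, $L_2L_3 = L_3L_2$ lets one commute all $L_2$'s to the right, yielding $L_3^2L_2^{n-2}$; applied to $X$ or $L_2X$ it dies through $L_2^2X = 0$ since $n-2\ge 2$. In Case B, every monomial has the form $L_2^aL_4L_2^b$ with $a+b=n-1\ge 3$, and case analysis on $b$ reduces it to one of the vanishing relations $L_2^2X=0$, $L_2^3L_4|_H=0$, or $L_2^2L_4L_2|_H=0$ supplied by Propositions \ref{prop:7-7-1} and \ref{prop:7-7-4}: if $b\ge 2$, then $L_2^bX=0$; if $b=1$ then $a\ge 2$ and $L_2^aL_4L_2X$ is killed by $L_2^2L_4L_2|_H=0$; if $b=0$ then $a\ge 3$ and $L_2^aL_4X$ is killed by $L_2^3L_4|_H=0$. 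Arguments on $L_2X$ are identical, with one extra $L_2$ on the right. Once this bookkeeping is complete, only the $n\le 3$ terms survive, yielding (\ref{eq:7-7-3}).
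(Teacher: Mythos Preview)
Your proof is correct and follows essentially the same strategy as the paper: both start from $\tau^{\theta}(t_C)X = e^{-L}(X+L_2X)$, expand, and simplify using Propositions \ref{prop:7-7-1} and \ref{prop:7-7-4} to obtain the formula. The only difference is organizational---the paper computes $L^nX$ and $L^n(L_2X)$ iteratively modulo $\widehat{T}_4$ (so that once $L^4X\equiv 0$ the higher powers stay zero since $L$ is degree non-decreasing), whereas you enumerate monomials $L_{k_1}\cdots L_{k_n}$ directly; note also that there are only three $(2,3,3)$-permutations, not six, but this slip does not affect your argument.
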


\begin{proof}
Let $X\in H$. Modulo $\widehat{T}_4$, we compute
\begin{eqnarray*}
LX &\equiv & L_2X+L_3X+L_4X, \\
LLX &\equiv & L_2(L_2X+L_3X+L_4X)+L_3(L_2X+L_3X)+L_4L_2X \\
 &=& L_2L_4X+L_3L_3X+L_4L_2X, \\
LLLX &\equiv & L_2L_2L_4X+L_2L_3L_3X+L_2L_4L_2X=L_2L_2L_4X+L_2L_4L_2X, \\
LLLLX &\equiv & L_2L_2L_2L_4X+L_2L_2L_4L_2X=0,\ {\rm and} \\
L(L_2X) &\equiv & L_4L_2X, \\
LL(L_2X) &\equiv &L_2L_4L_2X, \\
LLL(L_2X) &\equiv & 0.
\end{eqnarray*}
Here we use Proposition \ref{prop:7-7-1} and the first part
of Proposition \ref{prop:7-7-4}.
Note that $L_2L_3L_3X=L_3L_2L_3X=0$.
Therefore, the degree 4-part of
$\tau^{\theta}(t_C)X=e^{-L}(X+L_2X)$ is equal to
$$-L_4X-L_4L_2X+\frac{1}{2}(L_2L_4X+L_3L_3X+L_4L_2X)
+\frac{1}{2}L_2L_4L_2X-\frac{1}{6}(L_2L_2L_4X+L_2L_4L_2X).$$
Using the second part of Proposition \ref{prop:7-7-4},
the formula follows.
\end{proof}

\section{The case of $\mathcal{M}_{g,*}$}
We close this paper by deriving similar results
for the mapping class group of a once punctured surface.
Let $\Sigma_g$ be a closed oriented $C^{\infty}$-surface
of genus $g$. Choose a basepoint $*^{\prime}\in \Sigma_g$
and let $\pi_1(\Sigma_g)=\pi_1(\Sigma_g,*^{\prime})$.

\subsection{The mapping class group $\mathcal{M}_{g,*}$}
Let $\mathcal{M}_{g,*}$ be
the mapping class group of $\Sigma_g$ relative to $*^{\prime}$,
namely the group of orientation-preserving 
diffeomorphisms of $\Sigma_g$ fixing $*^{\prime}$, modulo
isotopies fixing $*^{\prime}$.
By the theorem of Dehn-Nielsen, we have a natural identification
\begin{equation}
\label{eq:8-1-1}
\mathcal{M}_{g,*}={\rm Aut}^+(\pi_1(\Sigma_g)),
\end{equation}
where $+$ means acting on $H_2(\pi_1(\Sigma_g);\mathbb{Z})\cong \mathbb{Z}$
as the identity.

We take a small disk $D$ around $*^{\prime}$ and fix an identification
$$\Sigma_g \setminus {\rm Int}(D)\cong \Sigma.$$
We can extend any diffeomorphism of $\Sigma$ to
a diffeomorphism of $\Sigma_g$ by defining the extension as the identity on $D$.
In this way we have a natural surjective homomorphism
\begin{equation}
\label{eq:8-1-2}
\mathcal{M}_{g,1}\rightarrow \mathcal{M}_{g,*}.
\end{equation}
For simplicity let us write
${\rm Aut}_{\zeta}(\pi)=\{ \varphi\in {\rm Aut}(\pi); \varphi(\zeta)=\zeta \}$
(see (\ref{eq:2-1-1})).
We have a natural surjection from $\pi=\pi_1(\Sigma,*)$
to $\pi_1(\Sigma_g)=\pi_1(\Sigma_g,*^{\prime})$.
This naturally induces a homomorphism
${\rm Aut}_{\zeta}(\pi)\rightarrow {\rm Aut}^+(\pi_1(\Sigma_g))$.
This map is compatible with (\ref{eq:2-1-1}) and (\ref{eq:8-1-1}).
Namely, the diagram
$$\begin{CD}
\mathcal{M}_{g,1} @>>> \mathcal{M}_{g,*} \\
@V{\cong}VV @V{\cong}VV\\
{\rm Aut}_{\zeta}(\pi) @>>> {\rm Aut}^+(\pi_1(\Sigma_g))
\end{CD}$$
commutes.

\subsection{Action on the competed group ring of $\pi_1(\Sigma)$}
Let $\mathcal{N}$ be the two-sided ideal of $\widehat{T}$ generated by $\omega$,
and $\widehat{T}/\mathcal{N}$ the quotient algebra.
It naturally inherits a decreasing filtration $(\widehat{T}/\mathcal{N})_p$,
$p\ge 1$ and a structure of complete
Hopf algebra from $\widehat{T}$.
We denote by $\varpi$ the projection
$\widehat{T}\rightarrow \widehat{T}/\mathcal{N}$.

If $\theta$ is a symplectic expansion of $\pi$,
$\theta(\zeta)=\exp (\omega)\in 1+\mathcal{N}$. Thus 
it induces a group homomorphism
$\bar{\theta}\colon \pi_1(\Sigma_g)\rightarrow 1+ (\widehat{T}/\mathcal{N})_1$.

\begin{lem}
\label{lem:8-2-1}
Let $\theta$ be a symplectic expansion of $\pi$.
Then the induced map
\begin{equation}
\label{eq:8-2-1}
\bar{\theta}\colon \widehat{\mathbb{Q}\pi_1(\Sigma_g)}
\rightarrow \widehat{T}/\mathcal{N}
\end{equation}
is an isomorphism of complete Hopf algebras.
Here $\widehat{\mathbb{Q}\pi_1(\Sigma_g)}$
is the completed group ring of $\pi_1(\Sigma_g)$, namely the
completion of $\mathbb{Q}\pi_1(\Sigma_g)$ by
the augmentation ideal.
\end{lem}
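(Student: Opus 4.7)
My plan is to exhibit $\bar\theta$ as the quotient of the known isomorphism $\theta\colon \widehat{\mathbb{Q}\pi}\overset\cong\to \widehat T$ by matching kernels on both sides, and then observe that the Hopf structure is inherited.

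First I would verify that $\bar\theta$ is well-defined and extends to a continuous algebra map on the completion. Since $\theta(\zeta)=\exp(\omega)\in 1+\mathcal{N}$, the image of $\zeta$ in $\widehat T/\mathcal{N}$ is $1$, so the homomorphism $\pi \to 1+\widehat T_1 \to 1+(\widehat T/\mathcal{N})_1$ factors through $\pi_1(\Sigma_g)=\pi/\langle\langle\zeta\rangle\rangle$. The induced group homomorphism extends $\mathbb{Q}$-linearly to an algebra map $\mathbb{Q}\pi_1(\Sigma_g)\to \widehat T/\mathcal{N}$ sending $I\pi_1(\Sigma_g)$ into $(\widehat T/\mathcal{N})_1$; since $\widehat T/\mathcal{N}$ is complete with respect to the induced filtration (interpreting $\mathcal{N}$ as the closure, or observing closedness directly), this extends to a continuous map $\bar\theta\colon \widehat{\mathbb{Q}\pi_1(\Sigma_g)}\to \widehat T/\mathcal{N}$.

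Next, I would consider the commutative diagram
$$
\begin{CD}
\widehat{\mathbb{Q}\pi} @>{\theta}>{\cong}> \widehat T \\
@VqVV @VpVV \\
\widehat{\mathbb{Q}\pi_1(\Sigma_g)} @>{\bar\theta}>> \widehat T/\mathcal{N}
\end{CD}
$$
in which $p$ and $q$ are the canonical surjections. Identify $\widehat{\mathbb{Q}\pi_1(\Sigma_g)}$ with $\widehat{\mathbb{Q}\pi}/\mathcal{J}$, where $\mathcal{J}\subset \widehat{\mathbb{Q}\pi}$ is the closed two-sided ideal generated by $\zeta-1$; this is the standard fact that the Malcev/group-ring completion of a quotient group equals the quotient of the completed group ring by the corresponding closed ideal, and can be proved directly from $\mathbb{Q}\pi_1(\Sigma_g)/I\pi_1(\Sigma_g)^k=\mathbb{Q}\pi/(\mathcal{J}_{\mathrm{alg}}+I\pi^k)$ and passage to the inverse limit. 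Given this identification, it suffices to show $\theta(\mathcal{J})=\mathcal{N}$, which is the heart of the proof.

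The crucial computation is the equality of the two closed ideals. The inclusion $\theta(\mathcal{J})\subseteq \mathcal{N}$ follows from
$$\theta(\zeta)-1=\exp(\omega)-1=\omega+\tfrac{1}{2}\omega^2+\tfrac{1}{6}\omega^3+\cdots,$$
which converges to an element of $\mathcal{N}$ because every partial sum lies in $\mathcal{N}$ and $\mathcal{N}$ is closed in the $\widehat T_1$-adic topology. Conversely, writing $g(\omega):=\sum_{k\ge 0}\omega^k/(k+1)!=1+\omega/2+\cdots$ one gets $\exp(\omega)-1=\omega\cdot g(\omega)$, and because $g(\omega)$ has leading term $1$ it is invertible in $\widehat T$; hence $\omega=(\theta(\zeta)-1)\,g(\omega)^{-1}\in \theta(\mathcal{J})$, so $\mathcal{N}\subseteq \theta(\mathcal{J})$. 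Combined with the isomorphism $\theta$ this yields the bijectivity of $\bar\theta$ as complete augmented algebras.

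Finally, the Hopf structure is automatic. Since $\theta$ is symplectic, in particular group-like, it is an isomorphism of complete Hopf algebras. The ideal $\mathcal{N}$ is a Hopf ideal: $\omega\in \mathcal{L}_2$ is primitive ($\Delta\omega=\omega\hat\otimes 1+1\hat\otimes \omega$), is killed by $\varepsilon$, and satisfies $\iota(\omega)=\omega$ (as $\iota$ reverses tensor factors with a sign, and $\omega=-\sum A_iB_i-B_iA_i$ reorganized gives $\iota(\omega)=\omega$). Symmetrically $\mathcal{J}$ is a Hopf ideal. Thus $\bar\theta$ respects coproduct and antipode. The main obstacle, in my view, is the identification $\widehat{\mathbb{Q}\pi_1(\Sigma_g)}\cong\widehat{\mathbb{Q}\pi}/\mathcal{J}$: one must check carefully that taking completions commutes with this quotient, which amounts to verifying that the ideals $J+I\pi^k$ are cofinal with the $I\pi_1(\Sigma_g)$-adic filtration — a fact that is technically true but needs either an appeal to the standard theory of Malcev completions or a short direct argument.
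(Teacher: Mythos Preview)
Your argument is correct in outline, and the ideal computation $\theta(\mathcal{J})=\mathcal{N}$ via $\exp(\omega)-1=\omega\,g(\omega)$ with $g(\omega)$ a unit is clean. One small slip: $\iota(\omega)=-\omega$, not $\omega$ (since $\iota(A_iB_i)=B_iA_i$), but this of course still gives $\iota(\mathcal{N})\subseteq\mathcal{N}$, so your conclusion that $\mathcal{N}$ is a Hopf ideal stands.

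The paper proceeds differently and avoids precisely the step you flag as the main obstacle. Rather than identifying $\widehat{\mathbb{Q}\pi_1(\Sigma_g)}$ with $\widehat{\mathbb{Q}\pi}/\mathcal{J}$ and matching ideals, the paper writes down the candidate inverse directly: since $\eta:=\theta^{-1}$ sends $\omega$ to $\log\zeta$, which dies in $\widehat{\mathbb{Q}\pi_1(\Sigma_g)}$, the map $\eta$ descends to $\bar\eta\colon \widehat{T}/\mathcal{N}\to \widehat{\mathbb{Q}\pi_1(\Sigma_g)}$. One then checks $\bar\theta\circ\bar\eta=\mathrm{id}$ and $\bar\eta\circ\bar\theta=\mathrm{id}$ by precomposing with the surjections $\varpi$ and $q$; the only nontrivial input is that $q\colon\widehat{\mathbb{Q}\pi}\to\widehat{\mathbb{Q}\pi_1(\Sigma_g)}$ is surjective, for which the paper cites Quillen's criterion. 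This sidesteps any discussion of whether completion commutes with the quotient. Your route is more hands-on and makes the structural reason (equality of kernels) explicit, at the cost of that extra identification; the paper's route is shorter but relies on an external reference for the surjectivity of $q$.
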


\begin{proof}
It is clear that (\ref{eq:8-2-1}) is a homomorphism
of complete Hopf algebras.
Consider the following commutative diagram:
\begin{equation}
\label{eq:8-2-2}
\begin{CD}
\widehat{\mathbb{Q}\pi} @>{\theta}>{\cong}> \widehat{T} \\
@VVV @VV{\varpi}V\\
\widehat{\mathbb{Q}\pi_1(\Sigma_g)} @>{\bar{\theta}}>> \widehat{T}/\mathcal{N}
\end{CD}
\end{equation}
Let $\eta$ be the inverse of the
isomorphism $\theta\colon \widehat{\mathbb{Q}\pi} \rightarrow \widehat{T}$.
Then $\eta(\omega)=\log \zeta$, which is mapped to zero under the map
$\widehat{\mathbb{Q}\pi} \rightarrow \widehat{\mathbb{Q}\pi_1(\Sigma_g)}$.
Therefore, $\eta$ induces a morphism $\bar{\eta}\colon \widehat{T}/\mathcal{N}
\rightarrow \widehat{\mathbb{Q}\pi_1(\Sigma_g)}$.
We claim $\bar{\eta}$ is the inverse of $\bar{\theta}$.
Since the diagram (\ref{eq:8-2-2}) commutes and $\eta=\theta^{-1}$,
it suffices to show the surjectivities of the horizontal arrows in
(\ref{eq:8-2-2}). The surjectivity of $\varpi$ is clear. To show
the surjectivity of the left horizontal arrow, we can use
the criterion of Quillen \cite{Qui} Appendix A, Proposition 1.6.
This completes the proof.
\end{proof}

The isomorphism (\ref{eq:8-2-1}) leads to the
definition of a counterpart of the total Johnson map
$T^{\theta}\colon \mathcal{M}_{g,1}\rightarrow {\rm Aut}(\widehat{T})$.
Let ${\rm Aut}(\widehat{T}/\mathcal{N})$ be the group
of the filter-preserving algebra automorphisms of $\widehat{T}/\mathcal{N}$.
Let $\bar{\varphi}\in \mathcal{M}_{g,*}$. As a consequence of
(\ref{eq:8-2-1}) there uniquely
exists $\bar{T}^{\theta}(\bar{\varphi})\in {\rm Aut}(\widehat{T}/\mathcal{N})$
such that $\bar{T}^{\theta}(\bar{\varphi}) \circ \bar{\theta}
=\bar{\theta} \circ \bar{\varphi}$. In this way
we have the group homomorphism
$$\bar{T}^{\theta}\colon \mathcal{M}_{g,*}\rightarrow
{\rm Aut}(\widehat{T}/\mathcal{N}).$$
It is known that $\bigcap_{m=1}^{\infty}I\pi_1(\Sigma_g)^m=0$, where
$I\pi_1(\Sigma_g)$ is the augmentation ideal.
See, for example, Chen \cite{Ch} p.193, Corollary 1 and p.197, Corollary 4.
It follows that the natural map $\pi_1(\Sigma_g)
\rightarrow \widehat{\mathbb{Q}\pi_1(\Sigma_g)}$ is injective,
so is the homomorphism $\bar{T}^{\theta}$.

Let $\varphi\in \mathcal{M}_{g,1}$.
Since $\theta$ is symplectic,
$T^{\theta}(\varphi)(\omega)=
T^{\theta}(\varphi)(\ell^{\theta}(\zeta))=
\ell^{\theta}(\varphi(\zeta))=\ell^{\theta}(\zeta)=\omega$.
This shows $T^{\theta}(\varphi)\in {\rm Aut}(\widehat{T})$ preserves $\mathcal{N}$.
By construction, we have
\begin{equation}
\label{eq:8-2-3}
\varpi \circ T^{\theta}(\varphi)=\bar{T}^{\theta}(\bar{\varphi})\circ \varpi,
\end{equation}
where $\bar{\varphi}\in \mathcal{M}_{g,*}$ is the image of $\varphi$
by (\ref{eq:8-1-2}).

Let $C$ be a simple closed curve on $\Sigma_g\setminus \{ *^{\prime}\}$.
Then $t_C$, the Dehn twist along $C$, is defined as
an element of $\mathcal{M}_{g,*}$.
Since $\Sigma$ is a deformation retract of $\Sigma_g\setminus \{ *^{\prime}\}$,
we can regard $C$ as a simple closed curve on $\Sigma$.
Thus, $t_C$ is also defined as an element of $\mathcal{M}_{g,1}$.
By (\ref{eq:8-2-3}), we have
$$\varpi \circ T^{\theta}(t_C)=\bar{T}^{\theta}(t_C)\circ \varpi.$$
Also, $L^{\theta}(C)\in \widehat{T}_2$ is well-defined.
Since $L^{\theta}(C)\in \mathfrak{l}_g$ (see \S2.7),
$L^{\theta}(C)\omega=0$. Therefore, $L^{\theta}(C)$ preserves $\mathcal{N}$
and it defines a derivation of $\widehat{T}/\mathcal{N}$. We denote
it by $\bar{L}^{\theta}(C)$. By construction, we have
$\varpi \circ L^{\theta}(C)=\bar{L}^{\theta}(C)\circ \varpi$ and moreover,
$$\varpi \circ e^{-L^{\theta}(C)}=e^{-\bar{L}^{\theta}(C)}\circ \varpi.$$
By Theorem \ref{thm:7-1-1}, we have
$\bar{T}^{\theta}(t_C)\circ \varpi=e^{-\bar{L}^{\theta}(C)}\circ \varpi$
and since $\varpi$ is surjective, $\bar{T}^{\theta}(t_C)=e^{-\bar{L}^{\theta}(C)}$.
In summary, we have proved the following theorem.
\begin{thm}
\label{thm:8-2-2}
Let $\theta$ be a symplectic expansion and $C$ a simple closed
curve on $\Sigma_g\setminus \{ *^{\prime}\}$. Let $t_C\in \mathcal{M}_{g,*}$
be the Dehn twist along $C$. Then
$$\bar{T}^{\theta}(t_C)=e^{-\bar{L}^{\theta}(C)}.$$
Here the right hand side is the algebra automorphism of $\widehat{T}/\mathcal{N}$
defined by the exponential of the derivation $-\bar{L}^{\theta}(C)$.
\end{thm}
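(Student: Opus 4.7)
The plan is to reduce Theorem 8.2.2 to Theorem 7.1.1 by showing that both sides of the formula there preserve the ideal $\mathcal{N}$ and hence descend to the quotient $\widehat{T}/\mathcal{N}$, which by Lemma 8.2.1 is identified via $\bar\theta$ with $\widehat{\mathbb{Q}\pi_1(\Sigma_g)}$. The whole proof is a compatibility argument under the projection $\varpi\colon \widehat{T}\to\widehat{T}/\mathcal{N}$.

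First I would check that the extended Dehn twist $t_C\in\mathcal{M}_{g,1}$ induces $t_C\in\mathcal{M}_{g,*}$ under the surjection $\mathcal{M}_{g,1}\to\mathcal{M}_{g,*}$ (the curve $C$ lies in $\Sigma_g\setminus\{*'\}\simeq \Sigma$, so the Dehn twist can be performed in either setting). Then I would verify that $T^\theta(\varphi)\in\mathrm{Aut}(\widehat T)$ sends $\mathcal{N}$ to itself for every $\varphi\in\mathcal{M}_{g,1}$: using that $\theta$ is symplectic, $T^\theta(\varphi)(\omega)=T^\theta(\varphi)(\ell^\theta(\zeta))=\ell^\theta(\varphi(\zeta))=\ell^\theta(\zeta)=\omega$, so $T^\theta(\varphi)$ fixes the generator of $\mathcal{N}$ and hence preserves $\mathcal{N}$. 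Combined with the defining relations of $T^\theta$ and $\bar T^\theta$ and the commutativity $\bar\theta\circ(\pi\!\twoheadrightarrow\!\pi_1(\Sigma_g))=\varpi\circ\theta$, this yields the identity $\varpi\circ T^\theta(\varphi)=\bar T^\theta(\bar\varphi)\circ\varpi$, where $\bar\varphi$ is the image of $\varphi$ in $\mathcal{M}_{g,*}$.

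Next I would show the analogous statement for the derivation side. Since $\theta$ is symplectic it is in particular group-like, and by the discussion at the end of \S2.7 our invariant takes values in $\mathfrak{l}_g=\mathrm{Der}_\omega(\widehat{\mathcal{L}})$; in particular $L^\theta(C)\cdot\omega=0$. Therefore $L^\theta(C)$ (viewed as a derivation of $\widehat T$) preserves the two-sided ideal $\mathcal{N}$ generated by $\omega$, and so does its exponential $e^{-L^\theta(C)}\in\mathrm{Aut}(\widehat T)$. Consequently $L^\theta(C)$ descends to a derivation $\bar L^\theta(C)$ of $\widehat T/\mathcal{N}$ with $\varpi\circ L^\theta(C)=\bar L^\theta(C)\circ\varpi$ and $\varpi\circ e^{-L^\theta(C)}=e^{-\bar L^\theta(C)}\circ\varpi$.

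Finally I would apply Theorem 7.1.1, which gives $T^\theta(t_C)=e^{-L^\theta(C)}$ as automorphisms of $\widehat T$. Composing with $\varpi$ and using the two compatibilities above, one obtains $\bar T^\theta(t_C)\circ\varpi=e^{-\bar L^\theta(C)}\circ\varpi$, and since $\varpi$ is surjective this forces $\bar T^\theta(t_C)=e^{-\bar L^\theta(C)}$. I do not expect any genuine obstacle here: the content lies entirely in Theorem 7.1.1 and Lemma 8.2.1, and the remaining work is the bookkeeping that both the total Johnson map and the logarithmic derivation are compatible with the quotient by $\mathcal{N}$, which in turn is controlled by the two conditions defining a symplectic expansion.
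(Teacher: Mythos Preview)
Your proposal is correct and follows essentially the same route as the paper: show $T^\theta(\varphi)$ fixes $\omega$ and hence preserves $\mathcal{N}$, deduce the compatibility $\varpi\circ T^\theta(t_C)=\bar T^\theta(t_C)\circ\varpi$, use $L^\theta(C)\in\mathfrak{l}_g$ to see the derivation also descends, then apply Theorem~7.1.1 and the surjectivity of $\varpi$. The paper's argument is exactly this compatibility bookkeeping, presented in the paragraphs immediately preceding the theorem statement.
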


\subsection{Action on $N_k(\pi_1(\Sigma_g))$}
Finally we prove a result similar to Theorem \ref{thm:7-4-1}.

Let $C$ be a simple closed curve on $\Sigma_g\setminus \{ *^{\prime}\}$.
As we saw in \S8.2, we can regard $C$ as a simple closed
curve on $\Sigma$. As we did in \S7.4, for each $k\ge 0$,
$\bar{C}_k\in \bar{N}_k=\bar{N}_k(\pi)$ is defined.

For each $k\ge 0$, let $N_k(\pi_1(\Sigma_g))$ be the $k$-th nilpotent quotient of
$\pi_1(\Sigma_g)$, defined similarly to $N_k=N_k(\pi)$.
The mapping class group $\mathcal{M}_{g,*}$
naturally acts on $N_k(\pi_1(\Sigma_g))$.

\begin{thm}
\label{thm:8-3-1}
For each $k\ge 1$,
the action of $t_C$ on $N_k(\pi_1(\Sigma_g))$ depends only
on $\bar{C}_k\in \bar{N}_k$. If $C$ is separating,
it depends only on $\bar{C}_{k-1}\in \bar{N}_{k-1}$.
\end{thm}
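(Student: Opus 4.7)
The plan is to reduce Theorem 8.3.1 directly to Theorem 7.4.1 via the natural surjection $\pi \twoheadrightarrow \pi_1(\Sigma_g)$. Since the simple closed curve $C$ lies in $\Sigma_g \setminus \{*'\}$ and $\Sigma$ is a deformation retract of $\Sigma_g \setminus \{*'\}$, we may regard $C$ as a simple closed curve on $\Sigma$, and the Dehn twist $t_C \in \mathcal{M}_{g,*}$ is the image under (\ref{eq:8-1-2}) of the Dehn twist $t_C \in \mathcal{M}_{g,1}$: the latter is supported away from $\partial\Sigma$, so it extends by the identity on the disk $D$ to a representative of the former.

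Killing the boundary loop $\zeta$ gives a surjective group homomorphism $\pi \twoheadrightarrow \pi_1(\Sigma_g)$, which respects the lower central series and therefore induces a surjection
$$
q_k \colon N_k(\pi) \twoheadrightarrow N_k(\pi_1(\Sigma_g))
$$
for every $k \geq 1$. By the compatibility exhibited in the commutative square displayed just after (\ref{eq:8-1-2}), the map $q_k$ is $t_C$-equivariant: the action of $t_C \in \mathcal{M}_{g,1}$ on $N_k(\pi)$ descends through $q_k$ to the action of $t_C \in \mathcal{M}_{g,*}$ on $N_k(\pi_1(\Sigma_g))$. Since $q_k$ is surjective, the latter action is completely determined by the former: for $\bar y \in N_k(\pi_1(\Sigma_g))$ and any lift $y \in N_k(\pi)$, one has $t_C \cdot \bar y = q_k(t_C \cdot y)$.

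Now I would invoke Theorem 7.4.1: the action of $t_C$ on $N_k(\pi)$ depends only on the class $\bar C_k \in \bar N_k = \bar N_k(\pi)$, and depends only on $\bar C_{k-1} \in \bar N_{k-1}$ when $C$ is separating. By the previous paragraph, the same dependency transfers to the action on $N_k(\pi_1(\Sigma_g))$, which is exactly what Theorem 8.3.1 asserts.

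There is no real obstacle to overcome: this is a formal consequence of Theorem 7.4.1. The only point to verify is the $t_C$-equivariance of $q_k$, which is immediate from the construction. In particular, the machinery of the barred Johnson map $\bar T^\theta$ and Theorem 8.2.2 is not strictly needed here, although one could alternatively mirror the proof of Theorem 7.4.1 step-by-step by using the identity $\bar T^\theta(t_C) = e^{-\bar L^\theta(C)}$ together with an analog of (\ref{eq:7-4-1}) for $\pi_1(\Sigma_g)$, which follows from Lemma 8.2.1 and Bourbaki's theorem applied in the quotient Hopf algebra $\widehat{T}/\mathcal{N}$.
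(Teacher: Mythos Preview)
Your proof is correct and follows essentially the same approach as the paper: both reduce the statement to Theorem~\ref{thm:7-4-1} via the natural surjection $N_k(\pi)\to N_k(\pi_1(\Sigma_g))$ and its equivariance with respect to the $t_C$-actions coming from (\ref{eq:8-1-2}). Your write-up is in fact more detailed than the paper's three-line proof, and your closing remark about the alternative route through $\bar T^\theta$ is accurate but, as you note, unnecessary.
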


\begin{proof}
Let $N_k(\pi)\rightarrow N_k(\pi_1(\Sigma_g))$
be the natural surjection. This map is compatible
with (\ref{eq:8-1-2}) and the actions of the two mapping class groups
on the nilpotent quotients. The result follows from
Theorem \ref{thm:7-4-1}.
\end{proof}

\section{Appendix: Examples of symplectic expansions}
We show first few terms of the symplectic expansion associated to
symplectic generators given by the method
mentioned in Example \ref{ex:2-4-4}. As the reader might
notice from the below,
this symplectic expansion has a certain kind of symmetry.
For details, see \cite{Ku}.

\vspace{0.5cm}
\noindent \textbf{The case of genus 1.}
For simplicity, write $\alpha_1=\alpha$, $\beta_1=\beta$
and $A_1=A$, $B_1=B$. Then,
there is a symplectic expansion $\theta$ of the following form:
modulo $\widehat{T}_6$,
\begin{eqnarray*}
\ell^{\theta}(\alpha) &\equiv &
A+\frac{1}{2}[A,B]+\frac{1}{12}[B,[B,A]]
-\frac{1}{8}[A,[A,B]]+\frac{1}{24}[A,[A,[A,B]]] \\
& & -\frac{1}{720}[B,[B,[B,[B,A]]]]-\frac{1}{288}
[A,[A,[A,[A,B]]]]-\frac{1}{288}[A,[B,[B,[B,A]]]] \\
& & -\frac{1}{288}[B,[A,[A,[A,B]]]]
+\frac{1}{144}[[A,B],[B,[B,A]]]+\frac{1}{128}[[A,B],[A,[A,B]]];
\end{eqnarray*}
\begin{eqnarray*}
\ell^{\theta}(\beta) &\equiv &
B-\frac{1}{2}[A,B]+\frac{1}{12}[A,[A,B]]
-\frac{1}{8}[B,[B,A]]+\frac{1}{24}[B,[B,[B,A]]] \\
& & -\frac{1}{720}[A,[A,[A,[A,B]]]]-\frac{1}{288}
[B,[B,[B,[B,A]]]]-\frac{1}{288}[B,[A,[A,[A,B]]]] \\
& & -\frac{1}{288}[A,[B,[B,[B,A]]]]
-\frac{1}{144}[[A,B],[A,[A,B]]]-\frac{1}{128}[[A,B],[B,[B,A]]].
\end{eqnarray*}

\vspace{0.5cm}
\noindent \textbf{The case of genus 2.} There is a symplectic
expansion $\theta$ of the following form: modulo $\widehat{T}_5$,
\begin{eqnarray*}
\ell^{\theta}(\alpha_1) &\equiv &
A_1+\frac{1}{2}[A_1,B_1] \\
& & +\frac{1}{12}[B_1,[B_1,A_1]]-\frac{1}{8}[A_1,[A_1,B_1]]
-\frac{1}{4}[A_1,[A_2,B_2]] \\
& & +\frac{1}{24}[A_1,[A_1,[A_1,B_1]]]-\frac{1}{10}[[A_1,B_1],[A_2,B_2]]
+\frac{1}{40}[A_1,[B_1,[A_2,B_2]]] \\
& & +\frac{1}{40}[A_1,[B_2,[A_2,B_2]]]+\frac{1}{40}[A_1,[A_1,[A_2,B_2]]]
+\frac{1}{40}[A_1,[A_2,[A_2,B_2]]];
\end{eqnarray*}

\begin{eqnarray*}
\ell^{\theta}(\beta_1) &\equiv &
B_1-\frac{1}{2}[A_1,B_1] \\
& & +\frac{1}{12}[A_1,[A_1,B_1]]-\frac{1}{8}[B_1,[B_1,A_1]]
-\frac{1}{4}[B_1,[A_2,B_2]] \\
& & +\frac{1}{24}[B_1,[B_1,[B_1,A_1]]]+\frac{1}{10}[[A_1,B_1],[A_2,B_2]]
+\frac{1}{40}[B_1,[A_1,[A_2,B_2]]] \\
& & +\frac{1}{40}[B_1,[A_2,[A_2,B_2]]]+\frac{1}{40}[B_1,[B_1,[A_2,B_2]]]
+\frac{1}{40}[B_1,[B_2,[A_2,B_2]]];
\end{eqnarray*}

\begin{eqnarray*}
\ell^{\theta}(\alpha_2) &\equiv &
A_2+\frac{1}{2}[A_2,B_2] \\
& & +\frac{1}{12}[B_2,[B_2,A_2]]-\frac{1}{8}[A_2,[A_2,B_2]]
+\frac{1}{4}[A_2,[A_1,B_1]] \\
& & +\frac{1}{24}[A_2,[A_2,[A_2,B_2]]]-\frac{1}{10}[[A_1,B_1],[A_2,B_2]]
-\frac{1}{40}[A_2,[B_2,[A_1,B_1]]] \\
& & -\frac{1}{40}[A_2,[B_1,[A_1,B_1]]]-\frac{1}{40}[A_2,[A_2,[A_1,B_1]]]
-\frac{1}{40}[A_2,[A_1,[A_1,B_1]]];
\end{eqnarray*}

\begin{eqnarray*}
\ell^{\theta}(\beta_2) &\equiv &
B_2-\frac{1}{2}[A_2,B_2] \\
& & +\frac{1}{12}[A_2,[A_2,B_2]]-\frac{1}{8}[B_2,[B_2,A_2]]
+\frac{1}{4}[B_2,[A_1,B_1]] \\
& & +\frac{1}{24}[B_2,[B_2,[B_2,A_2]]]+\frac{1}{10}[[A_1,B_1],[A_2,B_2]]
-\frac{1}{40}[B_2,[A_2,[A_1,B_1]]] \\
& & -\frac{1}{40}[B_2,[A_1,[A_1,B_1]]]-\frac{1}{40}[B_2,[B_2,[A_1,B_1]]]
-\frac{1}{40}[B_2,[B_1,[A_1,B_1]]].
\end{eqnarray*}

\noindent \textsc{Nariya Kawazumi\\
Department of Mathematical Sciences,\\
University of Tokyo,\\
3-8-1 Komaba Meguro-ku Tokyo 153-8914 JAPAN}

\noindent \texttt{E-mail address: kawazumi@ms.u-tokyo.ac.jp}

\vspace{0.5cm}

\noindent \textsc{Yusuke Kuno\\
Department of Mathematics,\\
Graduate School of Science,\\
Hiroshima University,\\
1-3-1 Kagamiyama, Higashi-Hiroshima, Hiroshima 739-8526 JAPAN}

\noindent \texttt{E-mail address: kunotti@hiroshima-u.ac.jp}

\end{document}